\definecolor{hot}{RGB}{65,105,225}
\theoremstyle{plain}
\newtheorem{theorem}{Theorem}[subsection]
\newtheorem{prop}[theorem]{Proposition}
\newtheorem{lm}[theorem]{Lemma}
\newtheorem{cor}[theorem]{Corollary}
\newtheorem{conj}[theorem]{Conjecture}
\newtheorem{lemma}[theorem]{Lemma}
\newtheorem{thrm}[theorem]{Theorem}
\theoremstyle{definition}
\newtheorem{defn}[theorem]{Definition}
\newtheorem{rmk}[theorem]{Remark}
\newtheorem{ex}[theorem]{Example}
\newtheorem*{ex*}{Example}
\newtheorem*{Dthrm}{Decomposition Theorem}
\newcommand\sC{{\mathscr C}}
\newcommand\sD{{\mathcal D}}
\newcommand\cP{{\mathcal P}}
\def\cU{\mathcal{U}}
\newcommand\sB{\mathscr{B}}
\newcommand\sS{\mathcal{S}}
\def\cA{\mathcal{A}}
\def\bA{\mathbb{A}}
\def\bN{\mathbb{N}}
\def\bG{\mathbb{G}}
\def\be{\begin{equation}}
\def\ee{\end{equation}}
\def\bdcxr{\bD^b_{c}(X,R)}
\def\bdcx{\bD^b_{c}(X,\_)}
\newcommand{\mb}{\mathcal{M}_{\textrm{B}}}
\newcommand{\mdr}{\mathcal{M}_{\textrm{DR}}}
\newcommand{\rb}{\mathcal{R}_{\textrm{B}}}
\DeclareMathOperator{\id}{id}                    
\DeclareMathOperator{\obj}{Obj}
\DeclareMathOperator{\iso}{Iso}
\DeclareMathOperator{\Exp}{Exp}
\DeclareMathOperator{\homo}{Hom}
\DeclareMathOperator{\spec}{Spec}
\def\Nat{{\rm Nat}}
\DeclareMathOperator{\im}{Im}
\def\ra{\rightarrow}
\def\bC{\mathbb{C}}
\def\cM{\mathcal{M}}
\def\cR{\mathcal{R}}
\def\al{\alpha}
\def\cF{\mathcal{F}}
\def\bP{\mathbb{P}}
\def\cH{\mathcal{H}}
\def\cC{\mathcal{C}}
\def\cD{\mathscr{D}}
\def\cO{\mathcal{O}}
\def\lra{\longrightarrow}
\def\bQ{\mathbb{Q}}
\def\ol{\overline}
\def\cL{\mathcal{L}}
\def\cG{\mathcal{G}}
\def\bD{\mathbf{D}}
\def\cS{\mathcal{S}}
\def\bZ{\mathbb{Z}}
\def\ul{\underline}
\def\sX{\mathscr{X}}
\def\Cstr{\ul{\bZ}^{cstr}}
\def\xra{\xrightarrow}
\numberwithin{equation}{subsection}
\title[Absolute sets and the Decomposition Theorem]{Absolute sets and the Decomposition Theorem}
\begin{document}

\author{Nero Budur}
\address{KU Leuven, Celestijnenlaan 200B, B-3001 Leuven, Belgium} 
\email{Nero.Budur@kuleuven.be}

\author{Botong Wang}
\address{University of Wisconsin, Van Vleck Hall, 480 Lincoln Drive, Madison, WI, USA}
\email{bwang274@wisc.edu}

\begin{abstract}
We give a framework to produce constructible functions from natural functors between categories, without need of a morphism of moduli spaces to model the functor. We show using the Riemann-Hilbert correspondence that any natural (derived) functor on constructible sheaves on smooth complex algebraic varieties can be used to construct a special kind of constructible sets, called absolute sets, generalizing a notion introduced by Simpson in  presence of moduli. We conjecture that the absolute sets of local systems  satisfy a ``special varieties package", an analog of the Manin-Mumford, Mordell-Lang, and Andr\'e-Oort conjectures. The conjecture gives a simple proof of the Decomposition Theorem for all semi-simple perverse sheaves, assuming the Decomposition Theorem for the geometric ones. We prove the conjecture in the rank one case by showing that the closed absolute sets in this case are finite unions of torsion-translated affine tori. This extends a structure result of the authors for cohomology jump loci to any other natural jump loci. For example, to jump loci of  intersection cohomology and  Leray filtrations. We also show that the Leray spectral sequence for the open embedding in a good compactification  degenerates for all rank one local systems at the usual page, not just for unitary local systems.
\end{abstract}

\maketitle

\setcounter{tocdepth}{1}

\tableofcontents

\section{Introduction}

\subsection{Original motivation}  Let $X$ be an algebraic variety over $\bC$. Let $\cM_B(X,1)$ be the moduli space of rank-one local systems on $X$. This variety, also called the character variety, is easy to describe, the set of $\bC$-points being $$\cM_B(X,1)(\bC)=\homo(H_1(X,\bZ),\bC^*),$$
that is, finitely many copies of an affine torus $(\bC^*)^{n}$. Let now $$\phi:\cM_B(X,1)(\bC)\ra\bZ$$ be a function defined as the composition of functors
$$
\cM_B(X,1)(\bC)\ra \bD^b_c(X_1,\bC)\ra \bD^b_c(X_2,\bC)\ra\ldots\ra \bD^b_c(X_r,\bC)\xra{\dim_\bC H^i(\_)} \bZ,
$$
where $X_i$ are smooth complex algebraic varieties, $X=X_1$, $X_r$ is a point, $\bD^b_c(X_i,\bC)$ is the derived category of bounded complexes of $\bC$-modules with constructible cohomology, $H^i$ is cohomology,  such that  the first arrow attaches to a local system the complex of sheaves concentrated in degree zero given by that local system, and the other arrows are some natural derived functors. The question is: {\it is $\phi$ a classical constructible function ?}

We show that the answer is yes, and that more is true, $\phi$ is an absolute function, see Theorem \ref{thrmCac0}. This means that the irreducible components of the inverse images of $\phi$ are constructible sets of a very restricted type from both a geometric and arithmetic point of view, see Theorem \ref{thrmCac-intro} below. If in addition the varieties are  projective, the derived categories are replaced by moduli of local systems, and the functors are restricted to  certain types, this has been shown by Simpson \cite{simpson}.

\subsection{General framework for constructibility.}
Classical constructible sets and functions are produced in algebraic geometry via morphisms of schemes. First we develop a framework to produce constructible functions from natural functors between categories, without the need to resort to the existence of a morphism of moduli spaces that models the functor.

The idea is that we should still view a collection of categories $\{\bD^b_c(X,R)\}_R$, where $R$ are rings of coefficients, as some kind of space, with the functors between such collections as some kind of morphisms, preserving a uniform notion of constructibility that we formulate in Definition \ref{defCfx}. Uniform, that is, recovering the classical notion of constructibility for collections $\{\cM (R)\}_R$ where $\cM$ is some variety, moduli space of objects of some subcategory.

We do this by introducing unispaces and their morphisms. A unispace is nothing new, it is just a functor from finite type regular $\bC$-algebras to sets. The morphisms of unispaces are defined so that they preserve the general constructible functions of Definition \ref{defCfx}. Morphisms of unispaces provide a way to produce classical constructible functions. Such morphisms can arise from functors on categories which do not necessarily admit scheme-theoretic moduli spaces. In short, we provide a  general recipe for producing classical constructible functions.

More precisely, a morphism of $\bC$-schemes, or algebraic $\bC$-stacks, of finite type gives a morphism of the associated unispaces. A  basic unispace will be the constructible-functions unispace $\ul{\bZ}^{cstr}$. The morphisms of unispaces $\sX\ra\Cstr$ generalize the notion of constructible functions on a scheme or algebraic stack of finite type over $\bC$. A morphism of unispaces $X \ra \sX$ between a scheme or algebraic stack $X$ of finite type over $\bC$ and a unispace $\sX$, composed with a unispace morphism $\sX\ra\Cstr$, gives a classical constructible function on $X$. A typical such morphism $X\ra\sX$ arises when $\sX$ is a moduli functor and $X$ is representing it. Most of the moduli functors are not represented by a scheme or an algebraic stack, although they are unispaces. Hence one can view our theory of constructible functions for unispaces as a notion of constructibility which frees the moduli functors from being represented in a specific category. 

All this is done in Section 2. In Section 3 we introduce the general notion of constructibility over a subfield of $\bC$ from the point of view of unispaces.
 
The main application of our general framework for constructibility is to answer the question from the original motivation above. More precisely,  we show that any natural functor encountered on local systems, constructible sheaves, perverse sheaves, or bounded complexes with constructible cohomology, on $\bC$ algebraic varieties lifts to a morphism of unispaces, see Theorem \ref{thrmPCs}. Since we do not know how to define ``any" in the previous sentence, in practice we prove this claim for a huge list of natural functors. By composition, taking fiber products, or inverse images of appropriate subcategories, or by looking at morphisms between such functors, we can generate many, if not ``any",  other natural functors.  

We also address the field of definition of such morphisms of unispaces, most of the time this being $\bQ$ rather than $\bC$. A particular case of this observation is the well-known fact that the moduli spaces of local systems are $\bQ$-schemes. 

Besides morphisms of unispaces, one also has plain natural transformations of unispaces. These can be thought as continuous maps, since  they produce classical semi-continuous functions rather than constructible functions. Many of the natural (derived) functors are actually natural transformations of unispaces and hence produce closed sets, rather than just constructible sets, see Theorem \ref{thrmClo}. A particular case of this observation is the well-known fact that the cohomology jump loci  are closed subschemes of the moduli of local systems. However, other functors such as intersection cohomology do not satisfy this property. In particular, intersection cohomology jump loci of local systems are constructible, sometimes not closed, sets.

\subsection{Absolute sets} When we restrict to derived or perverse functors on smooth $\bC$ algebraic varieties, the constructible sets they produce come with extra structure due to the  Riemann-Hilbert correspondence between regular holonomic algebraic $\cD$-modules and perverse sheaves, see Theorem \ref{thrmABS}. We call these sets absolute. For local systems, the absolute sets have been introduced in \cite{simpson}, with the difference that  here we do not use the Dolbeault picture at all. Another difference is that our definition of absolute constructible sets of local systems circumvents the existence of de Rham moduli spaces, yet it implies constructibility in the de Rham moduli, Proposition \ref{propRHC}.

The major question is: {\it just how special these absolute sets are}? For this question to remain concrete, we address it mainly for the moduli of local systems $\cM_B(X)$ on a smooth $\bC$-algebraic variety. Recall that the $\bC$-points of this space are the semi-simple local systems. Building on Simpson's  conjectures \cite{SiCo, Si-conj}, we conjecture that the absolute sets of local systems  satisfy a ``special varieties package",  which can be viewed as an analog for semi-simple local systems of the Manin-Mumford, Mordell-Lang, and Andr\'e-Oort conjectures \cite{Lau,Rey,Fa, MQ, KY, UY, Ts}, see Conjecture \ref{conjSVP}.

We prove the ``special varieties package" conjecture for absolute sets of rank one local systems. More precisely, we show: 

\begin{thrm}\label{thrmCac-intro} Let $X$ be a smooth complex algebraic variety.
Let $S$ be an absolute $\bar{\bQ}$-constructible subset of $\cM_B(X,1)(\bC)$ of rank one local systems. Then $S$ is obtained from finitely many torsion-translated affine subtori via a sequence of taking union, intersection, and complement. 
\end{thrm}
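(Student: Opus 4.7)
The plan is to reduce to the case of a single closed irreducible absolute subset, rewrite the ``absolute'' condition as a concrete Galois-invariance statement via the Riemann--Hilbert correspondence, and then invoke a Manin--Mumford-type rigidity theorem on the torus $\cM_B(X,1)(\bC)$. The starting point is that the class of finite unions of torsion-translated affine subtori of $\cM_B(X,1)(\bC)$ is stable under finite union, intersection, and complement; hence it suffices to prove that every closed absolute $\bar{\bQ}$-set $S\subseteq\cM_B(X,1)(\bC)$ is a finite union of torsion-translated affine subtori. Passing to irreducible components, which should again be absolute by the formalism of Section~3, one may even assume $S$ is irreducible.

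\textbf{Step 1.} Translate the ``absolute'' condition into Galois invariance. A rank-one local system on $X$ is a character $\chi:H_1(X,\bZ)\to\bC^*$; via Riemann--Hilbert it corresponds to a rank-one regular holonomic $\cD$-module. Applying a field automorphism $\sigma\in\mathrm{Aut}(\bC/\bar{\bQ})$ to the associated $\cD$-module produces the local system $\sigma\circ\chi$. Since $S$ is absolute and Theorem~\ref{thrmABS} realises it as being cut out by natural functors compatible with this operation, $S(\bar{\bQ})$ must be stable under $\chi\mapsto\sigma\circ\chi$ for every $\sigma\in\mathrm{Gal}(\bar\bQ/\bQ)$. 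Crucially this is a genuinely stronger invariance than the trivial $\mathrm{Gal}(\bar\bQ/\bQ)$-stability of a $\bar\bQ$-defined subscheme of a torus, since $\sigma$ acts on the \emph{values} of the character, not on the ambient $\bar\bQ$-scheme.

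\textbf{Step 2.} Use this invariance to produce a Zariski-dense set of torsion points in $S$, and then apply Laurent's theorem (the Manin--Mumford conjecture for tori): a closed subvariety of $(\bC^*)^n$ containing a Zariski-dense set of torsion points is a finite union of torsion-translated affine subtori. The idea is that combining Galois invariance at each $\bar\bQ$-point of $S$ with the action of the power maps $[m]:\cM_B(X,1)\to\cM_B(X,1)$ --- which, being induced by natural endofunctors on local systems, also preserve absolute sets --- yields many torsion cosets inside $S$. A density argument of the type used by the authors in the cohomology jump-loci case (which is itself a special instance of the class of absolute sets) then upgrades this to Zariski-dense torsion in $S$, and Laurent's theorem together with irreducibility concludes that $S$ is a single torsion-translated affine subtorus.

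\textbf{Main obstacle.} The technical heart is Step~2: extracting Zariski-dense torsion from Galois-invariance of a closed absolute set. Galois-invariance alone cannot suffice, since a non-torsion algebraic point in a torus has only finitely many conjugates. One must therefore use the stability of absolute sets under further natural operations coming from the unispace framework of Sections~2--3 --- typically pullback by the power maps on the character torus, or intersection with absolute hypersurfaces obtained from tensor products of rank-one local systems --- and combine these with Galois invariance to force the required torsion density. Identifying precisely which of these operations preserve the class of absolute sets, and assembling them into a density argument that mirrors the authors' earlier proof in the cohomology jump-loci setting, is the main work.
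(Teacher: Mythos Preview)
Your Step~1 contains a fundamental mistranslation of the absolute condition, and this is what makes the whole approach collapse. For a rank-one local system with monodromy character $\chi$, the map $RH\circ p_\sigma\circ RH^{-1}$ is \emph{not} $\chi\mapsto\sigma\circ\chi$. The Riemann--Hilbert correspondence is transcendental: on $\bC^*$, for instance, the character $e^{2\pi i\alpha}$ corresponds to the connection $d+\alpha\,dx/x$, and applying $\sigma$ on the de Rham side gives the connection $d+\sigma(\alpha)\,dx/x$, whose monodromy is $e^{2\pi i\sigma(\alpha)}$, not $\sigma(e^{2\pi i\alpha})$. In fact the action $\chi\mapsto\sigma\circ\chi$ is exactly the standard Galois action on the $\bC$-points of the $\bQ$-scheme $\cM_B(X,1)$; any $\bar\bQ$-constructible subset is automatically stable under $\mathrm{Aut}(\bC/\bar\bQ)$ for this action, so your condition is vacuous and gives no leverage at all toward torsion. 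Your remark that ``this is a genuinely stronger invariance than the trivial $\mathrm{Gal}(\bar\bQ/\bQ)$-stability'' is therefore exactly backwards.

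The paper's proof goes through a completely different mechanism, and it is worth seeing why. One fixes a good compactification $(\bar X,D)$ and works with the algebraic de Rham moduli $\mdr(\bar X/D,1)$, on which $\mathrm{Gal}(\bC/\bar\bQ)$ acts algebraically (once $\bar X,D$ are defined over $\bar\bQ$; there is a separate spreading-out argument if not). The absolute condition then says that for every $\sigma$, the set $\sigma(RH^{-1}(S))$ equals $RH^{-1}(S_\sigma)$ for some $\bar\bQ$-constructible $S_\sigma$ on the Betti side. A general algebraicity criterion (Proposition~\ref{Galois2}) shows that this Galois invariance on the de Rham side forces the Euclidean closure of each analytic irreducible component of $RH^{-1}(S)$ to be Zariski closed in $\mdr$. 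At that point one is in the situation of the earlier Ax--Lindemann type theorem for the Riemann--Hilbert covering map (Theorem~\ref{BettiDR-weak}, proved in \cite{bw1,bw2,bw3}): an irreducible algebraic subvariety of $\mdr$ whose $RH$-image has the same dimension and lies in an irreducible $\bar\bQ$-subvariety of $\mb$ forces the latter to be a torsion-translated subtorus. Induction on dimension handles the constructible (rather than closed) case. So the torsion ultimately comes not from Laurent's theorem applied to a dense set of torsion points, but from an Ax--Lindemann rigidity for the transcendental map $RH$ between two different algebraic structures, combined with the Gelfond--Schneider type input already built into \cite{bw1,bw2}. Your Step~2 would need an entirely new source of torsion points, and none of the operations you list (power maps, tensor products) produce them from the correct invariance statement.
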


By an affine subtorus we mean an algebraic subgroup $(\bC^*)^m\subset \cM_B(X,1)(\bC)$.
For $X$ projective this theorem  was proved in \cite{simpson}. To prove the theorem, we prove a criterion of Ax-Lindemann type similar to the criteria proved and used in \cite{bw1, bw2}, see Theorem \ref{BettiDR}. Recent proofs of cases the Andr\'e-Oort conjecture also involve versions of the Ax-Lindemann theorem \cite{KUY, PTs}, see also \cite{ChL, PeS}.

The ``special varieties package" conjecture is also proven for complex affine tori and abelian varieties, Proposition \ref{propAbV}. It also has some support from the following fundamental result in algebraic geometry:

\begin{Dthrm} {\it 
Let $f:X\ra Y$ be a proper morphism between complex algebraic varieties. If $\cF$ is a semi-simple perverse sheaf on $X$, then $$Rf_*(\cF)\simeq \bigoplus_i {}^p\cH^i(Rf_*(\cF))[-i]$$ 
and each direct summand ${}^p\cH^i(Rf_*(\cF))$ is  a semi-simple perverse sheaf on $Y$.}
\end{Dthrm}

The Decomposition Theorem  for semi-simple perverse sheaves of geometric origin is due to  Beilinson-Bernstein-Deligne-Gabber \cite{BBD}. 

\begin{thrm}\label{thrmBBDNew} The ``special varieties package" conjecture for absolute sets of local systems on smooth complex algebraic varieties, together with the Decomposition Theorem  for semi-simple perverse sheaves of geometric origin implies the Decomposition Theorem for all semi-simple perverse sheaves.
\end{thrm}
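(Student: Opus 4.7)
The plan is to reduce the general statement to the geometric case by realizing the locus where the Decomposition Theorem holds as an absolute constructible subset of a moduli of local systems, and then invoking the ``special varieties package'' to force this locus to be everything.

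First I would reduce to a single IC sheaf. A semi-simple perverse sheaf $\cF$ on $X$ decomposes as $\bigoplus_j \IC_{\bar Z_j}(L_j)$ with each $L_j$ an irreducible local system on a smooth locally closed $Z_j\subset X$. Since $Rf_*$, the perverse cohomology ${}^p\cH^i$ and direct sum all commute, and semi-simplicity is additive, it suffices to treat $\cF=\IC_{\bar Z}(L)$ with $L$ irreducible on a smooth locally closed $Z\subset X$. Fix such $Z$ and let $\Sigma$ be an irreducible component of the moduli $\cM_B(Z)$ of irreducible local systems on $Z$ containing the class of $L$.

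Define
\[
T=\big\{L'\in\Sigma \;\big|\; \text{the Decomposition Theorem holds for } Rf_*\,\IC_{\bar Z}(L')\big\}.
\]
I would then show that $T$ is an absolute constructible subset of $\Sigma$. The assignments $L'\mapsto \IC_{\bar Z}(L')$, $\cG\mapsto Rf_*\cG$ and $\cG\mapsto {}^p\cH^i(\cG)$ are all natural derived functors on smooth complex algebraic varieties and, by Theorem \ref{thrmPCs}, lift to morphisms of unispaces. The Decomposition Theorem for $Rf_*\IC_{\bar Z}(L')$ is the conjunction of two natural conditions: (a) the perverse truncation triangles of $Rf_*\IC_{\bar Z}(L')$ split, which is the vanishing of the connecting $\mathrm{Ext}^1$-obstructions between consecutive ${}^p\cH^i$'s; and (b) each ${}^p\cH^i(Rf_*\IC_{\bar Z}(L'))$ is semi-simple, which is the vanishing of $\mathrm{Ext}^1$ between its simple Jordan-H\"older constituents. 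Both are natural vanishing conditions on $\mathrm{Ext}$-groups attached to the above natural functors, so by Theorem \ref{thrmABS} their joint vanishing locus is an absolute constructible subset of $\Sigma$.

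Next I would observe that every $L'\in\Sigma$ of geometric origin lies in $T$: for such $L'$, the sheaf $\IC_{\bar Z}(L')$ is a semi-simple perverse sheaf of geometric origin, hence so is its proper pushforward $Rf_*\IC_{\bar Z}(L')$, so the classical Decomposition Theorem of \cite{BBD} applies. By Conjecture \ref{conjSVP} (the ``special varieties package''), the local systems of geometric origin are Zariski dense in every component of every absolute constructible subset of $\Sigma$, so any absolute constructible subset containing all geometric local systems of $\Sigma$ must equal $\Sigma$. Therefore $T=\Sigma$, and in particular $L\in T$, which is the Decomposition Theorem for $\cF$.

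The main obstacle is the middle step: verifying that ``Decomposition holds'' is genuinely an absolute constructible condition. The splitting isomorphism in (a) is not itself natural, so one must argue via the natural connecting obstructions in the perverse truncation triangles; and the semi-simplicity condition (b), which ranges over Jordan-H\"older factors of a variable perverse sheaf, must be assembled into a single natural vanishing condition using the unispace formalism of Sections 2--3. A secondary difficulty is extracting from Conjecture \ref{conjSVP} the exact density statement used at the end; the rank-one case (Theorem \ref{thrmCac-intro}) provides the model via density of torsion characters inside torsion-translated subtori.
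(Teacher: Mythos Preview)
Your overall strategy coincides with the paper's: reduce to $\IC_{\bar Z}(L)$ for a smooth locally closed $Z$, define the locus $T\subset\cM_B(Z)$ where the Decomposition Theorem holds, show $T$ is absolute $\bar\bQ$-constructible, and conclude $T=\cM_B(Z)$ from the conjecture since the complement, being absolute constructible, would otherwise contain a geometric point. The splitting condition (a) is also handled as in the paper, via sufficient $\mathrm{Ext}$-vanishing conditions among the ${}^p\cH^j(Rf_*K)$.

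There are two genuine gaps. First, you omit the reduction to the situation where $Z\xrightarrow{j}\bar Z\xrightarrow{i}X\xrightarrow{f}Y$ are all maps between \emph{smooth} varieties. This is not cosmetic: the notion of absolute set is defined via the Riemann--Hilbert correspondence, which requires smoothness; the paper achieves this by first proving the case of resolutions of singularities and then bootstrapping.

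Second, and more seriously, your treatment of (b) does not work as stated. Saying that semi-simplicity of ${}^p\cH^j(Rf_*K)$ is ``vanishing of $\mathrm{Ext}^1$ between its simple Jordan--H\"older constituents'' is not a condition expressible by the unispace formalism: the constituents themselves vary with $L'$, and there is no natural functor sending $L'$ to the list of its JH factors. You correctly flag this as the main obstacle but do not resolve it. The paper's resolution is quite different. Since $i,j,f$ are fixed, one may fix once and for all a stratification $\Sigma$ of $Y$ such that every ${}^p\cH^j(Rf_*K)$ is $\Sigma$-constructible for every $L'$. Then semi-simplicity of a $\Sigma$-constructible perverse sheaf is equivalent to each underlying local system on each stratum $T\in\Sigma$ being semi-simple. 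The functor
\[
\mathop{\times}_{T\in\Sigma}\cL oc\cS ys(T,\bC)\xrightarrow{\ \oplus_T (i_T)_*(j_T)_{!*}\ } Perv(Y,\bC)
\]
is absolute $\bar\bQ$-constructible, so one is reduced to checking that the set of semi-simple local systems on each smooth stratum $T$ is absolute $\bar\bQ$-constructible; this holds because semi-simple representations form a $\bQ$-constructible subset of $\cR_B(T)(\bC)$ and semi-simplicity is preserved by Riemann--Hilbert. This is the missing idea in your argument.
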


See Theorem \ref{thrmCIDT} for a more precise version. The proof is an easy application of the theory developed in this article.  Hence the full Decomposition Theorem acts as a reality check for our conjecture. The Decomposition Theorem for semi-simple perverse sheaves underlying mixed Hodge modules has been proven by M. Saito \cite{Sa-MHM}. The Decomposition Theorem was conjectured to hold for all semi-simple perverse sheaves by Kashiwara \cite{Kas}.  Kashiwara's conjecture was settled due to works of Sabbah \cite{Sab-pol}, T. Mochizuki \cite{MocI} on twistor modules, as well as by the works of Drinfeld \cite{Dr}, B\"ockle-Khare \cite{BK}, Gaitsgory \cite{Ga}, on a related conjecture of de Jong \cite{dJ} in arithmetic. 
Note that Kashiwara actually conjectured more generally that the Decomposition Theorem holds for all semi-simple holonomic $\cD$-modules. The full Kashiwara conjecture was proved by T. Mochizuki \cite{Moc-k}. See also de Cataldo-Migliorini \cite{DM} for an account of their alternative proof of the Decomposition Theorem for intersection cohomology a smooth complex algebraic variety.

Since the ``special varieties package" holds for rank one local systems, Theorem \ref{thrmCac-intro}, the above Theorem \ref{thrmBBDNew}  together with the Decomposition Theorem for rank one local systems of geometric origin give a new proof of the full Decomposition Theorem for all rank one local systems.

\subsection{Other results} The theory developed in this article produces some immediate corollaries:

\begin{thrm}
Let $X$ be a  smooth complex algebraic variety.  Let $\cF\in \bD^b_c(X,\bC)$ be of geometric origin (cf. Def. \ref{defOri}). Let $i, k\in\bZ$.  Then:
\begin{enumerate}
\item  The hypercohomology jump locus
$$
V^i_k(X,\cF)=\{L\in \mb(X,1)(\bC)\mid \dim H^i(X,\cF\otimes L)\ge k\}.
$$
is absolute $\bar{\bQ}$-closed. In particular, it is a finite union of torsion-translated affine subtori.
\item Let $X\subset \bar{X}$ be a locally closed embedding into a smooth complex algebraic variety. The intersection cohomology jump locus
$$
IV^i_k(X,\bar{X},\cF)=\{L\in \mb(X,1)(\bC)\mid \dim IH^i(\bar{X},\cF\otimes L)\ge k\}
$$
is absolute $\bar{\bQ}$-constructible. In particular, it is obtained from finitely many torsion-translated affine subtori via a sequence of taking union, intersection, and complement.
\end{enumerate}
\end{thrm}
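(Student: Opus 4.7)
The plan is to realize both jump loci as absolute $\bar{\bQ}$-(closed or constructible) subsets of $\mb(X,1)$ through the unispace machinery, and then appeal to Theorem \ref{thrmCac-intro} to obtain the Boolean-combination description.

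For (1), I would factor the assignment $L\mapsto \dim_\bC H^i(X,\cF\otimes L)$ as a composition of natural derived functors: first the tensor operation $L\mapsto \cF\otimes L$ in $\bD^b_c(X,\bC)$, then the derived pushforward $Ra_*$ along the constant map $a:X\to\mathrm{pt}$, and finally the dimension of the $i$-th cohomology of the resulting complex on a point. Each factor is one of the natural functors on constructible sheaves that lifts to a morphism of unispaces via Theorem \ref{thrmPCs}, and the composite gives a morphism $\mb(X,1)\to\Cstr$. Because the intermediate functors are derived functors on the smooth variety $X$, Theorem \ref{thrmABS} promotes the resulting constructible function to an absolute one. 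Moreover, the usual upper semi-continuity of the fiber cohomology along deformations of $L$ is formalized here by the fact that this composite is not merely a morphism of unispaces but a natural transformation, so by Theorem \ref{thrmClo} the upper-level sets $V^i_k(X,\cF)$ are absolute closed. Since $\cF$ is of geometric origin (Def.~\ref{defOri}), all intermediate morphisms of unispaces are defined over $\bar{\bQ}$, so $V^i_k(X,\cF)$ is absolute $\bar{\bQ}$-closed, and Theorem \ref{thrmCac-intro} forces it to be a finite union of torsion-translated affine subtori.

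For (2), I would run the same argument with the intersection cohomology functor in place of ordinary hypercohomology. Concretely, factor $L\mapsto \dim IH^i(\bar{X},\cF\otimes L)$ as: tensor with $\cF$ on $X$, appropriate shift into perverse sheaves, intermediate extension $j_{!*}$ along the locally closed inclusion $j:X\hookrightarrow\bar{X}$, derived pushforward to a point, and taking the dimension of the $i$-th cohomology. Intermediate extension is among the natural perverse-sheaf functors lifting to a morphism of unispaces under Theorem \ref{thrmPCs}, so the composite is a morphism $\mb(X,1)\to\Cstr$, hence an absolute $\bar{\bQ}$-constructible function by Theorem \ref{thrmABS}, again with field of definition $\bar{\bQ}$ because $\cF$ is of geometric origin. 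The key difference from (1) is that, as emphasized in the introduction, intermediate extension is only a morphism of unispaces and not a natural transformation, so the jump loci $IV^i_k(X,\bar{X},\cF)$ are constructible but need not be closed. Theorem \ref{thrmCac-intro} then yields the Boolean-combination description in terms of torsion-translated affine subtori.

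The main obstacle is not the final invocation of Theorem \ref{thrmCac-intro}, which is black-boxed, but rather checking that each categorical operation appearing in the compositions above has been verified in the paper's unispace catalogue (Theorem \ref{thrmPCs}, Theorem \ref{thrmABS}, Theorem \ref{thrmClo}) as a unispace morphism (or natural transformation, for part (1)), and moreover that this can be done over $\bar{\bQ}$. The latter point is precisely what the geometric-origin hypothesis on $\cF$ buys us: such complexes descend to $\bar{\bQ}$-structures compatible with the unispace framework, so that tensor with $\cF$, $j_{!*}$, and $Ra_*$ assemble into a $\bar{\bQ}$-defined morphism of unispaces whose level sets are then absolute $\bar{\bQ}$-constructible by construction.
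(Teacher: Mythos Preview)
Your proposal is correct and follows essentially the same approach as the paper. The paper's proof factors $L\mapsto\dim H^i(X,\cF\otimes L)$ exactly as you do (embedding local systems into $\bD^b_c(X,\bC)$, tensoring with $\cF$, applying $Ra_*$, then $rk\circ H^i$), invokes the unispace machinery to obtain absolute semi-continuity over the field of definition of $\cF$, observes that geometric origin guarantees $\cF^\sigma$ is defined over $\bar{\bQ}$ for every $\sigma\in Gal(\bC/\bQ)$, and then applies Theorem~\ref{thrmCac-intro}; for part (2) it inserts $j_{!*}$ and notes, as you do, that this is only a unispace morphism rather than a natural transformation, downgrading closed to constructible.
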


Part (1) was proved for: $X$ projective and $\cF$ trivial by Simpson \cite{simpson}, $X$ quasi-projective and $\cF$ trivial by Budur-Wang \cite{bw1}, $X=A$ an abelian variety by Schnell \cite[Theorem 2.2]{Schne}. There is however a rich history of partial results besides the ones we have just mentioned, see the survey \cite{BW-survey}.

Part (2) gives a precise answer to \cite[Question 1.6]{B-uls}, where it was remarked that the intersection cohomology jump loci were untouchable by the results of  \cite{simpson}.

The following result, seemingly new according to P. Deligne, is another illustration of how absolute sets can be used to draw conclusions for all local systems from knowing them about local systems of geometric origin, without the need of additional theories of weights:

\begin{thrm}
 Let $f:X\ra {Y}$ be the open embedding of the complement of a normal crossings divisor in a smooth $\bC$ projective variety.  Let $L$ be a $\bC$-local system on $X$ of rank one, not necessarily unitary. Then
the Leray spectral sequence 
$$
E_2^{p,q}(L)=H^p(Y,R^qj_*L) \Rightarrow H^{p+q}(X,L)
$$
degenerates at $E_3$.
\end{thrm}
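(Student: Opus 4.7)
The plan is to identify the $E_3$-degeneration locus as an absolute $\bar\bQ$-constructible subset of $\cM_B(X,1)(\bC)$, apply Theorem \ref{thrmCac-intro}, and conclude using classical Hodge theory on the Zariski-dense set of torsion local systems. Set
$$D := \{L \in \cM_B(X,1)(\bC) : \text{the Leray SS of } j \text{ with coefficients in } L \text{ degenerates at } E_3\}.$$
I would first show $D$ is absolute $\bar\bQ$-constructible. The entries $E_r^{p,q}(L)$ and the Leray filtration on $H^n(X,L)$ are built from $L$ by iterating natural derived functors on $\bD^b_c$ --- namely $Rj_*$, the canonical truncations $\tau_{\le k}$ together with their connecting homomorphisms, and $H^p(Y,-)$ --- each of which lifts to a morphism of unispaces by Theorem \ref{thrmPCs}. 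Hence $\dim_\bC E_3^{p,q}(L)$ and $\dim_\bC H^n(X,L)$ are absolute $\bar\bQ$-constructible functions of $L$, and $E_3$-degeneration is the absolute constructible condition $\sum_{p,q} \dim E_3^{p,q}(L) = \sum_n \dim H^n(X,L)$. By Theorem \ref{thrmCac-intro}, $D$ is obtained from finitely many torsion-translated affine subtori by union, intersection, and complement.

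Next I would verify that $D$ contains every torsion $L$. A torsion rank-one local system is unitary and underlies a polarizable variation of pure Hodge structure of rank one. Via Saito's theory of polarizable mixed Hodge modules (or Timmerscheidt's classical mixed Hodge theory for unitary local systems), the Leray filtration of $j$ carries a natural mixed Hodge structure, and weight considerations on the pages $E_2^{p,q}(L)$ force the differentials $d_r$ for $r \ge 3$ to vanish. Alternatively one may pass to a finite Galois étale cover $\pi \colon \tilde X \to X$ trivializing $L$ and a smooth projective compactification $\tilde Y$ with simple normal crossings boundary; the Leray spectral sequence for the open embedding $\tilde X \hookrightarrow \tilde Y$ with constant coefficients decomposes Galois-equivariantly into the Leray spectral sequences for the characters of $\mathrm{Gal}(\tilde X/X)$ on $X$, one of which is $L$, reducing the claim to $E_3$-degeneration for constant coefficients.

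Finally, torsion points are Zariski dense in each component $(\bC^*)^n$ of $\cM_B(X,1)(\bC)$. The complement $U := \cM_B(X,1)(\bC) \setminus D$ is again, by Theorem \ref{thrmCac-intro}, a Boolean combination of torsion-translated subtori; if $U$ were non-empty, a top-dimensional piece of $U$ would have the form $T \setminus (T_1 \cup \cdots \cup T_k)$ with $T$ a torsion-translated subtorus and $T_i \subsetneq T$ proper torsion-translated subtori, so it would contain torsion points of $T$ (which are Zariski dense in $T$), contradicting $D \supseteq \{\text{torsion points}\}$. Hence $D = \cM_B(X,1)(\bC)$. The main obstacle is the verification at torsion points in the second step: the Hodge-theoretic input asserting $E_3$-degeneration for unitary rank-one $L$ is classical but non-trivial, and the descent route via étale covers requires careful handling of the Galois decomposition across the non-finite proper morphism $\bar\pi \colon \tilde Y \to Y$ (one works at the level of $R\bar\pi_*$ and exploits $R\bar\pi_* R\tilde j_* \bC = \bigoplus_\chi Rj_* L_\chi$).
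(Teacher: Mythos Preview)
Your proposal is correct and follows essentially the same strategy as the paper's proof of Theorem \ref{thrmLer}: establish that the $E_3$-degeneration locus is absolute $\bar\bQ$-constructible (the paper invokes Corollary \ref{corLer} for this), verify degeneration on a Zariski-dense set of ``good'' local systems via classical Hodge theory, and conclude using the structure theorem for absolute sets. The only notable difference is that the paper appeals to Timmerscheidt's result for \emph{all} unitary local systems, whereas you restrict to torsion local systems; since torsion points are already Zariski dense in every torsion-translated subtorus, your version suffices and in fact meshes more directly with the output of Theorem \ref{thrmCac-intro}. Your final density argument is also spelled out more carefully than the paper's somewhat terse ``the complement of $S$ must be absolute $\bar\bQ$-closed if non-empty'' (which really means: if non-empty, the complement, being absolute $\bar\bQ$-constructible, would contain torsion/unitary points).
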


The theory developed in this article is used in a separate article   to prove that also the length function on perverse sheaves is absolute $\bQ$-constructible. In particular:

\begin{thrm}{\rm{(}}\cite{GL}{\rm{)}}
Let $j:U\ra X$ an affine open embedding into a smooth $\bC$ algebraic variety. Consider the following set of rank one local systems on $U$: 
$$
\ell_k(U,X):=\{L\in\cM_B(U,1)(\bC)\mid \ell(Rj_*(L[n]))\ge k \},
$$
where $\ell$ is the length of a perverse sheaf. Then $\ell_k(U,X)$ is an absolute $\bQ$-constructible set. In particular,  it is obtained from finitely many torsion-translated affine subtori via a sequence of taking union, intersection, and complement.
\end{thrm}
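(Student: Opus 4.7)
\emph{Strategy.} The plan is to realize $L\mapsto \ell(Rj_*(L[n]))$ as an absolute $\bQ$-constructible function on $\cM_B(U,1)$ via the unispace framework of Section~2, and then apply Theorem~\ref{thrmCac-intro} to obtain the structural description.

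\emph{Step 1: lift the assignment to a unispace morphism.} Because $j:U\to X$ is an affine open embedding, $Rj_*$ preserves perversity, so for $n=\dim U$ the complex $Rj_*(L[n])$ is a perverse sheaf on $X$ for every $L\in\cM_B(U,1)(\bC)$. By Theorem~\ref{thrmPCs}, the derived functor $Rj_*$, composed with the shift $[n]$ and with the inclusion of rank one local systems into $\bD^b_c(U,\bC)$, lifts to a morphism of unispaces
$$
\Phi:\cM_B(U,1)\longrightarrow \cP(X),
$$
where $\cP(X)$ denotes the sub-unispace of perverse sheaves inside the unispace of $\bD^b_c(X,-)$. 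The rationality clauses built into Sections~2--3 show that $\Phi$ is defined over $\bQ$.

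\emph{Step 2: realize length as a unispace morphism to $\Cstr$.} The length of a perverse sheaf $P$ is the sum, over isomorphism classes of simple perverse sheaves $\IC_Z(M)$, of the multiplicities with which they appear as Jordan--H\"older factors of $P$. For $P=Rj_*(L[n])$ with $L$ of rank one, the composition factors that can occur are tightly controlled: $Z$ ranges over the finitely many irreducible strata of a fixed Whitney stratification of $(X,X\setminus U)$, and $M$ is forced to be rank one with monodromy eigenvalues determined, up to finite ambiguity, by the local monodromies of $L$. For each candidate pair $(Z,M)$, the multiplicity of $\IC_Z(M)$ in $P$ can be written as an integer combination of dimensions of $\homo$-spaces in $\bD^b_c(X,\bC)$ between $P$ and the finitely many competing IC sheaves. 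Each such $\homo$-dimension is a unispace morphism to $\Cstr$ by Theorem~\ref{thrmPCs}, hence so is the resulting finite $\bZ$-linear combination $\ell\circ\Phi$.

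\emph{Step 3: conclude.} The level sets of the unispace morphism $\ell\circ\Phi:\cM_B(U,1)\to\Cstr$ are, by Theorem~\ref{thrmABS}, absolute $\bQ$-constructible subsets of $\cM_B(U,1)$. They are in particular absolute $\bar\bQ$-constructible, so Theorem~\ref{thrmCac-intro} applies and yields the description of each $\ell_k(U,X)$ as a Boolean combination of finitely many torsion-translated affine subtori.

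\emph{Main obstacle.} Step 2 is the delicate one: composition-factor isomorphism types can jump in a family, and there is no classical moduli space of perverse sheaves on which to invoke Chevalley constructibility directly. The unispace formalism of Section~2 is exactly what circumvents this difficulty: rather than needing a representing scheme, one expresses the multiplicities as combinations of $\homo$-dimensions coming from natural derived functors, and the general framework then guarantees constructibility as a unispace morphism.
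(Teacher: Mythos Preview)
The theorem you are trying to prove is not proved in the present paper; it is quoted from the companion article \cite{GL}. The paper says explicitly that ``the theory developed in this article is used in a separate article to prove that also the length function on perverse sheaves is absolute $\bQ$-constructible,'' so there is no proof here against which to compare. That said, your Steps~1 and~3 are correct and are exactly how the present paper's machinery is meant to be used. The difficulty is entirely in Step~2, and there your argument has a genuine gap.

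The problem is your claim that the Jordan--H\"older multiplicities of $P=Rj_*(L[n])$ can be written as integer combinations of $\homo$-dimensions against a \emph{fixed finite} list of candidate IC sheaves. This is not available. First, the simple constituents of $P$ vary with $L$: already $j_{!*}(L[n])$ is a composition factor and moves continuously in $L$, so there is no finite set of test objects independent of $L$. Second, even for a fixed $L$, $\dim\homo(S,P)$ and $\dim\homo(P,S)$ compute socle and top multiplicities, not Jordan--H\"older multiplicities; for a non-semisimple $P$ these numbers do not determine $\ell(P)$, and no finite $\bZ$-linear combination of $\homo$- or $Ext$-dimensions against fixed objects recovers length in general. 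Your remark that ``$M$ is forced to be rank one with monodromy eigenvalues determined, up to finite ambiguity'' does not resolve this: the ambiguity depends on $L$, so the candidate simples still form an infinite family, and the multiplicity-via-$\homo$ formula you invoke is not valid for the non-semisimple $Rj_*(L[n])$.

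What \cite{GL} does is prove directly that the length function $\ell:Perv(X,\bC)\to\bZ$ is absolute $\bQ$-constructible with respect to the unispace $Perv(X,\_)$; this requires an argument beyond the list (\ref{eqList}) of functors handled in the present paper, precisely because length is not visibly built from those functors. Once that is established, your Steps~1 and~3 finish the proof exactly as you wrote them.
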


\subsection{Acknowledgement} The first author was sponsored by  FWO, KU Leuven OT, and Methusalem grants. He would like to thank IHES for hospitality during writing a part of this article. We would like to thank  J. Sch\"urmann for the many useful discussions and for his help with corrections in Section 3 after the first version of this article appeared. We also thank P. Gatti, Y. Liu, and L. Saumell for comments and insights, and for running a year-long seminar where the ideas behind this article were born. We thank the referees for the useful suggestions and comments which lead to a better version of this article.

\section{Unispaces}

Classically, one can use morphisms between algebraic varieties to produce classical constructible functions. In this section we provide a more general framework for constructibility. The upshot will be that the morphisms of unispaces will provide additional tools to produce classical constructible functions.

\subsection{Notation.} For a category $\cC$, we denote the class of objects by $\obj (\cC)$ and the class of morphisms between two objects $A,B$ by $\homo_\cC(A,B)$. By $\iso(\cC)$ we denote the collection of isomorphism classes of objects of $\cC$. Every set or class is a category: the objects are the elements and the only morphisms are the identity maps. If $F$ and $G$ are two functors of categories, we denote the collection of natural transformations $F\ra G$ by
$
{\rm{Nat}} (F,G).
$
A {\bf function on a category} $\phi:\cC\ra\bZ$ will always mean a function 
$
\phi:\iso(\cC)\ra \bZ;
$
equivalently, $\phi$ is a functor when $\bZ$ is viewed as a category. We denote by: $\sS et$ the category of sets,    $\cC at$ the category of categories with functors as morphisms, $\cA lg_{ft, reg}(\bC)$ the category of regular algebras of finite type over $\bC$.

\subsection{Set-theoretic foundations.}

We will also consider larger entities, such as ``categories" of functors, albeit for book-keaping reasons rather than for auxiliary constructions via operations like taking limits over large ``sets". Then, as usual, one needs to consider set-theoretic foundations in order to avoid paradoxes. If one restricts attention to small categories, that is the objects and the homomorphisms form sets and not proper classes, the paradoxes are avoided. A typical way to handle this issue in presence of large categories is to assume the Grothendieck universe axiom: every set $s$ is a member of some set-theoretic universe $U$ that is itself a set.  A set $s\in U$ is called $U$-small. Thus, starting with some classes in some fixed Grothendieck universe, these classes and the functors between them are (small) sets in a larger Grothendieck universe and the paradoxes are avoided, see \cite{McL}. 

However, the main source of objects and morphisms in this article will come from the bounded derived category of complexes of $R$-modules with constructible cohomology on algebraic varieties, for some finitely generated $\bC$-algebra $R$. This category is essentially small, implying that the collection of isomorphism classes is indeed a set, not a proper class. As it will be clear from the text, this is enough avoid any problems with the set-theoretic foundations, and hence in this case we do not have to appeal to Grothendieck universes.

With this in mind, from now on we will not distinguish anymore between sets and classes.

\subsection{Classical constructible sets}
 
Let $K$ be a field. In this article we only consider subfields $K$ of the complex number field $\bC$. Let $X$ a $K$-scheme of finite type. For  a $K$-algebra $R$, let 
$
X_R=X\times_K R:= X\times_{\spec K}\spec R
$
be the base change of $X$ to $R$, and let
$
X(R)=\homo_{Sch(K)}(\spec R, X)
$
be the set of $R$-points of $X$. By the universal property of the product, there is an equality of sets of $R$-points, $X_R(R)=X(R)$.

A {\bf $\bC$-scheme of finite type defined over $K$} is the base change to $\bC$ of a $K$-scheme of finite type. A {\bf morphism defined over $K$} between $\bC$-schemes of finite type defined over $K$ is the base change to $\bC$ of a morphisms of finite type $K$-schemes. A {\bf $\bC$-subscheme is defined over $K$} if the embedding morphism is defined over $K$. Subschemes are assumed to be locally closed in this article.

\begin{defn} Let $X$ be a finite type $\bC$-scheme defined over $K$. A subset of $X(\bC)$ is {\bf $K$-constructible}  
if it is a finite union of sets of $\bC$-points of subschemes defined over $K$ of $X$. A function $$\phi : X(\bC)\ra \bZ$$ is called {\bf $K$-constructible} 
if it has finite image and  the subsets $\phi^{-1}(s)$ are $K$-constructible 
in $X(\bC)$ for all $n\in\bZ$. Let $Y$ be another finite type $\bC$-scheme. A set map $f:X(\bC)\ra Y(\bC)$ is a {\bf $K$-pseudo-morphism} if its graph is a $K$-constructible subset of $X(\bC)\times Y(\bC)$. 

If the field $K$ is be omitted from notation, it is assumed $K=\bC$. 
\end{defn}

Note that we are using here only the $\bC$-points version of the general notion of constructibility for schemes which is defined locally using the Zariski topology on ring spectra. For the following, see for example Section 3.2 and Lemma 7.4.7 in \cite{Ma} for $K=\bC$, or \cite{Jo}  for the more general case of Artin $\bC$-stacks of finite type. The case $K\subset\bC$ follows from the more general notion of constructibility for schemes.

\begin{prop}\label{propCharCs}
Let $X$ and $Y$ be finite type $\bC$-schemes defined over $K$. Consider a set map $f:X(\bC)\ra Y(\bC)$. 
\begin{enumerate}
\item  The following are equivalent:
\begin{enumerate}[label=(\alph*)]
\item $f$ is a $K$-pseudo-morphism.
\item There exist a finite disjoint union decomposition $X(\bC)=\coprod_i X_i(\bC)$ where $X_i$ are $\bC$-subschemes of $X$ defined over $K$, and morphisms $f_i:X_i\ra Y$ defined over $K$ , such that $f=f_i$ on $X_i(\bC)$. 
\end{enumerate}

\item If $f$ is a $K$-pseudo-morphism, its composition with a $K$-constructible function is also a $K$-constructible function.

\item The composition of two $K$-pseudo-morphisms is a $K$-pseudo-morphism.
\end{enumerate}
\end{prop}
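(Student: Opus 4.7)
The plan is to prove (1) first, since (2) and (3) follow quickly from the characterization (1)(b). For (1), the direction (b) $\Rightarrow$ (a) is straightforward: given a decomposition $X(\bC)=\coprod_i X_i(\bC)$ and morphisms $f_i\colon X_i\to Y$ defined over $K$, the graph of $f_i$ is a closed subscheme of $X_i\times Y$ defined over $K$ (cut out by equalizing $f_i$ with the second projection), and the graph of $f$ is the finite disjoint union of these, hence a $K$-constructible subset of $X(\bC)\times Y(\bC)$.

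The substantive direction is (a) $\Rightarrow$ (b). Assume $\Gamma_f\subset X(\bC)\times Y(\bC)$ is $K$-constructible. By definition I may write $\Gamma_f=\coprod_{i} \Gamma_i(\bC)$ where each $\Gamma_i$ is a locally closed subscheme of $X\times Y$ defined over $K$ (after replacing the given finite union by a disjoint refinement using complements, which preserves the property of being defined over $K$). Because $f$ is a single-valued map, the projection $p_X\colon \Gamma_i\to X$ is injective on $\bC$-points. By Chevalley's theorem applied to the morphism $p_X$ (which is defined over $K$), the image $p_X(\Gamma_i(\bC))$ is $K$-constructible in $X(\bC)$. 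The main technical step is then to stratify both sides so that $p_X$ becomes an isomorphism on each stratum: Noetherian induction on $\dim \Gamma_i$, combined with generic flatness and the fact that a bijective morphism between reduced $\bC$-schemes of finite type is an isomorphism over a dense open (in characteristic zero), produces a finite decomposition of $\Gamma_i$ into locally closed subschemes defined over $K$ on each of which $p_X$ is an isomorphism onto its image. Refining the resulting pieces of $X(\bC)$ into a disjoint union of locally closed $K$-subschemes $X_j$, and composing the inverse of $p_X|_{\Gamma_j}$ with $p_Y\colon \Gamma_j\to Y$, gives morphisms $f_j\colon X_j\to Y$ defined over $K$ whose collection realizes $f$, which is the content of (b).

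For (2), let $\phi\colon Y(\bC)\to \bZ$ be $K$-constructible and let $f\colon X(\bC)\to Y(\bC)$ be a $K$-pseudo-morphism. Using (1)(b), write $X(\bC)=\coprod_j X_j(\bC)$ with morphisms $f_j\colon X_j\to Y$ defined over $K$. Then $(\phi\circ f)^{-1}(n)=\coprod_j f_j^{-1}(\phi^{-1}(n))\cap X_j(\bC)$; since $\phi^{-1}(n)$ is $K$-constructible in $Y(\bC)$, its scheme-theoretic preimage under a $K$-morphism is $K$-constructible, and a finite intersection of $K$-constructible sets is $K$-constructible. Finite image of $\phi\circ f$ is clear. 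Hence $\phi\circ f$ is $K$-constructible.

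For (3), let $g\colon Y(\bC)\to Z(\bC)$ be a second $K$-pseudo-morphism. Applying (1)(b) to both, decompose $X(\bC)=\coprod_j X_j(\bC)$ with $f_j\colon X_j\to Y$ and $Y(\bC)=\coprod_k Y_k(\bC)$ with $g_k\colon Y_k\to Z$, all defined over $K$. Intersecting with the preimages, set $X_{jk}:=f_j^{-1}(Y_k)\subset X_j$, which is a locally closed $K$-subscheme of $X$; on $X_{jk}(\bC)$ the composition equals $g_k\circ f_j|_{X_{jk}}$, a morphism defined over $K$. These pieces give the required decomposition of $g\circ f$, so by (1)(b) $\Rightarrow$ (a) it is a $K$-pseudo-morphism. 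The main obstacle throughout is the stratification argument in (a) $\Rightarrow$ (b); the rest is formal bookkeeping using that $K$-subschemes are closed under preimage, intersection, and complement.
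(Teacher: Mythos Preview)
Your argument is correct. The paper does not give its own proof of this proposition: it simply refers to Section 3.2 and Lemma 7.4.7 of Marker's model theory text for $K=\bC$, to Joyce for Artin stacks, and remarks that the case of a general subfield $K\subset\bC$ follows from the scheme-theoretic notion of constructibility. So there is nothing to compare strategies against; you have supplied a self-contained algebro-geometric proof where the paper defers to the literature.

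A couple of minor points worth tightening. In (1)(a)$\Rightarrow$(b), when you invoke ``a bijective morphism between reduced $\bC$-schemes is an isomorphism over a dense open'', make explicit why the open can be taken over $K$: the non-\'etale locus of $p_X\colon\Gamma_i\to X$ is the support of $\Omega_{\Gamma_i/X}$, which is a coherent sheaf already defined over $K$, so its complement is $K$-open; on that open, $p_X$ is \'etale and injective on $\bC$-points, hence an open immersion. This is the only place where a reader might worry about the field of definition slipping. The rest of your bookkeeping in (2) and (3) is straightforward and matches what the cited references would give.
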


\begin{rmk}\label{rmkSmooth} Since $K$ is a perfect field, the regular locus of a Noetherian scheme of finite type defined over $K$ is non-empty open. Hence, by induction, one can improve the above proposition to require that $X_i=\spec(R_i)\times_K\bC$ with $R_i$  finite type regular $K$-algebras.
\end{rmk}

Hence the schemes of finite type over $\bC$ defined over $K$ together with $K$-pseudo-morphisms form a category, which we denote by $\cP se\cS ch_{ft}(K)$. We shall see how to construct a larger category which still admits a theory of constructible functions, for which $\cP se\cS ch_{ft}(K)$ is a full subcategory.

\subsection{Unispaces}

\begin{defn}
A functor 
$$\sC : \cA lg_{ft, reg}(\bC)\lra  \cS et$$
$$R\mapsto\sC(R)$$
$$(R\ra R')\mapsto (\sC(R)\ra\sC(R'))$$ such that $\sC(\bC)\neq\emptyset$
will be called a {\bf unispace.} The maps $\sC(R)\ra\sC(R')$ will sometimes be denoted by $(\_) \star_R R'$, and the unispace might be referred to as $(\sC,\star)$ when we desire to specify the operation $\star$.
\end{defn}

\begin{ex}\label{ex1} {$\quad$}
\begin{enumerate}
\item In this article, whenever we encounter a functor $\sC:\cA lg_{ft, reg}\ra\cC at$, we will view it automatically, without changing notation,  as a unispace by considering the functor $R\mapsto \iso(\cC(R))$. In keeping with this convention, 
a {\bf function on a category} $\phi:\cC\ra\bZ$ will always mean for us a set function $\phi:\iso(\cC)\ra \bZ$.
\item Every stack over $\bC$ gives a unispace by forgetting the descent data.
\item In particular, a $\bC$-scheme of finite type $X$ gives a unispace $X(\_):R\mapsto X(R)$.

\item Let $X$ be a topological space. Let $R\in\cA lg_{ft, reg}(\bC)$ and denote by $R_X$ the constant sheaf on $X$. Let $\cS h(X,R)$ denote the category of sheaves of $R_X$-modules on $X$, and $\bD(X,R)$ its derived category. Then $$(\cS h(X,\_), \otimes)\quad\text{and}\quad(\bD(X,\_), \otimes^L)$$ form unispaces, where $\otimes$ is the usual tensor product, and $\otimes^L$ is the derived tensor product. More precisely, according to the convention in (1),  $(\cS h(X,\_), \otimes)$ is the unispace $(\sC,\star)$ defined by setting $\sC(R)=\iso(\cS h(X,R))$ for every $R\in\cA lg_{ft,reg}(\bC)$, and by setting $(\_)\star_R R':\sC(R)\ra\sC(R')$ to be the usual tensor product map $(\_)\otimes_RR'$ up to isomorphisms, for every morphism $R\ra R'$ in $\cA lg_{ft,reg}(\bC)$. Similarly, $(\bD(X,\_), \otimes^L)$ is the unispace $(\sC,\star)$ with $\sC(R)=\iso(\bD (X,R))$ and $(\_)\star_R R':\sC(R)\ra\sC(R')$  the derived tensor product map $(\_)\otimes^L_RR'$ up to isomorphisms. These unispaces will come up again in section \ref{secDer}.

\item If $\sC:\cA lg_{ft, reg}\ra\cC at$ is a functor, one can consider the {\bf unispace of morphisms}  $\homo(\sC)$ defined by
$$
R\mapsto \homo(\sC(R))/_\sim ,
$$
where two morphisms in $\sC(R)$ are set to be equivalent if they fit into a commutative square with an isomorphism between the sources and another isomorphism between the targets.

\end{enumerate}  
\end{ex}

\begin{defn}\label{defCad} For a unispace $\sC$ and an algebra $R\in\cA lg_{ft, reg}(\bC)$, a {\bf $\sC$-adapted function} is a function
$$
\phi_R:\spec(R)(\bC)\ra \sC(\bC)
$$
satisfying the following condition. There exists a finite disjoint decomposition $$\spec(R)(\bC)=\coprod_{i} U_i(\bC),$$ where $U_i=\spec(R_i)$ are locally closed affine subschemes of $\spec(R)$ with  $R_i$ also in $\cA lg_{ft, reg}(\bC)$, and there exist $\cF_{i}\in\sC(R_i)$  such that
$$
\phi_R(m) = \cF_{i}\star_{R_i} R_i/m
$$
if $m\in U_i(\bC)$, where $\cF_{R_i}\star_{R_i}R_i/m$ is canonically indentified with an element in $\sC(\bC)$. Denote by
$
\ul{\sC}(R)
$
the {\bf set of $\sC$-adapted functions} on $\spec(R)(\bC)$. These assignments form a unispace $\ul{\sC}$, the {\bf unispace of $\sC$-adapted functions}, and $\sC\ra\ul{\sC}$ is a natural transformation of functors.
\end{defn}

\begin{rmk}\label{exE}$\quad$

\begin{enumerate}
\item The functions adapted to the unispace of a $\bC$-scheme of finite type $X$ are exactly the pseudo-morphisms $\spec(R)(\bC)\ra X(\bC)$.
\item $\ul{\sC}(\bC)=\sC(\bC)$ and $\ul{\sC}$ is its own unispace of adapted functions.
\end{enumerate}
\end{rmk}

\begin{defn}
A {\bf morphism of unispaces} $F:\sC\ra\sC'$ is a map $F(\bC):\sC(\bC)\ra\sC'(\bC)$ such that for all $R\in\cA lg_{ft, reg}(\bC)$ and all $\sC$-adapted functions $\phi:\spec(R)(\bC)\ra\sC(\bC)$, the composition $F(\bC)\circ\phi$ is $\sC'$-adapted.
\end{defn}

\begin{ex}
A natural transformation of functors $\sC\ra\sC'$ gives a unispace morphism, but not conversely.
\end{ex}

\begin{defn}
We denote the category formed by unispaces together with unispace morphisms by
$
\cU ni.
$
\end{defn}

\begin{lemma}\label{lemBarC} The natural transformation of functors $\sC\ra\ul{\sC}$ is an isomorphism $\sC\cong\ul{\sC}$ in $\cU ni$.
\end{lemma}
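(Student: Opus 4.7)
\emph{Plan.} By the definition of $\cU ni$, a morphism of unispaces $F:\sC\to\sC'$ is just a set map $F(\bC):\sC(\bC)\to\sC'(\bC)$ that preserves adapted functions under post-composition. By Remark~\ref{exE}(2) we have $\ul{\sC}(\bC)=\sC(\bC)$, and the map induced on $\bC$-points by the natural transformation $\sC\to\ul{\sC}$ is precisely the identity under this identification (since, for $R=\bC$, the point of $\spec(\bC)(\bC)$ is sent to $\cF\star_\bC\bC=\cF$). Thus the only candidate for an inverse is again the identity, and the lemma reduces to showing that the identity on $\sC(\bC)=\ul{\sC}(\bC)$ sends $\sC$-adapted functions to $\ul{\sC}$-adapted functions and conversely.

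For the direction $\sC\to\ul{\sC}$, let $\phi:\spec(R)(\bC)\to\sC(\bC)$ be $\sC$-adapted, witnessed by a decomposition $\spec(R)(\bC)=\coprod_i U_i(\bC)$, $U_i=\spec(R_i)$, together with elements $\cF_i\in\sC(R_i)$. Each $\cF_i$ canonically determines an element of $\ul{\sC}(R_i)$, namely the $\sC$-adapted function $m\mapsto \cF_i\star_{R_i}R_i/m$ on $\spec(R_i)(\bC)$. Taking these as the new witnesses, and using the same decomposition, exhibits $\phi$ as a $\ul{\sC}$-adapted function.

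For the direction $\ul{\sC}\to\sC$, let $\psi:\spec(R)(\bC)\to\ul{\sC}(\bC)$ be $\ul{\sC}$-adapted, with a decomposition $\spec(R)(\bC)=\coprod_i U_i(\bC)$, $U_i=\spec(R_i)$, and witnesses $\cG_i\in\ul{\sC}(R_i)$. By Definition~\ref{defCad}, each $\cG_i$ is itself a $\sC$-adapted function on $\spec(R_i)(\bC)$, so it already comes with a finer decomposition $\spec(R_i)(\bC)=\coprod_j V_{ij}(\bC)$, $V_{ij}=\spec(R_{ij})$, and witnesses $\cF_{ij}\in\sC(R_{ij})$. Each $V_{ij}$ is a locally closed affine subscheme of $\spec(R)$, and amalgamating gives a single decomposition $\spec(R)(\bC)=\coprod_{i,j}V_{ij}(\bC)$ whose witnesses $\cF_{ij}$ recover $\psi$. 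Hence $\psi$ is $\sC$-adapted.

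Both directions of the identity therefore define unispace morphisms, which establishes the isomorphism $\sC\cong\ul{\sC}$ in $\cU ni$. I do not foresee any real obstacle; the content of the lemma is essentially the idempotency $\ul{\ul{\sC}}=\ul{\sC}$ noted in Remark~\ref{exE}(2), together with the observation that unispace morphisms are detected purely on $\bC$-points.
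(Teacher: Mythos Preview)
Your proof is correct and follows the same approach as the paper, which simply invokes Remark~\ref{exE}(2). The paper's one-line proof relies on the fact that $\ul{\sC}(\bC)=\sC(\bC)$ and that $\ul{\sC}$ is its own unispace of adapted functions, so the $\sC$-adapted and $\ul{\sC}$-adapted functions coincide; you have unpacked exactly this, and you correctly identify the content as the idempotency $\ul{\ul{\sC}}=\ul{\sC}$ in your closing remark.
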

\begin{proof} Remark \ref{exE} {(2)} implies the claim.
\end{proof} 

\begin{lemma}\label{lemNat}
$\homo_{\,\cU ni}(\sC,\sC') = \Nat (\ul{\sC},\ul{\sC'}).$
\end{lemma}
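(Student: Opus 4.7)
The plan is to construct mutually inverse maps between the two sides, using that each natural transformation of unispace functors $\ul{\sC}\to\ul{\sC'}$ is determined by its $\bC$-component via naturality at residue-field quotients.

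In the forward direction, given a unispace morphism $F:\sC\to\sC'$, that is, a map $F(\bC):\sC(\bC)\to\sC'(\bC)$ preserving adapted functions by post-composition, I define a natural transformation $\Phi(F):\ul{\sC}\to\ul{\sC'}$ by
$$\Phi(F)_R(\phi) := F(\bC)\circ\phi, \qquad \phi\in\ul{\sC}(R).$$
This lands in $\ul{\sC'}(R)$ precisely by the defining property of a unispace morphism, and naturality with respect to a morphism $R\to R'$ in $\cA lg_{ft,reg}(\bC)$ reduces to associativity of composition: the functoriality of $\ul{\sC}$ is pre-composition with the induced map $\spec(R')(\bC)\to\spec(R)(\bC)$, which commutes with post-composition by $F(\bC)$.

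In the reverse direction, given $\eta\in\Nat(\ul{\sC},\ul{\sC'})$, I take $\Psi(\eta):=\eta_\bC:\sC(\bC)\to\sC'(\bC)$, using the identification $\ul{\sC}(\bC)=\sC(\bC)$ from Remark \ref{exE}(2). The crucial point is to show $\eta_\bC$ is a unispace morphism. For any $\phi\in\ul{\sC}(R)$ and any closed point $m\in\spec(R)(\bC)$, the residue quotient $R\twoheadrightarrow R/m=\bC$ is a morphism in $\cA lg_{ft,reg}(\bC)$, and the corresponding functoriality map $\ul{\sC}(R)\to\ul{\sC}(\bC)$ is evaluation at $m$. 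Naturality of $\eta$ at this morphism then yields the pointwise identity $\eta_R(\phi)(m)=\eta_\bC(\phi(m))$, so $\eta_\bC\circ\phi=\eta_R(\phi)$ as set functions. Since $\eta_R(\phi)\in\ul{\sC'}(R)$ is adapted by hypothesis, so is $\eta_\bC\circ\phi$, establishing the unispace-morphism property.

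Finally, these constructions are mutually inverse: $\Psi\circ\Phi$ returns $F$ since $\Phi(F)_\bC=F(\bC)$ directly from the definition, and $\Phi\circ\Psi$ returns $\eta$ since the identity $\eta_R(\phi)=\eta_\bC\circ\phi$ derived above coincides with the rule defining $\Phi(\eta_\bC)_R$. The only point of caution is to make explicit the functorial structure on $\ul{\sC}$ left implicit in Definition \ref{defCad}, namely that the pullback of an adapted function along $R\to R'$ remains adapted; this is a routine check using a locally-closed refinement of the given finite decomposition, so no real obstacle arises and the argument is otherwise formal.
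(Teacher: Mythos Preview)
Your proof is correct and is essentially the same approach as the paper's, which simply invokes Lemma~\ref{lemBarC} and then declares the remaining identification of $\homo_{\cU ni}(\ul{\sC},\ul{\sC'})$ with $\Nat(\ul{\sC},\ul{\sC'})$ a tautology ``if one unravels the definitions.'' You have carried out that unraveling explicitly, including the key pointwise identity $\eta_R(\phi)(m)=\eta_\bC(\phi(m))$ obtained from naturality at residue-field quotients, which is exactly what makes the tautology work.
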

\begin{proof}
By previous lemma, it is enough to show that  morphisms of adapted-functions of unispaces are the same as natural transformations. This is a tautology if one unravels the definitions. \end{proof}

\begin{cor}\label{corSc} $\cP se\cS ch_{ft}(\bC)$ is a full subcategory of $\cU ni$.
\end{cor}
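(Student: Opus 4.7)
The plan is to show that the assignment $X\mapsto X(\_)$, acting as the identity on underlying set maps, is a fully faithful functor $\cP se\cS ch_{ft}(\bC)\to\cU ni$. Since both pseudo-morphisms and unispace morphisms between schemes are, by definition, set maps $X(\bC)\to Y(\bC)$ subject to an additional property, faithfulness is tautological. Well-definedness on morphisms follows from Remark \ref{exE}(1)---identifying $X(\_)$-adapted functions on $\spec(R)(\bC)$ with pseudo-morphisms into $X(\bC)$---together with the composition statement in Proposition \ref{propCharCs}(3): composing a pseudo-morphism with an adapted (pseudo-morphism) input yields a pseudo-morphism.

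The substantive content lies in fullness. Suppose $F:X(\_)\to Y(\_)$ is a unispace morphism; I want the set map $F(\bC):X(\bC)\to Y(\bC)$ to have constructible graph. The strategy is to test $F$ against the inclusions of regular affine pieces of $X$ and glue. Using Noetherian induction and the openness of the regular locus over the perfect field $\bC$ (cf.\ Remark \ref{rmkSmooth}), write $X(\bC)=\coprod_i U_i(\bC)$ as a finite disjoint union where each $U_i=\spec(R_i)$ is a locally closed affine subscheme of $X$ with $R_i\in\cA lg_{ft,reg}(\bC)$. Each inclusion $\iota_i:U_i(\bC)\hookrightarrow X(\bC)$ is a pseudo-morphism, and hence by Remark \ref{exE}(1) it defines an $X(\_)$-adapted function with base ring $R_i$. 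By the defining property of a unispace morphism, the composition $F(\bC)\circ\iota_i:U_i(\bC)\to Y(\bC)$ is then $Y(\_)$-adapted, and so a pseudo-morphism by Remark \ref{exE}(1) again. The graph of $F(\bC)$ is the disjoint union, over $i$, of the graphs of these pseudo-morphisms, each constructible in $U_i(\bC)\times Y(\bC)\subset X(\bC)\times Y(\bC)$; a finite union of constructible sets being constructible, $F(\bC)$ is a pseudo-morphism, completing fullness.

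The only delicate point is the regularity requirement on the base rings $R_i$ built into Definition \ref{defCad}: one cannot apply the unispace-morphism property against an arbitrary locally closed affine cover of $X$, but must first arrange that the coordinate rings are regular. This is exactly what Remark \ref{rmkSmooth} is designed for, and it is the one nontrivial ingredient; beyond this mild bookkeeping, the argument is a direct unwinding of the definitions of adapted function, unispace morphism, and pseudo-morphism.
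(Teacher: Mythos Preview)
Your proof is correct. The paper's own proof takes a slightly more abstract route: it invokes Lemma~\ref{lemNat} to identify $\homo_{\cU ni}(X(\_),Y(\_))$ with $\Nat(\ul{X},\ul{Y})$, and then (using Remark~\ref{exE}(1)) reduces to the claim that pseudo-morphisms $X(\bC)\to Y(\bC)$ are the same as natural transformations between the pseudo-morphism functors $\ul{X}\to\ul{Y}$, which it declares ``straight-forward''. Your argument bypasses Lemma~\ref{lemNat} and works directly with the definition of unispace morphism; in particular, your fullness step (decompose $X$ into regular affine pieces, feed each inclusion to $F$, and glue the resulting pseudo-morphisms) is exactly the content hidden in the paper's ``straight-forward''. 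Both proofs rest on Remark~\ref{exE}(1) and Proposition~\ref{propCharCs}(3); yours is more explicit and self-contained, the paper's is terser but relies on the reader to supply the same decomposition-and-test argument.
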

\begin{proof}
Let $X$ and $Y$ be $\bC$-schemes of finite type. We need to show that
$$
\homo_{\,\cP se \cS ch_{ft}(\bC)}(X,Y) = \homo_{\,\cU ni}(X(\_),Y(\_)) .
$$
By the previous lemma and Remark \ref{exE} (1), it is enough to show that a pseudo-morphism $X(\bC)\ra Y(\bC)$ is equivalent to a natural transformation between the functors $\ul{X}, \ul{Y}:\cA lg_{ft, reg}(\bC)\ra\cS et$ of  pseudo-morphisms with targets $X(\bC)$ and $Y(\bC)$, respectively. This is straight-forward.
\end{proof}

\subsection{Sub-unispaces}\label{subSub}

\begin{defn}
Let $\sC$ be a unispace. A {\bf sub-unispace} of $\sC$ is a unispace $\sC'$ together with a natural transformation $\sC'\ra\sC$ such that
 $\sC'(R)\subset\sC(R)$ for each $R\in\cA lg_{ft, reg}(\bC)$. 
\end{defn}
\begin{defn}
Let $\sC$ be a unispace and $B\subset \sC(\bC)$. 
\begin{enumerate}
\item Define the unispaces $\sB^{ct}$ and $\sB$ by attaching to $R\in\cA lg_{ft, reg}(\bC)$ the set
$$
\sB^{ct}(R)=\{\cF_\bC\star_\bC R\mid \cF_\bC\in B\}  \subset \sC(R)
$$
and, respectively
$$
\sB(R)=\{\cF_R\in \sC(R)\mid \cF_R\star_RR/m\in B\text{ for all }m\in\spec(R)(\bC)\}.
$$
We call them the {\bf smallest (resp. largest) sub-unispace of $\sC$ supported on $B$}.

\item Define the unispaces $\ul{B}^{ct}$ and $\ul{B}$  by attaching to $R\in\cA lg_{ft, reg}(\bC)$ the set 
$$
\ul{B}^{ct}=\{\phi\in \ul{B}(R)\text{ constant}\}.
$$
and, respectively
$$
\ul{B}(R)=\{\phi\in\ul{\sC}(R)\mid \im(\phi)\subset B\}.
$$
We call them the {\bf  smallest (resp. largest) sub-unispace of $\ul{\sC}$ supported on $B$}. 
\end{enumerate}
\end{defn}

\begin{rmk}\label{rmkRfb}$\quad$
\begin{enumerate}
\item $\ul{B}^{ct}$ and $\ul{B}$ are the unispaces of adapted functions of $\sB^{ct}$ and $\sB$, respectively. Hence they are respectively isomorphic in $\cU ni$.
\item If $\sC'$ is a sub-unispace of $\sC$ with $\sC'(\bC)=B$, then there is a chain of natural transformations
$$
{
\sB^{ct} \ra \sC' \ra \sB \ra \sC.
}
$$

\end{enumerate}
\end{rmk}

\subsection{Fiber products of unispaces}\label{subFib} One can always take fiber products of natural transformations of functors. The resulting projections are again natural transformations. We also have:

\begin{prop}
The categorical fiber product in the category $\cU ni$ exists.
\end{prop}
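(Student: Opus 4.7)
The plan is to construct the fiber product pointwise on the adapted-functions versions of the unispaces, and then reduce the universal property to the already-established equivalence between unispace morphisms and natural transformations. Given morphisms $F:\sC\to\sE$ and $G:\sC'\to\sE$ in $\cU ni$, first I would use Lemma \ref{lemNat} to replace them by the induced natural transformations $\ul F:\ul\sC\to\ul\sE$ and $\ul G:\ul\sC'\to\ul\sE$, which by Lemma \ref{lemBarC} represent the same morphisms in $\cU ni$. Then define a functor
$$
\sP:\cA lg_{ft,reg}(\bC)\lra \cS et, \qquad \sP(R):=\ul\sC(R)\times_{\ul\sE(R)}\ul\sC'(R),
$$
with transition maps induced componentwise by those of $\ul\sC$, $\ul\sE$, $\ul\sC'$. (In the degenerate case where $\ul F(\bC)$ and $\ul G(\bC)$ have disjoint images, $\sP(\bC)$ is empty; we either exclude this case from the existence statement or broaden the definition of a unispace to allow the empty functor.) The two natural projections $\sP\to\ul\sC$ and $\sP\to\ul\sC'$ are by construction natural transformations, and hence give unispace morphisms to $\sC$ and $\sC'$ whose compositions with $F$ and $G$ coincide.

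Next I would verify the universal property. Suppose $\sD$ is a unispace equipped with unispace morphisms $H:\sD\to\sC$ and $H':\sD\to\sC'$ such that $F\circ H=G\circ H'$. By Lemma \ref{lemNat}, these are the same data as natural transformations $\ul H:\ul\sD\to\ul\sC$ and $\ul{H'}:\ul\sD\to\ul\sC'$ satisfying $\ul F\circ\ul H=\ul G\circ\ul{H'}$. By the universal property of the pointwise fiber product in the category of functors $\cA lg_{ft,reg}(\bC)\to\cS et$, there is a unique natural transformation $\ul\sD\to\sP$ whose compositions with the projections recover $\ul H$ and $\ul{H'}$. By Lemma \ref{lemNat} again this is the same as a unique unispace morphism $\sD\to\sP$, proving the universal property.

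The main technical point is the check that $\sP$, as constructed, is itself a unispace of adapted-functions type so that Lemma \ref{lemNat} applies directly; equivalently, that $\sP \cong \ul\sP$ canonically in $\cU ni$. Unwinding Definition \ref{defCad}, an adapted function $\phi:\spec(R)(\bC)\to\sP(\bC)$ decomposes componentwise into adapted functions valued in $\ul\sC(\bC)$ and $\ul\sC'(\bC)$ with equal images under $\ul F$ and $\ul G$, and Remark \ref{exE}(2) applied to $\ul\sC$, $\ul\sC'$, $\ul\sE$ shows these components are themselves elements of $\ul\sC(R)$ and $\ul\sC'(R)$ matching in $\ul\sE(R)$, i.e.\ an element of $\sP(R)$. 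Conversely every element of $\sP(R)$ tautologically defines such an adapted function by evaluation at maximal ideals, so $\sP(R)=\ul\sP(R)$. This identification is the only step that requires real checking; everything else is a formal consequence of working pointwise and of Lemma \ref{lemNat}.
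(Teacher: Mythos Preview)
Your proof is correct and follows essentially the same approach as the paper: pass to adapted-functions unispaces via Lemma~\ref{lemNat}, take the pointwise fiber product of the resulting natural transformations, and observe that the result is isomorphic to its own adapted-functions unispace. You are considerably more explicit than the paper in verifying the universal property and in unwinding why $\sP=\ul\sP$, and your observation about the degenerate case $\sP(\bC)=\emptyset$ is a genuine gap in the paper's statement that you are right to flag.
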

\begin{proof} By Lemma \ref{lemNat} we can restrict to natural transformations. The claim follows since we can use the fiber product of natural transformations. More explicitly, let $F':\sC'\ra\sC$ and $F'':\sC''\ra\sC$ be morphisms of unispaces. Replace them by the natural transformations on the unispaces of adapted functions. Define $\sC'\times_\sC\sC''$ as the fiber product of the two natural transformations. That is, 
$$
(\sC'\times_\sC\sC'')(R)=\{ (\phi_R',\phi_R'')\in \ul{\sC'}(R)\times \ul{\sC''}(R)\mid F'(\bC)\circ\phi_R'=F''(\bC)\circ\phi_R''\}
$$
for $R\in\cA lg_{ft, reg}(\bC)$. Then $\sC'\times_\sC\sC''$ is a unispace, isomorphic to its own adapted-functions unispace, and is the categorical fiber product unispace.
\end{proof}

\subsection{Constructible functions on unispaces}

\begin{defn}\label{defCfx} Let $\sC$ be a unispace.  We say $\phi:\sC(\bC)\ra \bZ$ is \textbf{a constructible function with respect to the unispace $\sC$} if for every $R\in\cA lg_{ft, reg}(\bC)$ and all  $\cF_R\in\sC(R)$, the function
$$\spec(R)(\bC)\to \bZ$$ 
$$m\mapsto\phi(\cF_R\star_R R/m)$$ is constructible. \end{defn}

\begin{ex}\label{propCFGen} Let $X(\_)$ be the unispace defined by a $\bC$-scheme of finite type $X$, or more generally, by an Artin $\bC$-stack of finite type. A function on the set of $\bC$-points $\phi:X(\bC)\ra \bZ$ is constructible in the usual sense  if and only if it is constructible with respect to the unispace $X(\_)$. Indeed, if $X$ is a scheme, this is true by Proposition \ref{propCharCs} and Remark \ref{rmkSmooth}. For stacks, the definition of constructible functions boils down to schemes, see \cite{Jo}.
\end{ex}

\begin{defn}
Define the {\bf unispace of $\bZ$-valued constructible functions} $\ul{\bZ}^{cstr}$ by attaching to every $R\in\cA lg_{ft, reg}(\bC)$ the set
$$
\ul{\bZ}^{cstr}(R)=\{\phi:\spec(R)(\bC)\ra\bZ \text{ constructible}\}.
$$
\end{defn}

\begin{prop}\label{propBiju}
There is a natural bijection between functions $\sC(\bC)\ra\bZ$ constructible for $\sC$ and morphisms $\sC\ra\ul{\bZ}^{cstr}$ of unispaces.
\end{prop}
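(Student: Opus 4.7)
The plan is to unwind the definitions and observe that the two notions are matched by specializing adapted functions to the trivial decomposition on one side, and reassembling piecewise-constructible data on the other.

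First I would note that $\spec(\bC)(\bC)$ is a single point, so $\ul{\bZ}^{cstr}(\bC)=\bZ$. Hence any unispace morphism $F\colon\sC\to\ul{\bZ}^{cstr}$ restricts on $\bC$-points to a set function $F(\bC)\colon\sC(\bC)\to\bZ$. The assignment $F\mapsto F(\bC)$ will be the claimed bijection. Before checking either direction, I would record the key identification: by Remark~\ref{exE}(2) applied to $\ul{\bZ}^{cstr}$, or directly by unraveling the definition of adapted function and using that a piecewise decomposition of $\spec(R)(\bC)$ into finitely many locally closed $U_i(\bC)$ on each of which the function is classically constructible yields a classically constructible function on $\spec(R)(\bC)$, the set $\ul{\ul{\bZ}^{cstr}}(R)$ of $\ul{\bZ}^{cstr}$-adapted functions on $\spec(R)(\bC)$ coincides with $\ul{\bZ}^{cstr}(R)$, i.e.\ with the classical constructible functions.

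For the forward direction, given a unispace morphism $F$, fix $R\in\cA lg_{ft, reg}(\bC)$ and $\cF_R\in\sC(R)$. The function
$$
\psi\colon\spec(R)(\bC)\to\sC(\bC),\qquad \psi(m)=\cF_R\star_R R/m,
$$
is $\sC$-adapted (using the trivial one-piece decomposition $\spec(R)=U_1$ with $\cF_1=\cF_R$). By the morphism condition, $F(\bC)\circ\psi$ is $\ul{\bZ}^{cstr}$-adapted, hence classically constructible by the identification above. Since this holds for every $\cF_R$, we conclude $F(\bC)$ is constructible for $\sC$ in the sense of Definition~\ref{defCfx}.

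For the reverse direction, given $\phi\colon\sC(\bC)\to\bZ$ constructible for $\sC$, I would check that the induced set map is a unispace morphism. Let $\psi\colon\spec(R)(\bC)\to\sC(\bC)$ be an arbitrary $\sC$-adapted function, witnessed by a decomposition $\spec(R)(\bC)=\coprod_i U_i(\bC)$ with $U_i=\spec(R_i)$ and $\cF_i\in\sC(R_i)$. On each stratum $U_i(\bC)$ we have $(\phi\circ\psi)(m)=\phi(\cF_i\star_{R_i}R_i/m)$, which is classically constructible on $\spec(R_i)(\bC)$ by hypothesis on $\phi$. Piecing these together over the finite decomposition yields a classically constructible function on $\spec(R)(\bC)$, i.e.\ a $\ul{\bZ}^{cstr}$-adapted function, so $\phi$ defines a unispace morphism. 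The two constructions are evident inverses since both pass through the evaluation $F\mapsto F(\bC)$, which establishes the bijection.

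No step is hard; the only point that requires a moment's care is the identification of $\ul{\bZ}^{cstr}$-adapted functions with classical constructible functions, which reduces to the elementary fact that a function which is constructible on each member of a finite locally closed stratification is constructible on the whole scheme.
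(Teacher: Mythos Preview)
Your proof is correct and follows essentially the same approach as the paper; the paper's proof simply records the two key facts you isolate ($\ul{\bZ}^{cstr}(\bC)=\bZ$ and $\ul{\bZ}^{cstr}$ is its own unispace of adapted functions) and then declares the rest to follow ``straight from the definitions,'' which is precisely the unwinding you carried out.
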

\begin{proof} Note that $\ul{\bZ}^{cstr}(\bC)=\bZ$ and that $\ul{\bZ}^{cstr}$ is its own unispace of adapted functions. Then the proof follows straight from the definitions.
\end{proof}

\subsection{Constructible functors}

\begin{defn}\label{defWW} Let $\sC, \sC' : \cA lg_{ft, reg}(\bC) \ra \cS et$ be two unispaces and $\cC, \cC'$ two categories such that $\iso(\cC)=\sC(\bC)$ and $\iso(\cC')=\sC'(\bC)$.
A functor $F(\bC):\sC\ra\sC'$   is a {\bf constructible functor with respect to the two unispaces} if it preserves constructible  functions. That is, let $\phi:\sC'(\bC)\ra\bZ$ be a constructible  function with respect to $\sC'$. Then $\phi\circ F(\bC): \sC (\bC)\ra\bZ$ is a constructible function with respect to $\sC$. We shall say that a functor $F(\bC):\cC\ra\cC'$ between two categories {\bf lifts to a morphism (or natural transformation) of unispaces} if the associated map $F(\bC):\iso(\sC)\ra\iso(\sC')$ does.

\end{defn}

From previous proposition and the fact that composition of morphisms of unispaces is again a morphism, we then have:

\begin{prop}\label{propFx}
If a functor $F(\bC):\cC\ra\cC'$ between two categories lifts to a morphism between two unispaces, then it is constructible with respect to the two unispaces.
\end{prop}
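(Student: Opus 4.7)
The plan is to give a one-step derivation by composing unispace morphisms and translating back and forth via the bijection of Proposition \ref{propBiju}. First I would take a constructible function $\phi:\sC'(\bC)\to\bZ$ with respect to $\sC'$ and apply Proposition \ref{propBiju} to view it as a morphism of unispaces $\Phi:\sC'\to\ul{\bZ}^{cstr}$. Then I would compose $\Phi$ with the given morphism of unispaces $F:\sC\to\sC'$ lifting $F(\bC)$, obtaining $\Phi\circ F:\sC\to\ul{\bZ}^{cstr}$; this composition makes sense and is itself a morphism in $\cU ni$, because $\cU ni$ is a category (as noted right before Proposition \ref{propBiju}).

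Next, I would apply Proposition \ref{propBiju} in the reverse direction to $\Phi\circ F$, extracting a function $\sC(\bC)\to\bZ$ that is constructible with respect to $\sC$. On $\bC$-points this function is the set-theoretic composition $\Phi(\bC)\circ F(\bC)=\phi\circ F(\bC)$, since the bijection of Proposition \ref{propBiju} is compatible with evaluation at $\bC$. Therefore $\phi\circ F(\bC)$ is constructible with respect to $\sC$, which by Definition \ref{defWW} is exactly the statement that $F(\bC)$ is a constructible functor with respect to the two unispaces.

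Because each step is a tautological unpacking of a previously established definition or bijection, there is essentially no obstacle: the entire content of the proposition is the fact that the constructible-functions unispace $\ul{\bZ}^{cstr}$ is a genuine object of $\cU ni$ and that $\cU ni$ admits composition. If anything requires a moment of care, it is just making explicit that the bijection in Proposition \ref{propBiju} is natural enough to identify the $\bC$-point of $\Phi\circ F$ with the set composition $\phi\circ F(\bC)$; this follows immediately from the construction in the proof of Proposition \ref{propBiju}, where a morphism $\sC\to\ul{\bZ}^{cstr}$ is evaluated on $\bC$ simply by looking at its underlying map on $\bC$-points.
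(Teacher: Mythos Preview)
Your proposal is correct and follows exactly the paper's own approach: the paper's proof is the one-line remark that the result follows from Proposition~\ref{propBiju} together with the fact that composition of unispace morphisms is again a unispace morphism. You have simply unpacked that sentence in full detail.
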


\subsection{Generic base change}\label{subUniII} Since the goal is to produce constructible functions, and this is achieved by producing morphisms of unispaces, we give now the most useful criterion for producing such morphisms.

\begin{prop}\label{propMG}
Let $\sC$ and $\sC'$ be two unispaces.  
\begin{enumerate}
\item Let $F(R):\sC(R)\ra\sC'(R)$ be a collection of maps, one  for every $R\in\cA lg_{ft, reg}(\bC)$. Assume that for every $R\in\cA lg_{ft, reg}(\bC)$ an integral domain, and every $\cF_R\in\sC(R)$, the base change formula
$$(F(R)(\cF_R))\star_R R/m = F(R/m)(\cF_R\star_RR/m)$$ of elements in $\sC'(R/m)=\sC'(\bC)
$
holds for $m$ in an open dense subset of $\spec(R)(\bC)$. Then $F(\bC)$ defines a morphism of unispaces
$$
F:\sC\ra\sC'.
$$

\item  Let $B\subset \sC'(\bC)$. With $F$ as above, there is commutative diagram of morphisms of unispaces
$$
\xymatrix{
 \sC \ar[drr]\ar[r]^F& \sC'\ar[r]^{\sim} & \ul{\sC'} \\
 & & \ul{B} \ar[u]
}$$ 
if in addition the base-change equality in (1) is of elements of $B$. Here $\ul{B}$ is the largest sub-unispace of $\ul{\sC'}$ supported on $B$. 

\end{enumerate}
\end{prop}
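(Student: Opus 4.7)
The plan is to unfold the definition of a morphism of unispaces and promote the generic base-change identity assumed in the hypothesis into a genuine $\sC'$-adapted presentation via Noetherian induction on $\dim\spec(R)$.

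For part (1), I start with an arbitrary $R\in\cA lg_{ft, reg}(\bC)$ and an arbitrary $\sC$-adapted function $\phi:\spec(R)(\bC)\ra\sC(\bC)$, and I must exhibit a $\sC'$-adapted presentation of $F(\bC)\circ\phi$. Unpacking Definition \ref{defCad}, $\phi$ is given by a finite disjoint decomposition $\spec(R)(\bC)=\coprod_i U_i(\bC)$ with $U_i=\spec(R_i)$ for $R_i\in\cA lg_{ft, reg}(\bC)$, together with elements $\cF_i\in\sC(R_i)$ representing $\phi$ on each $U_i$. My first reduction is to refine each $U_i$ into its finitely many connected components, all of which are integral because $R_i$ is regular; this lets me assume each $R_i$ is an integral domain, which is the setting in which the base-change hypothesis applies.

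Next, for each such integral $R_i$, the hypothesis furnishes a dense open $V_i\subset\spec(R_i)(\bC)$ on which $F(\bC)(\cF_i\star_{R_i}R_i/m)=F(R_i)(\cF_i)\star_{R_i}R_i/m$. Using Remark \ref{rmkSmooth}, I cover $V_i$ by finitely many principal affine opens of the form $\spec(R_i[1/f])$, each of which is automatically regular since $R_i$ is regular, and hence lies in $\cA lg_{ft, reg}(\bC)$. On each such piece, the element $F(R_i)(\cF_i)\star_{R_i}R_i[1/f]\in\sC'(R_i[1/f])$ realizes $F(\bC)\circ\phi$ in exactly the sense required by Definition \ref{defCad}. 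The complement $U_i(\bC)\setminus V_i(\bC)$ is a proper closed subset of $\spec(R_i)$, hence has strictly smaller Krull dimension; by Noetherian induction on $\dim\spec(R)$ it admits its own $\sC'$-adapted presentation, and concatenating all the pieces gives the required finite presentation of $F(\bC)\circ\phi$.

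For part (2), the construction is the same, but I additionally note that the stronger hypothesis forces each specialization $F(R_i)(\cF_i)\star_{R_i}R_i/m$ to lie in $B$ for every $m$ in the relevant dense open, and by the Noetherian induction every closed point of $\spec(R)(\bC)$ is eventually handled. Consequently $\im(F(\bC)\circ\phi)\subset B$, so $F(\bC)\circ\phi\in\ul{B}(R)$, and this produces the claimed factorization $\sC\ra\ul{B}\hookrightarrow\ul{\sC'}$, with the isomorphism $\sC'\simeq\ul{\sC'}$ of Lemma \ref{lemBarC} giving the top row of the diagram. I expect the main obstacle to be bookkeeping rather than substance: at every iteration of the induction one must verify that the newly introduced strata are affine regular subschemes and that the specializations of the chosen elements of $\sC'$ actually match the values of $F(\bC)\circ\phi$ at closed points, which requires keeping careful track of the canonical identifications $\sC'(R_i/m)=\sC'(\bC)$.
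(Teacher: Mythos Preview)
Your proof is correct and follows essentially the same approach as the paper: reduce to integral strata, apply the generic base-change hypothesis to obtain a $\sC'$-adapted presentation on a dense open, and conclude by Noetherian induction on the complement. Your version is slightly more explicit (covering the dense open by principal affines, tracking the factorization through $\ul{B}$ in part (2)), whereas the paper compresses these steps, but the argument is the same.
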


\begin{proof}

The second part follows from the first. For the first part, let $\phi_R:\spec(R)(\bC)\ra \sC(\bC)$ be a $\sC$-adapted function. We need to show that $F(\bC)\circ\phi_R$ is $\sC'$-adapted. It is enough to assume that $R$ is an integral domain.  By definition, $\phi_R$ is modelled on some smooth locally closed strata $\spec(R_i)(\bC)$ of $\spec(R)(\bC)$ by functions $m\mapsto\cF_i\star_{R_i}R_i/m$ for some $\cF_i\in\sC(R_i)$. Hence $F(\bC)\circ\phi_R$ is locally modeled by $m\mapsto F(R_i/m)(\cF_i\star_{R_i}R_i/m)$. By the generic base change assumption, there exists an open dense subset of $\spec(R_i)(\bC)$ on which $F(R_i/m)(\cF_i\star_{R_i}R_i/m)=(F(R_i)(\cF_{R_i}))\star_{R_i}R_i/m.$ Since $F(R_i)(\cF_{R_i})$ is in $\sC'(R_i)$, this means that $F(\bC)\circ\phi_R$ is $\sC'$-adapted on open dense subsets of the smooth locally closed strata $\spec(R_i)(\bC)$. By noetherian induction, we have that it is $\sC'$-adapted on the whole of $\spec(R)(\bC)$.
 \end{proof}

\subsection{Semi-continuous functions and functors}  
 
\begin{defn}
Let $\sC:\cA lg_{ft, reg}\ra\cS et$ be a unispace. A function $\phi:\sC(\bC)\ra\bZ$ is a {\bf semi-continuous function with respect to $\sC$} if for every $R\in\cA lg_{ft, reg}(\bC)$ and $\cF_R\in\sC(R)$, the function $\phi_R:\spec(R)(\bC)\ra\bZ$ given by $m\mapsto \phi(\cF_R\star_R R/m)$ is semi-continuous, that is, $\phi_R^{-1}(\bZ_{\ge k})$ is Zariski closed.
\end{defn} 
 
\begin{rmk}\label{rmkS}$\quad$

\begin{enumerate}
\item Although $\sC\cong\ul{\sC}$ as unispaces, the two functors do not have the same semi-continuous functions. 
\item If $X$ is a finite type $\bC$-scheme, a  function $X(\bC)\ra\bZ$ is semi-continuous for $X(\_)$ iff it is semi-continuous in the classical sense. On the other hand, the only functions which are semi-continuous for $\ul{X}$ are the constant functions.
\end{enumerate}
\end{rmk} 

\begin{defn}
Let $\sC, \sC':\cA lg_{ft, reg}(\bC)\ra\cS et$ be two unispaces  and $\cC, \cC'$ two categories such that $\iso(\cC)=\sC(\bC)$ and $\iso(\cC')=\sC'(\bC)$. A constructible functor $F(\bC):\cC\ra\cC'$ is a {\bf continuous functor} with respect to $\sC$ and $\sC'$ if it preserves semi-continuous functions. More precisely, for every  function $\phi:\sC'(\bC)\ra\bZ$ semi-continuous for $\sC'$, the composition $\phi\circ F(\bC)$ is semi-continuous  for $\sC$.
\end{defn}

\begin{prop}\label{propNeed}
If a functor $F(\bC):\cC\ra\cC'$ between two categories lifts to a natural transformation between two unispaces, then it is continuous with respect to the two unispaces.
\end{prop}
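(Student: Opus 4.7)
The plan is to unwind the definitions of semi-continuity and of a natural transformation, and to observe that the naturality square is exactly what is needed to transport a semi-continuity witness from $\sC'$ to $\sC$.

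First, I would fix data. Suppose $F:\sC\to\sC'$ is a natural transformation of unispaces lifting $F(\bC)$, so we have for each $R\in\cA lg_{ft,reg}(\bC)$ a set map $F(R):\sC(R)\to\sC'(R)$ such that for every $\bC$-algebra map $R\to R'$ the identity $F(R')(\cF_R\star_RR')=F(R)(\cF_R)\star_RR'$ holds in $\sC'(R')$. Let $\phi:\sC'(\bC)\to\bZ$ be a function that is semi-continuous for $\sC'$. I want to show that $\psi:=\phi\circ F(\bC)$ is semi-continuous for $\sC$; that is, for every $R\in\cA lg_{ft,reg}(\bC)$ and every $\cF_R\in\sC(R)$, the function $\psi_R:\spec(R)(\bC)\to\bZ$ given by $m\mapsto \psi(\cF_R\star_RR/m)$ has Zariski closed fibers of the form $\psi_R^{-1}(\bZ_{\ge k})$.

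Next, I would use naturality. For a closed point $m\in\spec(R)(\bC)$ the residue map $R\to R/m$ is canonically identified with $R\to\bC$, so the naturality square at this quotient reads
\[
F(\bC)\bigl(\cF_R\star_RR/m\bigr)\;=\;F(R/m)\bigl(\cF_R\star_RR/m\bigr)\;=\;F(R)(\cF_R)\star_RR/m.
\]
Setting $\cG_R:=F(R)(\cF_R)\in\sC'(R)$, this gives $\psi_R(m)=\phi(\cG_R\star_RR/m)$, which is exactly the fiberwise evaluation of $\phi$ against the family $\cG_R$. Since $\phi$ is semi-continuous for $\sC'$, the function $m\mapsto \phi(\cG_R\star_RR/m)$ has Zariski closed superlevel sets, i.e.\ $\psi_R^{-1}(\bZ_{\ge k})$ is closed in $\spec(R)(\bC)$ for every $k$.

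The only subtlety, and it is genuinely minor, is the compatibility between $F(\bC)$ and $F(R/m)$ under the identification $R/m\cong\bC$ at a $\bC$-point, which is automatic since naturality is required with respect to all $\bC$-algebra homomorphisms in $\cA lg_{ft,reg}(\bC)$, in particular the evaluation maps $R\to R/m=\bC$. Thus no hard step remains: the argument is a direct translation along the naturality square, and the proposition is simply the semi-continuous analogue of Proposition \ref{propFx}, whose proof in turn is the same observation applied to constructibility instead of semi-continuity.
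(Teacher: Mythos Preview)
Your proof is correct and is essentially identical to the paper's own argument: both use the naturality square $F(\bC)(\cF_R\star_RR/m)=F(R)(\cF_R)\star_RR/m$ to rewrite $\phi\circ F(\bC)$ evaluated on the family $\cF_R$ as $\phi$ evaluated on the family $\cG_R:=F(R)(\cF_R)\in\sC'(R)$, and then invoke the semi-continuity of $\phi$ for $\sC'$. Your version simply spells out the identification $R/m\cong\bC$ and introduces the intermediate notation $\cG_R$, but there is no substantive difference.
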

\begin{proof} Let $F:\sC\ra\sC'$ be the natural transformation of unispaces lifting $F(\bC):\sC(\bC)=\iso(\cC)\ra\sC'(\bC)=\iso(\cC')$. Let $\phi:\sC'(\bC)\ra\bZ$ be a semi-continuous function for $\sC'$. For $\cF_R\in\sC(R)$, the function $\spec(R)(\bC)\ra\sC(\bC)$ given by $m\mapsto \cF_R\star_RR/m$, composes with $\phi\circ F(\bC)=\phi\circ F(R/m)$ to give the function
$$
m\mapsto \phi(F(R/m)(\cF_R\star_RR/m))=\phi((F(R)(\cF_R))\star_RR/m).
$$
By assumption, this is semi-continuous.
\end{proof}

\subsection{Conclusion}  

\begin{thrm} Let $X$ be a finite type $\bC$-scheme or a finite type Artin $\bC$-stack. Let $\sC:\cA lg_{ft, reg}(\bC)\ra \cS et$ be a unispace. Let $\phi:\sC(\bC)\ra\bZ$ be a function.
\begin{enumerate}
\item If $\phi$ is constructible for $\sC$, and $F:X(\_)\ra\sC$ is a unispace morphism, then $\phi\circ F(\bC):X(\bC)\ra\bZ$ is a constructible function in the classical sense.
\item If $\phi$ is semi-continuous for $\sC$, and $F:X(\_)\ra\sC$ is a natural transformation, then $\phi\circ F(\bC):X(\bC)\ra\bZ$ is a semi-continuous function in the classical sense.
\end{enumerate}
\end{thrm}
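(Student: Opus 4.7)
The plan is to derive both parts by combining the results already established in the section, with essentially no new content beyond unwinding definitions.

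For part (1), I would apply Proposition \ref{propBiju} to identify the constructible function $\phi$ with a unispace morphism $\wti\phi:\sC\to\ul{\bZ}^{cstr}$. Composition in the category $\cU ni$ then yields a unispace morphism $\wti\phi\circ F:X(\_)\to\ul{\bZ}^{cstr}$, and applying Proposition \ref{propBiju} in the reverse direction this corresponds to a function $X(\bC)\to\bZ$ that is constructible with respect to $X(\_)$. By construction this function is precisely $\phi\circ F(\bC)$. Finally, Example \ref{propCFGen} identifies constructibility with respect to $X(\_)$ with classical constructibility when $X$ is a finite type $\bC$-scheme or Artin $\bC$-stack, closing part (1).

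For part (2), I would argue directly, paralleling the proof of Proposition \ref{propNeed}, since semi-continuous functions on $\sC$ are not packaged into a fixed ``semi-continuous functions'' unispace target (compare Remark \ref{rmkS}(1)). The goal is to show that $\phi\circ F(\bC)$ is semi-continuous with respect to $X(\_)$, after which Remark \ref{rmkS}(2) upgrades this to classical semi-continuity in the scheme case (and similarly for stacks, by reduction to a smooth atlas). So let $R\in\cA lg_{ft,reg}(\bC)$ and $x_R\in X(R)$. Because $F:X(\_)\to\sC$ is a natural transformation, the base-change identity
$$F(\bC)(x_R\star_R R/m)=F(R)(x_R)\star_R R/m$$
holds for every $m\in\spec(R)(\bC)$. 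Substituting into $\phi\circ F(\bC)$ gives
$$(\phi\circ F(\bC))(x_R\star_R R/m)=\phi\bigl(F(R)(x_R)\star_R R/m\bigr),$$
and since $F(R)(x_R)\in\sC(R)$, the right-hand side is a classically semi-continuous function of $m\in\spec(R)(\bC)$ by the assumption that $\phi$ is semi-continuous for $\sC$. This shows $\phi\circ F(\bC)$ is semi-continuous for $X(\_)$, and the remark yields part (2).

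There is no serious obstacle here: the theorem is the intended payoff of the formalism developed in the section, and the content is purely definitional once Proposition \ref{propBiju}, Example \ref{propCFGen}, and Remark \ref{rmkS} are in hand. The only small care needed is in the stacky case of part (2), where one uses that semi-continuity of a function on a finite type Artin $\bC$-stack is tested along a smooth atlas, reducing to the scheme case just proved.
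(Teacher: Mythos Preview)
Your proposal is correct and follows essentially the same approach as the paper. The only cosmetic difference is that the paper cites Proposition~\ref{propFx} for part~(1) and Proposition~\ref{propNeed} for part~(2) directly, whereas you inline their content (via Proposition~\ref{propBiju} and the base-change identity for natural transformations); both routes then finish with Example~\ref{propCFGen} and Remark~\ref{rmkS}(2) exactly as you do.
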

\begin{proof} (1) If $F$ is a unispace morphism, then by Proposition \ref{propFx}, $F(\bC):X(\bC)\ra\sC(\bC)$ is a constructible functor (where one views sets as categories) with respect to the unispaces $X(\_)$ and $\sC$. That is, $F(\bC)$ preserves constructible functions, hence $\phi\circ F(\bC):X(\bC)\ra\bZ$ must be constructible with respect to the unispace $X(\_)$. By Example \ref{propCFGen}, this is the same as constructible in the classical sense on $X(\bC)$.

(2) If $F$ is a natural transformation, then by Proposition \ref{propNeed}, $F(\bC):X(\bC)\ra\sC(\bC)$ is continuous with respect to the unispaces $X(\_)$ and $\sC$. That is, it preserves semi-continuous functions, hence $\phi\circ F(\bC)$ is semi-continuous for the unispace $X(\_)$. By Remark \ref{rmkS} (2), this is the same as semi-continuous in the classical sense on $X(\bC)$.
\end{proof}

\section{Unispaces defined over $K$}\label{secK}

Let $K$ be a subfield of $\bC$. We introduce now the category $\cU ni(K)$ of unispaces defined over $K$, so that $\cU ni(\bC)=\cU ni$. The proofs are easy generalizations of the proofs in the previous section, hence we will leave most of them out. 

Let $\cA lg_{ft, reg}(K)$ be the category of  finite type regular $K$-algebras. For every two intermediate subfields $K\subset L\subset L'\subset \bC$, the tensor product induces a functor 
$$
(\_)\otimes_LL':\cA lg_{ft, reg}(L) \lra \cA lg_{ft, reg}(L')
$$
since the fields are of characteristic zero and so every field extension will preserve regularity.

\subsection{Objects}

\begin{defn}
A {\bf unispace defined over $K$} is a unispace $\sC$ together with the following data, called a {\bf $K$-structure}:
\begin{enumerate} 
\item For every field extension $K\subset L\subset \bC$, a functor 
$$\sC_{L}:\cA lg_{ft, reg}(L)\lra\cS et.$$
\item For every two field extensions $K\subset L \subset L'\subset \bC$ , a collection of functors 
$$(\_)\star_LL':\sC_L(R_L)\ra \sC_{L'}(R_L\otimes_LL'),$$
one for each $R_L\in\cA lg_{ft, reg}(L)$. We denote this collection of functors by 
$$
(\_)\star_LL':\sC_L\lra \sC_{L'}.
$$
\end{enumerate}
This data is required to satisfy:
\begin{enumerate}[label=(\alph*)]
\item $\sC_{\bC}=\sC$.
\item $\sC_K$ is not an empty functor (although some $\sC_K(R_K)$ might be empty).
\item For every two field extensions $K\subset L \subset L'\subset \bC$, $(\_)\star_LL'$ is compatible with the base change functor $(\_)\otimes_LL':\cA lg_{ft, reg}(L)\ra\cA lg_{ft, reg}(L')$. That is, for a morphism $R_L\ra R'_L$ in $\cA lg_{ft, reg}(L)$, inducing a morphism $R_L\otimes_LL'\ra R'_L\otimes_LL'$ in $\cA lg_{ft, reg}(L')$, and for an object $\cF_L\in\sC_L(R_L)$, there is an equality
$$
(\cF_L\star_{R_L}R'_L)\star_LL'= (\cF_L\star_LL')\star_{R_L\otimes_LL'}(R'_L\otimes_LL')
$$
in $\sC_{L'}(R'_L\otimes_LL')$, and similarly for morphisms in $\sC_L(R_L)$.

\item For every three field extensions $K\subset L \subset L'\subset L''\subset \bC$,
$$((\_)\star_LL')\star_{L'} L''=(\_) \star_L L''.$$
\end{enumerate}

\end{defn}

\begin{rmk}
From the definition it follows that if a unispace is defined over a subfield $K$ of $\bC$, then it is naturally defined over any intermediate extension field $K\subset L\subset \bC$.
\end{rmk}

\begin{defn}
Let $\sC$ be a unispace defined over $K\subset \bC$. Let $R_K\in\cA lg_{ft, reg}(K)$. A map $\phi:\spec(R_K)(\bC)=\spec(R_K\otimes_K\bC)(\bC)\ra\sC(\bC)$ is a {\bf $\sC$-adapted function defined over $K$} if it admits a finite locally closed $K$-stratification $\spec(R_K)(\bC)=\coprod_i\spec(R_{K,i})(\bC)$ with $R_{K,i}\in\cA lg_{ft, reg}(K)$, such that the restriction of $\phi$ to the $i$-th stratum is a function $m\mapsto (\cF_i\star_K\bC)\star_{R_i} R_i/m$, where $R_i=R_{K,i}\otimes_K\bC$, for some $\cF_i\in\sC(R_{K,i})$; cf. Definition \ref{defCad}. Let $\ul{\sC}_K(R_K)$ be the set of $\sC$-adapted functions defined over $K$ on $\spec(R_K)(\bC)$. Then $\ul{\sC}_K:\cA lg_{ft, reg}(K)\ra\cS et$ is a functor. 

\end{defn}

\begin{lemma} Let $\sC$ be a unispace defined over $K\subset \bC$.
Then there is a natural induced $K$-structure on the unispace of $\sC$-adapted functions $\ul{\sC}$.
\end{lemma}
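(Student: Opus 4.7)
The goal is to transport the $K$-structure on $\sC$ up to $\ul{\sC}$ by stratifying over $K$-subschemes and modeling pieces locally by objects of $\sC_L$ for the appropriate intermediate field. First I would define, for each $K\subset L\subset\bC$, the functor $\ul{\sC}_L:\cA lg_{ft,reg}(L)\ra\sS et$ by declaring $\ul{\sC}_L(R_L)$ to be the set of maps $\phi:\spec(R_L)(\bC)\ra \sC(\bC)$ that admit a finite locally closed decomposition $\spec(R_L)(\bC)=\coprod_i\spec(R_{L,i})(\bC)$ with $R_{L,i}\in\cA lg_{ft,reg}(L)$ and local models $\cF_i\in\sC_L(R_{L,i})$, exactly as in the definition just stated for $L=K$. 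Functoriality in $R_L$ is verified by pulling back stratifications and local models along a morphism $R_L\ra R_L'$ of $L$-algebras; since pullbacks of $L$-locally-closed subschemes are again $L$-locally-closed and $\sC_L$ is a functor, this yields a well-defined functor $\ul{\sC}_L$.

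Next I would define the base-change operations $(\_)\star_L L':\ul{\sC}_L(R_L)\ra\ul{\sC}_{L'}(R_L\otimes_L L')$. As sets, $\spec(R_L)(\bC)=\spec(R_L\otimes_LL')(\bC)$ once we fix the embedding $L\subset L'\subset\bC$, so one can reuse the same underlying map $\phi$. Given a stratification $\{R_{L,i}\}$ and local models $\cF_i\in\sC_L(R_{L,i})$ witnessing that $\phi$ is $L$-adapted, the induced stratification $\{R_{L,i}\otimes_LL'\}$ together with the local models $\cF_i\star_LL'\in\sC_{L'}(R_{L,i}\otimes_LL')$ witnesses that the same underlying map is $L'$-adapted, thanks to axiom (c) for $\sC$: the $\bC$-point fiber of $\cF_i\star_LL'$ at any $m$ equals the $\bC$-point fiber of $\cF_i$ at the image of $m$. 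One checks independence of the choice of stratification by passing to a common refinement.

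I would then verify the four axioms in turn. Axiom (a), $\ul{\sC}_\bC=\ul{\sC}$, is immediate from the original definition. Axiom (b), non-emptiness of $\ul{\sC}_K$, follows because each $\cF_K\in\sC_K(R_K)$ produces a tautological adapted function $m\mapsto(\cF_K\star_K\bC)\star_{R_K\otimes_K\bC}(R_K\otimes_K\bC)/m$, and $\sC_K$ is non-empty by assumption. Axiom (c), compatibility with the algebra base change $(\_)\otimes_LL'$, reduces stratum by stratum to the analogous compatibility for $\sC$. Axiom (d), transitivity $((\_)\star_LL')\star_{L'}L''=(\_)\star_LL''$, likewise reduces stratum by stratum to the transitivity built into the $K$-structure of $\sC$.

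\textbf{Main obstacle.} The only delicate point is well-definedness of $\star_LL'$ on the level of adapted functions: one must check that refining or changing the witnessing $L$-stratification does not alter the resulting $L'$-adapted function, and that the composite stratification base change does not accidentally break regularity. Both are handled by passing to common refinements and by the fact that over characteristic zero the extension $L\ra L'$ preserves regularity (as already noted at the start of Section \ref{secK}), so all generic base-change arguments used elsewhere in the section apply verbatim.
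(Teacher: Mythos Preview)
Your proposal is correct and follows essentially the same approach as the paper: define $\ul{\sC}_L$ via $L$-stratified adapted functions and let $(\_)\star_LL'$ be the natural inclusion (reusing the same underlying map $\phi$ and base-changing the witnessing data). The paper simply asserts the axioms are ``straight-forward'' where you spell them out; the one remark worth making is that your ``main obstacle'' is less delicate than you suggest, since the output of $\star_LL'$ is literally the same map $\phi$, so well-definedness is automatic and only the \emph{existence} of an $L'$-witness needs checking.
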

\begin{proof} The $K$-structure is defined as follows. Let $K\subset L\subset \bC$ be an intermediate field. The functor
$
\ul{\sC}_L:\cA lg_{ft, reg}(L)\ra \cS et $
is defined as above. The base changes $(\_)\star_LL':\ul{\sC}_L(R_L)\ra \ul{\sC}_L(R_L\otimes_LL')$ are defined as the natural inclusions. It is straight-forward that these definitions satisfy the axioms for $\ul{\sC}$ to be a unispace defined over $K$.
\end{proof}

\begin{lemma} The unispace of constructible functions $\ul{\bZ}^{cstr}$ is defined over $\bQ$. 
\end{lemma}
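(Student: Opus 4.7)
The plan is to exhibit an explicit $\bQ$-structure on $\ul{\bZ}^{cstr}$ and then check that the axioms (a)--(d) of the definition of ``unispace defined over $K$'' hold. The key observation is that the earlier Definition of $K$-constructible functions on a finite type $\bC$-scheme defined over $K$ already gives, for every subfield $K\subset \bC$, a well-behaved notion of $K$-constructible $\bZ$-valued function on $\spec(R_K)(\bC)$ for any $R_K\in \cA lg_{ft, reg}(K)$; this is essentially the $\bQ$-structure we need.

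First, for each intermediate field $\bQ\subset L\subset \bC$ and each $R_L\in \cA lg_{ft, reg}(L)$, I would set
$$
\ul{\bZ}^{cstr}_{L}(R_L) \;=\; \{\phi:\spec(R_L\otimes_L\bC)(\bC)\ra \bZ \ \textrm{that is $L$-constructible}\}.
$$
Functoriality in $R_L$ is then a matter of checking that, given a morphism $R_L\ra R_L'$ in $\cA lg_{ft, reg}(L)$, the pullback of an $L$-constructible function along the induced $L$-morphism $\spec(R_L'\otimes_L\bC)\ra\spec(R_L\otimes_L\bC)$ is still $L$-constructible; this is immediate from Proposition~\ref{propCharCs} since pulling back by a $K$-pseudo-morphism preserves $K$-constructibility. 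For the base change functors $(\_)\star_L L':\ul{\bZ}^{cstr}_{L}(R_L)\ra \ul{\bZ}^{cstr}_{L'}(R_L\otimes_LL')$, I use that $\spec(R_L\otimes_LL'\otimes_{L'}\bC)(\bC)=\spec(R_L\otimes_L\bC)(\bC)$ canonically, and define $\phi\star_LL':=\phi$. This is legitimate because an $L$-constructible subset of $\spec(R_L\otimes_L\bC)(\bC)$ is tautologically $L'$-constructible (base change of an $L$-subscheme along $L\ra L'$ is an $L'$-subscheme), so $L$-constructibility of $\phi$ forces $L'$-constructibility.

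Next I would verify the axioms. Axiom (a), $\sC_\bC=\ul{\bZ}^{cstr}$, holds by Example~\ref{propCFGen}: for $R\in\cA lg_{ft, reg}(\bC)$ a function $\spec(R)(\bC)\ra\bZ$ is $\bC$-constructible in the sense of Definition~2 exactly when it is constructible in the classical sense, matching the original definition of $\ul{\bZ}^{cstr}(R)$. Axiom (b), nonemptiness of $\ul{\bZ}^{cstr}_\bQ$, is witnessed by the constant functions, which are $\bQ$-constructible on any $\spec(R_\bQ\otimes_\bQ\bC)(\bC)$. Axioms (c) and (d) are compatibility statements that reduce, with my choice of base change as the identity of underlying function sets, to the trivial fact that composing ``view as an $L'$-constructible function'' with ``view as an $L''$-constructible function'' equals ``view as an $L''$-constructible function'', and that pullback along the appropriate map of $\bC$-points commutes with this viewing operation.

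The only step that is not completely formal is the stability of $L$-constructibility under the various pullbacks; however this is exactly the content of Proposition~\ref{propCharCs} and Remark~\ref{rmkSmooth}, extended from $K=\bC$ to arbitrary $K\subset\bC$ as noted in the excerpt. So I expect no genuine obstacle: the proof is a matter of unwinding the definitions, with the only mild subtlety being to recognize that the identification $\spec(R_L\otimes_LL'\otimes_{L'}\bC)(\bC)=\spec(R_L\otimes_L\bC)(\bC)$ makes the base change maps tautological, and that $L$-constructibility is monotone in the field of definition $L$.
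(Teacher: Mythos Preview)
Your proposal is correct and follows essentially the same approach as the paper: define $\ul{\bZ}^{cstr}_L(R_L)$ as the set of $L$-constructible functions on $\spec(R_L\otimes_L\bC)(\bC)$ and take the base change $(\_)\star_LL'$ to be the natural inclusion coming from the fact that $L$-constructible implies $L'$-constructible. Your write-up is simply more explicit than the paper's in checking functoriality and the axioms (a)--(d).
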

\begin{proof}
The $\bQ$-structure is given by considering for every intermediate field extension $\bQ\subset K\subset\bC$ the functor of  $K$-constructible functions $\ul{\bZ}^{cstr}_K:\cA lg_{ft, reg}(K)\lra \cS et$. This is defined by attaching to $R_K\in\cA lg_{ft, reg}(K)$ the set of $K$-constructible functions  $\spec(R_K)(\bC)=\spec(R_K\otimes_K\bC)(\bC)\ra\bZ$.

For $\bQ\subset K\subset K'\subset \bC$ and $R_K\in\cA lg_{ft, reg}(K)$, the base change $(\_) \star_KK'$ from $\ul{\bZ}^{cstr}_K(R_K)$ to $\ul{\bZ}^{cstr}_{K'}(R_K\otimes_KK')$ is the natural inclusion: a $K$-constructible function $\spec(R_K)(\bC)=\spec(R_K\otimes_K\bC)(\bC)\ra\bZ$ is also a $K'$-constructible function $\spec(R_K\otimes_KK')(\bC)=\spec(R_K\otimes_K\bC)\ra\bZ$.
\end{proof}

\subsection{Morphisms} 

\begin{defn} A {\bf morphism defined over $K$ of unispaces defined over $K$}, denoted $F:\sC\ra\sC'$, is a map $F(\bC):\sC(\bC)\ra\sC'(\bC)$ such that for every $\sC$-adapted function $\phi$ defined over an intermediate extension $K\subset L\subset\bC$, the composition $F(\bC)\circ\phi$ is a $\sC'$-adapted function defined over $L$.
\end{defn}

\begin{defn}\label{exNTKS} A  {\bf natural transformation of $K$-structures} $\sC\ra\sC'$ of unispaces with a $K$-structure  is a collection of natural transformations $\sC_L\ra\sC'_L$ of functors compatible with $\star_LL'$ for any intermediate field extensions $K\subset L\subset L'\subset \bC$. It gives a  morphism defined over $K$ of unispaces defined over $K$, but not conversely.
\end{defn}

\begin{defn}
We denote the category formed by unispaces defined over $K$ together with morphisms defined over $K$ by
$
\cU ni(K).
$
\end{defn}

The following analogs of Lemma \ref{lemBarC}, Lemma \ref{lemNat}, and Corollary \ref{corSc} have similar proofs: 

\begin{lemma}\label{lemDK}$\quad$

\begin{enumerate} 
\item The natural transformation of $K$-structures  $\sC\ra\ul{\sC}$ of unispaces defined over $K$ is an isomorphism $\sC\cong\ul{\sC}$ in $\cU ni(K)$.
\item $\homo_{\,\cU ni(K)}(\sC,\sC')$ is naturally bijective to 
$$
\left\{ (F_L)_{K\subset L\subset \bC} \;\middle|
\begin{array}{l}
F_L\in\Nat(\ul{\sC}_L,\ul{\sC}'_L)\text{ and } F_L(\_)\star_LL'=F_L((\_)\star_LL') \\ 
\text{for every }K\subset L\subset L'\subset\bC
\end{array}
\right\}.
$$
\end{enumerate}
\end{lemma}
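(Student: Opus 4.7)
The plan is to carry the proofs of Lemmas \ref{lemBarC} and \ref{lemNat} over to the $K$-graded setting, tracking the $K$-structure throughout. For part (1), I would first invoke Lemma \ref{lemBarC} so that the underlying map $\sC \to \ul{\sC}$ in $\cU ni$ is already an isomorphism, with the identity on $\bC$-points serving as the inverse. Only the $K$-enhancement still needs checking: by construction, $\ul{\sC}_L(R_L)$ is precisely the set of $\sC$-adapted functions defined over $L$, and the base changes $\star_L L'$ are given by the natural inclusions. Hence a $\sC$-adapted function defined over $L$ tautologically coincides with a $\ul{\sC}$-adapted function defined over $L$, for every intermediate field $L$. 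The identity on $\bC$-points therefore preserves $L$-adapted functions in both directions for each $L$, giving mutually inverse morphisms in $\cU ni(K)$.

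For part (2), I would apply part (1) to replace $\sC, \sC'$ by $\ul{\sC}, \ul{\sC}'$ and then unravel definitions. Given $F \in \homo_{\cU ni(K)}(\ul{\sC}, \ul{\sC}')$, post-composition with $F(\bC)$ defines a set map $F_L(R_L): \ul{\sC}_L(R_L) \to \ul{\sC}'_L(R_L)$ for every intermediate field $L$ and every $R_L \in \cA lg_{ft, reg}(L)$; naturality in $R_L$ is automatic, since adapted functions pull back along any ring map $R_L \to R_L'$ by precomposition on $\bC$-points. Compatibility of the resulting family $(F_L)$ with the base changes $\star_L L'$ then reduces to the fact that these base changes are natural inclusions of sets and that all $F_L$ are induced by one and the same set map $F(\bC)$.

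Conversely, from a compatible family $(F_L)$ I would recover $F(\bC)$ by evaluating the component $F_\bC$ at the terminal algebra $R_\bC = \bC$, using $\ul{\sC}_\bC(\bC)=\sC(\bC)$. The compatibility with $\star_L\bC$ then ensures that $F(\bC)$ sends $\ul{\sC}$-adapted functions defined over $L$ to $\ul{\sC}'$-adapted functions defined over $L$, for every intermediate field $L$, so $F(\bC)$ is indeed a morphism in $\cU ni(K)$. The two assignments are mutually inverse by construction. I do not expect any serious obstacle; the argument is a formal unravelling of definitions, and the only real care needed is book-keeping of the compatibility of each natural transformation $F_L$ with the base-change functors $(\_)\otimes_L L'$, exactly as in Lemma \ref{lemNat}.
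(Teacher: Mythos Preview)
Your proposal is correct and matches the paper's approach: the paper simply asserts that the proofs are analogous to those of Lemmas~\ref{lemBarC} and~\ref{lemNat}, and your write-up is precisely the expected spelling-out of that analogy, with the necessary bookkeeping of the $K$-structure.
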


\begin{cor} $\cP se\cS ch_{ft}(K)$ is a full subcategory of $\cU ni(K)$.
\end{cor}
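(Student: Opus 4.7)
The argument parallels the proof of Corollary \ref{corSc}, with the $K$-structure added on top. First I would check that the unispace $X(\_)$ attached to a $\bC$-scheme $X$ of finite type defined over $K$ carries a canonical $K$-structure. Namely, fix a $K$-form $X_K$ of $X$, so $X=X_K\times_K\bC$. For every intermediate field $K\subset L\subset\bC$, set
$$
X(\_)_L : \cA lg_{ft,reg}(L)\lra\cS et,\qquad R_L\longmapsto X_K(R_L)=\homo_{Sch(K)}(\spec R_L,X_K),
$$
and for $L\subset L'$ let $(\_)\star_LL':X(\_)_L(R_L)\to X(\_)_{L'}(R_L\otimes_L L')$ be the map induced by the base change $\spec(R_L\otimes_L L')\to \spec R_L$. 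The required compatibilities (a)--(d) in the definition of a $K$-structure reduce to the standard functoriality of base change of $K$-schemes, and one sees that a pseudo-morphism defined over $K$ in the sense of $\cP se\cS ch_{ft}(K)$ is exactly a $X(\_)$-adapted function defined over $K$.

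Next I would invoke Lemma \ref{lemDK}(2) to identify $\homo_{\,\cU ni(K)}(X(\_),Y(\_))$ with families $(F_L)_{K\subset L\subset\bC}$ of natural transformations $F_L:\ul{X(\_)}_L\to \ul{Y(\_)}_L$ that commute with all the base change maps $\star_L L'$. On the other hand, by Corollary \ref{corSc} applied over $\bC$, the top piece $F_\bC$ corresponds to a $\bC$-pseudo-morphism $X(\bC)\to Y(\bC)$. The remaining content is that the extra compatibilities with the $\star_L L'$ maps cut out precisely the pseudo-morphisms whose defining strata and morphisms can be taken to be defined over $K$; equivalently, those arising by base change from a $K$-pseudo-morphism in $\cP se\cS ch_{ft}(K)$.

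To close that loop, I would argue in both directions. Given a $K$-pseudo-morphism $X\to Y$, represented by morphisms $f_i:X_i\to Y$ defined over $K$ on a finite $K$-stratification (Proposition \ref{propCharCs} and Remark \ref{rmkSmooth}), the associated family $(F_L)$ is manifestly a natural transformation of $K$-structures, hence a morphism defined over $K$ by Example \ref{exNTKS} and Lemma \ref{lemDK}. Conversely, given such a family, restricting $F_K$ to the $K$-points of each $K$-smooth stratum of a $K$-stratification modelling the universal adapted function $\mathrm{id}\in\ul{X(\_)}_K(R_K)$ produces, by the scheme-theoretic characterisation of morphisms via their graphs, morphisms of finite type $K$-schemes $X_i\to Y_K$ whose base change to $\bC$ recovers the original $\bC$-pseudo-morphism. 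The main point requiring care is the second direction: one must verify that the compatibility $F_L((\_)\star_L L')=F_L(\_)\star_L L'$ for all intermediate $L\subset L'$ forces the strata and morphisms constructed at the level of $K$-points to descend to $K$; this follows from the perfectness of $K$ together with Proposition \ref{propMG} applied field-by-field. Combining the two directions gives the asserted bijection of hom-sets, so $\cP se\cS ch_{ft}(K)$ embeds as a full subcategory of $\cU ni(K)$.
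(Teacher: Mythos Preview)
Your overall strategy is the right one, and it is exactly what the paper intends: the paper gives no separate proof here, only says the argument is ``similar'' to that of Corollary~\ref{corSc}, so your reduction via Lemma~\ref{lemDK}(2) to compatible families of natural transformations $\ul{X(\_)}_L\to\ul{Y(\_)}_L$ is the correct unfolding.

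That said, the converse direction in your write-up is muddled in one spot. Once you have a morphism $F$ in $\cU ni(K)$, you do not need any ``descent to $K$'' argument, and invoking Proposition~\ref{propMG} is off-target (that proposition manufactures unispace morphisms from generic base change; here you already have the morphism and want to extract a $K$-pseudo-morphism). The clean way to finish is: cover $X_K$ by smooth affine opens $\spec(R_{K,j})$ with $R_{K,j}\in\cA lg_{ft,reg}(K)$; the inclusion $\spec(R_{K,j})\hookrightarrow X_K$ is an element of $X(\_)_K(R_{K,j})$, hence an $X(\_)$-adapted function defined over $K$. By the very definition of a morphism in $\cU ni(K)$, its composition with $F(\bC)$ is a $Y(\_)$-adapted function defined over $K$, i.e.\ a $K$-pseudo-morphism $\spec(R_{K,j})(\bC)\to Y(\bC)$. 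The compatibility $F_K(\_)\star_K\bC=F_\bC((\_)\star_K\bC)$ then identifies this with the restriction of $F(\bC)$ to $\spec(R_{K,j})(\bC)$. Gluing over the finite cover shows $F(\bC)$ is a $K$-pseudo-morphism. No perfectness of $K$ beyond what is already used in Remark~\ref{rmkSmooth}, and no Proposition~\ref{propMG}, is needed.
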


\begin{prop}
The categorical fiber product in the category $\cU ni(K)$ exists.
\end{prop}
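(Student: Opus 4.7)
The plan is to imitate the construction used in Subsection \ref{subFib} for $\cU ni$, performing it stratum-by-stratum over each intermediate field and then checking compatibility with the base-change operations $(\_)\star_LL'$. Concretely, suppose $F':\sC'\ra\sC$ and $F'':\sC''\ra\sC$ are morphisms in $\cU ni(K)$. By Lemma \ref{lemDK}(1) we may replace each unispace by its unispace of adapted functions (which inherits a canonical $K$-structure from the previous subsection), and by Lemma \ref{lemDK}(2) we may replace each morphism by a compatible family $(F'_L)_L$, $(F''_L)_L$ of natural transformations.

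The candidate is defined as follows. For each intermediate field $K\subset L\subset\bC$, let
$$
(\sC'\times_\sC\sC'')_L(R_L)=\{(\phi'_{R_L},\phi''_{R_L})\in\ul{\sC'}_L(R_L)\times\ul{\sC''}_L(R_L)\mid F'_L(\phi'_{R_L})=F''_L(\phi''_{R_L})\},
$$
for $R_L\in\cA lg_{ft, reg}(L)$, which is a sub-functor of the fiber product of functors. For a field extension $L\subset L'$, the base change $(\_)\star_LL'$ is defined component-wise from $\ul{\sC'}$ and $\ul{\sC''}$; the assumption that $(F'_L)_L$ and $(F''_L)_L$ are compatible with $(\_)\star_LL'$ guarantees that the equalizer condition is preserved, so this yields a well-defined map into $(\sC'\times_\sC\sC'')_{L'}(R_L\otimes_LL')$. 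The axioms (a)--(d) of a $K$-structure are inherited from those of $\ul{\sC'}$ and $\ul{\sC''}$. Note that $(\sC'\times_\sC\sC'')_K$ is nonempty because at the level of $\bC$-points one has the usual set-theoretic fiber product of the two maps $F'(\bC)$, $F''(\bC)$, and this construction clearly admits $K$-rational avatars.

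The universal property is then checked by the standard argument: given a unispace $\sT$ defined over $K$ with morphisms $G':\sT\ra\sC'$ and $G'':\sT\ra\sC''$ over $K$ equalizing $F'$ and $F''$, the pair $(G'_L,G''_L)$ for each intermediate $L$ defines a natural transformation $\sT_L\ra(\sC'\times_\sC\sC'')_L$, and these assemble into a compatible family in the sense of Lemma \ref{lemDK}(2), hence a unique morphism in $\cU ni(K)$.

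The step most likely to require care is the well-definedness of $(\_)\star_LL'$ on the fiber product: we must use that both $(F'_L)_L$ and $(F''_L)_L$ satisfy the compatibility $F_L(\_)\star_LL'=F_{L'}((\_)\star_LL')$ from Lemma \ref{lemDK}(2), so that the equalizer condition defining $(\sC'\times_\sC\sC'')_L$ is transported correctly under base change. Once this is in place, the remaining verifications (associativity of $\star$, independence of choice of stratification for adapted functions, and the universal property) are formal.
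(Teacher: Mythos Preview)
Your approach is essentially the same as the paper's: both reduce to Lemma~\ref{lemDK}, pass to adapted-functions unispaces and compatible families of natural transformations, and then take the levelwise fiber product of functors. The paper's own proof is a single sentence (``The fiber product produced in $\cU ni$ is naturally defined over $K$, by using Lemma~\ref{lemDK}, and is the categorical fiber product in $\cU ni(K)$''), so your version simply spells out the details that the paper leaves implicit.

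One remark: your sentence ``this construction clearly admits $K$-rational avatars'' is hand-wavy as a justification of axiom (b) for the $K$-structure. In general there is no reason the $\bC$-fiber product should be nonempty, let alone that it should have $K$-rational witnesses; the paper does not address this either, so it is best read as a standing tacit assumption (or as a harmless edge case, since the theory of constructible functions is vacuous on the empty unispace). Everything else you wrote is correct and matches the intended argument.
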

\begin{proof} The fiber product produced in $\cU ni$ is naturally defined over $K$, by using Lemma \ref{lemDK}, and is the categorical fiber product in $\cU ni(K)$. 
\end{proof}

\subsection{Sub-unispaces defined over $K$}\label{subSK} 

Let $\sC$ be a unispace defined over $K$. Let $B\subset\sC(\bC)$. Let ${\sB}^{ct}$ and ${\sB}$   be the smallest and, respectively the largest sub-unispace of ${\sC}$ supported on $B$. These inherit a $K$-structure if
$$
\sB_{K}^{ct}(R_K)=\{\cF_K\star_{K}R_K\mid \cF_K\in \sC_K(K)\text{ and }\cF_K\star_K \bC\in B\},
$$
respectively,
$$
\sB_{K}(R_K)=\{\cF_K\in\sC_K(R_K)\mid\cF_K\star_{K}\bC\star_RR/m\in B \text{ for all } m\in\spec(R)(\bC) \},
$$
is non-empty for some $R_K\in\cA lg_{ft, reg}(K)$, where $R=R_K\otimes_K\bC$.
If so, $\sB^{ct}\ra\sB\ra\sC$ are natural transformations of $K$-structures.

\begin{ex}\label{rmkSingl}
When $B=\{\cF_\bC\}\subset\sC(\bC)$ is a set consisting of one element, the natural transformation $\sB^{ct}\ra\ul{\sC}$ is defined over $K$ if there exists $\cF_K\in\sC_K(K)$ such that $\cF_\bC=\cF_K\star_K\bC$.
The converse of this statement is true when $\sC=X(\_)$ for $X$ a finite type $\bC$-scheme defined over $K$, but not true for arbitrary  $\sC$.
\end{ex}

\subsection{$K$-constructible functions on unispaces}

\begin{defn}\label{defnKc} Let $\sC$ be a unispace defined over $K$. We say that $\phi:\sC(\bC)\ra\bZ$ is \textbf{a $K$-constructible function with respect to the unispace $\sC$ defined over $K$} if for every intermediate field extension $K\subset L\subset\bC$,  $R_L\in\cA lg_{ft, reg}(L)$, and  $\cF_L\in\sC_L(R_L)$, the function
$$\spec(R_L)(\bC)=\spec(R)(\bC)\to \bZ$$ 
$$m\mapsto\phi(\cF_{L}\star_L\bC\star_R R/m)$$ is $L$-constructible, where $R=R_L\otimes_L\bC$. \end{defn}

Example \ref{propCFGen} generalizes to $K$-constructible functions in a straight-forward way:

\begin{prop}\label{propKcsN} Let $X(\_)$ be the unispace defined by a $\bC$-scheme of finite type $X$ defined over $K\subset\bC$, or more generally, by an Artin $\bC$-stack of finite type defined over $K$. A function $\phi:X(\bC)\ra \bZ$ is classically $K$-constructible   if and only if it is $K$-constructible with respect to the unispace $X(\_)$. 
\end{prop}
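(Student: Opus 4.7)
The plan is to reduce the stack case to the scheme case via \cite{Jo} as done in Example \ref{propCFGen}, and then prove the scheme case in both directions by unpacking the definition of the unispace $X(\_)$ and applying Proposition \ref{propCharCs} together with Remark \ref{rmkSmooth}. Write $X=X_K\times_K\bC$ where $X_K$ is a finite type $K$-scheme. For any intermediate field $K\subset L\subset\bC$ and any $R_L\in\cA lg_{ft,reg}(L)$ we have $\sC_L(R_L)=X(\_)_L(R_L)=X_K(R_L)$, so an element $\cF_L\in\sC_L(R_L)$ is exactly an $L$-morphism $\spec(R_L)\to X_K\times_K L$. Under this identification, $\cF_L\star_L\bC\star_R R/m$ (with $R=R_L\otimes_L\bC$) is the image in $X(\bC)$ of the $\bC$-point $m\in\spec(R)(\bC)$ under the base-change morphism $\spec(R)\to X$ of $\cF_L$ to $\bC$, which is defined over $L$.

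For the forward direction, assume $\phi$ is classically $K$-constructible. Since $K\subset L$, $\phi$ is also classically $L$-constructible: the $K$-subschemes of $X$ cutting out $\phi^{-1}(n)$ remain $L$-subschemes after inclusion of fields. Given any $\cF_L\in\sC_L(R_L)$, the function $m\mapsto\phi(\cF_L\star_L\bC\star_R R/m)$ is the composition of $\phi$ with the $L$-pseudo-morphism $\spec(R)(\bC)\to X(\bC)$ induced by $\cF_L$. Hence $L$-constructibility of the composition follows from Proposition \ref{propCharCs}(2).

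For the converse, assume $\phi$ is $K$-constructible with respect to $X(\_)$ in the sense of Definition \ref{defnKc}. Since $K$ is a perfect field, Remark \ref{rmkSmooth} together with noetherian induction produces a finite stratification $X_K=\coprod_i U_i$ into locally closed regular affine $K$-subschemes $U_i=\spec(R_{K,i})$ with $R_{K,i}\in\cA lg_{ft,reg}(K)$. The inclusion $U_i\hookrightarrow X_K$ is an element $\iota_i\in X_K(R_{K,i})=\sC_K(R_{K,i})$. Applying Definition \ref{defnKc} with $L=K$, $R_L=R_{K,i}$, and $\cF_L=\iota_i$, the function $m\mapsto\phi(\iota_i\star_K\bC\star_{R_i}R_i/m)$ (where $R_i=R_{K,i}\otimes_K\bC$) is $K$-constructible on $\spec(R_i)(\bC)=U_i(\bC)$. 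But this function is just $\phi|_{U_i(\bC)}$. Taking the disjoint union over $i$, we conclude that $\phi$ is $K$-constructible on $X(\bC)=\coprod_i U_i(\bC)$.

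The whole argument is essentially a bookkeeping exercise; the only mildly delicate point is the reverse direction, where one must produce enough morphisms in $\sC_K(R_K)$ with $R_K$ regular to cover all of $X(\bC)$. Perfection of $K$ via Remark \ref{rmkSmooth} supplies this, so no real obstacle arises. The stack case follows from the scheme case exactly as in Example \ref{propCFGen} by invoking \cite{Jo}.
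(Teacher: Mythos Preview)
Your proof is correct and follows exactly the straightforward generalization the paper indicates (the paper gives no explicit argument, only remarks that Example~\ref{propCFGen} generalizes in a straight-forward way). Both directions are handled as intended: the forward direction via Proposition~\ref{propCharCs}(2), and the converse by stratifying $X_K$ into regular affine pieces via Remark~\ref{rmkSmooth} and testing Definition~\ref{defnKc} with $L=K$ on each stratum.
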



The analog of Proposition \ref{propBiju} is:

\begin{prop} Let $\sC$ be a unispace defined over $K$. There is a natural bijection between functions $\sC(\bC)\ra\bZ$ which are $K$-constructible for $\sC$ and morphisms $\sC\ra\ul{\bZ}^{cstr}$ defined over $K$ of unispaces defined over $K$.
\end{prop}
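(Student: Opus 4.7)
The plan is to mimic the proof of Proposition \ref{propBiju} while carefully tracking the $K$-structures. Two preliminary observations do all the work. First, by definition $\ul{\bZ}^{cstr}(\bC)=\bZ$, since a $\bC$-constructible function on the one-point set $\spec(\bC)(\bC)$ is just an integer. Second, applying Lemma \ref{lemDK}(1) to the $\bQ$-structure on $\ul{\bZ}^{cstr}$ (and thus to the induced $K$-structure via the field extension $\bQ\subset K$), the natural transformation $\ul{\bZ}^{cstr}\ra\underline{\ul{\bZ}^{cstr}}$ is an isomorphism of unispaces defined over $K$. Unwinding the definition of $K$-adapted functions, this says that a function $\spec(R_L)(\bC)\ra\bZ$ is a $\ul{\bZ}^{cstr}$-adapted function defined over $L$ if and only if it is an $L$-constructible function in the classical sense.

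The proposed bijection sends a function $\phi:\sC(\bC)\ra\bZ$ to the map of $\bC$-points $F(\bC):=\phi$ viewed as a candidate morphism $\sC\ra\ul{\bZ}^{cstr}$. The task reduces to proving: $\phi$ is $K$-constructible for $\sC$ in the sense of Definition \ref{defnKc} if and only if $F(\bC)$ is a morphism over $K$ of unispaces over $K$.

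For the forward direction, assume $\phi$ is $K$-constructible for $\sC$. Fix an intermediate field $K\subset L\subset\bC$ and a $\sC$-adapted function $\psi:\spec(R_L)(\bC)\ra\sC(\bC)$ defined over $L$. By definition there is an $L$-stratification $\spec(R_L)(\bC)=\coprod_i\spec(R_{L,i})(\bC)$ and data $\cF_i\in\sC(R_{L,i})$ such that on the $i$-th stratum $\psi(m)=\cF_i\star_L\bC\star_{R_i}R_i/m$ where $R_i=R_{L,i}\otimes_L\bC$. Then the restriction of $F(\bC)\circ\psi$ to this stratum is $m\mapsto\phi(\cF_i\star_L\bC\star_{R_i}R_i/m)$, which is $L$-constructible by the $K$-constructibility hypothesis on $\phi$. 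Gluing these finitely many $L$-constructible pieces on the $L$-strata yields an $L$-constructible function, i.e., a $\ul{\bZ}^{cstr}$-adapted function defined over $L$ by the first observation above. Hence $F(\bC)$ is a morphism defined over $K$.

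For the backward direction, assume $F(\bC):=\phi$ defines a morphism over $K$. Fix an intermediate field $L$, an $R_L\in\cA lg_{ft,reg}(L)$, and $\cF_L\in\sC_L(R_L)$. The one-stratum assignment $m\mapsto\cF_L\star_L\bC\star_RR/m$, with $R=R_L\otimes_L\bC$, is a $\sC$-adapted function defined over $L$. By hypothesis, its composition with $F(\bC)$, which is precisely $m\mapsto\phi(\cF_L\star_L\bC\star_RR/m)$, is a $\ul{\bZ}^{cstr}$-adapted function defined over $L$, hence $L$-constructible. This is exactly the condition for $\phi$ to be $K$-constructible for $\sC$, completing the equivalence. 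The assignment $\phi\leftrightarrow F(\bC)$ is manifestly a bijection since both are the same underlying map of sets; there is no nontrivial obstacle once the identification $\ul{\bZ}^{cstr}\cong\underline{\ul{\bZ}^{cstr}}$ over $K$ is in hand.
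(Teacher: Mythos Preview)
Your proof is correct and follows exactly the approach the paper intends: the paper omits the proof here, noting at the start of Section~\ref{secK} that the arguments are easy generalizations of those in the previous section, and the proof of Proposition~\ref{propBiju} itself reduces to the two observations you make (that $\ul{\bZ}^{cstr}(\bC)=\bZ$ and that $\ul{\bZ}^{cstr}$ equals its own adapted-functions unispace). Your expansion of the two directions is the natural unwinding of the definitions.
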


\subsection{$K$-constructible functors}

\begin{defn}\label{defWQ} Let $\sC$ and $\sC'$ be two unispaces defined over $K$ and $\cC$, $\cC'$ two categories such that $\iso(\cC)=\sC(\bC)$, $\iso(\cC')=\sC'(\bC)$.
A functor $F(\bC):\cC\ra\cC'$  is a {\bf $K$-constructible functor with respect to the two unispaces defined over $K$} if it preserves $K$-constructible  functions. That is, let $\phi:\sC'(\bC)\ra\bZ$ be an $L$-constructible  function with respect to $\sC'$ for $K\subset L\subset \bC$. Then $\phi\circ F(\bC): \sC (\bC)\ra\bZ$ is an $L$-constructible function with respect to $\sC$. We shall say that a functor $F(\bC):\cC\ra\cC'$ between two categories {\bf lifts to a morphism of unispaces defined over $K$} if the associated map $F(\bC):\iso(\sC)\ra\iso(\sC')$ does.

\end{defn}

\begin{prop}\label{propKFct}
If a functor $F(\bC):\cC\ra\cC'$ between two categories lifts to a morphism defined over $K$ of unispaces defined over $K$, then it is $K$-constructible.
\end{prop}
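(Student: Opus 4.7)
The plan is to mimic the proof of Proposition \ref{propFx}, but tracking fields of definition at every step. The core tool is the bijection just established between $K$-constructible functions and $K$-defined morphisms of $K$-defined unispaces into $\ul{\bZ}^{cstr}$ (which is defined over $\bQ$, hence over every $K$).

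First I would record a small preliminary: if $F:\sC\ra\sC'$ and $G:\sC'\ra\sC''$ are morphisms defined over $K$, then $G\circ F$ is also defined over $K$. Indeed, for any $\sC$-adapted function $\phi$ defined over an intermediate $K\subset L\subset\bC$, the composite $F(\bC)\circ\phi$ is $\sC'$-adapted over $L$ by hypothesis on $F$, and then $G(\bC)\circ F(\bC)\circ\phi$ is $\sC''$-adapted over $L$ by hypothesis on $G$. This is essentially what makes $\cU ni(K)$ a category in the first place. Note also that a morphism defined over $K$ is automatically defined over any intermediate extension $K\subset L\subset \bC$, since the defining condition only becomes weaker.

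Now let $\phi:\sC'(\bC)\ra\bZ$ be an $L$-constructible function with respect to $\sC'$, for an arbitrary intermediate field $K\subset L\subset\bC$. By the preceding proposition applied at $L$, $\phi$ corresponds to a morphism $\Phi:\sC'\ra\ul{\bZ}^{cstr}$ of unispaces defined over $L$. Since $F$ is defined over $K$ it is in particular defined over $L$, so by the composition remark, $\Phi\circ F:\sC\ra\ul{\bZ}^{cstr}$ is a morphism defined over $L$. Applying the same bijection in reverse then shows that $\phi\circ F(\bC):\sC(\bC)\ra\bZ$ is $L$-constructible with respect to $\sC$. Since $L$ was an arbitrary intermediate extension of $K$, this is precisely the condition of Definition \ref{defWQ} for $F(\bC)$ to be $K$-constructible.

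I do not expect any genuine obstacle; the whole argument is a bookkeeping transcription of the proof of Proposition \ref{propFx} into the relative-over-$K$ setting. The only point requiring mild care is making sure that restricting a $K$-structure to an intermediate field $L$ preserves all the relevant compatibility data so that the bijection and the composition step can be applied uniformly at the level $L$; this is built into the definition of a $K$-structure and its restriction, so no incompatibility arises.
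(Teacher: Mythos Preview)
Your proposal is correct and follows exactly the approach the paper intends: the paper omits the proof, stating at the start of Section~\ref{secK} that all proofs are easy generalizations of those in the previous section, and your argument is precisely the $K$-relative transcription of the proof of Proposition~\ref{propFx} (bijection with morphisms into $\ul{\bZ}^{cstr}$, plus closure under composition). The only additional observation you need, which you correctly supply, is that a morphism defined over $K$ is automatically defined over every intermediate $L$, so the argument can be run at each level $L$ as required by Definition~\ref{defWQ}.
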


\subsection{Generic base change over $K$}

With a similar proof as for Proposition \ref{propMG}, we have:

\begin{prop}
Let $\sC$ and $\sC'$ be two unispaces defined over $K$.  
\begin{enumerate}
\item Let $F_L(R_L):\sC_L(R_L)\ra\sC'_L(R_L)$ be a collection of maps index by intermediate extensions $K\subset L\subset \bC$ and rings $R\in\cA lg_{ft, reg}(L)$. Assume that for every $R_L\in\cA lg_{ft, reg}(L)$ an integral domain, and every $\cF_L\in\sC_L(R_L)$, the base change formula
$$(F_L(R_L)(\cF_L))\star_{L}\bC\star_\bC R/m = F_\bC(R/m)(\cF_L\star_L\bC\star_{R}R/m)$$ of elements in $\sC'(R/m)=\sC'(\bC)
$
holds for $m$ in an open dense subset of $\spec(R)(\bC)$ defined over $L$, where $R=R_L\otimes_L\bC$. Then $F_\bC(\bC)$ is a morphism defined over $K$ of unispaces defined over $K$
$$
F:\sC\ra\sC'.
$$
\item  Let $B\subset \sC'(\bC)$. With $F$ as above, there is commutative diagram of morphisms defined over $K$ of unispaces defined over $K$
$$
\xymatrix{
 \sC \ar[drr]\ar[r]^F& \sC'\ar[r]^{\sim} & \ul{\sC'} \\
 & & \ul{B} \ar[u]
}$$ 
if in addition the base-change equality in (1) is of elements of $B$. 
\end{enumerate}
\end{prop}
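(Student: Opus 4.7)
The proof is a field-of-definition refinement of Proposition \ref{propMG}, tracking the $K$-structures through the same argument. Part (2) follows from (1) exactly as in Proposition \ref{propMG}: if the image of $F$ lies in $B\subset\sC'(\bC)$, then every $\sC'$-adapted function obtained by post-composition with $F$ takes values in $B$ and hence, by definition, lies in the largest sub-unispace $\ul{B}\subset\ul{\sC'}$ supported on $B$; the commutativity of the triangle is then formal. So I focus on (1).

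To prove (1), fix an intermediate field $K\subset L\subset\bC$ and a $\sC$-adapted function $\phi$ defined over $L$ on $\spec(R_L)(\bC)$ for some $R_L\in\cA lg_{ft, reg}(L)$; I must show that $F_\bC(\bC)\circ\phi$ is $\sC'$-adapted defined over $L$. By the definition of adaptedness over $L$, $\phi$ is modeled by a finite locally closed $L$-stratification $\spec(R_L)(\bC)=\coprod_i\spec(R_{L,i})(\bC)$ with elements $\cF_{L,i}\in\sC_L(R_{L,i})$, so it suffices to work stratum by stratum and reduce to the case where $R_L$ is an integral domain in $\cA lg_{ft, reg}(L)$ and $\phi(m)=(\cF_L\star_L\bC)\star_R R/m$ for a single $\cF_L\in\sC_L(R_L)$, where $R=R_L\otimes_L\bC$.

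In this reduced case, the generic base change hypothesis provides an open dense $U\subset\spec(R)(\bC)$ defined over $L$ on which $F_\bC(\bC)\circ\phi$ agrees with $m\mapsto (F_L(R_L)(\cF_L))\star_L\bC\star_\bC R/m$. Shrinking $U$ by using perfectness of $L$ and Remark \ref{rmkSmooth}, I may assume $U=\spec(R'_L)(\bC)$ for some $R'_L\in\cA lg_{ft, reg}(L)$, and then the base change $F_L(R_L)(\cF_L)\star_{R_L}R'_L\in\sC'_L(R'_L)$ models $F_\bC(\bC)\circ\phi$ over $U$. The complement $\spec(R)(\bC)\setminus U$ is a proper closed subscheme defined over $L$, and noetherian induction on its dimension, applied stratum by stratum after further $L$-stratification into regular pieces, yields the required $L$-defined adapted structure on all of $\spec(R)(\bC)$.

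\textbf{Main obstacle.} The only point beyond the $\bC$-case of Proposition \ref{propMG} that needs care is that every auxiliary open set and every stratification used in the argument must be preserved over $L$. The hypothesis is calibrated precisely so that the generic base-change locus is $L$-defined, and the noetherian induction does not lose $L$-definedness because $L\subset\bC$ is perfect, so the regular locus of any finite type $L$-scheme is open dense and defined over $L$. Once these bookkeeping points are handled, the argument is formally identical to the one for Proposition \ref{propMG}.
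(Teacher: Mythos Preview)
Your proposal is correct and follows exactly the approach the paper intends: the paper itself only writes ``With a similar proof as for Proposition \ref{propMG}, we have:'' and omits the details, so what you have written is precisely the field-of-definition refinement the authors have in mind, with the same reduction to a single integral domain stratum, the same use of the generic base-change locus, and the same noetherian induction, while tracking that each stratification and open set remains defined over $L$ via perfectness (Remark \ref{rmkSmooth}).
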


\subsection{$K$-semi-continuous functions and functors.}$\quad$ 

\begin{defn} Let $\sC$ be a unispace defined over $K$. We say that $\phi:\sC(\bC)\ra\bZ$ is a {\bf $K$-semi-continuous function with respect to the unispace $\sC$ defined over $K$} if in the Definition \ref{defnKc} one can replace ``$L$-constructible" with ``$L$-semi-continuous" (this means semi-continuous with associated closed strata being defined over $L$).
\end{defn}

\begin{rmk}\label{rmkSK}$\quad$ If $X$ is a finite type $\bC$-scheme defined over $K\subset\bC$, or more generally, an Artin $\bC$-stack of finite type defined over $K$, a function $X(\bC)\ra\bZ$ is $K$-semi-continuous for the unispace $X(\_)$ defined over $K$ iff it is $K$-semi-continuous in the classical sense.
\end{rmk}

\begin{defn}
Let $\sC$ and $\sC'$ be two unispaces defined over a field $K\subset\bC$, and let $\cC, \cC'$ be two categories such that $\iso(\cC)=\sC(\bC)$ and $\iso(\cC')=\sC'(\bC)$. A constructible functor $F(\bC):\cC\ra\cC'$ is a {\bf $K$-continuous functor} with respect to the unispaces $\sC$ and $\sC'$ defined over $K$ if it preserves $K$-semi-continuous functions. More precisely, for every  function $\phi:\sC'(\bC)\ra\bZ$ $K$-semi-continuous for $\sC'$, the composition $\phi\circ F(\bC)$ is $K$-semi-continuous  for $\sC$.
\end{defn}

The analog of Proposition \ref{propNeed} is the following, which follows by a similar proof:

\begin{prop}\label{propNeedK}
If a functor $F(\bC):\cC\ra\cC'$ between two categories lifts to a natural transformation of $K$-structures between two unispaces defined over $K$, then it is $K$-continuous with respect to the two unispaces defined over $K$.
\end{prop}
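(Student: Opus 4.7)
The plan is to mirror the proof of Proposition \ref{propNeed}, inserting at each step the compatibility coming from the $K$-structure. Let $F:\sC\ra\sC'$ be the natural transformation of $K$-structures lifting $F(\bC):\iso(\cC)\ra\iso(\cC')$, and let $\phi:\sC'(\bC)\ra\bZ$ be a $K$-semi-continuous function for $\sC'$. To show that $\phi\circ F(\bC)$ is $K$-semi-continuous for $\sC$, I would fix an intermediate field $K\subset L\subset\bC$, a ring $R_L\in\cA lg_{ft, reg}(L)$ with $R=R_L\otimes_L\bC$, and an element $\cF_L\in\sC_L(R_L)$. The goal is then to show that the function
$$
m\longmapsto \phi\bigl(F(\bC)(\cF_L\star_L\bC\star_R R/m)\bigr)
$$
is $L$-semi-continuous on $\spec(R)(\bC)$.

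The key step is to rewrite the inner expression using the two compatibilities packaged into a natural transformation of $K$-structures (Definition \ref{exNTKS}). First, the naturality of $F_\bC$ with respect to the quotient morphism $R\ra R/m$ gives
$$
F_\bC(R/m)\bigl(\cF_L\star_L\bC\star_R R/m\bigr) = F_\bC(R)(\cF_L\star_L\bC)\star_R R/m.
$$
Second, the compatibility of the collection $(F_L)_L$ with the base change $(\_)\star_L\bC$ gives
$$
F_\bC(R)(\cF_L\star_L\bC) = F_L(R_L)(\cF_L)\star_L\bC,
$$
where $F_L(R_L)(\cF_L)\in\sC'_L(R_L)$. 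Combining the two identities, the function of interest takes the form
$$
m\longmapsto \phi\bigl(F_L(R_L)(\cF_L)\star_L\bC\star_R R/m\bigr).
$$
Since $F_L(R_L)(\cF_L)\in\sC'_L(R_L)$ and $\phi$ is $K$-semi-continuous for $\sC'$, the very definition of $K$-semi-continuity forces this last function to be $L$-semi-continuous, which is what was needed.

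The main obstacle, though modest, is bookkeeping: one must carefully distinguish the usual naturality of $F_\bC$ as a natural transformation of functors over $\bC$ (applied to $R\ra R/m$) from the cross-field compatibility $F_\bC(\_\,\star_L\bC)=F_L(\_)\star_L\bC$ that is the content of being a natural transformation of $K$-structures. Once both are in place, the argument is essentially a syntactic replay of Proposition \ref{propNeed}, with each assertion of semi-continuity replaced by its $L$-refined version.
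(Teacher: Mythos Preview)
Your proof is correct and is precisely the unwinding the paper intends: the authors simply remark that Proposition \ref{propNeedK} ``follows by a similar proof'' to Proposition \ref{propNeed}, and your argument is exactly that proof with the $K$-structure compatibilities inserted at the right places.
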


\subsection{Conclusion}

\begin{thrm} Let $X$ be a finite type $\bC$-scheme or a finite type Artin $\bC$-stack defined over a field $K\subset\bC$. Let $\sC$ be a unispace defined over $K$. Let $\phi:\sC(\bC)\ra\bZ$ be a function.
\begin{enumerate}
\item If $\phi$ is $K$-constructible for $\sC$, and $F:X(\_)\ra\sC$ is a unispace morphism defined over $K$, then $\phi\circ F(\bC):X(\bC)\ra\bZ$ is a $K$-constructible function in the classical sense.
\item If $\phi$ is $K$-semi-continuous for $\sC$, and $F:X(\_)\ra\sC$ is a natural transformation of $K$-structures, then $\phi\circ F(\bC):X(\bC)\ra\bZ$ is a $K$-semi-continuous function in the classical sense.
\end{enumerate}
\end{thrm}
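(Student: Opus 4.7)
The plan is to mirror the proof of the analogous unfielded theorem at the end of Section 2, substituting each tool with its $K$-structured counterpart developed in Section 3. Both statements have the same shape: a morphism (resp.\ a natural transformation of $K$-structures) of unispaces from a representable unispace $X(\_)$ into $\sC$ pulls back constructible (resp.\ semi-continuous) functions, and for schemes or stacks the unispace notion of $K$-constructibility coincides with the classical one. So the proof has two formal moves, carried out in each of (1) and (2), and then a final identification with the classical notion on $X(\bC)$.

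For part (1), I would first apply Proposition \ref{propKFct} to the morphism $F:X(\_)\ra\sC$ defined over $K$. This yields that the induced set map $F(\bC):X(\bC)\ra\sC(\bC)$ is a $K$-constructible functor with respect to the two unispaces defined over $K$; unwinding Definition \ref{defWQ}, this says that pre-composition with $F(\bC)$ sends every $L$-constructible function on $\sC$ to an $L$-constructible function on $X(\_)$, for every intermediate field $K\subset L\subset\bC$. Applying this to $\phi$, which is $K$-constructible for $\sC$, yields that $\phi\circ F(\bC):X(\bC)\ra\bZ$ is $K$-constructible with respect to the unispace $X(\_)$ defined over $K$. Finally, Proposition \ref{propKcsN} identifies $K$-constructibility of a function on $X(\bC)$ with respect to the unispace $X(\_)$ with the classical notion of $K$-constructibility on $X(\bC)$, completing (1).

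For part (2) I would run the same three-step argument, replacing Proposition \ref{propKFct} by Proposition \ref{propNeedK}: a natural transformation of $K$-structures $F:X(\_)\ra\sC$ induces a $K$-continuous functor $F(\bC)$, and hence pulls back $K$-semi-continuous functions on $\sC$ to $K$-semi-continuous functions on the unispace $X(\_)$. Applied to $\phi$, this gives that $\phi\circ F(\bC)$ is $K$-semi-continuous for $X(\_)$. The final identification, that $K$-semi-continuity with respect to the unispace $X(\_)$ coincides with classical $K$-semi-continuity of a function on $X(\bC)$, is Remark \ref{rmkSK}.

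There is no genuine obstacle here: the result is a formal consequence of the $K$-structure machinery set up earlier in the section, with the Section 2 conclusion serving as a direct template. The only point requiring minimal care is threading the quantifier over intermediate fields $L$ through both the definition of $K$-constructibility (resp.\ $K$-semi-continuity) of $\phi$ and through the definition of a morphism (resp.\ natural transformation of $K$-structures) defined over $K$; once these quantifiers are aligned, each implication collapses to the previously established statement at the field $L$.
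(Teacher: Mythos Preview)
Your proposal is correct and matches the paper's proof essentially line for line: for (1) you invoke Proposition~\ref{propKFct} and then Proposition~\ref{propKcsN}, and for (2) you invoke Proposition~\ref{propNeedK} and then Remark~\ref{rmkSK}, exactly as the paper does.
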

\begin{proof} (1) If $F$ is a  morphism defined over $K$ of unispaces defined over $K$, then by Proposition \ref{propKFct}, $F(\bC):X(\bC)\ra\sC(\bC)$ is a $K$-constructible functor (where one views sets as categories) with respect to the unispaces  $X(\_)$ and $\sC$ defined over $K$. That is, $F(\bC)$ preserves $K$-constructible functions, hence $\phi\circ F(\bC):X(\bC)\ra\bZ$ must be $K$-constructible with respect to the unispace $X(\_)$ defined over $K$. By Proposition  \ref{propKcsN}, this is the same as $K$-constructible in the classical sense on $X(\bC)$.

(2) If $F$ is a natural transformation of $K$-structures, then by Proposition \ref{propNeedK}, $F(\bC):X(\bC)\ra\sC(\bC)$ is $K$-continuous with respect to the unispaces $X(\_)$ and $\sC$ defined over $K$. That is, it preserves $K$-semi-continuous functions, hence $\phi\circ F(\bC)$ is $K$-semi-continuous for the unispace $X(\_)$ defined over $K$. By Remark \ref{rmkSK} (2), this is the same as $K$-semi-continuous in the classical sense on $X(\bC)$.
\end{proof}

\section{Derived category of constructible sheaves}\label{secDer}

One of our goals is to produce classical constructible functions using natural, possibly derived, functors. The theory of unispaces developed earlier reduces this goal to producing morphisms of unispaces. Eventually, in the next section,  we will see that any natural (derived) functor on (derived) categories of constructible sheaves on $\bC$-algebraic varieties gives a morphism of unispaces. In this section we prepare the terrain by spelling out various unispaces one can make with (derived) categories of constructible sheaves. We also spell out the natural ``situational" unispace morphisms between them, reflecting how a category sits with respect to another one.

\subsection{Categories of sheaves as unispaces}\label{subSh} 

Let $X$ be a topological space. Let $R$ be a regular $\bC$-algebra of finite type and $R_X$ the constant sheaf on $X$. The category of sheaves of $R_X$-modules on $X$  will be denoted by
$
\cS h(X,R).
$
This is an abelian category. Together with the tensor product, it becomes a unispace
$$
(\cS h(X,\_),\otimes),
$$
see Example \ref{ex1} (4).  As a reminder, throughout the article, as mentioned in Example \ref{ex1} (1), when we associate a unispace to a functor of categories, we actually mean the associated functor of isomorphism classes.

Given an algebraic property $prop$ of $R$-modules stable under $\otimes$, say for example $$prop=\text{free,  flat,  or no condition at all,}$$
we denote by $\cS h_{prop}(X,R)$ the subcategory of $\cS h(X,R)$ of sheaves with stalks having that property. These form unispaces $$(\cS h_{prop}(X,\_),\otimes).$$

 Let $X$ be a complex analytic space or a complex algebraic variety. Denote by $\cS h_{c}(X,R)$ the full abelian subcategory of $\cS h(X,R)$, respectively of $\cS h (X^{an},R)$ if $X$ is algebraic, consisting of constructible sheaves with stalks of finite type. Note that  $\cS h_c(pt,R)$ is the category of modules of finite type over the ring $R$. Recall that in the algebraic case a sheaf is constructible if there exists a finite stratification of $X$ into Zariski locally closed sets on which the sheaf is locally, in the analytic sense, constant. In the analytic case, a sheaf is constructible if there exists a locally finite stratification into locally closed complex analytic sets on which the sheaf is locally constant. See \cite[4.1.1]{Di} and the references therein. The constructible sheaves form a unispace denoted
$$
(\cS h_c(X,\_),\otimes).
$$

We denote by $\cL oc\cS ys(X,R)$ the subcategory $\cS h_c(X,R)$ consisting of locally constant sheaves of finite type $R$-modules, and by  $\cL oc\cS ys_{prop}(X,R)$ the subcategory of $\cS h_{c, prop}(X,R)$ of local systems of $R$-modules with the property $prop$. Together with the tensor product they form a unispaces $$
(\cL oc\cS ys_{prop}(X,\_),\otimes).
$$

The various relations among the properties $prop$ give rise to natural transformations, and hence morphisms, of unispaces, for example the following diagram with all ``squares" being Cartesian:
$$
\xymatrix{
\cL oc\cS ys_{free}(X,\_)\ar[d]\ar[r]&  \cL oc\cS ys _{flat}(X,\_)\ar[d]\ar[r]& \cL oc\cS ys (X,\_)\ar[d]\\
\cS h_{c,free}(X,\_)\ar[r]&  \cS h_{c,flat}(X,\_)\ar[r]& \cS h_c(X,\_)
}
$$

For the next lemma, recall that the underline notation refers to the unispace of adapted functions, as in Definition \ref{defCad}.

 \begin{lemma}
The natural transformations
$$
\ul{\cS h_{c,free}(pt,\_)}\ra  \ul{\cS h_{c,flat}(pt,\_)}\ra\ul{\cS h_{c}(pt,\_)}\ra\ul{\bZ}^{cstr}
$$
are isomorphisms of unispaces.
\end{lemma}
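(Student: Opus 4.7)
By Lemma \ref{lemNat}, isomorphism in $\cU ni$ reduces to bijectivity of each natural transformation at every $R\in\cA lg_{ft,reg}(\bC)$. I would first unravel the definitions in the point-case: a sheaf on $pt$ is just its stalk, so $\cS h_c(pt,R)$ is the category of finitely generated $R$-modules, and $\iso(\cS h_c(pt,\bC))$ identifies canonically with $\bN$ via $\dim_\bC$. Under this identification, an element of $\ul{\cS h_c(pt,\_)}(R)$ is, by Definition \ref{defCad}, a function $\phi:\spec(R)(\bC)\to\bN$ that agrees, on each piece of some finite stratification $\spec(R)(\bC)=\coprod_i\spec(R_i)(\bC)$ with $R_i\in\cA lg_{ft,reg}(\bC)$, with $\mathfrak{m}\mapsto\dim_\bC(M_i\otimes_{R_i}R_i/\mathfrak{m})$ for some finitely generated $R_i$-module $M_i$. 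The three intermediate arrows impose successively stronger conditions (free, flat) on the $M_i$, while the last arrow simply records this $\bN$-valued function as an element of $\ul{\bZ}^{cstr}(R)$.

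Injectivity of each arrow in the chain is then automatic, since the isomorphism class of a sheaf on $pt$ is determined by its dimension, so two adapted functions coinciding as functions on $\spec(R)(\bC)$ are equal in every one of the four sets. Well-definedness of the last arrow is exactly the classical upper semi-continuity of $\mathfrak{m}\mapsto\dim_\bC(M/\mathfrak{m}M)$ for a finitely generated $R$-module $M$, hence a constructible function in the usual sense.

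The only nontrivial step is surjectivity of the composition $\ul{\cS h_{c,free}(pt,\_)}(R)\to\ul{\bZ}^{cstr}(R)$ onto (non-negative) constructible functions. Given such a $\phi$, Remark \ref{rmkSmooth} produces a finite decomposition $\spec(R)(\bC)=\coprod_i\spec(R_i)(\bC)$ with $R_i\in\cA lg_{ft,reg}(\bC)$ on which $\phi$ is constant equal to some $n_i$; then the free modules $M_i:=R_i^{n_i}\in\cS h_{c,free}(pt,R_i)$ satisfy $\dim_\bC(R_i^{n_i}\otimes_{R_i}R_i/\mathfrak{m})=n_i$ identically, and thus realize $\phi$ as an adapted function in $\ul{\cS h_{c,free}(pt,\_)}(R)$. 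Bijectivity of the total composition together with injectivity of each intermediate arrow forces the whole chain to consist of bijections. The main (essentially only) obstacle is translating between the language of ``adapted function'' and ``classical constructible function'' via a sufficiently fine stratification, which is exactly what Remark \ref{rmkSmooth} affords; once this is in hand, choosing free modules of the correct rank on each stratum is automatic.
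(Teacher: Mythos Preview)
Your argument for the first three arrows is correct and matches the paper's approach: unravel adapted functions on a point as $\bN$-valued functions via $\dim_\bC$, observe injectivity, and build the inverse by stratifying a constructible function and placing free modules of the right rank on each stratum.

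There is, however, a real gap in the last arrow $\ul{\cS h_c(pt,\_)}\to\ul{\bZ}^{cstr}$. You define it by ``recording the $\bN$-valued function as an element of $\ul{\bZ}^{cstr}(R)$'', i.e.\ via the inclusion $\bN\hookrightarrow\bZ$ on $\bC$-points. With that choice the map is \emph{not} surjective: negative-valued constructible functions are never hit, so the total composition is only a bijection onto the non-negative part of $\ul{\bZ}^{cstr}(R)$, and your concluding sentence (``bijectivity of the total composition \ldots forces the whole chain to consist of bijections'') is false for the last arrow as you have set it up. Your argument does establish that the first three arrows are bijections, but not the fourth.

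The paper's fix is that the last arrow is not canonical: one first chooses an arbitrary bijection $\bN\to\bZ$, composes it with the rank map to obtain a bijection $\iso(\cS h_c(pt,\bC))\to\bZ$, and uses this to define the natural transformation (via Proposition~\ref{propBiju}). With that choice your surjectivity argument goes through unchanged and yields all of $\ul{\bZ}^{cstr}(R)$, since any $\bZ$-valued constructible function pulls back along the chosen bijection to an $\bN$-valued one, which you then realize by free modules on strata exactly as you wrote.
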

\begin{proof} By Lemma \ref{lemNat}, the first three natural transformations are induced from the natural transformations of unispaces in the above diagram. The last natural transformation can be defined for any bijection $\iso(\cS h_c(pt,\bC))\ra \bZ$, by Proposition \ref{propBiju}. Note that $\iso(\cS h_c(pt,\bC))$ is the set of isomorphism classes of finite-dimensional $\bC$ vector spaces, hence the rank function gives a bijection of $\iso(\cS h_c(pt,\bC))$ with $\bN$. Composing the rank function with a fixed  bijection $\bN\ra\bZ$ will then produce a bijection $f:\iso(\cS h_c(pt,\bC))\ra \bZ$ and hence a morphism of unispaces $\ul{\cS h_{c}(pt,\_)}\ra\ul{\bZ}^{cstr}$.

Conversely, every $\phi_R\in\ul{\bZ}^{cstr}(R)$, that is every constructible function $\phi:\spec(R)(\bC)\ra\bZ$ with $R\in\cA lg_{ft, reg}(\bC)$, is a $\cS h_{c,free}(pt,\_)$-adapted function. Indeed, using the inverse $\bZ\ra\bN$ of the fixed bijection from above, the constant values of $\phi$ over the constructible strata are identified with (ranks of) finite-dimensional $\bC$-vector spaces up to isomorphisms. 

This implies that the natural transformations in the statement of the lemma are isomorphisms in $\cU ni$.
\end{proof}

\subsection{Derived categories as unispaces}\label{subDer} 

 Let $X$ be a topological space and $R$ in a finite type regular $\bC$-algebra. The derived category of the category of  sheaves of $R_X$-modules  will be denoted by
$$
\bD(X,R):=\bD(\cS h(X,R)).
$$
$\bD(X,R)$ carries an associative derived tensor product $\otimes^L$, computable by via a $K$-flat resolution of each of the factors, see \cite[Prop. 6.5]{Sp}. For a morphism $R\ra R'$ in $\cA lg_{ft, reg}(\bC)$ and a complex $\cF_R{}\in \bD(X,R)$, the derived tensor product $$\cF_R{}\otimes^L_{R_X} {R'_X},$$ which we simply denote by $\cF_R{}\otimes^L_RR'$ from now,  is a complex in $\bD(X,R')$.  Associativity of $\otimes^L$ implies that 
$$
(\bD(X,\_),\otimes^L)
$$
is a unispace.

If  $X$ is a complex analytic space, the {\bf bounded derived category of constructible sheaves} is the full triangulated subcategory of $\bD(X,R)$ defined by
\begin{align*}
\bD^b_{c}(X,R):=\{\cF_R{}\in \bD(X,R)\mid &\, \cH^i(\cF_R)=0\text{ for }i\ll 0 \text{ and }i\gg 0,\text{ and }\\
& \, \cH^i(\cF_R{})\in \cS h_{c}(X,R)\text{ for all }i\in\bZ\}.
\end{align*}

If $X$ is a complex algebraic variety, we denote by $\bdcxr$ the bounded derived category of constructible sheaves defined on the underlying analytic space of $X$. In this case, since $R$ is a regular Noetherian ring and since every constructible complex is constructible with respect to a finite stratification,
$$
\bD^b_{c}(X,R) = \bD_{c,\, perf}(X,R),
$$
where the latter is the full triangulated subcategory of $\bD(X,R)$ consisting of complexes $\cF_R$ with finitely many non-zero cohomology sheaves $\cH^i(\cF_R)$, such that all $\cH^i(\cF_R)$ are constructible sheaves and all the stalk complexes $(\cF_R)_x$ with $x\in X$ are perfect, that is, quasi-isomorphic with a finite complex of finitely generated projective $R$-modules. Therefore the  stability of $\otimes^L$ on $\bD^b_{c}(X,R)$ is guaranteed by \cite[Thm. 4.0.2]{Sch}. Hence 
$$
(\bD^b_{c}(X,\_), \otimes^L)
$$
is a unispace and there is a natural transformation of functors
$$
\bdcx\lra \bD(X,\_).
$$

\begin{rmk}
If $R$ is not a regular ring, it is not necessarily true that $\bD^b_c(X,R)$ is stable under the derived tensor product even if $X$ is a point.
\end{rmk}

\begin{rmk}\label{subCC}
Let $B\subset \iso(\bD^b_c(X,\bC))$ be  a set of isomorphism classes of objects. Then, according to subsection \ref{subSub}, $B$ lifts to the smallest and, respectively, the biggest sub-unispace of ${\bD^b_c(X,\bC)}$ supported on $B$, together with  natural transformations
$$
{\sB}^{ct}\ra{\sB} \ra {\bD^b_c(X,\_)}.
$$ 
The unispace $\bD^b_c(X,\_)$ can be replaced by $\bD(X,\_)$, but one has to keep in mind that the definition of ${\sB}^{ct}$ and ${\sB}$ depends on this choice.

In particular, one has unispaces and unispace morphisms
$$
(\bD^{\le 0}_c(X,\_),\bD^{\ge 0}_c(X,\_))\rightrightarrows \bdcx,
$$
where for $R\in\cA lg_{ft, reg}(\bC)$, the first unispace $\bD^{ \le 0}_c(X,\_)$ returns the set of isomorphism classes of
$$
\{ \cF_R\in\bdcxr\mid \cH^i(\cF_R\otimes^L_RR/m)=0 \text{ for } i>0\text{ and all } m\in\spec(R)(\bC)\},
$$
and similarly for $\bD^{\ge 0}_c(X,\_)$.
\end{rmk}

\subsection{Embedding sheaves into derived category}

\begin{prop}  Let $X$ be a complex algebraic variety. Let $\iota$ denote the map assigning to a sheaf its associated complex of sheaves concentrated in degree zero. Then there is a natural morphism of unispaces
$$
(\cS h_c(X,\_),\otimes) \xra{\iota}(\bD^b_{c}(X,\_),\otimes^L).
$$
Moreover, the morphism of unispaces 
$$
(\cS h_{c,flat}(X,\_),\otimes) \xra{\iota}(\bD^b_{c}(X,\_),\otimes^L)
$$
obtained by composition with $\cS h_{c,flat}(X,\_)\ra\cS h_c(X,\_)$, is  a natural transformation.
\end{prop}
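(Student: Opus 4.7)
The plan is to treat the two parts separately: for the general (possibly non-flat) case, invoke the generic base change criterion of Proposition~\ref{propMG}(1); for the flat case, exhibit an honest natural transformation of functors that descends to a morphism of unispaces.

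For the first assertion, define $F(R):\cS h_c(X,R)\to\bD^b_c(X,R)$ by $\cF_R\mapsto\iota(\cF_R)=\cF_R[0]$. To apply Proposition~\ref{propMG}(1) I fix an integral domain $R\in\cA lg_{ft,reg}(\bC)$ and a constructible sheaf $\cF_R$, and must produce an open dense $U\subset\spec(R)(\bC)$ on which the base change identity
$$
F(R)(\cF_R)\otimes^L_R R/m \;=\; F(R/m)(\cF_R\otimes_R R/m)
$$
holds in $\bD^b_c(X,\bC)$. The right-hand side is $(\cF_R\otimes_R R/m)[0]$, so the desired equality amounts to the vanishing of the negative cohomology sheaves $\cH^{-j}(\cF_R\otimes^L_R R/m)$ for $j>0$, which is a stalk-wise statement. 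Because $\cF_R$ is constructible on the algebraic variety $X$, it is locally constant along a finite algebraic stratification whose strata have finitely many connected components, so its stalks fall into finitely many isomorphism classes $M_1,\dots,M_n$ of finitely generated $R$-modules. A finitely generated module over an integral domain is generically free on $\spec R$, so there is a common open dense $U\subset\spec(R)(\bC)$ over which every $M_i$ is $R$-flat; for $m\in U$ all the higher Tors vanish stalk-wise and the claimed base-change identity follows. Proposition~\ref{propMG}(1) then gives the sought morphism of unispaces.

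For the second assertion, restrict $F(R)$ to $\cS h_{c,flat}(X,R)$. By definition, flatness means every stalk is a flat $R$-module, so for any morphism $R\to R'$ in $\cA lg_{ft,reg}(\bC)$ the derived and underived tensor products agree on stalks, giving $\iota(\cF_R)\otimes^L_R R'=\iota(\cF_R\otimes_R R')$ in $\bD^b_c(X,R')$ without passing to a dense open subset. This is exactly the naturality square for $\iota$, so $\iota$ is an honest natural transformation of functors $\cS h_{c,flat}(X,\_)\to\bD^b_c(X,\_)$, and Lemma~\ref{lemNat} then identifies this with the corresponding morphism of unispaces.

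The only real obstacle is the control of the higher Tor sheaves in the non-flat case; once one uses the finiteness of the defining stratification to reduce to finitely many finitely generated stalk modules, the classical generic freeness theorem over an integral domain takes care of the rest, and the flat case is automatic.
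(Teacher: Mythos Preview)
Your proof is correct and follows essentially the same approach as the paper: both invoke the generic base change criterion (Proposition~\ref{propMG}), reduce to stalks, use constructibility to get finitely many stalk isomorphism types, and apply generic freeness/flatness to obtain a common dense open; the flat case is handled identically by observing that derived and underived tensor products agree everywhere. The only minor difference is that the paper phrases the stalk argument as ``for each $x$ there is a dense open $U_x$, and only finitely many $U_x$ occur,'' whereas you enumerate the stalk modules $M_1,\dots,M_n$ directly---these are the same argument.
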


\begin{proof} In this and the subsequent proofs, we will consistently use the criterion of \ref{subUniII} to produce morphisms of unispaces.

For $\cF_R\in\cS h_{}(X,R)$ and a point $x\in X$, the stalk of $\cF_{R,x}$ is an $R$-module of finite type. We can assume that $R\in\cA lg_{ft,reg}(\bC)$ is an integral domain, see \ref{subUniII}. Hence there exists an  open dense $U_x\in\spec(R)$ such that 
$$
(\iota\cF_{R,x})\otimes^L_R R/m = \iota(\cF_{R,x}\otimes _R R/m)
$$ 
for all $m\in U_x(\bC)$. Since $\cF_R$ is a constructible sheaf on $X$, there only finitely many sets $U_x$ as $x$ varies on $X$. Hence the intersection $U=\cap_xU_x$ is open dense and satisfies that for every $m\in U(\bC)$, 
$$
(\iota\cF_{R})\otimes^L_R R/m = \iota(\cF_{R}\otimes _R R/m).
$$ 
By Proposition \ref{propMG}, one has then a morphism  $
\iota:\cS h_{c}(X,\_)\ra\bD^b_{c}(X,\_)
$
in $\cU ni$. The second claim is similar, but now the flatness guarantees that the usual tensor products of $\cF_R\otimes_RR'$ is the same as the derived tensor product $\iota\cF_R\otimes^L_RR'$, hence one has a natural transformation of functors.
\end{proof}

\begin{rmk}
We have in fact morphisms of unispaces
$$
\cS h_c(X,\_)\ra(\bD^{\le 0}_c(X,\_),\bD^{\ge 0}_c(X,\_))\rightrightarrows\bdcx
$$
where the middle two unispaces are defined as in Remark \ref{subCC}.
\end{rmk}

\subsection{Shift functor}\label{exShift} The following is clear.

\begin{prop} Let $X$ be a topological space. The shift functors define a natural transformation, hence a morphism, of unispaces
$$
[n]: \bD(X,\_)\lra  \bD(X,\_).
$$
One can replace $\bD(X,\_)$ with  $\bD^b_{c}(X,\_)$ if $X$ is a complex algebraic variety.
\end{prop}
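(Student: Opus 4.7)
The plan is to verify that the shift functor is compatible with the base change operation $\star$ that gives $\bD(X,\_)$ its unispace structure, and then invoke Lemma \ref{lemNat} to conclude we get a morphism.

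First I would check that, for each $R\in\cA lg_{ft,reg}(\bC)$, the assignment $\cF_R\mapsto\cF_R[n]$ respects isomorphism classes, so it descends to a map $[n]:\iso(\bD(X,R))\to\iso(\bD(X,R))$. The substantive step is then to show naturality in $R$: for every morphism $R\to R'$ in $\cA lg_{ft,reg}(\bC)$ and every $\cF_R\in \bD(X,R)$, the identification
$$
(\cF_R[n])\otimes^L_R R' \;\cong\; (\cF_R\otimes^L_R R')[n]
$$
holds in $\bD(X,R')$. This is standard: computing $\otimes^L_R$ via a $K$-flat resolution $P^\bullet\to\cF_R$ and using that $P^\bullet[n]$ is still $K$-flat, the shift merely relabels the degrees of the underlying double complex and commutes with the tensor product, which is termwise. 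These isomorphisms are functorial in $\cF_R$ and compatible with compositions $R\to R'\to R''$, yielding a genuine natural transformation of functors
$$
[n]:\bD(X,\_)\longrightarrow \bD(X,\_).
$$
By the observation in the definition following Lemma \ref{lemBarC} (equivalently, by Lemma \ref{lemNat}), a natural transformation of unispaces induces a morphism of unispaces, giving the first claim.

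For the second claim, with $X$ a complex algebraic variety, I would note that the shift functor preserves the defining conditions of $\bD^b_c(X,R)$: if $\cF_R$ has bounded cohomology with constructible cohomology sheaves, then so does $\cF_R[n]$, because $\cH^i(\cF_R[n])=\cH^{i+n}(\cF_R)$. Hence the shift restricts to a functor $\bD^b_c(X,R)\to\bD^b_c(X,R)$, and the same base-change identity above (which lives inside $\bD^b_c(X,R')$ once we start in $\bD^b_c(X,R)$, using the stability result \cite[Thm.~4.0.2]{Sch} invoked earlier to ensure $\otimes^L$ preserves $\bD^b_c$) gives the corresponding natural transformation $[n]:\bD^b_c(X,\_)\to \bD^b_c(X,\_)$.

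There is no real obstacle here; the only mild point to keep in mind is that the derived tensor product is only associative and commutes with shifts up to natural isomorphism, so one should work at the level of isomorphism classes, which is exactly what the unispace formalism of Example \ref{ex1}(1) prescribes. No appeal to Proposition \ref{propMG} (generic base change) is needed, since the identity holds on the nose rather than only generically.
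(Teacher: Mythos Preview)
Your proposal is correct and matches the paper's approach: the paper simply declares the result ``clear'' without further argument, and your verification that $[n]$ commutes with the base change $\otimes^L_R R'$ is exactly the content that makes it so. Your write-up is more detailed than necessary but contains no errors.
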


\subsection{Perversity} Let $X$ be a complex algebraic variety. We denote by $${}^p\bD^{\le 0}(X,\bC), {}^p\bD^{\ge 0}(X,\bC)$$ the full subcategories  of $\bD^b_c(X,\bC)$ consisting of objects $\cF$ such that
$$
\dim  \{x\in X\mid \cH^i(i^{-1}_x\cF)\ne 0\} \le -i, \forall i\in \bZ,
$$
$$
\dim  \{x\in X\mid \cH^i(i_x^!\cF)_x\ne 0\} \le -i, \forall i\in \bZ,
$$
respectively, where $i_x:\{x\}\ra X$ is the inclusion.  We denote by $$Perv(X,\bC)={}^p\bD^{\le 0}(X,\bC)\cap {}^p\bD^{\ge 0}(X,\bC),$$
the category of perverse sheaves of $\bC$-modules. The notion of perversity we use is the  classical middle-perversity. The perverse truncation functors 
$${}^p\tau_{\le 0}:\bdcxr\ra {}^p\bD^{\le 0}(X,\bC),\quad {}^p\tau_{\ge 0}:\bdcxr\ra {}^p\bD^{\ge 0}(X,\bC),$$
are well-defined and give a $t$-structure whose heart is $Perv(X,\bC)$, see \cite{BBD, HTT}. Thus the perverse cohomology functors
$$
{}^p\cH^i:\bD^b_c(X,\bC)\ra Perv(X,\bC)
$$
are also well-defined.

\begin{defn}\label{defNonStd}  Let $R\in\cA lg_{ft, reg}(\bC)$. Define
$$
{{}^p\bD^{\le 0}(X,R)}, {{}^p\bD^{\ge 0}(X,R)}, {Perv(X,R)},
$$
to be 
$$
{}^p\bD^{\le 0}(X,R)=\{\cF_R\in\bD^b_c(X,R)\mid \cF_R\otimes^L_RR/m\in {}^p\bD^{\le 0}(X,\bC)\text{ for all }m\in\spec(R)(\bC)\},
$$ 
$$
{}^p\bD^{\ge 0}(X,R)=\{\cF_R\in\bD^b_c(X,R)\mid \cF_R\otimes^L_RR/m\in {}^p\bD^{\ge 0}(X,\bC)\text{ for all }m\in\spec(R)(\bC)\},
$$
$$
Perv(X,R)=\{\cF_R\in\bD^b_c(X,R)\mid \cF_R\otimes^L_RR/m\in Perv(X,\bC)\text{ for all }m\in\spec(R)(\bC)\},
$$
as in subsection  \ref{subSub}. Then these assignments define unispaces together with a Cartesian diagram of natural transformations of functors
$$
Perv(X,\_)\ra ({}^p\bD^{\le 0}(X,\_),{}^p\bD^{\ge 0}(X,\_)) \rightrightarrows\bD^b_c(X,\_).$$
\end{defn}

\subsection{Conclusion}
We summarize the situational morphisms between the unispaces defined in this section in the following:

\begin{prop}\label{propSit}
Let $X$ be a complex algebraic variety of dimension $n$. 
\begin{enumerate}
\item Consider the { diagram} of categories and  functors 
$$
\xymatrix{
\cL oc\cS ys (X,\bC) \ar[r]^{\quad\iota[n]}\ar[d]& Perv(X,\bC) \ar[r] & ({}^p\bD^{\le 0}(X,\bC) ,  {}^p\bD^{\ge 0}(X,\bC))\ar@<-.5ex>[r] \ar@<.5ex>[r]&  \bD^b_c(X,\bC)\\
\cS h_c(X,\bC) \ar[rr]^{{\iota}}  &  & (\bD^{\le 0}(X,\bC) ,  \bD^{\ge 0}(X,\bC)) \ar@<-.5ex>[ru] \ar@<.5ex>[ru]& },
$$
where functor $\iota[n]$ is  defined only if $X$ is smooth. Then this diagram underlies a diagram of morphism of unispaces
$$
\xymatrix{
\cL oc\cS ys_{flat}(X,\_) \ar[r]^{\,\quad\iota[n]}\ar[d]& Perv(X,\_) \ar[r] & ({}^p\bD^{\le 0}(X,\_) ,  {}^p\bD^{\ge 0}(X,\_))\ar@<-.5ex>[r] \ar@<.5ex>[r]&  \bD^b_c(X,\_)\\
\cS h_{c, flat}(X,\_) \ar[rr]^{\quad\iota}\ar[d]  & & (\bD^{\le 0}(X,\_) ,  \bD^{\ge 0}(X,\_)) \ar@<-.5ex>[ru] \ar@<.5ex>[ru]& \\
 \cS h_{c}(X,\_)\ar@{-->}[rru]^\iota& & & 
}
$$
Here, the unispaces on the left column have as base change the usual tensor product $\otimes$. All the other unispaces have as base change  the derived tensor product $\otimes^L$. 
All the maps are natural transformations of functors, except the dotted arrow which is only a morphism of unispaces. 
\item The above diagram induces a diagram of morphisms unispaces of type $\homo(\sC)$ as in Example \ref{ex1} (5). 
\end{enumerate}
\end{prop}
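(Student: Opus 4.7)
The plan is to verify each arrow in the diagram separately, reusing the morphisms already constructed in this section and invoking the generic base change criterion of Proposition \ref{propMG} only where a genuinely new check is required. All of the sub-unispace inclusions $Perv(X,\_) \ra ({}^p\bD^{\le 0}(X,\_), {}^p\bD^{\ge 0}(X,\_)) \rightrightarrows \bD^b_c(X,\_)$, and the analogous ordinary $t$-structure inclusions $(\bD^{\le 0}(X,\_), \bD^{\ge 0}(X,\_)) \rightrightarrows \bD^b_c(X,\_)$, are natural transformations directly from the construction of the sub-unispaces via fiber-wise conditions on $\cF_R \otimes^L_R R/m$ (Remark \ref{subCC} and Definition \ref{defNonStd}); the base change $\otimes^L_R R'$ simply restricts along the inclusion. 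The vertical arrows $\cL oc\cS ys_{flat}(X,\_) \ra \cS h_{c,flat}(X,\_) \ra \cS h_c(X,\_)$ are natural transformations as well, being full-subcategory inclusions compatible with $\otimes$.

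For the horizontal arrow $\iota: \cS h_{c,flat}(X,\_) \ra \bD^b_c(X,\_)$, naturality was already shown in the Embedding Proposition: flatness guarantees $\iota(\cF_R \otimes_R R') \simeq \iota\cF_R \otimes^L_R R'$ on the nose. The refinement to the ordinary $t$-structure pair $(\bD^{\le 0}(X,\_), \bD^{\ge 0}(X,\_))$ is automatic because a sheaf placed in degree $0$ is in both truncation ranges. The dotted arrow $\cS h_c(X,\_) \dashrightarrow \bD^b_c(X,\_)$ is precisely the content of the first part of the Embedding Proposition: generic flatness of the finitely many distinct stalks (using that $R$ is regular Noetherian and constructibility bounds the number of stalk types) combined with Proposition \ref{propMG} yields a morphism of unispaces. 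Without flatness, $\cF_R \otimes_R R/m$ and $\cF_R \otimes^L_R R/m$ need not agree for every $m$, so one cannot upgrade to a natural transformation; this is the reason for the dotted notation.

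The new arrow to verify is $\iota[n]: \cL oc\cS ys_{flat}(X,\_) \ra Perv(X,\_)$, defined when $X$ is smooth of complex dimension $n$. First check that the target is correct: for a flat local system $L_R \in \cL oc\cS ys_{flat}(X,R)$, flatness gives $(\iota L_R[n]) \otimes^L_R R/m = \iota(L_R \otimes_R R/m)[n]$, a $\bC$-local system on the smooth variety $X$ shifted by $n$, which is perverse; hence $\iota L_R[n] \in Perv(X,R)$ by Definition \ref{defNonStd}. Naturality with respect to base change follows by composing the two natural transformations $\iota$ (on the flat subcategory) and $[n]$ (the shift, see the preceding subsection). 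Finally, part (2) follows formally: the $\homo(\sC)$ construction of Example \ref{ex1}(5) is functorial in $\sC$, sending a morphism (resp.\ natural transformation) of unispaces to a morphism (resp.\ natural transformation) between the morphism-unispaces, so applying $\homo(-)$ objectwise to the diagram of (1) produces the corresponding diagram asserted in (2).

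The main obstacle is simply a bookkeeping one: one must keep straight which arrows are genuinely natural transformations (requiring an on-the-nose base change identity) and which are only morphisms of unispaces (requiring only a generic base change identity via Proposition \ref{propMG}). The substantive input — that constructible sheaves have generically flat stalks over regular $R$, that shifts commute with tensor, and that flat local systems stay flat and locally constant under base change — has been isolated in the preceding subsections, so the proof reduces to assembling these pieces along the diagram.
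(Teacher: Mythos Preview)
Your proposal is correct and follows the same approach as the paper. The paper in fact presents this proposition as a summary (labeled ``Conclusion'') with no explicit proof, since each arrow was established in the preceding subsections; your write-up correctly identifies those pieces and assembles them, including the key distinction between genuine natural transformations and the dotted arrow that is only a morphism of unispaces via generic base change.
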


\section{Natural functors as  morphisms of unispaces}

This section is dedicated to proving that  any natural (derived) functor on (derived) categories of constructible sheaves on $\bC$-algebraic varieties gives a morphism of unispaces. Since we do know how to define ``any" in the previous sentence, we will work with the usual list of functors that can be used to generate many, possibly ``any", other natural functors.

\subsection{Derived tensor product}

\begin{prop}\label{propTen}
Let $X$ be a topological space. There is a natural transformation, hence a morphism, of unispaces
$$
\otimes^L: \bD(X,\_)\times \bD(X,\_) \lra \bD(X,\_)
$$
$$
(\cF_R,\cG_R)\mapsto \cF_R\otimes^L_R\cG_R.
$$
One can replace $\bD(X,\_)$ with  $\bD^b_{c}(X,\_)$ if $X$ is a complex algebraic variety.
\end{prop}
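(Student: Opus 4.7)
The plan is to verify directly that $(\cF_R, \cG_R) \mapsto \cF_R \otimes^L_R \cG_R$ underlies a natural transformation between the product unispace $R \mapsto \iso(\bD(X,R))^2$ and $R \mapsto \iso(\bD(X,R))$. Concretely, for each morphism $R \to R'$ in $\cA lg_{ft, reg}(\bC)$ and each pair $(\cF_R, \cG_R) \in \bD(X,R)^2$, I need a canonical isomorphism
$$
(\cF_R \otimes^L_R \cG_R) \otimes^L_R R' \;\simeq\; (\cF_R \otimes^L_R R') \otimes^L_{R'} (\cG_R \otimes^L_R R')
$$
in $\bD(X,R')$, functorial in $(\cF_R, \cG_R)$ and compatible with further base changes $R' \to R''$. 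By Lemma \ref{lemNat} this will automatically promote to a morphism of unispaces.

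First I would reduce to ordinary tensor products via K-flat resolutions. Following \cite[Prop.~6.5]{Sp} as already invoked in Section \ref{subDer}, pick K-flat resolutions $P_\cF \to \cF_R$ and $P_\cG \to \cG_R$ over the sheaf of rings $R_X$. Then $P_\cF \otimes_{R_X} P_\cG$ represents $\cF_R \otimes^L_R \cG_R$ and is itself K-flat over $R_X$, while $P_\cF \otimes_{R_X} R'_X$ and $P_\cG \otimes_{R_X} R'_X$ are K-flat as complexes of $R'_X$-modules by the usual extension-of-scalars argument: for any acyclic complex $M$ of $R'_X$-modules, $(P_\cF \otimes_{R_X} R'_X) \otimes_{R'_X} M = P_\cF \otimes_{R_X} M$ is acyclic by K-flatness of $P_\cF$ over $R_X$. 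Hence both sides of the display above are represented by plain tensor products of these resolutions.

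The main step is then the elementary identity of sheaves of modules
$$
(P_\cF \otimes_{R_X} R'_X) \otimes_{R'_X} (P_\cG \otimes_{R_X} R'_X) \;\cong\; (P_\cF \otimes_{R_X} P_\cG) \otimes_{R_X} R'_X,
$$
which follows from associativity and commutativity of $\otimes$ together with the cancellation $R'_X \otimes_{R'_X} N = N$. It is manifestly natural in $(P_\cF, P_\cG)$ and in $R \to R'$, and a straightforward diagram check confirms compatibility with compositions $R \to R' \to R''$ in $\cA lg_{ft, reg}(\bC)$. For the bounded constructible variant, I would invoke the stability of $\bD^b_c(X,\_)$ under $\otimes^L$ already recorded in Section \ref{subDer} via \cite[Thm.~4.0.2]{Sch}, so the natural transformation just constructed restricts to one between $\bD^b_c(X,\_) \times \bD^b_c(X,\_)$ and $\bD^b_c(X,\_)$. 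I do not expect any serious obstacle; the only care required is ensuring that the K-flat resolutions used to represent $\otimes^L$ assemble into a coherent system under base change, which is exactly what Spaltenstein's construction guarantees.
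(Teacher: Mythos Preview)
Your proof is correct and follows essentially the same approach as the paper: both establish the base-change identity
\[
(\cF_R \otimes^L_R \cG_R) \otimes^L_R R' \;\simeq\; (\cF_R \otimes^L_R R') \otimes^L_{R'} (\cG_R \otimes^L_R R')
\]
and invoke \cite[Thm.~4.0.2]{Sch} for the constructible case. The only difference is that the paper's proof simply cites ``associativity of the derived tensor product'' for this identity, whereas you spell out the K-flat resolution argument underlying it.
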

\begin{proof}
It follows from the associativity of the derived tensor product that for $\cF_R$ and $\cG_R$ in $\bD(X,R)$, and $R\ra R'\in\cA lg_{ft, reg}(\bC)$,
$$
(\cF_R\otimes^L_RR')\otimes^L_{R'}(\cG_R\otimes^L_RR') = (\cF_R\otimes^L_R\cG_R)\otimes^L_RR',
$$
for all $m$ in $\spec(R)(\bC)$.  The stability of  $\bD^b_c(X,R)$ under $\otimes^L$ for the algebraic case is shown, as mentioned before, in \cite[Theorem 4.0.2]{Sch}.
\end{proof}

\subsection{Truncation functors} 

\begin{prop}\label{propTrunc} Let $X$ be a complex algebraic variety. The truncation functors define morphisms of unispaces for every $n\in\bZ$
$$
\tau_{\le n}, \tau_{\ge n} :\bD^b_c(X,\_)\lra  \bD^b_c(X,\_).
$$
\end{prop}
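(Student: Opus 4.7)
The plan is to apply the generic base change criterion of Proposition \ref{propMG}. For each $R\in\cA lg_{ft, reg}(\bC)$, the truncation $\tau_{\le n}$ restricts from $\bD(X,R)$ to a well-defined functor on $\bD^b_c(X,R)$, since constructibility and boundedness are preserved by the standard truncation functors. This produces set maps $\tau_{\le n}(R) : \iso(\bD^b_c(X,R)) \to \iso(\bD^b_c(X,R))$, so by Proposition \ref{propMG} it suffices to establish, for each integral domain $R$ and each $\cF_R \in \bD^b_c(X,R)$, the generic base-change isomorphism
\[
\tau_{\le n}(\cF_R) \otimes^L_R R/m \;\cong\; \tau_{\le n}\bigl(\cF_R \otimes^L_R R/m\bigr)
\]
for $m$ in an open dense subset of $\spec(R)(\bC)$.

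First I would reduce the claim to a generic flatness statement. Because $\cF_R$ is bounded with constructible cohomology, there is a finite stratification of $X$ relative to which each cohomology sheaf $\cH^i(\cF_R)$ is locally constant, and only finitely many degrees $i$ contribute. Combined with the identification $\bD^b_c(X,R)=\bD_{c,\,perf}(X,R)$ recorded in Section \ref{subDer}, this means only finitely many finitely generated $R$-modules (the stalks of the $\cH^i(\cF_R)$ over the finite strata) enter the picture. Over an integral domain, any finitely generated module is generically free, so intersecting the finitely many generic loci yields an open dense $U \subset \spec(R)(\bC)$ over which every stalk of every cohomology sheaf of $\cF_R$ is $R$-flat.

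For $m \in U$, stalk-wise flatness kills all higher Tor, so the universal coefficient spectral sequence degenerates and gives $\cH^i(\cF_R \otimes^L_R R/m) = \cH^i(\cF_R)\otimes_R R/m$, and likewise for $\tau_{\le n}(\cF_R)$. Applying $(\_)\otimes^L_R R/m$ to the truncation triangle
\[
\tau_{\le n}\cF_R \lra \cF_R \lra \tau_{>n}\cF_R \xra{+1}
\]
yields a distinguished triangle whose first and third terms lie stalk-wise in $\bD^{\le n}$ and $\bD^{>n}$ respectively. By uniqueness of truncation triangles for the standard $t$-structure, this identifies $\tau_{\le n}(\cF_R) \otimes^L_R R/m$ with $\tau_{\le n}(\cF_R \otimes^L_R R/m)$. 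An entirely symmetric argument, using the same flatness locus, handles $\tau_{\ge n}$.

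I expect the main obstacle to be bookkeeping rather than anything conceptual: ensuring that only finitely many pairs $(i,x)$ of degree and stratum stalks control the flatness condition, so that the generic flatness locus remains open and dense in $\spec(R)(\bC)$. Boundedness of $\cF_R$ together with the finite constructibility stratification make this step routine, after which Proposition \ref{propMG} promotes the set-level truncation maps to the desired morphisms of unispaces.
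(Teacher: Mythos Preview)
Your proposal is correct and takes a cleaner, slightly different route than the paper. Both arguments reduce, via the finite constructibility stratification and boundedness, to a generic flatness statement over $\spec(R)$ involving only finitely many finitely generated $R$-modules. From there the paths diverge: the paper constructs an explicit comparison map $\tau_{\le 0}(\cF_R)\otimes^L_R R/m \to \tau_{\le 0}(\cF_R\otimes^L_R R/m)$, passes to stalks, replaces the complex by a bounded free resolution, and checks directly that the kernel of $d^0$ is generically free so that truncation commutes with $\otimes_R R/m$. You instead observe that once the cohomology sheaves $\cH^i(\cF_R)$ are generically $R$-flat, the universal-coefficient spectral sequence collapses, so tensoring the truncation triangle with $R/m$ yields a triangle whose outer terms sit in $\bD^{\le n}$ and $\bD^{>n}$; the uniqueness of such triangles for the standard $t$-structure then forces the identification. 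Your argument is more categorical and avoids any explicit resolution, at the cost of invoking the $t$-structure uniqueness property; the paper's argument is more hands-on and exhibits the comparison map directly. Both land on the same generic base-change conclusion required by Proposition~\ref{propMG}.
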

\begin{proof} Since the shift functors give morphisms of unispaces, it is enough to prove the claim for $n=0$. Let us focus on $\tau_{\le 0}$ now. We need to show that for $R\in\cA lg_{ft, reg}(\bC)$ and $\cF_R\in\bD^b_c(X,R)$, there is a quasi-isomorphism
$$
\tau_{\le 0}(\cF_R)\otimes^L_RR/m=\tau_{\le 0}(\cF_R\otimes^L_RR/m)
$$
for all $m$ in an open dense subset of $\spec(R)$, where we can assume $R$ is an integral domain, see \ref{subUniII}.

For any complex $\cF_R$, there is a canonical morphism of complexes
$$
\tau_{\le 0}(\cF_R)\ra\cF_R.
$$
Applying $\otimes^L_RR/m$, one has a morphism in $\bD^b_c(X,R/m)$
$$
\tau_{\le 0}(\cF_R)\otimes^L_RR/m\ra\cF_R\otimes^L_RR/m,
$$
which is represented by a morphism of complexes 
$$
\tau_{\le 0}(\cF_R)\otimes_R\cP_{R/m}\ra\cF_R\otimes_R\cP_{R/m},
$$
by using a flat resolution $\cP_{R/m}$ of $(R/m)_X$.
Applying $\tau_{\le 0}$ again, 
one has a morphism in $\bD^b_c(X,R/m)$
$$
\tau_{\le 0}(\tau_{\le 0}(\cF_R)\otimes^L_RR/m)\ra\tau_{\le 0}(\cF_R\otimes^L_RR/m).
$$
represented by a morphism of complexes as before.
However, the term on the left is $
\tau_{\le 0}(\cF_R)\otimes^L_RR/m$, hence we have a canonical morphism in $\bD^b_c(X,R/m)$
\be\label{eqTau}
\tau_{\le 0}(\cF_R)\otimes^L_RR/m\ra\tau_{\le 0}(\cF_R\otimes^L_RR/m),
\ee
represented by the morphism of complexes
$$
\tau_{\le 0}(\cF_R)\otimes_R\cP_{R/m}\ra\tau_{\le 0}(\cF_R\otimes_R\cP_{R/m}).
$$
To show this is a quasi-isomorphism for $m$ generic, it is enough to show it for all stalks. For the stalks at $x\in X$, this follows if we can show: given $F$ a bounded  complex of modules over $R$ with cohomology of finite type, there exists an open dense $U_x\in\spec(R)(\bC)$ such that for all $m\in U_x$, the morphism
\be\label{eqRTau}\tau_{\le 0}(F)\otimes^L_R{R/m}\ra\tau_{\le 0}(F\otimes^L_R{R/m}).
\ee
is a quasi-isomorphism. If this is true, (\ref{eqTau}) is a quasi-isomorphism for all $m$ in $U=\cap_xU_x$ which is also open dense  since there are only finitely many stalks to consider by the assumption ``bounded and constructible" on an algebraic variety. Thus we would be done.

Let us show now that (\ref{eqRTau}) is a quasi-isomorphism for $m$ generic. Let $\widetilde{F}\xra{\sim}F$ be a resolution of $F$ with $\widetilde{F}$ a bounded  complex of finite type free $R$-modules. Let $K$ be the kernel of $d_F^0:F^0\ra F^1$, so that $$\tau_{\le 0}(F)=[\ldots\ra \widetilde{F}^{-2}\ra \widetilde{F}^{-1}\ra K].$$
Since $R$ is an integral domain, $K$ is free in a dense open subset of $\spec(R)$. Replacing $\spec(R)$ by this open dense, $K$ is a direct summand of $F^0$, and thus $\tau_{\le 0}(F)$ is a direct summand as a complex of $\widetilde{F}$. On the other hand, since generically $K\otimes_RR/m=ker(d_F^0\otimes id_{R/m})$,
$$
\tau_{\le 0}(F\otimes^L_R{R/m})=\tau_{\le 0}(\widetilde{F}\otimes_R{R/m})=$$ $$=[\ldots\ra \widetilde{F}^{-2}\otimes_RR/m\ra \widetilde{F}^{-1}\otimes_RR/m\ra K\otimes_RR/m] =$$
$$
=[\ldots\ra \widetilde{F}^{-2}\ra \widetilde{F}^{-1}\ra K] \otimes_RR/m,
$$
which is what we wanted to prove.

The proof for $\tau_{\ge 0}$ is similar: using the morphism at level of complexes $\cF_R\ra\tau_{\ge 0}(\cF_R)$ we reduce the problem to stalks, hence to complexes of $R$-modules, and $ker(d_F^0)$ above is replaced by $coker(d_F^{-1})$.

\end{proof}

\subsection{Cohomology sheaves}

\begin{prop}\label{propCU} Let $X$ be a complex algebraic variety. Taking cohomology sheaves gives a morphism of unispaces for all $i\in\bZ$,
$$
\cH^i: \bD_{c}^b(X,\_)\lra \cS h_{c}(X,\_). 
$$
\end{prop}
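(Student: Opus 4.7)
\medskip

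\noindent\textbf{Proof plan.} By the shift morphism Proposition already established (subsection \ref{exShift}), we have $\cH^i(\cF_R)=\cH^0(\cF_R[i])$, so it suffices to treat $i=0$ and construct $\cH^0:\bD^b_c(X,\_)\to\cS h_c(X,\_)$. I would invoke Proposition \ref{propMG} (generic base change): assume $R\in\cA lg_{ft,reg}(\bC)$ is an integral domain, fix $\cF_R\in\bD^b_c(X,R)$, and verify that
$$
\cH^0(\cF_R)\otimes_R R/m \;=\; \cH^0(\cF_R\otimes^L_R R/m)
$$
as sheaves in $\cS h_c(X,R/m)=\cS h_c(X,\bC)$, for every $m$ in some open dense subset of $\spec(R)(\bC)$. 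Since constructibility guarantees only finitely many isomorphism classes of stalks, by intersecting open dense subsets I may reduce to checking the equality stalkwise at each point $x\in X$, which in turn amounts to the following claim for bounded complexes of $R$-modules with finitely generated cohomology.

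\medskip

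\noindent\textbf{Stalk claim.} For a bounded complex $F^\bullet$ of $R$-modules whose cohomology modules $\cH^j(F^\bullet)$ are finitely generated, there is a dense open $U\subset \spec(R)(\bC)$ such that $\cH^0(F^\bullet)\otimes_R R/m = \cH^0(F^\bullet\otimes^L_R R/m)$ for all $m\in U$. To prove this, first replace $F^\bullet$ by a quasi-isomorphic bounded complex $\widetilde F^\bullet$ of finite rank free $R$-modules, so that $F^\bullet\otimes^L_R R/m=\widetilde F^\bullet\otimes_R R/m$. Write $Z^j=\ker d^j$, $B^j=\operatorname{im} d^{j-1}$, so that $\cH^j(F^\bullet)=Z^j/B^j$. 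Only finitely many $j$ are relevant. Since $R$ is a Noetherian integral domain, each of the finitely many finitely generated $R$-modules $Z^j,B^j,\cH^j(F^\bullet)$ is free after inverting a suitable nonzero element of $R$; taking the intersection of the corresponding dense opens yields a single dense open $U$ over which all of them are simultaneously free (hence flat).

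\medskip

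Over this $U$, the short exact sequences
$$0\to Z^j\to \widetilde F^j\to B^{j+1}\to 0,\qquad 0\to B^j\to Z^j\to \cH^j(F^\bullet)\to 0$$
have flat (indeed free) quotients, hence remain exact after $\otimes_R R/m$. The first shows $Z^0\otimes_R R/m=\ker(d^0\otimes \mathrm{id})$, and the surjection $\widetilde F^{-1}\twoheadrightarrow B^0$ together with flatness of $Z^0$ shows $B^0\otimes_R R/m=\operatorname{im}(d^{-1}\otimes\mathrm{id})$ inside $\widetilde F^0\otimes_R R/m$. Combining, $\cH^0(F^\bullet)\otimes_R R/m=\cH^0(\widetilde F^\bullet\otimes_R R/m)=\cH^0(F^\bullet\otimes^L_R R/m)$, which proves the stalk claim. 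The generic base-change hypothesis of Proposition \ref{propMG} is thereby verified and yields the desired unispace morphism. The main (mildly technical) obstacle is the bookkeeping of reducing to stalks while guaranteeing that only finitely many intersections of dense opens are needed, which is handled by the finiteness built into the constructibility and boundedness assumptions on $\cF_R$.
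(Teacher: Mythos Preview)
Your proof is correct. The paper's proof is much shorter: it simply observes that $\cH^i=\tau_{\le i}\circ\tau_{\ge i}$ and cites Proposition \ref{propTrunc}, where the truncation functors were already shown to be morphisms of unispaces via precisely the kind of stalkwise generic-freeness argument you carry out here. So your approach and the paper's are essentially the same at the level of content; you have just unpacked the argument directly for $\cH^0$ rather than factoring through the truncations. One small advantage of your direct route is that it makes explicit why the target can be taken to be $(\cS h_c(X,\_),\otimes)$ rather than $(\bD^b_c(X,\_),\otimes^L)$: your generic base-change identity is written with the ordinary tensor on the left-hand side, whereas the composition $\tau_{\le i}\circ\tau_{\ge i}$ a priori only gives a morphism into $\bD^b_c(X,\_)$, and one needs the further generic equality $\iota\cH\otimes^L_R R/m=\iota(\cH\otimes_R R/m)$ (established in the proof that $\iota$ is a unispace morphism) to descend to $\cS h_c(X,\_)$.
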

\begin{proof} $\cH^i$ is the composition of morphisms of unispaces
$
\tau_{\le i} \circ \tau_{\ge i},
$
hence it is a morphism.
\end{proof}

\begin{rmk}
The truncation and cohomology functors do not define natural transformations of unispaces.
\end{rmk}

\subsection{Inner hom functor}

\begin{prop}\label{propRHoo} Let $X$ be a complex algebraic variety. The inner hom functors give natural transformations, hence morphisms, of unispaces
$$
R\cH om:\bD^b_{c}(X,\_)^{op} \times \bD^b_{c}(X,\_)\lra \bD^b_{c}(X,\_)
$$
$$
(\cF_R,\cG_R)\mapsto R\cH om_R (\cF_R,\cG_R).
$$
\end{prop}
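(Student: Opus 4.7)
The plan is to apply the natural-transformation version of the criterion in Proposition \ref{propMG}: exhibit, for every morphism $R \to R'$ in $\cA lg_{ft, reg}(\bC)$ and every pair $(\cF_R,\cG_R) \in \bD^b_c(X,R)^{op}\times \bD^b_c(X,R)$, a canonical isomorphism
$$
R\cH om_R(\cF_R,\cG_R)\otimes^L_R R' \xrightarrow{\ \sim\ } R\cH om_{R'}\bigl(\cF_R\otimes^L_R R',\,\cG_R\otimes^L_R R'\bigr),
$$
compatible with composition of base changes. Unlike in the proofs of Propositions \ref{propTen} and \ref{propTrunc}, no generic-$m$ hypothesis should be needed: the isomorphism should hold on the nose, yielding a natural transformation of unispaces, not merely a morphism.

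First, I would construct the canonical morphism using the tensor--Hom adjunction. Applying the functor $(-)\otimes^L_R R'$ to the evaluation map $R\cH om_R(\cF_R,\cG_R)\otimes^L_R \cF_R \to \cG_R$ gives a morphism with target $\cG_R\otimes^L_R R'$, and taking its adjoint under the adjunction $\bigl((-)\otimes^L_{R'}(\cF_R\otimes^L_R R')\bigr) \dashv R\cH om_{R'}(\cF_R\otimes^L_R R',-)$ produces the desired morphism. Its naturality in both variables and its compatibility with further base change along $R'\to R''$ are formal consequences of the adjunction calculus.

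Second, to show this morphism is a quasi-isomorphism, I would check it stalkwise at each point $x\in X$. The key input is the identity $\bD^b_c(X,R) = \bD_{c,\,perf}(X,R)$ recalled in subsection \ref{subDer}, which forces $(\cF_R)_x$ to be a perfect complex of $R$-modules; the constructibility of $\cF_R,\cG_R$ along a common finite stratification then yields an identification $(R\cH om_R(\cF_R,\cG_R))_x \simeq R\homo_R((\cF_R)_x,(\cG_R)_x)$. The problem reduces to the classical module-level base-change isomorphism
$$
R\homo_R(P,M)\otimes^L_R R' \xrightarrow{\ \sim\ } R\homo_{R'}(P\otimes^L_R R',\, M\otimes^L_R R')
$$
for $P$ perfect over $R$ and arbitrary $M$, which is verified by replacing $P$ with a bounded complex of finitely generated projective $R$-modules and checking the statement in each homological degree.

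Finally, I need that $R\cH om_R(\cF_R,\cG_R)\in\bD^b_c(X,R)$, so that the construction takes values in the correct unispace; this stability of constructible complexes under $R\cH om$ over a regular Noetherian coefficient ring is standard (cf.\ \cite[Thm.~4.0.2]{Sch} and the references therein). I expect the main technical obstacle to be the stalkwise identification of the sheaf-theoretic $R\cH om$ (computed via injective resolutions) with the module-theoretic $R\homo_R$ (computed via projective/flat resolutions); perfectness of stalks together with local constancy along strata of $\cF_R$ is exactly what reconciles the two computations and removes any genericity requirement on $m$.
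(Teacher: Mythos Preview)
Your proposal is correct and rests on the same key input as the paper's proof: perfectness of the stalks $(\cF_R)_x$, which makes the base-change map an isomorphism for every $R\to R'$ (not just generically), yielding a genuine natural transformation of unispaces.

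The organization differs slightly. You pass to stalks immediately and invoke the module-level identity $R\homo_R(P,M)\otimes^L_R R'\cong R\homo_{R'}(P\otimes^L_R R',M\otimes^L_R R')$ for $P$ perfect. The paper instead first uses perfectness of stalks globally to obtain $R\cH om_R(\cF_R,\cG_R)\cong R\cH om_R(\cF_R,R_X)\otimes^L_R\cG_R$ (citing the Stacks Project), thereby reducing to the case $\cG_R=R_X$; that case is then dispatched by two cited identities, one from \cite{borel} and one from \cite{ks}. The paper's route has the advantage that every step is a direct citation, so it sidesteps the point you flag as the main obstacle, namely the stalkwise identification of the sheaf $R\cH om$ with the module $R\homo$. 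Your route is more self-contained and conceptually direct, but you should be aware that this stalk identification is precisely the content hidden in the paper's citations; it holds here because constructibility allows one to work locally on a stratum where $\cF_R$ is locally constant with perfect fiber.
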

\begin{proof} Let $R\in\cA lg_{ft, reg}(\bC)$.
The functor
$$
R\cH om_R:\bD(X,R)^{op} \times \bD(X,R)\lra \bD(X,R)
$$ 
is well-defined using $K$-injective resolutions of the second input \cite{Sp}, restricts to the bounded derived categories \cite[Exercise II.24]{ks}, and to the bounded derived category of constructible sheaves \cite[Theorem 4.0.2]{Sch}.

To show that the functors $R\cH om$ give a natural transformation of unispaces, we need to show that given $\cF_R, \cG_R \in\bD^b_c(X,R)$, and $R\ra R'$ a morphism in $\cA lg_{ft, reg}(\bC)$,
\be\label{eqRHom}
R\cH om_R (\cF_R,\cG_R)\otimes^L_R R' = R\cH om_{R'} (\cF_R\otimes^L_R R',\cG_R\otimes^L_R R').
\ee

Since $\cF_R$ is in $\bD^b_c(X,R)$ and $R$ is regular Noetherian, $\cF_R$ has perfect stalks. Similarly, $\cF_R\otimes^L_RR'$ has perfect stalks over $R'$. Hence we can apply \cite[\href{http://stacks.math.columbia.edu/tag/08CL}{Prop. 20.41.6}]{stacks} to get
$$
R\cH om_R(\cF_R,\cG_R)=R\cH om_R(\cF_R,R_X)\otimes^L_R\cG_R
$$
and similarly
$$
R\cH om_{R'}(\cF_R\otimes^L_RR',\cG_R\otimes^L_RR')=R\cH om_{R'}(\cF_R\otimes^L_RR',R'_X)\otimes^L_{R'}(\cG_R\otimes^L_RR').
$$
Using Proposition \ref{propTen},  the problem is then reduced to the case $\cG_R=R_X$.

Now, by \cite[V, 10.23]{borel} we have the following isomorphism
$$
R\cH om_R (\cF_R,R_X)\otimes^L_R R' = R\cH om_R (\cF_R,R_X\otimes^L_RR')
$$
By \cite[Exercise II.24 (iii)]{ks}, we have
$$R\cH om_{R'}(\cF_R {\otimes}^L_R R', R'_X)= R\cH om_{R}(\cF_R, R\cH om_{R'}(R'_X,R'_X))
=R\cH om_{R}(\cF_R, R'_X).
$$
The base change formula (\ref{eqRHom}) follows.
\end{proof}

\subsection{Verdier duality}

\begin{prop}\label{propVD}
Let $X$ be a complex algebraic variety. Verdier duality gives a natural transformation, hence a morphism, of unispaces
$$
\sD_X: \bdcx^{op}\lra \bdcx
$$
$$
\cF_R\mapsto \sD_X(\cF_R)=R\cH om(\cF_R,a^!R),
$$
where $a:X\ra pt$ is the map to a point.
\end{prop}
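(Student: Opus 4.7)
The plan is to mirror the strategy of Proposition~\ref{propRHoo} and reduce the claim to a single base-change identity for the dualizing complex $\omega_X^R:=a^!R_X$.

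First I would verify the basic well-definedness: for each $R\in\cA lg_{ft,reg}(\bC)$ one has $\omega_X^R\in\bdcxr$, and $\sD_X=R\cH om_R(\_,\omega_X^R)$ preserves $\bdcxr$. Both are standard facts from the six-functor formalism on complex algebraic varieties (cf.~\cite[Thm.~4.0.2]{Sch} and the surrounding discussion). This ensures the target category is correct.

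To exhibit the assignment $\cF_R\mapsto \sD_X(\cF_R)$ as a natural transformation of unispaces, I need to produce, for every morphism $R\to R'$ in $\cA lg_{ft,reg}(\bC)$ and every $\cF_R\in\bdcxr$, a canonical isomorphism
\[
\sD_X(\cF_R)\otimes^L_R R' \;\cong\; \sD_X(\cF_R\otimes^L_R R'),
\]
natural in $\cF_R$ and in $R\to R'$. Unraveling the definition and invoking the inner-hom base-change formula of Proposition~\ref{propRHoo}, the left-hand side is identified with
\[
R\cH om_{R'}\!\left(\cF_R\otimes^L_R R',\,\omega_X^R\otimes^L_R R'\right),
\]
while the right-hand side is $R\cH om_{R'}(\cF_R\otimes^L_R R',\,\omega_X^{R'})$. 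The question is therefore reduced to producing a canonical base-change isomorphism for the dualizing complex alone,
\[
\omega_X^R\otimes^L_R R' \;\cong\; \omega_X^{R'}.
\]

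I expect this last identity to be the main obstacle and the only genuinely new input. It expresses the compatibility of $a^!$ (for $a:X\to pt$) with a change of coefficient ring $R\to R'$. I would prove it by choosing a Whitney stratification of $X$ and checking the formula stratum by stratum: along each smooth stratum $S$ of complex dimension $d_S$, $\omega_X^R\vert_S$ is, up to shift by $2d_S$ and a local orientation sheaf, the constant sheaf $R_S$, and the identity is immediate; compatibility of the attaching triangles of the stratification with $(\_)\otimes^L_R R'$ then promotes the stratum-wise identification to a global one. Alternatively, one could simply cite the general base-change theorems for the six operations on constructible sheaves over complex algebraic varieties developed in~\cite{Sch}. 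Combining this with the inner-hom reduction above yields the required natural transformation, which is a fortiori a morphism of unispaces in the sense of Lemma~\ref{lemNat}.
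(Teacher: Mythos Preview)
Your proposal is correct and follows essentially the same route as the paper: reduce to the inner-hom base change of Proposition~\ref{propRHoo} and then to the single identity $a^!R\otimes^L_R R'\cong a^!R'$ for the dualizing complex. The paper's version is marginally more economical in that it only needs the special case $a^!R\cong (a^!\bC)\otimes_\bC R$ (from which your general statement follows by associativity of $\otimes^L$), and it simply cites \cite[Theorem~3.2.13~(iii)]{Di} for this rather than arguing via a stratification.
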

\begin{proof}

Let $\cG_\bC\in\bD^b_{c}(X,\bC)$. There is a unispace morphism
$$
R\cH om(\_,\cG_\bC\otimes_\bC \_):\bD^b_{c}(X,\_)^{op} \lra \bD^b_{c}(X,\_)
$$
$$
\cF_R\mapsto R\cH om(\cF_R,\cG_\bC\otimes_\bC R).
$$
This is the composition of the following three unispace morphisms:
$$
\bdcx^{op} \ra \bdcx^{op} \times \spec(\bC)(\_)
$$
$$
\cF_R\mapsto (\cF_R,\phi_R),
$$
where $\phi_R$ is the unique structure morphism $\spec(R)\ra\spec(\bC)$;
$$
\bdcx^{op}\times \spec(\bC)(\_)\ra\bdcx^{op}\times\bdcx
$$
$$
(\cF_R,\phi_R)\mapsto (\cF_R,\cG_\bC\otimes_\bC R);
$$
 and
$$
R\cH om:\bD^b_{c}(X,\_)^{op} \times \bD^b_{c}(X,\_)\lra \bD^b_{c}(X,\_).
$$
Set now $\cG_\bC=a^!\bC$, which is the Verdier dualizing complex. Then the claim follows, since
$$
a^!R=(a^!\bC)\otimes_\bC R \quad\in\bdcxr,
$$
for which we refer to \cite[Theorem 3.2.13 (iii)]{Di}, or see below.
\end{proof}

\subsection{Pullback and direct image with compact supports}

\begin{prop}
Let $f:X\ra Y$ be a continuous map of topological spaces.
\begin{enumerate}
\item The pullback gives a natural transformation, hence a morphism, of unispaces
$$
f^{-1}: \bD(Y,\_) \lra \bD(X,\_)
$$
$$
\cF_R\mapsto f^{-1}(\cF_R).
$$
One can replace $\bD$ with $\bD_c^b$ if $f$ is a morphism of complex algebraic varieties.
\item The direct image with compact supports gives a natural transformation, hence a morphism, unispaces
$$
Rf_!:\bD(X,\_)\lra \bD(X,\_ )
$$
$$
\cF_R\mapsto Rf_!(\cF_R).
$$
One can replace $\bD$ with $\bD_c^b$ if $f$ is a morphism of complex algebraic varieties.
\end{enumerate}
\end{prop}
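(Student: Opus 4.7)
The plan is to verify that in both cases there is a canonical base-change isomorphism valid for \emph{every} morphism $R\to R'$ in $\cA lg_{ft,reg}(\bC)$, not merely on an open dense locus, so as to get a full natural transformation of functors (and hence a unispace morphism by Lemma \ref{lemNat}) rather than only the conclusion of Proposition \ref{propMG}. Concretely, for (1) I need
$$f^{-1}(\cF_R)\otimes^L_{R_X} R'_X \;\cong\; f^{-1}(\cF_R\otimes^L_{R_Y} R'_Y)$$
in $\bD(X,R')$, and for (2) I need the analogous
$$Rf_!(\cF_R)\otimes^L_{R_Y} R'_Y \;\cong\; Rf_!(\cF_R\otimes^L_{R_X} R'_X)$$
in $\bD(Y,R')$, in each case natural in $\cF_R$ and in $R\to R'$.

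For (1), the argument is essentially formal: $f^{-1}$ is an exact functor on sheaves of abelian groups, commutes with stalks, sends constant sheaves to constant sheaves ($f^{-1}R'_Y=R'_X$), and therefore preserves $K$-flat resolutions. Picking a $K$-flat resolution $\cP_\bullet\to R'_Y$ in $\bD(Y,R)$, pulling back gives a $K$-flat resolution $f^{-1}\cP_\bullet\to R'_X$ in $\bD(X,R)$, and the desired isomorphism is realized already at the level of complexes by $f^{-1}(\cF_R\otimes_{R_Y}\cP_\bullet) = f^{-1}(\cF_R)\otimes_{R_X}f^{-1}\cP_\bullet$. Stability under $f^{-1}$ in the algebraic case is immediate from the definition of $\bD^b_c$, since pullbacks of constructible sheaves are constructible and $f^{-1}$ commutes with cohomology sheaves.

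For (2), the required isomorphism is nothing but the projection formula
$$Rf_!(\cF_R)\otimes^L_{R_Y}\cG \;\cong\; Rf_!(\cF_R\otimes^L_{R_X} f^{-1}\cG),$$
applied to $\cG=R'_Y$, since then the right-hand side becomes $Rf_!(\cF_R\otimes^L_{R_X} R'_X)$ as desired; I would cite \cite[V, (8.4)]{borel} or \cite[Thm.~4.2.5]{Sch} for the projection formula over a general base ring $R$ with $R\in\cA lg_{ft,reg}(\bC)$. Stability of $\bD^b_c$ under $Rf_!$ for a morphism of complex algebraic varieties is classical for $\bC$-coefficients and extends to $R$-coefficients by \cite[Thm.~4.0.2]{Sch}. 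The main obstacle I anticipate is purely bookkeeping: one must be careful that the references really give a projection-formula isomorphism that is functorial in both arguments and compatible with the base-change maps used to define the unispace structure, so that the assembled data genuinely defines a natural transformation $Rf_!:\bD(X,\_)\to\bD(Y,\_)$ rather than a family of isomorphisms without coherence. Once functoriality is checked for each step, the two claims follow.
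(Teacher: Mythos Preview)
Your proposal is correct and follows essentially the same approach as the paper: both reduce the claim to the two base-change identities $f^{-1}(\cF_R\otimes^L_R R')\cong (f^{-1}\cF_R)\otimes^L_R R'$ and $Rf_!(\cF_R)\otimes^L_R R'\cong Rf_!(\cF_R\otimes^L_R R')$, the latter being the projection formula. The paper simply cites Spaltenstein \cite[Prop.~6.8 and Thm.~B~(ii)]{Sp} for these, whereas you spell out (1) directly via $K$-flat resolutions and cite Borel or Sch\"urmann for (2); the content is the same.
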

\begin{proof} For every $R\in\cA lg_{ft, reg}(\bC)$, $f^{-1}:\bD(Y,R)\ra\bD(X,R)$ and $Rf_!:\bD(X,R)\ra\bD(Y,R)$ are well-defined  \cite[Prop. 6.7]{Sp}, and restrict to functors on the (bounded) derived categories of constructible sheaves (in algebraic case) \cite[Theorem 4.0.2]{Sch}. To show that they give  natural transformations of unispaces, we need that for $\cF_R\in\bD(Y,R)$ and $R\ra R'$ a morphism in $\cA lg_{ft, reg}(\bC)$,
$$f^{-1}(\cF_R\otimes^L_R R') = (f^{-1}\cF_R)\otimes^L_R R',
$$
and
$$Rf_!(\cF_R\otimes^L_RR')=Rf_!(\cF_R)\otimes_R^LR'.
$$
This is indeed true, see \cite[Proposition 6.8 and Theorem B (ii)]{Sp}.
\end{proof}

\subsection{Direct image and exceptional inverse image}

\begin{prop}\label{propDI}
Let $f:X\ra Y$ be a morphism of complex algebraic varieties. 
\begin{enumerate}
\item The direct image functor gives a natural transformation, hence a morphism, of unispaces
$$
Rf_*: \bD^b_{c}(X,\_) \lra \bD^b_{c}(Y,\_)
$$
$$
\cF_R\mapsto Rf_*(\cF_R).
$$
\item The exceptional inverse image functor gives a natural transformation, hence a morphism, of unispaces
$$
f^!: \bD^b_{c}(Y,\_) \lra \bD^b_{c}(X,\_)
$$
$$
\cF_R\mapsto f^!(\cF_R).
$$
\end{enumerate}
\end{prop}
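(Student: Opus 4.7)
The plan is to reduce both parts to facts already established in the preceding propositions, using Verdier duality. On $\bdcx$ over a regular Noetherian ring of coefficients (cf.\ \cite{Sch}, \cite{ks}) one has the standard duality identifications
$$
Rf_* \;\simeq\; \sD_Y \circ Rf_! \circ \sD_X,
\qquad
f^! \;\simeq\; \sD_X \circ f^{-1} \circ \sD_Y,
$$
together with biduality $\sD_X\circ\sD_X\simeq\id$. These natural isomorphisms are compatible with the derived base change $(\_)\otimes^L_R R'$ on both sides, in the same sense as the base change identities used in Propositions \ref{propVD} and the inverse image / direct image with compact supports propositions.

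Granting this, part (1) is immediate: $\sD_X$ (Proposition \ref{propVD}), $Rf_!$ (direct image with compact supports proposition), and $\sD_Y$ are each natural transformations of unispaces, and compositions of natural transformations of unispaces are natural transformations. The shuffling between $\bdcx$ and $\bdcx^{op}$ is harmless at the level of unispaces, which see only the set of isomorphism classes. Part (2) goes through by the same argument with $Rf_!$ replaced by $f^{-1}$, whose status as a natural transformation was already established. Thus both $Rf_*$ and $f^!$ are exhibited as compositions of natural transformations of unispaces.

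The main substantive point is naturality in $R$ of the duality identifications, which concretely amounts to checking that the canonical morphisms
$$
(Rf_*\cF_R)\otimes^L_R R' \lra Rf_*(\cF_R \otimes^L_R R'),
\qquad
(f^!\cG_R)\otimes^L_R R' \lra f^!(\cG_R\otimes^L_R R')
$$
are isomorphisms for $\cF_R\in\bdcxr$, $\cG_R\in\bD^b_c(Y,R)$, and $R\to R'$ in $\cA lg_{ft,reg}(\bC)$. For $Rf_*$ this is a projection-formula-type statement which follows formally from the corresponding property of $Rf_!$ already used in the previous proposition and the compatibility of $\sD$ with base change proved in Proposition \ref{propVD}; for $f^!$ the analogous reduction uses the base change property of $f^{-1}$. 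This compatibility check is really the only non-formal ingredient; once it is in place, everything else is a purely categorical assembly from the earlier results of this section.
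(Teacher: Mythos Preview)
Your proof is correct and follows exactly the paper's approach: both decompose $Rf_* = \sD_Y\circ Rf_!\circ\sD_X$ and $f^! = \sD_X\circ f^{-1}\circ\sD_Y$ (citing \cite[Cor.~4.2.2]{Sch}) and conclude by composing the already-established natural transformations from Proposition \ref{propVD} and the pullback/$Rf_!$ proposition. Your final paragraph on the base change morphisms is a bit more than needed, since once each factor is a natural transformation of unispaces the composite automatically is, but it does no harm.
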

\begin{proof}
For every $R\in\cA lg_{ft, reg}(\bC)$, the direct image $Rf_*:\bdcxr\ra\bD^b_{c}(Y,R)$ and the exceptional inverse image $f^!:\bD^b_{c}(Y,R)\ra\bdcxr$ are well-defined \cite[Theorem 4.0.2]{Sch}. Moreover, on $\bdcxr$ for complex algebraic varieties,
$$
Rf_*=\sD_Y\circ Rf_!\circ \sD_X,
$$
and
$$
f^!=\sD_X\circ f^{-1}\circ\sD_Y.
$$
see \cite[Corollary 4.2.2]{Sch}. Since $\sD_X$, $Rf_!$, $f^{-1}$, and $\sD_Y$ each give natural transformations of unispaces on $\bD^b_{c}$, it follows that $Rf_*$ and $f^!$ do too.
\end{proof}

\subsection{Nearby and vanishing cycles}

\begin{prop}
Let $X$ be a complex algebraic variety and $f:X\ra\bC$ a regular function. The nearby and vanishing cycles functors give natural transformations, hence a morphisms, of unispaces
$$
\psi_f, \phi_f:\bD^b_{c}(X,\_)\ra \bD^b_{c}(f^{-1}(0),(\_)[t,t^{-1}]),
$$
$$
\bD^b_{c}(X,R)\ni\cF_R\mapsto \psi_f(\cF_R), \phi_f(\cF_R)\in\bD^b_{c}(f^{-1}(0),R[t,t^{-1}]),
$$
where $t$ is the monodromy action.
\end{prop}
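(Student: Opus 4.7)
My plan is to prove this proposition in the same spirit as Propositions \ref{propDI} and \ref{propVD}, by decomposing $\psi_f$ and $\phi_f$ into operations already shown to commute with base change, and then invoking the generic base change criterion of \S\ref{subUniII} (Proposition \ref{propMG}). The first step is to unravel the definition: writing $X_0=f^{-1}(0)$, $X^*=X\setminus X_0$, with $i:X_0\hookrightarrow X$ and $j:X^*\hookrightarrow X$, let $e:\bH\to\bC^*$ be the universal cover (exponential), $\wti{X^*}=X^*\times_{\bC^*}\bH$ with projection $p:\wti{X^*}\to X^*$, and $k=j\circ p$. Then
$$\psi_f(\cF_R)=i^{-1}Rk_*k^{-1}\cF_R,$$
viewed as a complex on $X_0$, equipped with the $\bZ$-action by deck transformations of $p$, which provides the $R[t,t^{-1}]$-module structure.

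Since $R[t,t^{-1}]$ is flat over $R$ and $R[t,t^{-1}]\otimes_R R'=R'[t,t^{-1}]$, the required base change identity
$$\psi_f(\cF_R)\otimes^L_{R[t,t^{-1}]}R'[t,t^{-1}]\simeq \psi_f(\cF_R\otimes^L_RR')$$
is equivalent, as a morphism of complexes of $R'[t,t^{-1}]$-modules, to
$$\psi_f(\cF_R)\otimes^L_R R'\simeq \psi_f(\cF_R\otimes^L_RR').$$
The functors $i^{-1}$ and $k^{-1}$ (being inverse images) commute with base change by the same argument as in the pullback case treated earlier. Hence the only thing to verify is that $Rk_*$ commutes with $(\_)\otimes^L_R R'$ on the relevant complexes.

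The subtlety is that $k$ is not algebraic, since $p$ involves the universal cover of $\bC^*$. My strategy is to factor $k=j\circ p$: the base-change property for $Rj_*$ is already supplied by Proposition \ref{propDI}, and for $Rp_*$ I would exploit that $p$ is a covering map, writing it as the inverse limit of the finite cyclic covers $p_n:X^*_{(n)}\to X^*$ (which are algebraic). The nearby cycles are then recovered as the homotopy colimit of $i^{-1}R(j\circ p_n)_*(j\circ p_n)^{-1}\cF_R$. Each term in this colimit commutes with base change by the already-established Propositions \ref{propDI} and the pullback case, and since derived tensor product of constructible sheaves on a complex algebraic variety commutes with such filtered homotopy colimits (this is where constructibility and Noetherianness of $R$ are used), the conclusion for $Rp_*$ follows. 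This gives the natural transformation property for $\psi_f$, and by the criterion of Proposition \ref{propMG} (after restricting to an integral domain $R$ and an open dense locus if needed) it upgrades to a morphism of unispaces.

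For $\phi_f$, I would use the defining distinguished triangle
$$i^{-1}\cF_R\xra{sp}\psi_f(\cF_R)\to\phi_f(\cF_R)\xra{+1}$$
in $\bD^b_c(X_0,R[t,t^{-1}])$. Since the specialization morphism $sp$ is functorial in $\cF_R$ and compatible with base change (both sides transform correctly by the preceding step), and since the cone construction in the derived category is preserved by applying $(\_)\otimes^L_R R'$, the functor $\phi_f$ inherits the natural transformation property from $i^{-1}$ and $\psi_f$. The main obstacle in this plan is the non-algebraicity of $p$; this is the reason I pass through finite covers, and verifying carefully that the limit procedure is compatible with tensoring over the coefficient ring is the technically most delicate point—though it is a direct consequence of the fact that $Rp_*$ on a covering map of finite type with constructible targets computes as a colimit over the finite quotients of the deck group.
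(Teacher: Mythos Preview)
Your overall shape---decompose $\psi_f$ into pieces, handle $\phi_f$ via the specialization triangle---matches the paper. The gap is in your treatment of the non-algebraic map $p$.

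First, you cannot apply Proposition \ref{propDI} to $Rj_*$ with input $Rp_*p^{-1}j^{-1}\cF_R$: that complex has stalks of infinite rank over $R$ (the fiber of $p$ is $\bZ$), so it does not lie in $\bD^b_c(X^*,R)$. You recognize this and switch to a colimit over finite cyclic covers, but that repair fails too. The analytic universal cover $\wti{X^*}$ has deck group $\bZ$, whereas the inverse system of finite cyclic covers $p_n$ has limit the $\widehat{\bZ}$-cover; these are different spaces. Accordingly the homotopy colimit of $i^{-1}R(j\circ p_n)_*(j\circ p_n)^{-1}\cF_R$ does not compute $\psi_f$ in the analytic setting (that colimit is the $\ell$-adic/\'etale construction and only sees quasi-unipotent monodromy). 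So the step ``nearby cycles are recovered as the homotopy colimit over finite covers'' is simply false here, and the argument collapses at that point.

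The paper avoids this entirely by using Brylinski's formula ${}^p\psi_f=i^{-1}\circ Rj_*\circ Rp_!\circ p^{-1}\circ j^{-1}$ with $Rp_!$ rather than $Rp_*$. The functor $Rp_!$ commutes with $\otimes^L_R R'$ for any continuous map, and moreover one has the closed formula $Rp_!\,p^{-1}(j^{-1}\cF_R)=j^{-1}\cF_R\otimes_\bC\cL_A$ where $\cL_A$ is a rank-one local system of free $A=\bC[t,t^{-1}]$-modules. This lands in $\bD^b_c(U,R\otimes_\bC A)$, so the already-proved base change for $Rj_*$ (via $Rj_*=\sD_X\circ Rj_!\circ\sD_U$ over the regular ring $R\otimes A$) applies directly---no limiting procedure is needed. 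Your argument for $\phi_f$ via the distinguished triangle is fine and is exactly what the paper does.
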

\begin{proof} Let $A=\bC[t,t^{-1}]$. Note first that $$\bD^b_{c}(f^{-1}(0),(\_)[t,t^{-1}]): R\mapsto \bD^b_{c}(f^{-1}(0),R\otimes_\bC A),$$ 
together with the operation $(\_)\otimes^L_{R\otimes A} (R'\otimes A)$ for a morphism of $\bC$-algebras $R\ra R'$, is indeed a unispace.

By \cite[Theorem 4.0.2]{Sch}, $\psi_f(\cF_R)$ and $\phi_f(\cF_R)$ are well-defined. Consider first the nearby cycles functors $\psi_f$. We need to show that 
$$
\psi_f(\cF_R)\otimes^L_{R\otimes A}(R'\otimes A) = \psi_f(\cF_R\otimes^L_R R').
$$
 Since the shift functor is a natural transformation of unispaces, it is enough to prove the statement for 
$
{}^p\psi_f = \psi_f[-1].
$
By \cite[p. 13]{Bry}, 
$$
{}^p\psi_f=i^{-1}\circ Rj_*\circ Rp_!\circ p^{-1}\circ j^{-1},
$$
where 
$$
\xymatrix{
f^{-1}(0)\ar[r]^i & X\ar[d]^f & U\ar[l]_j \ar[d]^{f\mid_U} & \tilde{U} \ar[l]_p \ar[d]\\
& \bC & \bC^* \ar[l] & \widetilde{\bC^*} \ar[l],
}
$$
with $U=X\setminus f^{-1}(0)$, $\widetilde{\bC^*}$ the universal covering space of $\bC^*$, $\tilde{U}=U\times_{\bC^*}\widetilde{\bC^*}$, and all the maps in the picture being the natural ones.

Note that $i^{-1}, j^{-1}, p^{-1}, Rp_!$ satisfy the base change to any ring $R'$. In particular, $$i^{-1}(\_ )\otimes^L_{R\otimes A}(R'\otimes A)= i^{-1}(\_ \otimes^L_{R\otimes A}(R'\otimes A)).$$ Next, we show that 
\be\label{eqRj}
Rj_*(Rp_! (p^{-1}(j^{-1}\cF_R)))\otimes^L_{R\otimes A}(R'\otimes A) = Rj_*(Rp_! (p^{-1}(j^{-1}\cF_R))\otimes^L_{R\otimes A}(R'\otimes A)).
\ee
For this, recall that 
\be\label{eqTr}Rp_! (p^{-1} (j^{-1}\cF_R)) = j^{-1}\cF_R\otimes_\bC \cL_A\quad \in \bD^b_c(U,R\otimes_\bC A)
\ee 
where $\cL_A$ is a rank one local system of free $A$-modules on $U$, see \cite[Lemma 3.3]{Bud}. Since $j$ is a morphism of varieties, we have $Rj_*=\sD_X\circ Rj_!\circ \sD_U$, where $\sD$ is the Verdier dual over $R\otimes A$. Now, using the same proof as in Proposition \ref{propVD} but with $\cG_{\bC}=a^!A$, $\sD$ commutes with $\otimes^L_{R\otimes A}(R'\otimes A)$. $Rj_!$ again has base change with any ring, and then one more time we use that $\sD$ has base change. Hence (\ref{eqRj}) is proven.

Next, we need to show that
$$
Rp_!(p^{-1}(j^{-1}\cF_R))\otimes^L_{R\otimes A}(R'\otimes A) = Rp_!((p^{-1}(j^{-1}\cF_R\otimes^L_{R}R'))).
$$
 For this we use (\ref{eqTr}) together with the fact that $p^{-1}$ and $j^{-1}$ commute with base change to any ring. This finishes the proof for $\psi_f$.

To show that $\phi_f$ is a morphism of unispaces, we need that, for $\cF_R$ in $\bD^b_{c}(X,R)$,
$$
\phi_f(\cF_R)\otimes^L_RR'= \phi_f(\cF_R\otimes^L_RR')\quad\in \bD^b_{c}(X,R').
$$
But $\phi_f(\cF_R)$ fits into a distinguished triangle in $\bD^b_{c}(X,R[t,t^{-1}])$
$$
i^{-1}(\cF_R)\ra\psi_f(\cF_R)\ra\phi_f(\cF_R)\xra{[1]}
$$
where the monodromy acts trivially on the leftmost term. Hence there is a distinguished triangle
$$
i^{-1}(\cF_R)\otimes^L_RR'\ra\psi_f(\cF_R)\otimes^L_RR'\ra\phi_f(\cF_R)\otimes^L_RR'\xra{[1]}.
$$
Thus the  base change property for $\phi_f(\cF_R)$ follows from the  base change for $i^{-1}(\cF_R)$ and $\psi_f(\cF_R)$.
\end{proof}

\subsection{Perverse truncations and cohomology}
Recall here the non-standard definitions of ${}^p\bD^{\le 0}(X,R)$, ${}^p\bD^{\ge 0}(X,\_)$, and $Perv(X,R)$, for $R\ne\bC$, from Definition \ref{defNonStd}.

\begin{prop}
Let $X$ be a complex algebraic variety. Then
$$
{}^p\tau_{\le 0} :\bdcx \ra {}^p\bD^{\le 0}(X,\_),
$$
$$
{}^p\tau_{\ge 0}:\bdcx \ra {}^p\bD^{\ge 0}(X,\_),
$$
$$
{}^p\cH^i:\bdcx\ra Perv(X,\_)
$$
are  morphisms of unispaces.
\end{prop}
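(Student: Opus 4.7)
The plan is to reduce all three statements to the case of ${}^p\tau_{\le 0}$. By Proposition~\ref{propVD}, Verdier duality $\sD_X$ is a natural transformation of unispaces on $\bdcx$, and for the middle perversity it exchanges ${}^p\bD^{\le 0}(X,\bC)$ with ${}^p\bD^{\ge 0}(X,\bC)$; hence ${}^p\tau_{\ge 0}=\sD_X\circ{}^p\tau_{\le 0}\circ\sD_X$ is a morphism of unispaces as soon as ${}^p\tau_{\le 0}$ is. The perverse cohomology ${}^p\cH^i={}^p\tau_{\le 0}\circ{}^p\tau_{\ge 0}\circ[i]$ then follows as a composition of morphisms of unispaces, using the shift from Section~\ref{exShift}.

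For the main case of ${}^p\tau_{\le 0}$, the strategy is to apply the generic base-change criterion of Proposition~\ref{propMG}. Fix an integral domain $R\in\cA lg_{ft, reg}(\bC)$ and $\cF_R\in\bdcxr$. I will produce a representative ${}^p\tau_{\le 0}\cF_R\in\bdcxr$ via BBD's recollement over the coefficient ring $R$ and verify
\[
\bigl({}^p\tau_{\le 0}\cF_R\bigr)\otimes^L_R R/m \;\cong\; {}^p\tau_{\le 0}\bigl(\cF_R\otimes^L_R R/m\bigr)
\]
in $\bdcx(\bC)$ on a dense Zariski open of $\spec(R)(\bC)$. Choose a Whitney stratification $X=\bigsqcup_\alpha S_\alpha$ to which $\cF_R$ is constructible; generic flatness ensures the same stratification serves every fiber $\cF_R\otimes^L_R R/m$ over a dense open of $\spec(R)(\bC)$. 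Then proceed by induction on the number of strata. The base case is a single smooth stratum, where $\cF_R$ has locally constant cohomology, so ${}^p\tau_{\le 0}\cF_R=\tau_{\le -\dim X}\cF_R$ and the required base change follows from Proposition~\ref{propTrunc} together with the shift natural transformation. For the inductive step, let $j:U\hookrightarrow X$ be an open stratum and $i:Z\hookrightarrow X$ its closed complement; BBD's recollement expresses ${}^p\tau_{\le 0}\cF_R$ as the output of a finite diagram of distinguished triangles assembled from $j^{-1}$, $j_!$, $Rj_*$, $i^{-1}$, $Ri_*$, $i^!$, standard truncations, shifts, and the inductively known perverse truncations on $U$ and $Z$. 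Each ingredient is a morphism or natural transformation of unispaces by Propositions~\ref{propTrunc} and~\ref{propDI} together with the other base-change results of this section; shrinking $\spec(R)$ finitely many times to preserve all intermediate base-change identities yields the displayed equality, and hence also places the representative in ${}^p\bD^{\le 0}(X,R)$ in the sense of Definition~\ref{defNonStd}.

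The main obstacle will be the bookkeeping inside the BBD recollement: distinguished triangles are defined only up to non-canonical isomorphism, so one must commit to an explicit model of ${}^p\tau_{\le 0}\cF_R$ in $\bdcxr$ (for instance via stratum-wise $K$-flat resolutions) whose fibers are genuinely comparable to the perverse truncations of $\cF_R\otimes^L_R R/m$. Once a concrete model is fixed, each elementary base-change identity in the recollement is controlled by the earlier propositions, and noetherian induction on $\spec(R)$ propagates the generic identity to all points of $\spec(R)(\bC)$ via a further locally closed stratification, completing the argument.
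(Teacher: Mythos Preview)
Your proposal is correct and follows essentially the same approach as the paper: construct ${}^p\tau_{\le 0}$ via the BBD recollement out of standard truncations and the six functors for strata inclusions, then invoke the generic base-change criterion (Proposition~\ref{propMG}) together with the earlier results that each of these ingredients is a morphism of unispaces. Your reduction of ${}^p\tau_{\ge 0}$ to ${}^p\tau_{\le 0}$ via Verdier duality is a mild variation (the paper simply says the proof is ``similar'' and treats ${}^p\tau_{\ge 0}$ by the dual BBD construction), but it is equally valid since $\sD_X$ is a natural transformation of unispaces and exchanges the two halves of the perverse $t$-structure pointwise in $\spec(R)(\bC)$. Your worry about non-canonical isomorphisms of cones is unnecessary: the unispaces here are built from isomorphism classes, and the cone in a distinguished triangle is determined up to isomorphism, which is all that is needed for Proposition~\ref{propMG}.
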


\begin{proof} 
By definition, ${}^p\tau_{\le 0}$ on $\bD^b_c(X,R)$ is constructed inductively  from: the ordinary truncation functors, the functors $i^{-1}$, $i^!$, $Ri_!$, where $i$ are inclusions maps between various strata, together with cones of morphisms induced by these functors in $\bD^b_c(X,R)$, see \cite{BBD} and \cite[Theorem 8.1.27]{HTT}. Our previous results show that these functors are give morphisms of unispaces. Hence their composition is also a morphism of unispaces ${}^p\tau_{\le 0}:\bD^b_c(X,\_)\ra \bD^b_c(X,\_)$. By Proposition \ref{propMG} (2), this morphism factors through ${}^p\bD^{\le 0}(X,\_)$. The proof for ${}^p\tau_{\ge 0}$  and  ${}^p\cH^i={}^p\tau_{\le 0}\circ{}^p\tau_{\ge 0}\circ [i]$ is similar.
\end{proof}

None of these functors is give a natural transformation of unispaces, because of the appearance of the ordinary truncation functors in their definition.

\subsection{Intermediate extension}

\begin{prop}
Let $X$ be a complex algebraic variety and $j:U\ra X$ the inclusion of a locally closed subset. The intermediate extension functors give a morphism of unispaces
$$
j_{!*}: Perv(U,\_)\ra Perv(X,\_).
$$
\end{prop}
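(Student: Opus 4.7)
The plan is to exhibit $j_{!*}$ as a composition of morphisms of unispaces already constructed in this section, then invoke closure of $\cU ni$ under composition. First, I would reduce to the case of an open embedding by factoring $j = i \circ j_1$, where $j_1 : U \hookrightarrow \bar U$ is the open inclusion into the Zariski closure and $i : \bar U \hookrightarrow X$ is closed. Classically $j_{!*} = Ri_* \circ (j_1)_{!*}$, since for a closed embedding $Ri_*$ is $t$-exact, fully faithful on perverse sheaves, and agrees with its own intermediate extension. As $Ri_*$ is already a natural transformation, hence a morphism, of unispaces by Proposition~\ref{propDI}, it suffices to handle the open case.

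Assuming $j$ open, I would use the standard description
\[
j_{!*}\cF = \im\!\bigl({}^p\cH^0(j_! \cF) \longrightarrow {}^p\cH^0(Rj_* \cF)\bigr),
\]
where the arrow comes from the adjunction morphism $j_! \to Rj_*$. The assignment $\cF_R \mapsto (j_! \cF_R \to Rj_* \cF_R)$ is a morphism of unispaces $\bD^b_c(U,\_) \to \homo(\bD^b_c(X,\_))$ (in the sense of Example~\ref{ex1}(5)), since $j_!$ and $Rj_*$ are natural transformations of unispaces by Proposition~\ref{propDI} and the adjunction unit/counit are functorial with respect to $\otimes^L$. Postcomposing with the functorial perverse cohomology on morphisms yields a morphism of unispaces $Perv(U,\_) \to \homo(Perv(X,\_))$ sending $\cF_R$ to the arrow $\alpha_R : {}^p\cH^0(j_! \cF_R) \to {}^p\cH^0(Rj_* \cF_R)$.

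The last step is to realize the abelian image $\im(\alpha)$ in $Perv(X,\bC)$ from triangulated data. If $C$ is a cone of $\alpha$, then ${}^p\cH^0(C) = \coker(\alpha)$, and $\im(\alpha)$ is the shifted fiber of the composite $B \to C \to {}^p\cH^0(C)$, i.e.\ $\text{cone}(B \to {}^p\cH^0(C))[-1]$. Each ingredient is a morphism of unispaces: the cone functor descends from termwise mapping cones on $K$-flat representatives and commutes with $\otimes^L$ up to canonical quasi-isomorphism, the perverse truncations and ${}^p\cH^0$ were handled in the preceding proposition, and the fiber is just a shifted cone again. Composing these morphisms produces a morphism of unispaces $\homo(Perv(X,\_)) \to Perv(X,\_)$, $\alpha \mapsto \im(\alpha)$. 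Chaining with the map $\cF_R \mapsto \alpha_R$ above and with the closed-embedding reduction completes the proof. Generic base change is then automatic from Proposition~\ref{propMG}: for $R$ an integral domain, each constituent has a dense open $U_i \subset \spec(R)(\bC)$ on which the base-change identity holds, and a finite intersection remains dense.

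The main obstacle lies in the last step: the categorical image in $Perv(X,\bC)$ is an abelian-categorical construction, and its faithful realization in the triangulated setting must commute with generic base change even though cones in a triangulated category are only canonical up to non-unique isomorphism. The resolution is to carry out the construction at the level of chain representatives (using $K$-flat resolutions), where the mapping cone and the derived tensor product are strictly compatible, and to observe that the defining identities
\[
{}^p\cH^0(C_R) \otimes^L_R R/m \;\simeq\; {}^p\cH^0(C_R \otimes^L_R R/m), \qquad \text{cone}(\alpha_R) \otimes^L_R R/m \;\simeq\; \text{cone}(\alpha_R \otimes^L_R R/m)
\]
each hold on dense opens of $\spec(R)(\bC)$ by the earlier propositions in this section; their intersection is still dense, and there the construction of $j_{!*}\cF_R$ base-changes to the classical $j_{!*}$ of the fiber, yielding in particular an object of $Perv(X,R)$ as required.
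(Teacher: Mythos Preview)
Your proof is correct, but the route is considerably more elaborate than the paper's. The paper invokes Deligne's explicit construction of the intermediate extension (BBD, Prop.\ 2.1.11): for a locally closed $j:U\to X$, one chooses a stratification of the complement and builds $j_{!*}$ by successively applying direct images $Ri_*$ along strata inclusions followed by ordinary truncations $\tau_{\le n}$ for the appropriate $n$. Since both $Ri_*$ and $\tau_{\le n}$ have already been shown to satisfy generic base change (Propositions~\ref{propDI} and~\ref{propTrunc}), the composite does as well, and Proposition~\ref{propMG}(2) applies immediately. No reduction to open embeddings, no $\homo$-unispaces, no cones.

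Your approach via $j_{!*}=\im\bigl({}^p\cH^0(j_!\,\cdot\,)\to{}^p\cH^0(Rj_*\,\cdot\,)\bigr)$ is also valid and you correctly identify and handle the delicate point: reconstructing the abelian image from triangulated data while maintaining generic base change. The cone indeed commutes with $\otimes^L$ on the nose, and the identity $\text{cone}(B\to\coker\alpha)[-1]\simeq\im\alpha$ holds for a surjection of perverse sheaves. The well-definedness on $\homo$-equivalence classes (cone morphisms being non-canonical) is saved, as you note, by working with chain-level representatives. What your argument buys is a construction that does not depend on a choice of stratification and stays closer to the categorical definition of $j_{!*}$; what it costs is the bookkeeping around $\homo$-unispaces, mapping cones, and the several nested generic base-change opens. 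The paper's argument trades that bookkeeping for the single external input of Deligne's formula, which reduces everything to two functors already on the list (\ref{eqList}).
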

\begin{proof}
For $R\in\cA lg_{ft, reg}(\bC)$ a domain, and $\cF_R\in\bD^b_c(U,R)$, consider the intermediate extension $j_{!*}(\cF_R)$. By Proposition \ref{propMG} (2), we need to show that there exists an open dense subset of $\spec(R)$ on which
$$
j_{!*}(\cF_R)\otimes^L_RR/m = j_{!*}(\cF_R\otimes^L_RR/m) \quad \in  Perv(X,\bC).
$$
These are two statements: generic equality, and generically perverse.
Both sides of the equality are generically perverse. It remains to prove the equality generically. By \cite[Prop. 2.1.11]{BBD}, $j_{!*}$ can be defined by using successively the ordinary truncation functors and direct images $Ri_*$, where $i$ are various inclusions between strata. However, we have already proven that these satisfy generic base change.
\end{proof}

Because of the appearance of the ordinary truncation functors in the definition of $j_{!*}$, we cannot conclude that $j_{!*}$ is a natural transformation of unispaces.

\subsection{Fiber products of functors}\label{subFS}

One can create more unispace morphisms, and sometimes natural transformations of unispaces, from those proven so far in this section by using fiber products and inverse images, see   \ref{subSub} and  \ref{subFib}. 

For example, fixing $\cG_\bC\in \bD^b_c(X,\bC)$, the derived tensor product with $\cG_\bC$ and the inner hom into $\cG_\bC$ lift to the natural transformations, hence morphism, of unispaces
$$
(\_)\otimes^L \cG_\bC : \bdcx \ra \bdcx
$$
$$
\cF_R\mapsto \cF_R\otimes_R^L (\cG_\bC\otimes_\bC R),
$$
and respectively,
$$
R\cH om(\_,\cG_\bC):\bD^b_{c}(X,\_)^{op} \lra \bD^b_{c}(X,\_)
$$
$$
\cF_R\mapsto R\cH om(\cF_R,\cG_\bC\otimes_\bC R).
$$ Indeed, one first lifts the one-element set $B=\{\cG_\bC\}$ to the unispace $\sB$ given by
$$
\sB(R)=\{\cF_R\in \bD^b_c(X,R)\mid \cF_R\otimes^L_RR/m\cong \cG_\bC ,\, \forall m\in\spec(R)(\bC)\}
$$
as in subsection \ref{subSub}. Then $i:\sB\ra\bD^b_c(X,\_)$ is a natural transformation. Then $(\_)\otimes^L_\bC\cG_\bC$ and $R\cH om(\_,\cG_\bC)$ are obtained from $i$ together with the natural functors from Propositions \ref{propTen} and \ref{propRHoo}, respectively.

\subsection{Morphisms of functors} So far we have shown that the usual functors induce morphisms of unispaces. There are well-known morphisms between usual functors, for example $\tau_{\le i}\ra id$, $Rj_!\ra Rj_*$, etc. Any morphism between the usual functors induces  a morphism on unispaces of type $\homo(\bD^b_c(X,\_))$, or more generally  as in  Proposition \ref{propSit} (2). Note for example that $(Rj_!\ra Rj_*):\homo(\bD^b_c(U,\_))\ra\homo(\bD^b_c(X,\_))$ is a natural transformation for $j:U\ra X$ an open embedding since it commutes with base changes $\otimes^L_R R'$ for every $R\ra R'$ in $\cA lg_{ft, reg}(\bC)$. Whereas $(\tau_{\le i}\ra id):\homo(\bD^b_c(X,\_))\ra \homo(\bD^b_c(X,\_))$ is only a morphism of unispaces, since the truncation functors do not give natural transformations of unispaces.

\subsection{$K$-structures}\label{subK}

All the unispaces considered in this section ({\it except} those that one must choose as in  \ref{subFS} above to define inverse images, but see Remark \ref{rmkExcept}) are defined over $\bQ$; see the definition in Section \ref{secK}. The $\bQ$-structure is obtained by considering for every field $\bQ\subset K\subset\bC$
objects defined over rings $R_K$ in $\cA lg_{ft, reg}(K)$. For example, the $\bQ$-structure on the unispace $\bD^b_c(X,\_)$ is given by the functors $\cA lg_{ft, reg}(K)\ra\cS et$ sending
$$
R_K\mapsto \bD^b_c(X,R_K)
$$
and sending morphisms in $\cA lg_{ft, reg}(K)$ to the corresponding derived tensor product, together with the functors $(\_)\star_KK'$ given by
$$(\_)\otimes_KK':\bD^b_c(X,R_K)\lra\bD^b_c(X,R_K\otimes_KK')$$
$$\cF_{K}\mapsto \cF_K\otimes_KK',$$
for every two intermediate fields $\bQ\subset K\subset K'\subset\bC$.

Moreover, all the morphisms of unispaces proved in this section and the morphisms generated by fiber products among them ({\it except} those coming from inverse images as in subsection \ref{subFS}, see Remark \ref{rmkExcept}) are morphisms defined over $\bQ$ of unispaces with $\bQ$-structures. Indeed, for complexes $\cF_K$ as above, where $\cF_K$ is over $R_K$, we use in the arguments resolutions of $\cF_K$ over $R_K$. Hence all open dense subsets of $\spec(R_K)(\bC)=\spec(R_K\otimes_K\bC)(\bC)$ over which the generic base change has been proven in this section are in this case defined over $K$ rather than over $\bC$.

\begin{rmk}\label{rmkExcept}
The  maps coming from inverse-image constructions as in subsection \ref{subFS} are  defined over a subfield $K\subset \bC$ as long as one is consistent in choosing only sub-unispaces with a $K$-structure, see \ref{subSK}. For example, in order for $(\_)\otimes^L\cG_\bC$ and $\cR\cH om(\_\,,\cG_\bC)$ to be morphisms defined over $K$ of unispaces with a $K$-structure, it is enough to assume  that $\cG_\bC=\cG_K\otimes_K \bC$ for some $\cG_K\in \bD^b_c(X,K)$, cf. Remark \ref{rmkSingl}.
\end{rmk}


\subsection{Conclusion}\label{subCon}

\begin{theorem}\label{thrmPCs} Consider the following list of usual (derived) functors on with source and target any two categories of type
$$\cL oc \cS ys(X,\bC), \cS h_c(X,\bC), Perv (X,\bC), \bD^b_c(X,\bC),$$
(or more generally as in Proposition \ref{propSit})
on complex algebraic varieties:

\begin{gather}\label{eqList}
\text{ the  functors from Proposition \ref{propSit}},\\
\iota, [n], \tau_{\le n}, \tau_{\ge n}, \cH^n, \; \cdot\otimes^L_\bC\cdot\;,  \cR \cH om (\cdot,\cdot), \sD_X, \nonumber\\
f^{-1}, f^!, Rf_*, Rf_!, \psi_g, \phi_g,  \nonumber\\
{}^p\tau_{\le n}, {}^p\tau_{\ge n}, {}^p\cH^n, j_{!*}, \nonumber 
\end{gather}
where  $\sD_X$ is the Verdier dual, $f:X\ra Y$ and $g:X\ra \bC$ are morphisms of complex algebraic varieties, $j:U\ra X$ is the inclusion of a locally closed subset.  Then:
\begin{enumerate}
\item
  
\begin{enumerate}
\item Every functor generated from the functors in the list (\ref{eqList}) via compositions and fiber products   lifts to a morphism defined over $\bQ$ of unispaces with a $\bQ$-structure. In particular, every such functor is a $\bQ$-constructible functor. 
\item A natural transformation between functors as in (1.a) lifts to a morphism of unispaces defined over $\bQ$ of the associated $\homo$ unispaces with a $\bQ$-structure.
\end{enumerate}

\item

\begin{enumerate}
\setcounter{enumi}{2}
\item Let $K$ be a subfield of $\bC$. Every functor generated from the functors in the list (\ref{eqList})  via compositions, fiber products, and embeddings of isomorphisms-closed sub-categories defined over $K$, lifts to a morphism defined over $K$ of unispaces with a $K$-structure. In particular, every such functor is a $K$-constructible functor. 
\item A natural transformation between functors as in (2.a) lifts to a morphism of unispaces defined over $K$ of the associated $\homo$ unispaces with a $K$-structure.
\end{enumerate}
\end{enumerate}
\end{theorem}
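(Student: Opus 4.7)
The plan is that this theorem is essentially a bookkeeping statement: almost all the work has been carried out in the preceding propositions of the section, and what remains is to observe that the relevant operations (composition, fiber product, sub-unispace embedding) preserve the structure we have built. So my strategy is to verify closure properties for the class of functors ``liftable to a morphism defined over $K$ of unispaces with a $K$-structure,'' and then invoke Proposition \ref{propKFct} (or its $\bQ$-version) for the constructibility conclusion.

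For part (1), the first step is to observe that every single functor appearing in the list (\ref{eqList}) has been shown individually to lift to a morphism (in many cases a natural transformation) of unispaces in the propositions of this section: Proposition \ref{propSit} handles the situational embeddings, Proposition \ref{propTen} handles $\otimes^L$, Proposition \ref{propTrunc} the ordinary truncations, Proposition \ref{propCU} the cohomology sheaves, Proposition \ref{propRHoo} the inner hom, Proposition \ref{propVD} Verdier duality, the next propositions $f^{-1}, Rf_!, Rf_*, f^!$, then nearby and vanishing cycles, and finally the perverse truncations, perverse cohomology and $j_{!*}$. Once this is recorded, closure under composition is automatic because $\cU ni$ is a category by construction, and closure under fiber products is given by the existence of fiber products in $\cU ni$ established in subsection \ref{subFib}. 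The $\bQ$-structure refinement is the content of subsection \ref{subK}: each proof in the section was written using resolutions and generic base-change arguments whose open dense loci are defined over the ring of coefficients $R_K\in\cA lg_{ft,reg}(K)$, so every such morphism is in fact a morphism defined over $\bQ$ of unispaces with a $\bQ$-structure. The constructibility conclusion is then Proposition \ref{propKFct} applied with $K=\bQ$.

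For part (2), the only new ingredient relative to part (1) is the requirement to incorporate embeddings of isomorphism-closed subcategories defined over $K$. Here I invoke subsection \ref{subSK}: given $B\subset\sC(\bC)$ with a $K$-structure on $\sB$ (or $\sB^{ct}$) as in that subsection, the inclusion $\sB\ra\sC$ is a natural transformation of $K$-structures, hence a morphism defined over $K$. Remark \ref{rmkExcept} handles the subtlety that inverse-image constructions (as in subsection \ref{subFS}) force one to fix an auxiliary object $\cG_\bC$; provided this object itself is taken to come from a $\cG_K\in\bD^b_c(X,K)$ via $\cG_\bC=\cG_K\otimes_K\bC$, the resulting morphism is defined over $K$, cf.\ Remark \ref{rmkSingl}. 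Composing with the $\bQ$-structure morphisms from part (1) then gives the desired $K$-structure morphism for any functor generated by the listed operations. Constructibility follows once more from Proposition \ref{propKFct}.

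The natural-transformation statements (1.b) and (2.b) are handled by the same mechanism via the $\homo(\sC)$ unispaces from Example \ref{ex1}(5), as promoted to $\homo$-unispaces of the functors under consideration in Proposition \ref{propSit}(2). A natural transformation between two of the listed functors commutes with base change $(\_)\otimes^L_RR'$ for any $R\ra R'$ in $\cA lg_{ft,reg}(\bC)$ (indeed, this is precisely what allows each individual functor to be a natural transformation in the first place), and hence descends to a morphism of the associated $\homo$ unispaces; the $K$-structure is inherited exactly as in subsection \ref{subK}. The only point requiring genuine care, and the main candidate for being the ``hard'' step, is consistency of the $K$-structure across fiber products and inverse-image sub-unispaces: one must check that when taking fiber products in $\cU ni(K)$ the $K$-structure produced in subsection \ref{subFib} really is the categorical fiber product in $\cU ni(K)$, and that the sub-unispaces of subsection \ref{subSK} are taken consistently over the same field $K$ throughout. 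Both are routine given the formalism, but they are the place where a careless reader could lose track of the field of definition.
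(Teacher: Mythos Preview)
Your proposal is correct and is essentially the paper's own approach: the theorem is stated as a ``Conclusion'' with no separate proof, relying entirely on the individual propositions for each functor in Section~5, the closure properties of $\cU ni(K)$ under composition and fiber products, and the $K$-structure discussion in subsection~\ref{subK}. One small imprecision: your parenthetical in the (1.b)/(2.b) paragraph suggests every natural transformation between listed functors commutes with base change, but the paper's own subsection on morphisms of functors notes that e.g.\ $(\tau_{\le i}\ra id)$ only yields a \emph{morphism} of $\homo$ unispaces, not a natural transformation---this is harmless here since the theorem only claims a morphism of unispaces on $\homo$, which is what you need.
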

 
\begin{rmk}\label{rmkNa} Some explanation is necessary. 
\begin{enumerate} 
\item Recall that we say that a functor lifts to a morphism of unispaces if the map on the associated isomorphism classes does, cf. Definitions \ref{defWW} and \ref{defWQ}.
\item The unispaces and $\homo$ unispaces lifting the categories above to which the theorem applies are described in Proposition \ref{propSit}.

\item By a {\bf  sub-category defined over $K$} we mean a sub-category $\sC(\bC)$ of any of the source categories $\sC'(\bC)$ for any functor generated from the list (\ref{eqList}) via compositions and fiber products on the associated categories defined over $K$, such as $$\cL oc \cS ys(X,K), \cS h_c(X,K), Perv (X,K), \bD^b_c(X,K).$$ In addition, such a sub-category is {\bf isomorphisms-closed} if it contains every object isomorphic to one of its objects. By {\bf embeddings of isomorphisms-closed sub-categories defined over $K$}, we mean to allow the functors $\sC(\bC)\ra\sC'(\bC)$. For example, the embedding of a one-object-up-to-isomorphism category $\{\cG \mid \cG\cong\cG_K\otimes_K\bC\}\ra \bD^b_c(X,\bC)$, with $\cG_K\in\bD^b_c(X,K)$. 
\item Note that many operations in algebraic topology can be generated from the list (\ref{eqList}), see for example the brief list in \cite[5.4]{DM}. We do not attempt to write down every operation (``absolute functor") from \cite{simpson} in terms of our list, but we note in any case that our approach extends to cover those operations as well.
\end{enumerate}
\end{rmk}

We have in fact proved that most of the functors in (\ref{eqList}) actually define natural transformations of unispaces:

\begin{thrm}\label{thrmClo}
Consider the sublist of (\ref{eqList}) consisting of the functors 
\begin{gather}\label{eqListN}
\text{ the functors from Proposition \ref{propSit}},\\
[n],  \; \cdot\otimes^L_\bC\cdot\;,  \cR \cH om (\cdot,\cdot), \sD_X, \nonumber\\
f^{-1}, f^!, Rf_*, Rf_!, \psi_g, \phi_g  \nonumber
\end{gather}
 for complex algebraic varieties. Then:
\begin{enumerate}
\item Every functor generated from the functors in the list (\ref{eqListN}) via compositions and fiber products lifts to a natural transformation defined over $\bQ$ of unispaces with a $\bQ$-structure. In particular, every such functor is $\bQ$-continuous.
\item A natural transformation between functors as in (1) lifts to a natural transformation defined over $\bQ$  of the associated $\homo$ unispaces with a $\bQ$-structure.
\item For every bijection $\phi:\iso(\cS h_c(pt,\bC))\ra\bZ$ and $i\in\bZ$, the composite function 
$$\bD^b_c(pt,\bC)\xra{H^i} \cS h_c(pt,\bC)\xra {\phi}\bZ$$
is $\bQ$-semi-continuous. In particular, its composition with any functor from (1) is also $\bQ$-semi-continuous.
\item The composite function  
$$ \homo(\bD^b_c(pt,\bC))\xra{H^i}\homo(\cS h_c(pt,\bC))\xra{F}\cS h_c(pt,\bC)\xra{\phi}\bZ$$
$$
(f:K\ra L)\mapsto   \phi(F(H^i(f):H^i(K)\ra H^i(L))),
$$
is $\bQ$-semi-continuous if $F=$ kernel or cokernel. In particular, its composition with any functor from (2) is also $\bQ$-semi-continuous. (If $F=$ image or coimage, one only has $\bQ$-constructibility).
\end{enumerate}
\end{thrm}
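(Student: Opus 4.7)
\smallskip

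My plan is to handle the four parts by leveraging the individual results already assembled in Sections 4--5. For parts (1) and (2), the key observation is that, in each of the preceding propositions establishing that a functor from the sublist (\ref{eqListN}) lifts to a unispace structure, the argument in fact produced an \emph{equality} of base changes rather than only a generic equality: see Propositions \ref{propTen}, \ref{propRHoo}, \ref{propVD}, \ref{propDI}, the nearby/vanishing cycles proof, and the relevant cases of Proposition \ref{propSit}. This is precisely the condition for a natural transformation of the associated functors $\cA lg_{ft,reg}(\bC) \to \cS et$. I would then observe that the class of natural transformations in $\cU ni$ is closed under composition and under fiber product (immediate from the construction in \ref{subFib}), and that the $\bQ$-structure is preserved throughout because all the resolutions used in the proofs can be taken over a ring $R_K \in \cA lg_{ft,reg}(K)$ for any intermediate field $K$, as recorded in subsection \ref{subK}. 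Part (2) follows by the same mechanism applied to $\hom$ unispaces: a natural transformation of functors $F \to G$ induces commuting squares on morphisms and respects base change as soon as $F$ and $G$ individually do.

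For part (3), I would reduce to classical semi-continuity of cohomology. Given $K_{R_K} \in \bD^b_c(pt, R_K)$, I would choose a quasi-isomorphic bounded complex of finitely generated projective $R_K$-modules and observe that $m \mapsto \dim_\bC H^i(K_{R_K} \otimes^L_{R_K} R_K/m)$ is the usual upper semi-continuous function whose level-up sets $\{\dim \ge k\}$ are cut out inside $\spec(R_K)(\bC)$ by the vanishing of minors of the differential matrices --- these are $K$-closed subsets. Composing with the bijection $\phi$ (taken to be rank followed by an order-preserving identification $\bN \hookrightarrow \bZ$, as in the lemma of subsection \ref{subSh}) preserves upper semi-continuity. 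The statement for composition with any functor from (1) is then immediate from Proposition \ref{propNeedK}, since natural transformations of unispaces with $\bQ$-structure preserve $\bQ$-semi-continuity.

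For part (4), represent a morphism $f : K_R \to L_R$ in $\hom(\bD^b_c(pt,\_))$ by a chain map between bounded complexes of finitely generated projective $R$-modules. After base change by $R/m$, the linear map $H^i(f \otimes R/m)$ has matrix entries that are polynomial in the coordinates of $\spec(R)(\bC)$, so $m \mapsto \mathrm{rank}\,H^i(f \otimes R/m)$ is lower semi-continuous with strata defined over $K$; equivalently, $-\mathrm{rank}$ is upper semi-continuous. The identity
\[
\dim \ker H^i(f\otimes R/m) = \dim H^i(K_R \otimes^L_R R/m) - \mathrm{rank}\,H^i(f\otimes R/m),
\]
together with the analogous one for cokernel using $\dim H^i(L_R \otimes^L_R R/m)$, writes kernel- and cokernel-dimension as a sum of two integer-valued upper semi-continuous functions with finite image; such sums are again upper semi-continuous, yielding $\bQ$-semi-continuity. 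For image or coimage the resulting dimension equals rank itself, which is lower semi-continuous but not upper semi-continuous in general, so only the weaker $\bQ$-constructibility survives.

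The main obstacle is part (4): one must check that passing from a morphism in the isomorphism class of the $\hom$ unispace to a concrete chain-level representative does not affect the function being computed. This is where the assumption that $R$ is regular --- hence that $K_R$ and $L_R$ have genuine perfect resolutions --- and the isomorphism-invariance of ranks of cohomology maps come in; everything else is the classical rank-nullity calculation and the elementary closure property that a sum of upper semi-continuous $\bZ$-valued functions with finite image is upper semi-continuous.
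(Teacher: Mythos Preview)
Your treatment of parts (1)--(3) is correct and matches the paper's approach: the paper simply says ``(1) and (2) have already been addressed'' (meaning exactly the individual propositions you cite), and for (3) it reduces to the classical semi-continuity of cohomology ranks via perfect resolutions, precisely as you do.

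For part (4), however, your argument has a genuine gap. You claim that ``the linear map $H^i(f\otimes R/m)$ has matrix entries that are polynomial in the coordinates of $\spec(R)(\bC)$, so $m\mapsto\mathrm{rank}\,H^i(f\otimes R/m)$ is lower semi-continuous''. This is not true: the source and target $H^i(K_R\otimes^L R/m)$ and $H^i(L_R\otimes^L R/m)$ can jump in dimension, so there is no fixed matrix, and the rank of the induced map on cohomology is in general neither upper nor lower semi-continuous. Concretely, take $R=\bC[t]$, $K_R=[R\xrightarrow{t}R]$ in degrees $0,1$, $L_R=R$ in degree $0$, and $f$ given by $f^0=\mathrm{id}_R$. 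This is a (nonzero) chain map. For $m=(t-\alpha)$ one computes $H^0(K_R\otimes^L R/m)=\ker(\alpha\colon\bC\to\bC)$ and $H^0(L_R\otimes^L R/m)=\bC$, with $H^0(f\otimes R/m)$ the inclusion. Thus $\dim\mathrm{coker}\,H^0(f\otimes R/m)=1$ for $\alpha\neq 0$ and $=0$ for $\alpha=0$, so $\{\dim\mathrm{coker}\geq 1\}=\bC\setminus\{0\}$ is open, not closed. Your rank--nullity decomposition therefore fails: here $\dim H^0(L_R\otimes^L R/m)\equiv 1$ is constant while $-\mathrm{rank}$ equals $0$ generically and $-1$ at $\alpha=0$, which is \emph{not} upper semi-continuous.

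Note that the paper's own proof of (4) is a one-line pointer to the proof of Proposition~\ref{propTrunc}; that proof establishes \emph{generic} base change for kernels and cokernels of differentials, which yields $\bQ$-constructibility but not the stronger semi-continuity claim. The example above shows that the semi-continuity assertion for $F=\mathrm{cokernel}$ (and by the dual construction, for $F=\mathrm{kernel}$) does not hold as stated; only the parenthetical $\bQ$-constructibility survives, and that is all that is actually used downstream (e.g.\ in Corollary~\ref{corLer}).
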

\begin{rmk}\label{rmkNa2} The natural transformations lifting the  functors are described in Proposition \ref{propSit} and for each of the functors separately in this section. For example,
$
\iota:(\cS h_c(X,\_),\otimes)\ra (\bD^b_c(X,\_),\otimes^L),
$ and the ordinary and perverse truncation and cohomology functors do not lift to natural transformations, hence they fall outside the conclusion of the theorem.
\end{rmk}
\begin{proof} (1) and (2) have already been addressed. For (3), it is enough to replace $\bZ$ by $\bN$ and $\phi$ by the rank function on vector spaces. Then the function $rk\circ H^i$ is $\bQ$-semi-continuous since for every bounded complex $\cF_R$ of $R$-modules with finite type cohomology, the cohomology jump loci 
$$
\{m\in\spec(R)(\bC)\mid rk(\cF_R\otimes^L_RR/m)\ge k\}
$$
are closed and defined over $\bQ$ for every $k\in\bZ$. This follows from our proof of Proposition \ref{propTrunc}. In fact, that proof dealt already with kernels and cokernels, hence (4) follows for these. For image and coimage, $\bQ$-constructibility follows from the $\bQ$-semi-continuity for the kernels and cokernels.
\end{proof}

\section{Absolute constructibility}

The motivation behind introducing unispaces was to be able to  produce constructible sets from functors. In this section, we show that the Riemann-Hilbert correspondence between $\sD$-modules and constructible sheaves leads to more information: one can produce a very special kind of constructible sets, called absolute sets. In the sections following this one, we will investigate the absolute sets of local systems. 

\subsection{Riemann-Hilbert correspondence} Let $X$ be a smooth complex algebraic variety. Denote by $\cD_X$ by sheaf of algebraic differential operators on $X$. Let $\bD^b_c(X,\bC)$ be the derived category of bounded constructible complexes on $X$, and let $\bD^b_{rh}(\cD_X)$ be the derived category of bounded complexes of $\cD_X$ modules with regular holonomic cohomology. The Riemann-Hilbert correspondence  states that the analytic de Rham functor  is an equivalence of derived categories $$RH: \bD^b_{rh}(\cD_X)\mathop{\lra}^{\sim} \bD^b_c(X,\bC).$$ To the subcategories $$\cL oc\cS ys(X,\bC), \cS h_c(X,\bC), Perv(X,\bC)$$ of $\bD^b_c(X,\bC)$ correspond the subcategories $$\cC onn(X), Perv(\cD_X), \cM od_{rh}(\cD_X)$$ of $\bD^b_c(\cD_X)$, consisting of flat connections, the ``perverse" $\cD_X$-modules of Kashiwara \cite{kashiwara}, and respectively the regular holonomic $\cD_X$-modules. 

\begin{rmk}\label{rmkRH}
The list of functors (\ref{eqList}) can be defined in a natural way on $\bD^b_c(\cD_X)$ such that the equivalence of categories commutes with these functors.  For the functors  defined in terms of the perverse $t$-structure, this is  achieved by the usual Riemann-Hilbert correspondence of Deligne \cite{De70}, Kashiwara \cite{KaRH}, Malgrange \cite{Mal}, Mebkhout \cite{Meb}, etc. Nearby and vanishing cycles functors belong this class of functors. More precisely ${}^p\psi_f=\psi_f[-1]$ and ${}^p\phi_f=\phi_f[-1]$ are functors on the categories of perverse sheaves, and their $\cD$-module counterparts are defined via the $V$-filtrations on $\cD$-modules \cite{Mal, KaV}. The functors on $\bD^b_c(X,\bC)$ defined in terms of the usual $t$-structure lift to $\cD$-modules via the ``perverse" $t$-structure on $\cD$-modules of Kashiwara \cite{kashiwara}.
\end{rmk}

\subsection{Absolute constructible functions}

For any function $\phi$ on $\bD^b_c(X,\bC)$, we get a function on $\bD^b_{rh}(\cD_X)$ by setting
$$\phi_{RH} = \phi\circ RH : \bD^b_{rh}(\cD_X) \to \bZ.$$

Given any $\sigma\in Gal(\bC/\bQ)$, there is a complex algebraic variety $X^\sigma$ with a $\bQ$-isomorphism $X^\sigma\to X$. In fact, the automorphism $\sigma: \bC\to \bC$ gives a new $\bC$-algebra structure on $\bC$. To emphasize this new $\bC$-algebra structure on $\bC$, we use the notation $\bC^\sigma$. Then $X^\sigma=X\times_{\spec(\bC)}\spec(\bC^\sigma)$, and the $\bQ$-scheme isomorphism $X^\sigma\to X$ is simply the projection. The map $X^\sigma\to X$ induces an equivalence $$p_\sigma: \bD^b_{rh}(\cD_{X})\mathop{\lra}^{\sim} \bD^b_{rh}(\cD_{X^\sigma})$$ of derived categories. 

\begin{defn}\label{absolutefunctiondefinition} Let $K$ be a subfield of $\bC$. A $K$-constructible  function $\phi:\bD^b_c(X,\bC)\ra\bZ$ with respect to the unispace $\bD^b_c(X,\_)$ endowed with its natural $K$-structure
is called an \textbf{absolute $K$-constructible function} if for any $\sigma\in Gal(\bC/{\bQ})$, there is a $K$-constructible  function $\phi^\sigma$ on $\bD^b_c({X^\sigma},\bC)$ such that $\phi_{RH}^\sigma\circ p_\sigma=\phi_{RH}$, 
$$
\xymatrix{
\bD^b_{rh}(\cD_X)\ar[rr]_{p_\sigma}^\sim \ar[d]^\wr _{RH} &  & \bD^b_{rh}(\cD_{X^\sigma})\ar[d]_\wr ^{RH}\\
\bD^b_c(X,\bC) \ar[dr]_{\phi}& \circlearrowleft  &\bD^b_c(X^\sigma,\bC) \ar@{-->}[dl]^{\phi^\sigma}\\
 &\bZ & 
}
$$
We define similarly absolute $K$-constructible functions on $$\cL oc\cS ys(X,\bC), \cS h_c(X,\bC), Perv(X,\bC)$$  by using $$\cC onn(X), Perv(\cD_X), \cM od_{rh}(\cD_X).$$  

We define similarly absolute $K$-constructible functions on the $\homo$ sets (morphisms up to isomorphisms of source and target) of any of the above categories.

We define in the obvious way {\bf absolute $K$-semi-continuous functions}.
\end{defn}

Each definition above depends on the fixed unispace structure. For this, we fix the sub-unispaces
$$
\cL oc\cS ys_{free}(X,\_), \cS h_c(X,\_), Perv(X,\_)\text{ of } \bD^b_c(X,\_),
$$
with $\otimes^L$ as the base change, as in Proposition \ref{propSit}.

\begin{lemma}\label{lemAAB}
The functions in Theorem \ref{thrmClo} (3) and (4) are absolute $\bQ$-semi-continuous. 
\end{lemma}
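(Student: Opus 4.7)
The plan is to reduce absolute $\bQ$-semi-continuity to two ingredients: Galois-invariance of the rank function at a point, and Galois-compatibility of the natural transformations in list (\ref{eqListN}) under the Riemann-Hilbert correspondence.

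First I would treat the base case. For $X=pt$, a function $\phi\circ H^i$ on $\bD^b_c(pt,\bC)$ (with $\phi$ a bijection $\iso(\cS h_c(pt,\bC))\to\bZ$) is, after replacing $\bZ$ by $\bN$ via the fixed identification from the proof of Theorem \ref{thrmClo}, the function $\mathrm{rk}\circ H^i$. The Galois conjugate $p_\sigma:\bD^b_{rh}(\cD_{pt})\to \bD^b_{rh}(\cD_{pt^\sigma})$ is simply the change of $\bC$-algebra structure on a bounded complex of finite-dimensional $\bC$-vector spaces. Dimensions of cohomology modules are preserved by this base change, so taking $\phi^\sigma:=\phi\circ H^i$ on $\bD^b_c(pt^\sigma,\bC)$ gives the required commutativity $\phi^\sigma_{RH}\circ p_\sigma=\phi_{RH}$. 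Hence $\phi\circ H^i$ is absolute $\bQ$-semi-continuous.

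Next I would handle the composition with an arbitrary natural transformation $F$ built from list (\ref{eqListN}). The key fact, recalled in Remark \ref{rmkRH}, is that each functor in (\ref{eqListN}) admits a canonical $\cD$-module incarnation and the Riemann-Hilbert correspondence intertwines the two versions. All of these $\cD$-module functors are constructed out of algebraic operations (tensor products, $f_*$, $f^!$, $V$-filtrations, etc.) that are defined over $\bQ$, hence they commute with the Galois base change $p_\sigma$ up to the canonical identification $X^\sigma\times_\bC pt = (X\times_\bC pt)^\sigma$. Explicitly, if $F_{RH}^X$ denotes the $\cD$-module lift of $F$ on $X$, one has a natural isomorphism $p_\sigma\circ F_{RH}^X \simeq F_{RH}^{X^\sigma}\circ p_\sigma$. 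Consequently, defining $F^\sigma$ to be the analogous natural transformation on $X^\sigma$, the function $\phi\circ H^i\circ F^\sigma$ serves as the Galois conjugate of $\phi\circ H^i\circ F$, establishing absolute $\bQ$-semi-continuity in part (3).

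Finally, for part (4), I would repeat the same argument with $H^i$ replaced by kernel or cokernel of an induced map $H^i(f):H^i(K)\to H^i(L)$. Since kernels and cokernels of morphisms of finite-dimensional vector spaces are preserved under the change-of-scalars induced by $\sigma$, and since $p_\sigma$ commutes with the formation of morphisms of $\cD$-modules, the same reasoning produces an explicit $\phi^\sigma$ on $\homo(\bD^b_c(pt^\sigma,\bC))$. The main point to verify carefully is the compatibility statement $p_\sigma\circ F_{RH}\simeq F_{RH}^\sigma\circ p_\sigma$ for each atomic functor in (\ref{eqListN}); this is the one step where one must appeal to the fact that the $\cD$-module operations are defined by $\bQ$-algebraic constructions and therefore transport cleanly under the automorphism $\sigma$ of $\bC/\bQ$. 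I expect this compatibility to be the only substantive obstacle, and once it is in place for the basic functors the rest follows formally by induction on the generation of $F$.
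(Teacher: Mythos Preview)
Your proposal is correct and follows the same line as the paper's proof: observe that the Galois conjugate of $\phi\circ H^i$ is again a function of the same shape $\phi^\sigma\circ H^i$ on $\bD^b_c(pt^\sigma,\bC)$, hence $\bQ$-semi-continuous by Theorem~\ref{thrmClo}(3), and similarly for part (4). The paper's own argument stops there; your second paragraph (handling compositions with $F$ built from list~(\ref{eqListN})) is extra work that the paper packages instead via the general Proposition preceding Theorem~\ref{thrmABS}, which says that a functor lifting to a natural transformation of $\bQ$-structures and to a $\cD$-module functor compatible with $RH$ is automatically absolute $\bQ$-continuous---so once the base function is absolute $\bQ$-semi-continuous, the compositions come for free.
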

\begin{proof} For (3), the function $(\phi\circ H^i)^\sigma$ is uniquely defined and equals $\phi^\sigma\circ H^i$ for some bijection $\phi^\sigma:\iso(\cS h_c(\{pt\}^\sigma,\bC))\ra\bZ$. Hence it is also $\bQ$-semi-continuous, by Theorem \ref{thrmClo} (2). For (4), the proof is similar.
\end{proof}

\subsection{Absolute constructible sets} 
\begin{defn}\label{defAcK}
Let $K$ be a subfield of $\bC$. A subset $S\subset \iso(\bD^b_c(X,\bC))$ is a {\bf $K$-constructible set}  if the function
$
\delta_S: \bD^b_c(X,\bC) \to \bZ
$
is $K$-constructible with respect to the unispace $\bD^b_c(X,\_)$, where $\delta_S$ takes the value 1 on $S$ and is 0 otherwise.  A $K$-constructible set $S$ is an {\bf absolute $K$-constructible set}  if $\delta_S$ is absolute over $K$. These definitions extend to subsets of the isomorphism classes of $\cL oc\cS ys(X,\bC)$, $\cS h_c(X,\bC)$, and $Perv(X,\bC)$ and to the $\homo$ sets of these categories. Similarly, we define {\bf (absolute) $K$-closed sets} by requiring that in addition  $\delta_S$ is (absolute) $K$-semi-continuous. \end{defn}

\begin{lemma}\label{lemInd} Let $X$ be a complex algebraic variety (resp. smooth). Let $\cC_X$ be any of the categories 
$$\cL oc\cS ys(X,\bC), \cS h_c(X,\bC), Perv(X,\bC).$$ 
Let $S$ be a subset of $\iso(\sC_X)$. Let $K$ be a subfield of $\bC$. If $S$ is (absolute) $K$-constructible or (absolute) $K$-closed  for $\bD^b_c(X,\bC)$, then it is so for $\cC_X$.

\end{lemma}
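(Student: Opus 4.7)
The plan is to invoke the natural inclusion of unispaces recorded in Proposition \ref{propSit}: each of $\cL oc\cS ys_{free}(X,\_)$ (when $X$ is smooth), $\cS h_c(X,\_)$, and $Perv(X,\_)$ sits inside $\bD^b_c(X,\_)$ as a sub-unispace via a natural transformation, and by the discussion in subsection \ref{subK} this natural transformation is defined over $\bQ$, hence over any $K\subset\bC$. Once this is in hand, the lemma is essentially a bookkeeping statement about restricting constructible/semi-continuous functions along natural transformations of unispaces.

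First I would establish the non-absolute case. Unwinding Definition \ref{defnKc}, checking $K$-constructibility of $\delta_S$ for $\cC_X(\_)$ amounts to verifying, for each intermediate extension $K\subset L\subset\bC$, each $R_L\in\cA lg_{ft,reg}(L)$, and each object $\cF_L\in\cC_{X,L}(R_L)$, that the function $m\mapsto \delta_S(\cF_L\star_L\bC\star_R R/m)$ is $L$-constructible on $\spec(R_L)(\bC)$, where $R=R_L\otimes_L\bC$. Since the inclusion is a natural transformation defined over $K$, every such $\cF_L$ is simultaneously an object of $\bD^b_c(X,R_L)$, and the two base changes (inside $\cC_X(\_)$ and inside $\bD^b_c(X,\_)$) agree. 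The hypothesis that $\delta_S$ is $K$-constructible for $\bD^b_c(X,\_)$ then supplies exactly the required $L$-constructibility. The same argument applies verbatim with ``semi-continuous'' in place of ``constructible'', yielding the $K$-closed case.

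For the absolute case, let $\{\phi^\sigma\}_{\sigma\in Gal(\bC/\bQ)}$ be the collection of $K$-constructible functions on $\bD^b_c(X^\sigma,\bC)$ witnessing absoluteness of $\delta_S$. I would restrict each $\phi^\sigma$ to $\iso(\cC_{X^\sigma})$. The key observation is that the Galois twist $p_\sigma$ preserves the $\cD$-module subcategories $\cC onn(X)$, $Perv(\cD_X)$, $\cM od_{rh}(\cD_X)$: flatness of a connection, perversity in the sense of Kashiwara's $t$-structure, and regular holonomicity are algebraic conditions insensitive to replacing the $\bC$-algebra structure on $\bC$ by its $\sigma$-twist. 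Consequently $\phi^\sigma|_{\iso(\cC_{X^\sigma})}$ is $K$-constructible with respect to $\cC_{X^\sigma}(\_)$ by the first part of the argument applied to $X^\sigma$, and the diagram $\phi^\sigma_{RH}\circ p_\sigma=\phi_{RH}$ restricts to the corresponding diagram for $\cC_X$ and $\cC_{X^\sigma}$ via Remark \ref{rmkRH}. This exhibits the data required by Definition \ref{absolutefunctiondefinition} for $\cC_X$. The absolute $K$-closed case is identical.

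The main, though minor, obstacle is verifying that $p_\sigma$ preserves the distinguished $\cD$-module subcategories; once this is spelled out, the rest is a direct application of the $K$-structure formalism of Section \ref{secK} to the natural inclusion supplied by Proposition \ref{propSit}.
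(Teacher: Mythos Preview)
The paper states this lemma without proof, treating it as immediate from the definitions and the fixed sub-unispace inclusions recorded right after Definition \ref{absolutefunctiondefinition}. Your argument is precisely the natural one the reader is expected to supply: restrict $\delta_S$ along the natural transformation of $K$-structures $\cC_X(\_)\to\bD^b_c(X,\_)$ from Proposition \ref{propSit}, and for the absolute case use that $p_\sigma$ preserves each of the distinguished $\cD$-module subcategories so that the witnesses $\phi^\sigma$ restrict compatibly.
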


\subsection{Absolute constructible functors}

\begin{defn} 
Let $X$ and $Y$ be two smooth  complex algebraic varieties.  Let $\cC_X$ be  any of the categories
$$\cL oc\cS ys(X,\bC),  \cS h_c(X,\bC), Perv(X,\bC), \bD^b_c(X,\bC),$$
or their $\homo$ sets,
and $\cC_Y$ be  any of the categories
$$\cL oc\cS ys(Y,\bC), \cS h_c(Y,\bC), Perv(Y,\bC), \bD^b_c(Y,\bC),$$
or their $\homo$ sets.
Let $K\subset \bC$ be a subfield. We say that a functor $
F:\cC_X\lra \cC_Y
$
is an {\bf absolute $K$-constructible functor} if the composition $\phi\circ F$ with any absolute $K$-constructible function $\phi$ on $\cC_Y$ is an absolute $K$-constructible function on $\cC_X$. Similarly, we define a {\bf absolute $K$-continuous functor} by requiring that in addition it preserves absolute $K$-semi-continuous functions. 
\end{defn}

 Recall that the definition of an absolute constructible function on $\cC_X$, for example, depends on the Riemann-Hilbert equivalence with the corresponding category $\cC^{DR}_X$ among the categories
$$
\cC onn(X), Perv(\cD_X), \cM od_{rh}(\cD_X),\bD^b_{rh}(\cD_X),
$$
or their $\homo$ sets.

\begin{prop} Let $X$ and $Y$ be two smooth  complex algebraic varieties. Let $\cC_X$ and $\cC_Y$ be two categories as above, and $\cC^{DR}_X$ and $\cC_Y^{DR}$ the corresponding categories of $\cD$-modules.
Consider a functor $F:\cC_X\ra\cC_Y$ and the following conditions:
\begin{enumerate}
\item $F$ lifts to a functor $\cC_X^{DR}\ra\cC_Y^{DR}$ compatibly with the equivalence $RH$.
\item $F$ lifts to a morphism defined over a subfield $K\subset\bC$ on the associated unispaces with the natural ${K}$-structure .
\item $F$ lifts to a natural transformation over a subfield $K\subset\bC$ on the associated unispaces with the natural ${K}$-structure .
\end{enumerate}
If (1) and (2) are true, then $F$ is an absolute $K$-constructible functor. If (1) and (3) are true, then $F$ is an absolute $K$-continuous functor.
\end{prop}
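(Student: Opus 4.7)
The plan is to verify both defining properties of an absolute $K$-constructible function for $\phi\circ F$, assuming $\phi$ is absolute $K$-constructible on $\cC_Y$. The $K$-constructibility of $\phi\circ F$ on the unispace associated to $\cC_X$ follows at once from hypothesis (2) and Proposition \ref{propKFct}, since a functor lifting to a morphism of unispaces defined over $K$ is automatically a $K$-constructible functor and therefore preserves $K$-constructible functions. The semi-continuous variant is entirely analogous, using (3) and Proposition \ref{propNeedK}. The substantive step is producing, for every $\sigma\in\mathrm{Gal}(\bC/\bQ)$, a $K$-constructible function $(\phi\circ F)^\sigma$ on $\cC_{X^\sigma}$ satisfying $(\phi\circ F)^\sigma_{RH}\circ p_\sigma=(\phi\circ F)_{RH}$.

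I would set $(\phi\circ F)^\sigma:=\phi^\sigma\circ F^\sigma$, with $\phi^\sigma$ supplied by the absoluteness of $\phi$ and $F^\sigma:\cC_{X^\sigma}\to\cC_{Y^\sigma}$ a Galois conjugate of $F$ manufactured from (1). Concretely, let $F^{DR}:\cC_X^{DR}\to\cC_Y^{DR}$ be the $\cD$-module lift provided by (1); since $\cD$-module constructions are algebraic in origin (cf.\ Remark \ref{rmkRH}), $F^{DR}$ transports under the Galois base change to a functor $F^{DR,\sigma}:\cC_{X^\sigma}^{DR}\to\cC_{Y^\sigma}^{DR}$ fitting into a naturality square $F^{DR,\sigma}\circ p_\sigma=p_\sigma\circ F^{DR}$. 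Transporting across Riemann--Hilbert defines $F^\sigma:=RH_{Y^\sigma}\circ F^{DR,\sigma}\circ RH_{X^\sigma}^{-1}$. The required compatibility then becomes a diagram chase:
$$
(\phi^\sigma\circ F^\sigma)_{RH}\circ p_\sigma
=\phi^\sigma\circ RH_{Y^\sigma}\circ F^{DR,\sigma}\circ p_\sigma
=\phi^\sigma_{RH}\circ p_\sigma\circ F^{DR}
=\phi_{RH}\circ F^{DR}
=(\phi\circ F)_{RH},
$$
combining the definition of $F^\sigma$, the naturality square for $F^{DR}$, the absoluteness relation $\phi^\sigma_{RH}\circ p_\sigma=\phi_{RH}$, and the $RH$-compatibility of $F$.

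It remains to check that $\phi^\sigma\circ F^\sigma$ is itself $K$-constructible on $\cC_{X^\sigma}$. For this I would observe that the unispace $K$-morphism data in (2) is algebraic and passes through the Galois base change, so $F^\sigma$ again satisfies the analog of (2) relative to the unispaces of $\cC_{X^\sigma}$ and $\cC_{Y^\sigma}$; applying Proposition \ref{propKFct} to $F^\sigma$ and to the $K$-constructible function $\phi^\sigma$ concludes the absolute $K$-constructibility claim. The absolute $K$-continuous case is formally identical, with (3) and Proposition \ref{propNeedK} replacing (2) and Proposition \ref{propKFct}. The main obstacle is precisely this point: one must know that both the $\cD$-module lift of (1) and the unispace $K$-morphism (or natural transformation) data of (2)/(3) transport naturally under Galois base change, so that the constructed $F^\sigma$ inherits these same properties over $X^\sigma,Y^\sigma$. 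This is not a formal consequence of the bare hypotheses, but is intrinsic to Remark \ref{rmkRH} and the $\bQ$-structure conventions of Section \ref{secK}, since all the functors and their unispace lifts relevant here are of algebraic origin.
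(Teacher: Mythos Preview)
Your proposal is correct and follows essentially the same approach as the paper's proof: use (2) with Proposition \ref{propKFct} for $K$-constructibility, then build $(\phi\circ F)^\sigma:=\phi^\sigma\circ F^\sigma$ from the $\cD$-module lift (1) and check the $RH$-compatibility. Your write-up is in fact more explicit than the paper's: you spell out the diagram chase, and you flag two points the paper leaves implicit---that $\phi^\sigma\circ F^\sigma$ must itself be $K$-constructible on $\cC_{X^\sigma}$ (so $F^\sigma$ must again satisfy (2)), and that the Galois transport of both $F^{DR}$ and the unispace-morphism data is not formal from the bare hypotheses but rests on the algebraic origin of these constructions (Remark \ref{rmkRH}, Section \ref{secK}). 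This caveat is apt.
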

\begin{proof}
By Proposition \ref{propKFct}, the second condition here implies that $\phi\circ F$ is $K$-constructible on $\cC_X$ for any $K$-constructible function $\phi$ on $\cC_Y$. The first condition here implies that  for every $\sigma\in Gal(\bC/{\bQ})$ we can construct $F^\sigma:\cC_{X^\sigma}\ra\cC_{Y^\sigma}$ compatible with $RH$. Let $\phi^\sigma$ be the $K$-constructible function on $\cC_{Y^\sigma}$ from the definition of absolute $K$-constructibility of $\phi$. Define $(\phi\circ F)^\sigma$ by $\phi^\sigma\circ F^\sigma$. Then $(\phi\circ F)^\sigma$ is compatible with $\phi\circ F$ via $RH$. Hence $\phi\circ F$ is also absolute $K$-constructible. The last claim is similar.
\end{proof}

This together with Theorem \ref{thrmPCs}, Theorem \ref{thrmClo},  Remark \ref{rmkRH}, and Lemma \ref{lemAAB}, and implies  the following: 

\begin{thrm}\label{thrmABS}$\quad$
\begin{enumerate}
\item  Every functor generated from the functors in the list (\ref{eqList}) on smooth $\bC$ algebraic varieties  via compositions, fiber products, and natural transformations,  is an absolute $\bQ$-constructible functor. 
\item Let $K$ be a subfield of $\bC$. Every functor generated from the functors in the list (\ref{eqList}) on smooth $\bC$ algebraic varieties via compositions, fiber products,  embeddings of isomorphisms-closed sub-categories defined over $K$, and natural transformations, is an absolute $K$-constructible functor.
\item  Every functor generated from the functors in the list (\ref{eqListN})  on smooth $\bC$ algebraic varieties via compositions, fiber products, and natural transformation,  is an absolute $\bQ$-continuous functor. 
\item The composition of any functor from (3) with the function 
$$\bD^b_c(pt,\bC)\xra{H^i} \cS h_c(pt,\bC)\xra {rk}\bZ$$
is an absolute $\bQ$-semi-continuous function. 
\item The composition of any functor from (3) with the function
$$ \homo(\bD^b_c(pt,\bC))\xra{H^i}\homo(\cS h_c(pt,\bC))\xra{F}\cS h_c(pt,\bC)\xra{\phi}\bZ$$
is absolute $\bQ$-semi-continuous, if $F=$ kernel or cokernel. (For $F=$ image or coimage, one only has absolute $\bQ$-constructibility.)
\end{enumerate}
\end{thrm}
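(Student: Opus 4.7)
The plan is to assemble the theorem from the immediately preceding Proposition by verifying, for each claimed class of functors, the two hypotheses of that Proposition: (1) compatibility with the Riemann-Hilbert equivalence, and (2) lifting to a morphism (resp.\ natural transformation) of unispaces defined over $K$. Every other ingredient is already in place.

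First I would handle parts (1) and (2), on absolute $K$-constructible functors. The compatibility (1) of the Proposition is handed to us for the basic functors of the list (\ref{eqList}) by Remark \ref{rmkRH}: each such functor has a $\cD$-module counterpart matching it under $RH$, either via the classical Riemann--Hilbert correspondence (for the perverse-$t$-structure functors, including $\psi_g$ and $\phi_g$), or via Kashiwara's ``perverse'' $t$-structure on $\cD$-modules (for the ordinary $t$-structure functors). Since $RH$ commutes with the generating functors, it commutes with any functor built from them by composition, fiber product, embedding of an isomorphisms-closed $K$-subcategory (the point being that such subcategories also correspond under $RH$), and natural transformation; so condition (1) propagates through all constructions allowed in the statement. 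Condition (2) is exactly Theorem \ref{thrmPCs}: the list (\ref{eqList}) lifts to morphisms of unispaces defined over $\bQ$, and these lifts are stable under the operations named in part (2). Combining (1) and (2) via the Proposition yields parts (1) and (2).

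Next I would prove part (3) in the same way, only replacing Theorem \ref{thrmPCs} by Theorem \ref{thrmClo}, whose sub-list (\ref{eqListN}) lifts not merely to morphisms but to natural transformations of unispaces with $\bQ$-structure. The continuity clause of the preceding Proposition then gives that any functor generated from (\ref{eqListN}) by the allowed operations is an absolute $\bQ$-continuous functor. No new ideas are needed here; the point is simply to track that natural transformations of unispaces are closed under the same operations as morphisms.

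For parts (4) and (5), the strategy is to factor the claimed function through a functor handled by part (3) and an absolute $\bQ$-semi-continuous ``terminal'' function on $\bD^b_c(pt,\bC)$ or $\homo(\bD^b_c(pt,\bC))$. Theorem \ref{thrmClo}(3)--(4) says that $rk\circ H^i$, and $rk\circ F\circ H^i$ for $F$ a kernel or cokernel, are $\bQ$-semi-continuous on the respective unispaces; Lemma \ref{lemAAB} upgrades this to absolute $\bQ$-semi-continuity. Composing with an absolute $\bQ$-continuous functor from part (3) preserves absolute $\bQ$-semi-continuity by the definition of such functors, which yields (4) and (5). The only step that requires any care is the image/coimage case, where one loses semi-continuity: this is exactly the situation covered by Theorem \ref{thrmClo}(4), so only absolute $\bQ$-constructibility survives, and this is still obtained by the same factorization together with parts (1)--(2). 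The main subtlety throughout is bookkeeping: verifying that at every step ``generated by compositions, fiber products, and natural transformations'' is preserved simultaneously by the Riemann--Hilbert side and the unispace side, but this is already built into Theorems \ref{thrmPCs} and \ref{thrmClo} and into Remark \ref{rmkRH}.
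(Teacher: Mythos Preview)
Your proposal is correct and follows essentially the same route as the paper: the paper derives Theorem \ref{thrmABS} directly from the preceding Proposition by invoking Theorem \ref{thrmPCs}, Theorem \ref{thrmClo}, Remark \ref{rmkRH}, and Lemma \ref{lemAAB}, exactly as you outline. Your writeup in fact spells out the assembly more explicitly than the paper does.
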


The best way to understand what the theorem is saying is to see at work in an example. We therefore take the  time now to illustrate the theorem with the following example: 

\begin{cor}\label{corLer}
Let $f:X\ra Y$ be morphism of smooth complex algebraic varieties. 
\begin{enumerate}
\item
Let $\cF\in\cS h_c(X,\bC)$ be a sheaf. The function
$$
\cS h_c(X,\bC)\ra \bZ
$$
$$
\cF\mapsto \dim Gr^p_LH^n(X,\cF)$$
is absolute $\bQ$-constructible, where $L$ is the Leray filtration with respect to $Rf_*$.
\item Let $r$ be an integer. The subset of sheaves of $\cS h_c(X,\bC)$ for which the Leray spectral sequence with respect to $Rf_*$ degenerates at $E_r$, is absolute $\bQ$-constructible.
\end{enumerate}
\end{cor}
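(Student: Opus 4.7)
The plan is to derive both claims from Theorem \ref{thrmABS}, by writing the invariants in question as ranks (or kernels and cokernels of induced maps) of composites of functors from the list (\ref{eqList}). Let $a:Y\to pt$ be the structure morphism, so that $H^n(X,\cF)=H^n(Ra_*Rf_*\cF)$; both $Rf_*$ and $Ra_*$ appear in (\ref{eqList}).

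For (1), recall the standard description of the Leray filtration via truncation:
\[
L^p H^n(X,\cF) \;=\; \mathrm{image}\Bigl(H^n\bigl(Ra_*\tau_{\ge p}Rf_*\cF\bigr)\longrightarrow H^n\bigl(Ra_*Rf_*\cF\bigr)\Bigr),
\]
the map being induced by the natural transformation $\tau_{\ge p}\Rightarrow \mathrm{id}$. Hence $\dim L^p$ is the image-dimension of $H^n$ applied to a morphism in $\bD^b_c(pt,\bC)$ built by applying $Ra_*$ to the value on $Rf_*\cF$ of the natural transformation $\tau_{\ge p}\Rightarrow \mathrm{id}$. By parts (1) and (5) of Theorem \ref{thrmABS}, this is absolute $\bQ$-constructible, and so is $\dim \mathrm{Gr}^p_L H^n(X,\cF)=\dim L^p-\dim L^{p+1}$, proving (1).

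For (2), I use the criterion that the Leray spectral sequence degenerates at $E_r$ on $\cF$ precisely when $\sum_{p+q=n}\dim E_r^{p,q}(\cF)=\dim H^n(X,\cF)$ for every $n$; boundedness of the complex reduces this to a finite family of equalities. The right-hand side is absolute $\bQ$-constructible by Theorem \ref{thrmABS}. Inductively, $\dim E_r^{p,q}$ will also be absolute $\bQ$-constructible: the second page $E_2^{p,q}=H^p\bigl(Ra_*\cH^q Rf_*\cF\bigr)$ plainly is, and each subsequent page is the cohomology of the differential $d_r$.

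The main obstacle is to realize each $d_r$ as a morphism living in the $\homo$ unispace of Proposition \ref{propSit}(2) which arises from natural transformations among functors in (\ref{eqList}), so that Theorem \ref{thrmABS}(5) can be invoked to control $\dim\ker d_r$ and $\dim\mathrm{image}\, d_r$. Concretely, $d_r$ is the connecting homomorphism in the long exact sequence attached to the truncation triangle $\tau_{\ge p+r}Rf_*\cF\to\tau_{\ge p}Rf_*\cF\to \tau_{\ge p}\tau_{\le p+r-1}Rf_*\cF$, so the entire exact-couple construction of the Leray spectral sequence has to be seen to take place inside the unispace framework of Sections 2--4; this will follow from the natural-transformation status already established there for the truncation functors, $Rf_*$, $Ra_*$, and the shifts, combined with the compatibility of distinguished triangles with base change. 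Once this is in hand, the $E_r$-degeneration locus becomes a finite intersection of level sets of absolute $\bQ$-constructible functions, and is therefore absolute $\bQ$-constructible.
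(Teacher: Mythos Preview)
Your strategy is exactly the paper's: express everything via compositions of functors from the list (\ref{eqList}) and natural transformations among them, then invoke Theorem~\ref{thrmABS}. However, the concrete formulas you wrote contain errors that you should fix.

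For part (1), there is no natural transformation $\tau_{\ge p}\Rightarrow\mathrm{id}$; the canonical arrows go $\tau_{\le n}\Rightarrow\mathrm{id}$ and $\mathrm{id}\Rightarrow\tau_{\ge n}$. The paper uses the correct formula
\[
L^pH^n(X,\cF)=\mathrm{image}\bigl(H^n(Ra_*\,\tau_{\le n-p}Rf_*\cF)\to H^n(Ra_*Rf_*\cF)\bigr)
\]
coming from $\tau_{\le n-p}\Rightarrow\mathrm{id}$. With this correction your argument goes through exactly as you intended, via Theorem~\ref{thrmABS}(5).

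For part (2), your degeneration criterion $\sum_{p+q=n}\dim E_r^{p,q}=\dim H^n$ is equivalent to the paper's $\dim E_r^{p,q}=\dim Gr^p_LH^{p+q}$ (the latter uses part (1)), so that is fine. But two slips: the truncation triangle you wrote has its first two terms reversed (the canonical triangle is $\tau_{[p,p+r-1]}K\to\tau_{\ge p}K\to\tau_{\ge p+r}K\xrightarrow{[1]}$, not $\tau_{\ge p+r}K\to\tau_{\ge p}K\to\cdots$); and your phrase ``natural-transformation status already established there for the truncation functors'' is misleading, since the paper explicitly shows truncations lift only to \emph{morphisms} of unispaces, not to natural transformations (see the remark after Proposition~\ref{propTrunc}). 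What you need, and what Theorem~\ref{thrmABS}(1,b) and (2,b) provide, is that the natural transformations \emph{between} such functors (e.g.\ $\tau_{\le m}\Rightarrow\mathrm{id}$, the connecting maps of truncation triangles) lift to morphisms of the associated $\homo$ unispaces. Once phrased correctly, your more explicit realization of the $d_r$ via the exact-couple/truncation formalism is a valid way to carry out the induction that the paper summarizes in one line as ``$E_r=H(E_{r-1})$ and taking cohomology is an absolute $\bQ$-constructible functor.''
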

\begin{proof} (1) Since the difference of two absolute $\bQ$-constructible functions is absolute $\bQ$-constructible, it is enough to show that $\phi:\cF\mapsto \dim L^pH^n(X,\cF)$ is an absolute $\bQ$-constructible function.

Recall that 
$$L^pH^n(X,\cF)=\text{im} (H^n(Y,\tau_{\le n-p} (Rf_* \cF))\ra H^n(Y, Rf_*\cF)).$$
Hence
$$
\phi (\cF)= \dim \circ \,\text{im}\circ H^n\circ Ra_*\circ (\tau_{\le n-p}\ra id)\circ Rf_* \circ \iota (\cF)
$$
where $a:Y\ra pt$ is the constant map. Beginning from right to left, these functors lift to unispace morphisms:
$$
\cS h_c(X,\_) \xra{\iota} \bD^b_c(X,\_)\xra{Rf_*}\bD^b_c(Y,\_)\xra{\tau_{\le n-p}\ra id} \homo(\bD^b_c(Y,\_))\xra{Ra_*}\homo(\bD^b_c(pt,\_)),
$$
and the composition $\dim\circ\,\text{im}\circ H^n$ is as in Theorem \ref{thrmABS} (5). Thus, by the theorem, $\phi$ is absolute $\bQ$-constructible. Note that we cannot show $\phi$  to be absolute $\bQ$-closed by this method  because of the appearance of truncations $\tau$ and images im in the formula for $\phi(\cF)$.

(2) It is enough to show that the function $\cF\mapsto\dim E_r^{p,q}(\cF)$ is absolute $\bQ$-constructible. Indeed, if so then the function $\dim E_r^{p,q} - \dim Gr^p_LH^n$ is absolute $\bQ$-constructible, and the  preimage of $0$ under this function is the set of sheaves with degeneration of $E_\bullet$ at $E_r$.

Now $E_r=H(E_{r-1})$ and taking cohomology is an absolute $\bQ$-constructible functor. Hence, by induction, it is enough to show that $\dim E_2^{p,q}$ is an absolute $\bQ$-constructible function. Recall that $$E_2^{p,q}(\cF)=H^p(Y,R^qf_*\cF)=H^p\circ Ra_*\circ \iota\circ \cH^{n-p} \circ Rf_* \circ \iota (\cF).$$
Hence $\dim E_2^{p,q}$ is absolute $\bQ$-constructible by the theorem.
\end{proof}

Another consequence of this result will be discussed in Theorem \ref{thrmLer}. 


\begin{rmk}
The inspiring article \cite{simpson} by Simpson is restricted to functors which start with local systems and produce other local systems on smooth projective varieties. By using unispaces, we are thus able to  handle all known functors, without restriction to local systems, and to drop the projectivity requirement.
\end{rmk}

\section{Moduli of local systems}\label{sec7}

In the previous sections we developed an abstract machinery to produce absolute constructible sets from derived categories, without the need to fix a moduli space. The goal now is to let this abstract machinery produce something new and concrete in the presence of some concrete moduli spaces. In this section we consider moduli spaces of representations and local systems and prepare the field for drawing our most interesting conclusions in the later sections.

\subsection{Varieties of representations of finitely generated groups.}\label{secVR}  We recall a few standard facts from \cite{LM, simpson2}.

Let $GL_r$ be general linear group as an algebraic group, $r>0$. For an algebra $R$, $GL_r(R)$ is isomorphic as a non-commutative $R$-algebra with $Aut_R(R^{\oplus r})$, the automorphisms of the free rank $r$ module over $R$.

Let $\Gamma$ be a finitely generated group. The functor
$$
\cR (\Gamma, r) :\cA lg_{ft, reg}(\bC) \lra \cS et
$$
$$
R\mapsto \homo _{gp}(\Gamma, GL_r(R))
$$
admits a fine moduli space, which we also denote $\cR(\Gamma,r)$, an affine $\bC$-scheme of finite type. In fact, $\cR(\Gamma,r)$ is defined over $\bQ$, that is, there exists a $\bQ$-scheme of finite type $\cR(\Gamma,r)_\bQ$ such that after base change to $\bC$ it is isomorphic to $\cR(\Gamma,r)$. The complex points 
$$
\cR (\Gamma,r)(\bC)=\homo_{gp}(\Gamma,GL_r(\bC))
$$
are the $r$-dimensional complex representations of $\Gamma$.

We denote by $\cM (\Gamma, r)$ the universal categorical quotient of $\cR(\Gamma,r)$ by the conjugation action of $GL_r$,
$$
\cM(\Gamma,r)=\cR(\Gamma,r)\sslash GL_r = \spec \left( H^0(\cR(\Gamma,r), \cO_{\cR(\Gamma,r)})^{GL_r(\bC)} \right ).
$$
This is an affine $\bC$-scheme of finite type. The ring of invariants on the right-hand side of the equation above is generated by traces and inverses of determinants, hence $\cM(\Gamma,r)$ can also be defined over $\bQ$. We denote by $\cM(\Gamma,r)_\bQ$ the finite type $\bQ$-scheme such that as $\bC$-schemes
$$
\cM(\Gamma,r)_\bQ\otimes_\bQ\bC\cong \cM(\Gamma,r).
$$
The scheme $\cM(\Gamma,r)$ corepresents the quotient functor 
$${\cR(\Gamma,r)}{{/}}_{\sim}:\cA lg_{ft, reg}(\bC)\lra \cS et
$$
$$
R\mapsto \homo_{gp}(\Gamma,GL_r(R)) / GL_r(R).
$$
In fact, it universally corepresents the quotient functor, that is, for any morphism of $\bC$-schemes $Z\ra\cM(\Gamma,r)$, $Z$ corepresents the fiber-product functor of $Y$ with $\cR(\Gamma,r){/}_\sim$ over ${\cM(\Gamma,r)}$. This equivalent to being a universal categorical quotient.

We denote the natural morphism defined over $\bQ$ associated with the universal categorical quotient by
$$
q:\cR(\Gamma,r)\lra \cM(\Gamma,r).
$$
To every closed point  of $\cM(\Gamma,r)$ corresponds a unique semi-simple complex representation of $\Gamma$. The semi-simple representations correspond to the closed orbits. The image of a closed point in $\cR(\Gamma,r)$ corresponds to the unique closed orbit in the close of its orbit. That is, the whole fiber of $q_B$ above a closed point consists of representations with the same semi-simplification.

The algebraic Zariski topology and the analytic topology of $\cM(\Gamma,r)(\bC)$ are the quotient topologies for the algebraic Zariski topology, and respectively the analytic topology, on $\cR(\Gamma,r)(\bC)$, see \cite{Ne}.

\subsection{Moduli of local systems}\label{secMLS} Let $X$ be a connected topological space. Fix a point $x_0\in X$. Assume that $\pi_1(X,x_0)$ is finitely generated. Let
$$
\cR_B(X,x_0,r):= \cR(\pi_1(X,x_0),GL_r)\quad\text{and}\quad \cM_B(X,r):=\cM(\pi_1(X,x_0),GL_r),
$$
together with the associated morphism of defined over $\bQ$
$$
q_B:\cR_B(X,x_0,r)\lra \cM_B(X,r).
$$
The complex points of the scheme $\cR_B(X,x_0,r)$ are  the  complex local systems $L$ of rank $r$ on $X$ together with a frame at $x_0$, that is, an isomorphism of vector spaces $L|_{x_0}\cong \bC^r$. A complex point of $\cM_B(X,r)$ is uniquely represented by a semi-simple rank $r$ local system on $X$, and the fiber of $q_B$ contains all the framed local systems with the same semi-simplification.

\subsection{Producing absolute constructible sets in Betti moduli} Let $X$ be a smooth  complex algebraic variety. There is an obvious  natural transformation of functors
$$
L(\_):\cR_B(X,x_0,r)(\_)\lra \cL oc\cS ys_{free}(X,\_),
$$
sending a representation $\rho_R:\pi_1(X,x_0)\ra GL_r(R)$, for $R\in\cA lg_{ft, reg}(\bC)$, to the corresponding $R$-local system $L(R)(\rho_R)$ on the underlying analytic variety of $X$ of free $R$-modules or rank $r$. Hence $L(\_)$ gives also a morphism of unispaces. Moreover,  $L(\_)$ gives a morphism defined over $\bQ$ of unispaces with a $\bQ$-structure, that is, in the category $\cU ni(\bQ)$. Composing it further with the natural  morphism in $\cU ni (\bQ)$ from $\cL oc\cS ys_{free}(X,\_)$ 
to  $\bD^b_c(X,\_)$ for example, we can use constructible functions and functors on these categories to produce classical constructible functions and subsets in the variety $\cR_B(X,x_0,r)(\bC)$. Taking the image under the morphism $q_B$, we then produce classical constructible subsets in the variety $\cM_B(X,r)(\bC)$. Moreover, using absolute constructible functions and functors, we produce classical constructible sets with an extra structure, which we formally define now.

\begin{defn}\label{defOurA} Let $X$ be a smooth  complex algebraic variety and $K\subset\bC$ a field.
A subset $S$ in $\cR_B(X,x_0,r)(\bC)$ is an {\bf absolute $K$-constructible (resp. absolute $K$-closed) set} if there exists a  subset $S'$ of the set of isomorphisms classes of $\cL oc \cS ys(X,\bC)$ which is absolute $K$-constructible (resp. absolute $K$-closed) with respect to the unispace $\cL oc\cS ys_{free}(X,\_)$ and such that  $S$ is the inverse image of $S'$ under the map
\begin{equation}\label{eqLC}
L(\bC):\cR_B(X,x_0,r)(\bC)\ra \cL oc \cS ys(X,\bC).
\end{equation}

A subset $S$ in $\cM_B(X,r)(\bC)$ is an {\bf absolute $K$-constructible (resp. absolute $K$-closed) set}  if it is the image under $q_B$ of an absolute $K$-constructible  (resp. absolute $K$-closed) set in $\cR_B(X,x_0,r)$. Such a set is indeed $K$-constructible (resp. $K$-closed), since $q_B$ is a morphism of schemes defined over $\bQ$ (resp. since $q_B^{-1}(S)$ being $K$-closed implies $S$ is $K$-closed, the Zariski topology on $\cM_B(X,r)(\bC)$ being the quotient Zariski topology).

An {\bf absolute $K$-constructible (resp. absolute $K$-semi-continuous) function} on $\cR_B(X,x_0,r)(\bC)$, respectively $\cM_B(X,r)(\bC)$,  is a $\bZ$-valued function which is a finite $\bZ$-linear combination of delta functions of absolute $K$-constructible (resp. absolute $K$-closed) sets. 
\end{defn}

The following is then just a rewording of part of Theorem \ref{thrmABS}:

\begin{thrm}\label{thrmRM}
Let $X$ be a smooth  algebraic variety over $\bC$.  Consider a function $F:\cL oc\cS ys(X,\bC)\ra\bZ$, and let 
$$\phi:\cR_B(X,x_0,r)(\bC)\xra{L(\bC)} \cL oc \cS ys(X,\bC)\xra{F}\bZ$$
be the composition. Identify   $\bZ$ with $\iso(\cS h_c(pt,\bC))$ via some bijection.
\begin{enumerate}
\item Suppose $F$ is as in part (1) of Theorem \ref{thrmABS}. Then $\phi$ is an absolute $\bQ$-constructible function. Moreover, for every $k\in\bZ$, $q_B(\phi^{-1}(k))$ is an absolute $\bQ$-constructible set in $\cM_B(X,r)(\bC)$.
\item Suppose $F$ is as in part (2) of Theorem \ref{thrmABS}. Then $\phi$ is an absolute $K$-constructible function. Moreover, for every $k\in\bZ$, $q_B(\phi^{-1}(k))$ is an absolute $K$-constructible set in $\cM_B(X,r)(\bC)$.
\item Suppose $F$ is a composition 
$$
F:\cL oc\cS ys(X,\bC)\xra{G} \bD^b_c(pt,\bC)\xra{H^i}\cS h_c(X,\bC)\ra\bZ,
$$
with $G$ is as in part (3) of Theorem \ref{thrmABS}. Then $\phi$ is an absolute $\bQ$-semi-continuous function. 
\end{enumerate}
\end{thrm}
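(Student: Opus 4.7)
My plan is to chain together three observations already established in the earlier sections. First, as noted immediately before Definition \ref{defOurA}, the map $L(\_):\cR_B(X,x_0,r)(\_)\to \cL oc\cS ys_{free}(X,\_)$ is a morphism in $\cU ni(\bQ)$: a representation $\rho_R:\pi_1(X,x_0)\to GL_r(R)$ over a regular finite type $\bQ$-algebra produces a local system whose stalks are free $R$-modules of rank $r$, and this assignment is natural and compatible with base change. Composing further with the embeddings $\cL oc\cS ys_{free}(X,\_)\to\cL oc\cS ys(X,\_)\to\bD^b_c(X,\_)$ from Proposition \ref{propSit}, one obtains morphisms defined over $\bQ$ between the associated unispaces with their natural $\bQ$-structures.

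For part (1), I would invoke Theorem \ref{thrmABS}(1) to see that $F$, regarded as a $\bZ$-valued function on isomorphism classes, is an absolute $\bQ$-constructible function with respect to the chosen unispace on $\cL oc\cS ys(X,\bC)$. Unpacking Definition \ref{defAcK}, each fiber $F^{-1}(k)$ is an absolute $\bQ$-constructible subset of $\iso(\cL oc\cS ys(X,\bC))$, and by Lemma \ref{lemInd} this passes to an absolute $\bQ$-constructible subset with respect to the sub-unispace $\cL oc\cS ys_{free}(X,\_)$. The first half of Definition \ref{defOurA}, applied to the unispace morphism $L$, then shows that the preimage $\phi^{-1}(k)=L(\bC)^{-1}(F^{-1}(k))$ is an absolute $\bQ$-constructible subset of $\cR_B(X,x_0,r)(\bC)$, so $\phi$ is absolute $\bQ$-constructible. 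The statement that $q_B(\phi^{-1}(k))$ is absolute $\bQ$-constructible in $\cM_B(X,r)(\bC)$ is then immediate from the second half of the same definition.

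Part (2) follows by the identical argument, with Theorem \ref{thrmABS}(2) in place of Theorem \ref{thrmABS}(1) and $K$ in place of $\bQ$. For part (3), Theorem \ref{thrmABS}(4) guarantees that the composition of the absolute $\bQ$-continuous functor $G$ with $H^i$ and the fixed bijection is an absolute $\bQ$-semi-continuous function on $\cL oc\cS ys(X,\bC)$; hence each sublevel set $F^{-1}(\bZ_{\ge k})$ is an absolute $\bQ$-closed subset of $\iso(\cL oc\cS ys(X,\bC))$, and the same pullback argument as in (1) identifies $\phi^{-1}(\bZ_{\ge k})$ as an absolute $\bQ$-closed subset of $\cR_B(X,x_0,r)(\bC)$, which is precisely absolute $\bQ$-semi-continuity of $\phi$.

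The only genuine bookkeeping to be done lies in reconciling the several possible sub-unispace structures on local systems (free versus general, and their embeddings into $\bD^b_c$ via $\iota[n]$ on smooth varieties), which is what Proposition \ref{propSit} and Lemma \ref{lemInd} handle. I would also flag the (correct) absence from part (3) of any $\cM_B$ statement: $q_B$ is only a morphism of $\bQ$-schemes, and direct images under a scheme morphism need not preserve absolute $\bQ$-closedness.
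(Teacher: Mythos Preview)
Your argument is correct and matches the paper's own treatment, which simply declares the theorem to be ``just a rewording of part of Theorem \ref{thrmABS}'' without further proof; you have essentially supplied the unpacking the paper omits. One small correction to your closing remark: by Definition \ref{defOurA}, a subset of $\cM_B(X,r)(\bC)$ is \emph{defined} to be absolute $K$-closed precisely when it is the $q_B$-image of an absolute $K$-closed set in $\cR_B$, so images under $q_B$ do preserve absolute closedness; the absence of an $\cM_B$ clause in part (3) is thus an authorial choice rather than a mathematical obstruction.
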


\subsection{Moduli-absolute constructible sets}\label{secSac}\label{subSac} Our definition of absolute constructible subsets of $\rb(X,x_0,r)$ and $\mb(X,r)$  bypasses the question of existence of de Rham versions of the moduli spaces. The definition is however applicable also in the presence of the de Rham moduli. That is, one can ask the Galois-invariance property to take  place inside the de Rham moduli instead of in the derived category of algebraic $\cD_X$-modules. One would like then to relate this moduli-absoluteness with the one without moduli. This subsection serves to connect these two notions of absoluteness.

We recall the main properties of the de Rham moduli constructed by Nitsure \cite{n}  and Simpson  \cite{simpson2}.

Let $X$ be a smooth complex quasi-projective  algebraic variety.  We fix an embedding of $X$ into a  smooth $\bC$ projective variety $\bar{X}$ such that the complement $D=\bar{X}-X$ is a divisor with normal crossing singularities. We fix a point $x_0\in X$ and $r\in\bN$.

The quasi-projective schemes $\cR_{DR}(\bar{X}/ D,x_0,r)$ and $\mdr(\bar{X}/D,r)$ are special cases of the moduli spaces of \cite[Part I]{simpson2} when one considers the sheaf of rings $\Lambda=\cD_{\bar{X}}(\log D)$ of logarithmic derivatives, the sub $\cO_{\bar{X}}$-algebra generated by germs of tangent vector fields on $\bar{X}$ which preserve the ideal sheaf of the reduced subscheme $D$. 

The de Rham representation space $\cR_{DR}(\bar{X}/ D,x_0,r)$ is a fine moduli space parametrizing pairs $((E,\nabla),\tau)$ where $\nabla:E\ra E\otimes_{\cO_{\bar{X}}}\Omega^1_{\bar{X}}(\log D)$ are semistable logarithmic connections (i.e. flat connections with poles only along $D$) on $\cO_{\bar{X}}$-coherent torsion free modules $E$ of rank $r$, and $\tau:i^*E\simeq \cO_{x_0}^{r}$ is a frame at $x_0$, where $i:x_0\ra \bar{X}$ is the inclusion. There is no additional condition of locally freeness of $E$ around $x_0$, as required in {\it loc. cit.} since this is automatic for points $x_0\not\in D$. There is an action of $GL_r(\bC)$ on $\cR_{DR}(\bar{X}/ D,x_0,r)$ given by the change of basis in the frame and all the points are GIT-semistable with respect with this action. Then 
$$\mdr(\bar{X}/D,r)=\cR_{DR}(\bar{X}/ D,x_0,r)\sslash GL_r(\bC),$$
and one has a good quotient morphism
$$
q_{DR}:\cR_{DR}(\bar{X}/ D,x_0,r) \ra \mdr(\bar{X}/D,r),
$$
see \cite[Part I, Theorem 4.10]{simpson2}. For the definition of  a good quotient morphism, see \cite[Part I, p.61]{simpson2}. The moduli $\mdr(\bar{X}/D,r)$ is a coarse moduli space parametrizing Jordan equivalence classes of semistable logarithmic connections of rank $r$. Again, the condition of locally freeness around $x_0$ from  \cite[Part I, Theorem 4.10]{simpson2} is automatically guaranteed. For $((E,\nabla),\tau)\in\cR_{DR}(\bar{X}/ D,x_0,r)(\bC)$, 
$$
q_{DR}((E,\nabla),\tau) = gr(E,\nabla) \in\mdr(\bar{X}/D,r)(\bC)
$$
where $gr(E,\nabla)$ is the logarithmic connection given by the direct sum of the (stable) graded quotients with respect to a Jordan-H\"older series of $(E,\nabla)$.

Since we did not have any assumption on Hilbert polynomials, $\cR_{DR}(\bar{X}/ D,x_0,r)$ and $\mdr(\bar{X}/D,r)$ will not be of finite type in general.  By fixing for the torsion-free $\cO_{\bar{X}}$-coherent sheaves $E$ a Hilbert polynomial $p(t)$ (with respect to a fixed ample line bundle on $\bar{X}$) with the highest-degree coefficient $r/(\dim X)!$, one obtains two disjoint components with a good quotient morphisms between them $q_{DR}:\cR_{DR}(\bar{X}/ D,x_0,p)\ra \mdr(\bar{X}/D,p)$ that is a morphism of quasi-projective schemes of finite-type defined over the common field of definition of $\bar{X}, D$, and  $x_0$.

There are analytic morphisms, also called Riemann-Hilbert morphisms, making the following diagram with vertical surjections commute:
\be\label{eqRHM}
\xymatrix{
\cR_{DR}(\bar{X}/ D,x_0,r)(\bC) \ar[r]_{\;\;\;RH} \ar[d]_{q_{DR}}& \cR_{B}(X,x_0,r)(\bC) \ar[d]_{q_B}\\
\mdr(\bar{X}/D,r)(\bC) \ar[r]_{\;\; RH}& \mb(X,r)(\bC)
}
\ee
The existence of these analytic morphisms for $X=\bar{X}$ can be found in \cite[Part II, Theorem 7.1, Theorem 7.8]{simpson2}, in which case they are analytic isomorphisms. For the general case, we only remark that the proof from the projective case applies as well, with $\Lambda=\cD_{\bar{X}}$ replaced by $\Lambda=\cD_{\bar{X}}(\log D)$. The top $RH$ sends a framed logarithmic connection $(E,\nabla)$ to the framed local system $L=\ker (\nabla_{| {X^{an}}})$. The bottom $RH$ sends a stable direct summand $(E,\nabla)$ to the semisimplification $q_B(L)$ of $L$.  


It is known that the two analytic morphisms $RH$ from (\ref{eqRHM}) are surjective. In fact for every local system $L$ on $X$ there exists a locally free semistable logarithmic connection  giving rise to $L$. Moreover, if $L$ is a simple local system on $X$, then any logarithmic connection giving rise to $L$ is stable, see \cite[\S 2]{n}; the converse is true if the rank $r=1$  or if $X$ is projective, and in both cases $E$ are always locally free. If $X$ is projective, $RH$  are analytic isomorphisms \cite[Part II]{simpson2}. In general, $RH:\mdr(\bar{X}/ D,r)(\bC) \ra \mb(X,r)(\bC)$ is an isomorphism on the level of Zariski tangent spaces at stable locally free logarithmic connections that go to simple local systems and for which no two eigenvalues of the residue differ by an integer \cite{n, NiD}. When $r=1$, $\cR_{DR}(\bar{X}/ D,x_0,1)=\mdr(\bar{X}/ D,1)$ is smooth and every rank one local system is simple; thus in this case $RH$ is everywhere an isomorphism at the level of tangent spaces, that is, it is a covering map.


 For any $\sigma\in Gal(\bC/\bQ)$, we define 
$$\bar{X}^\sigma:=\bar{X}\times_{\spec(\bC)}\spec(\bC^\sigma), \quad D^\sigma:=D\times_{\spec(\bC)}\spec(\bC^\sigma)$$
and
$$X^\sigma:=X\times_{\spec(\bC)}\spec(\bC^\sigma)=\bar{X}^\sigma-D^\sigma.$$
Taking pullback by the natural projections $\bar{X}^\sigma\to \bar{X}$, $D^\sigma\to D$, and $X^\sigma\to X$, we have maps $p_\sigma$, denoted by abuse by the same symbol, making the following diagram commutative:
$$
\xymatrix{
\cR_{DR}(\bar{X}/D,x_0,r)(\bC) \ar[r]^{p_\sigma\;\;} \ar[d]^{q_{DR}}& \cR_{DR}(\bar{X}^\sigma/D^\sigma,x_0^\sigma,r)(\bC) \ar[d]^{q_{DR}}\\
\mdr(\bar{X}/D,r)(\bC) \ar[r]^{p_\sigma\;\;} & \mdr(\bar{X}^\sigma/D^\sigma,r)(\bC).
}
$$
The maps $p_\sigma$ are bijections, in fact isomorphism of $\bC$-schemes when the targets are endowed with the $\bC^\sigma$-scheme structure.

 
Suppose $\bar{X}, D$, and $x_0$ are  defined over a subfield $K$ of $\bC$. Then $\cR_{DR}(\bar{X}/D,x_0,r)$ and $\mdr(\bar{X}/D,r)$ are also defined over $K$, and hence the Galois group $Gal(\bC/K)$ acts on the $\bC$-points of each disjoint component obtained by fixing Hilbert polynomials.

\begin{defn}\label{defSac} Let $X$ be a smooth $\bC$ quasi-projective algebraic variety. Let $K\subset \bC$ be a subfield.
A $K$-constructible  subset $S$ of  $\cR_{B}(X,x_0,r)(\bC)$ (resp. $\cM_{B}(X,r)(\bC))$ is called a \textbf{moduli-absolute $K$-constructible set}, respectively a {\bf moduli-absolute $K$-closed set}, if for any $\sigma\in Gal(\bC/\bQ)$ there exists a $K$-constructible, respectively a $K$-closed,  subset $S^\sigma$ of $\cR_{B}(X^\sigma,x_0^\sigma,r)(\bC)$ (resp.
 $\cM_{B}(X^\sigma,r)(\bC) )$ such that $RH\circ p_\sigma\circ RH^{-1}(S)=S^\sigma$.  \end{defn}

\begin{rmk}\label{rmkSMA} The definition of moduli-absolute constructibility is inspired by Simpson \cite[p. 376]{simpson}. However, Simpson's definition of an absolute constructible subset requires two additional conditions ($X$ is projective in \cite{simpson}): firstly, that the $RH^{-1}(S)$ is also constructible in the de Rham moduli space, and secondly, a condition involving the Higgs moduli spaces. We will show below that our definition automatically implies constructibility  in the de Rham moduli, see Proposition \ref{propRHC}. In the rank $r=1$ projective case,  Simpson's additional condition from the Higgs moduli is also superfluous; this is proved implicitly in \cite{simpson}. A natural question to which we do not know the answer is if this is true for all ranks. That is, are the moduli-absolute $\bar{\bQ}$-constructible sets of semi-simple local systems on smooth projective varieties invariant under the natural $\bC^*$-action on Higgs moduli space?
\end{rmk}

\begin{lemma}\label{lemMaks}
A  subset $T$ of $\cM_B(X,r)(\bC)$ is moduli-absolute $K$-constructible (resp. moduli-absolute $K$-closed) iff $q_B^{-1}(T)$ is a moduli-absolute $K$-constructible (resp. moduli-absolute $K$-closed) subset of $\cR_B(X,x_0,r)(\bC)$.
\end{lemma}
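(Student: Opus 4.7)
The plan is to reduce the lemma to two ingredients: a quotient topology argument transferring $K$-constructibility and $K$-closedness between $T$ and $q_B^{-1}(T)$, and a base-change identity doing the same for their Galois-translate witnesses $T^\sigma := RH\circ p_\sigma\circ RH^{-1}(T)$ and $S^\sigma := RH\circ p_\sigma\circ RH^{-1}(q_B^{-1}(T))$.

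First I would record that $q_B$ is a surjective morphism of $\bQ$-schemes (Section \ref{secVR}) and that the algebraic Zariski topology on $\cM_B(X,r)(\bC)$ is the quotient topology from $\cR_B(X,x_0,r)(\bC)$. Chevalley's theorem applied in both directions then gives: $T$ is $K$-constructible iff $q_B^{-1}(T)$ is, and $T$ is $K$-closed iff $q_B^{-1}(T)$ is. The same remarks apply to the analogous maps on $X^\sigma$ for every $\sigma\in Gal(\bC/\bQ)$.

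The heart of the proof is the identity
\be\label{eqMainKey}
RH\circ p_\sigma\circ RH^{-1}\bigl(q_B^{-1}(T)\bigr)=q_{B,\sigma}^{-1}\bigl(RH\circ p_\sigma\circ RH^{-1}(T)\bigr),
\ee
i.e.\ $S^\sigma=q_{B,\sigma}^{-1}(T^\sigma)$. Commutativity of the Riemann-Hilbert square (\ref{eqRHM}) gives $RH^{-1}(q_B^{-1}(T))=q_{DR}^{-1}(RH^{-1}(T))$; since $p_\sigma$ is a bijection intertwining $q_{DR}$ with $q_{DR,\sigma}$, the left-hand side of (\ref{eqMainKey}) equals $RH(q_{DR,\sigma}^{-1}(A))$ with $A:=p_\sigma(RH^{-1}(T))$. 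The inclusion $\subseteq$ follows from $q_{B,\sigma}\circ RH = RH\circ q_{DR,\sigma}$. For $\supseteq$, given $x\in\cR_B(X^\sigma,x_0^\sigma,r)(\bC)$ with $q_{B,\sigma}(x)=RH(a)$ for some $a\in A$, I would use the surjectivity of the top Riemann-Hilbert map over $X^\sigma$ (Section \ref{secSac}) together with the surjectivity of $q_{DR,\sigma}$ to produce a $y\in\cR_{DR}(\bar X^\sigma/D^\sigma,x_0^\sigma,r)(\bC)$ simultaneously satisfying $RH(y)=x$ and $q_{DR,\sigma}(y)=a$, which places $x$ in $RH(q_{DR,\sigma}^{-1}(A))$.

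With (\ref{eqMainKey}) in hand both directions of the lemma are immediate. If $T$ is moduli-absolute $K$-constructible with witness $T^\sigma$, then $q_{B,\sigma}^{-1}(T^\sigma)$ is $K$-constructible by the first step, and (\ref{eqMainKey}) identifies it as the witness for $q_B^{-1}(T)$. Conversely, if $q_B^{-1}(T)$ is moduli-absolute with witness $S^\sigma$, then (\ref{eqMainKey}) yields $S^\sigma=q_{B,\sigma}^{-1}(T^\sigma)$, so $T^\sigma=q_{B,\sigma}(S^\sigma)$ is $K$-constructible by the first step. The $K$-closed case is entirely parallel, additionally invoking that a good quotient morphism sends $GL_r$-invariant $K$-closed subsets to $K$-closed subsets. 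The main obstacle lies in the $\supseteq$ inclusion of (\ref{eqMainKey}): surjectivity of $RH$ and of $q_{DR,\sigma}$ separately produce lifts of $x$ and $a$, but the simultaneous lift requires knowing that the analytic fibers of Riemann-Hilbert and the algebraic fibers of the good quotient $q_{DR,\sigma}$ interact in the expected way, a point that rests on the specific structure of logarithmic connection moduli recalled in Section \ref{secSac}.
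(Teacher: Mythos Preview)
Your approach is the same as the paper's: both reduce the lemma to the concatenation of the three commutative squares $q_B\circ RH=RH\circ q_{DR}$ (for $X$ and $X^\sigma$) and $q_{DR,\sigma}\circ p_\sigma=p_\sigma\circ q_{DR}$, together with the quotient-topology/Chevalley argument transferring $K$-constructibility and $K$-closedness through $q_B$. The paper's proof is a single sentence invoking exactly this concatenation; your writeup unpacks it into the explicit identity (\ref{eqMainKey}) and the relation $T^\sigma=q_{B,\sigma}(S^\sigma)$.

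One comment on the obstacle you flag. The $\subseteq$ inclusion of (\ref{eqMainKey}) and the equality $q_{B,\sigma}(S^\sigma)=T^\sigma$ follow purely from commutativity and surjectivity, and already give the $(\Leftarrow)$ direction of the lemma cleanly. For $(\Rightarrow)$, your proposed simultaneous lift---a $y$ with $RH(y)=x$ and $q_{DR,\sigma}(y)=a$---is \emph{not} a consequence of the two surjectivities alone: for a general pair $(x,a)$ with $q_{B,\sigma}(x)=RH(a)$ such a $y$ need not exist (one can write down rank-two log connections on $\bP^1$ with prescribed $gr$ but the wrong local system, because the relevant $Ext^1$ vanishes). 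What keeps the argument afloat is that $A=p_\sigma(RH^{-1}(T))$ is not an arbitrary subset of $\cM_{DR}(X^\sigma)$, so the failure of the lift for one $a\in A$ does not prevent it for another element of $A$ mapping to the same point of $T^\sigma$. The paper's one-line proof does not isolate this point either; your instinct to flag it is correct, but the resolution is not simply ``surjectivity plus the structure recalled in Section~\ref{secSac}''.
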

\begin{proof} For moduli-absolute $K$-constructibility, this follows immediately by the concatenation of the three commutative diagrams, the first diagram being $q_{DR}\circ p_\sigma = p_\sigma\circ q_{DR}$, and the two other diagrams being $q_B\circ RH = RH\circ q_B$ for $X$ and $X^\sigma$, respectively. For moduli-absolute $K$-closed, this follows from the claim about moduli-absolute $K$-constructibility together with the fact that the Zariski topology on $\mb(X^\sigma,r)(\bC)$ is the quotient Zariski topology under the good quotient morphism $q_B$.
\end{proof}

\begin{prop}\label{propSac}
Let $X$ be smooth  $\bC$ quasi-projective algebraic variety and $K$ a subfield of $\bC$. 
\begin{enumerate}
\item Let $S$ be a subset of $\cR_B(X,x_0,r)(\bC)$.  Then $S$ is absolute $K$-constructible (resp. absolute $K$-closed) iff it is moduli-absolute $K$-constructible (resp. absolute $K$-closed) and  $S$ is an inverse image of a set of isomorphism classes of local systems by the map $L(\bC)$ from (\ref{eqLC}).
\item Let $T$ be a subset of $\cM_B(X,r)(\bC)$. Then $T$ is absolute $K$-constructible (resp. absolute $K$-closed) iff it is moduli-absolute $K$-constructible (resp. absolute $K$-closed).
\end{enumerate}
\end{prop}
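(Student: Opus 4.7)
The plan is to reduce everything to the commutative diagram (\ref{eqRHM}) together with the observation that the moduli-level Galois translate $p_\sigma$ on $\cR_{DR}(\bar X/D,x_0,r)$ agrees, after forgetting frames and restricting to $X$, with the derived-category Galois translate $p_\sigma: \bD^b_{rh}(\cD_X)\to \bD^b_{rh}(\cD_{X^\sigma})$ applied to the underlying logarithmic connection. Both constructions are given by pulling back along the $\bQ$-scheme isomorphisms $\bar X^\sigma\to\bar X$, $D^\sigma\to D$, $x_0^\sigma\to x_0$, so this compatibility is essentially a tautology. Similarly, the moduli Riemann--Hilbert morphism of (\ref{eqRHM}) agrees on framed objects with the composition of the derived Riemann--Hilbert equivalence $\cC onn(X)\to \cL oc\cS ys(X,\bC)$ with the choice of a frame at $x_0$. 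Combining these, for any $S'\subset \iso(\cL oc\cS ys(X,\bC))$ and any $\sigma$ the moduli operation $RH\circ p_\sigma\circ RH^{-1}$ applied to $L(\bC)^{-1}(S')$ equals $(L(\bC)^\sigma)^{-1}((S')^\sigma)$, where $(S')^\sigma$ is the Galois translate at the level of iso classes of local systems given by the derived correspondence.

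For part (1), the forward direction is then immediate: if $S=L(\bC)^{-1}(S')$ with $S'$ absolute $K$-constructible, take $(S')^\sigma$ supplied by Definition \ref{absolutefunctiondefinition} and set $S^\sigma:=(L(\bC)^\sigma)^{-1}((S')^\sigma)$. This is classically $K$-constructible in $\cR_B(X^\sigma,x_0^\sigma,r)(\bC)$ because $L(\_)^\sigma$ is a morphism defined over $\bQ$ of unispaces with $\bQ$-structure (Section \ref{secMLS}), and the identity above realizes moduli-absoluteness of $S$. For the reverse direction, suppose $S=L(\bC)^{-1}(S')$ is moduli-absolute $K$-constructible with witnesses $S^\sigma$. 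Since $L(\bC)$ is surjective (every local system admits a frame at $x_0$), $S'=L(\bC)(S)$ and analogously $(S')^\sigma:=L(\bC)^\sigma(S^\sigma)$ satisfies $S^\sigma=(L(\bC)^\sigma)^{-1}((S')^\sigma)$. The $K$-constructibility of $S'$ with respect to the unispace $\cL oc\cS ys_{free}(X,\_)$ follows by frame-trivializing an arbitrary family $\cF_R\in\cL oc\cS ys_{free}(X,R)$ at $x_0$ to produce a $K$-morphism $\spec(R)\to \cR_B(X,x_0,r)$ and pulling back the classical $K$-constructibility of $S$; the same argument yields $K$-constructibility of $(S')^\sigma$. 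The identity above, read in reverse, supplies the Galois compatibility required by Definition \ref{absolutefunctiondefinition}, so $S'$ is absolute $K$-constructible and hence so is $S$.

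Part (2) follows from part (1) combined with Lemma \ref{lemMaks}. By Definition \ref{defOurA}, $T\subset \cM_B(X,r)(\bC)$ is absolute $K$-constructible iff $T=q_B(S_0)$ for some absolute $K$-constructible $S_0$; because every $q_B$-fiber either lies in or is disjoint from $q_B^{-1}(T)$, one may replace $S_0$ by $q_B^{-1}(T)$, which is automatically of the form $L(\bC)^{-1}(S')$ for $S'$ the set of iso classes occurring in $q_B^{-1}(T)$. Thus $T$ is absolute $K$-constructible iff $q_B^{-1}(T)$ is absolute $K$-constructible, iff (by part (1)) $q_B^{-1}(T)$ is moduli-absolute $K$-constructible, iff (by Lemma \ref{lemMaks}) $T$ is moduli-absolute $K$-constructible. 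The ``closed'' versions of both parts proceed identically, replacing $K$-constructibility by $K$-semi-continuity throughout and using that $q_B$ is a good quotient so the Zariski topology on $\cM_B(X,r)(\bC)$ is the quotient topology.

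The principal obstacle is the careful verification of the compatibility in Step 1: although both Galois actions $p_\sigma$ and both Riemann--Hilbert constructions are ``pull-back along a $\bQ$-isomorphism'', the derived side lives in $\bD^b_{rh}(\cD_X)$ and the moduli side lives in a GIT quotient, so one must keep track of framing data, torsion-free $\cO_{\bar X}$-structures at the boundary $D$, and (semi)stability for the comparison to hold set-theoretically on the images of $RH_m$. The other delicate point is upgrading classical $K$-constructibility of $(S')^\sigma$ as a subset of $\cR_B^\sigma(\bC)$ to $K$-constructibility at the level of the unispace $\cL oc\cS ys_{free}(X^\sigma,\_)$; the frame-trivialization argument sketched above handles this, but relies on surjectivity of $L(\bC)^\sigma$ and on naturality of the representing scheme $\cR_B(X^\sigma,x_0^\sigma,r)$ with respect to $K$-algebras.
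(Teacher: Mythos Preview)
Your argument for part (1) is correct and follows the paper's approach: both establish the compatibility of the moduli-level maps $RH$, $p_\sigma$ with their derived-category counterparts (the paper does this via an explicit forgetful map $C:\cR_{DR}(\bar X/D,x_0,r)\to Conn(X)$ and two commutative squares), use that $L(\_)$ is a natural transformation of $\bQ$-structures for the forward direction, and frame-trivialize a given family $\cF_R\in\cL oc\cS ys_{free}(X,R)$ to upgrade classical $K$-constructibility to unispace $K$-constructibility for the backward direction.

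Part (2) has a genuine gap. Your chain needs the equivalence ``$T$ absolute $\Leftrightarrow q_B^{-1}(T)$ absolute'', but your justification of the forward implication (``one may replace $S_0$ by $q_B^{-1}(T)$'') is circular: to use $q_B^{-1}(T)$ as a witness for the absoluteness of $T$ you must already know it is absolute. In general $S_0\subsetneq q_B^{-1}(T)$ --- for instance $S_0$ the $GL_r$-orbit of the trivial representation versus $q_B^{-1}(T)$ the set of all unipotent representations --- so absoluteness of $S_0$ does not formally pass to its $q_B$-saturation. A clean fix avoids this implication altogether: from $T=q_B(S_0)$ with $S_0$ moduli-absolute (via part (1)), deduce $T$ moduli-absolute directly from the identity
\[
(RH\circ p_\sigma\circ RH^{-1})_{\cM_B}\bigl(q_B(S_0)\bigr)\;=\;q_B^\sigma\bigl((RH\circ p_\sigma\circ RH^{-1})_{\cR_B}(S_0)\bigr),
\]
which follows from $q_B\circ RH=RH\circ q_{DR}$ and $q_{DR}\circ p_\sigma=p_\sigma\circ q_{DR}$ once one checks that $(RH\circ p_\sigma\circ RH^{-1})_{\cM_B}$ is a well-defined set map: if two polystable log connections restrict to connections on $X$ with the same semisimplification, the same holds after conjugation by $\sigma$, since $p_\sigma$ is an exact equivalence on $Conn(X)$. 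The backward direction of (2), via Lemma~\ref{lemMaks} and part (1), is fine as you wrote it.
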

\begin{proof}
(1) Let  $Conn(X)$ be the subset of $\iso (\bD^b_{rh}(\cD_X))$ given by the isomorphism classes of  regular flat connections  on $X$. For every $\sigma\in Gal(\bC/\bQ)$, there is a well-defined bijection $p_\sigma:Conn(X)\ra Conn(X^\sigma)$ defined by pullback along $X^\sigma\ra X$, the restriction of $p_\sigma:\iso(\bD^b_{rh}(\cD_X))\ra \iso(\bD^b_{rh}(\cD_{X^\sigma}))$. By definition, a subset $S'$ of $\cL oc\cS ys(X,\bC)$ is an absolute $K$-constructible (resp. absolute $K$-closed) with respect to the unispace $\cL oc\cS ys_{free}(X,\_)$ if for every $\sigma$ there exists a $K$-constructible (resp. absolute $K$-closed) $(S')^\sigma\in \cL oc\cS ys(X^{\sigma},\bC)$ such that $RH\circ p_\sigma\circ RH^{-1}$ in the diagram
\be\label{eqDi2}
\xymatrix{
Conn(X) \ar[r]^{p_\sigma} \ar[d]^{RH}& Conn(X^\sigma) \ar[d]^{RH} \\
\cL oc\cS ys(X,\bC) & \cL oc\cS ys(X^{\sigma},\bC)
}
\ee
where $RH$ here sends a regular flat connection to the corresponding local system. Moreover, the map $C:\cR_{DR}(\bar{X}/D,x_0,r)(\bC)\ra Conn(X)$ defined by $((E,\nabla),\tau)\mapsto (E,\nabla)_{|X}$ fits into a commutative diagram
$$
\xymatrix{
\cR_{DR}(\bar{X}/D,x_0,r)(\bC) \ar[r]^{\;\;\;\;\quad C} \ar[d]^{RH} & Conn(X) \ar[d]^{RH}\\
\cR_B({X},x_0,r)(\bC) \ar[r]^{\;\;\;\; L(\bC)} & \cL oc\cS ys(X,\bC).
}
$$

Let now $S$ be an absolute $K$-constructible (respectively, absolute $K$-closed) subset of $\cR_B({X},x_0,r)(\bC)$. By definition, $S=L(\bC)^{-1}(S')$ for $S'$ as above. Define $S^\sigma=L(\bC)^{-1}((S')^\sigma)$, a subset of $\cR_{B}({X}^\sigma,x_0^\sigma,r)(\bC)$.  Recall that since $\cR_B({X},x_0,r)$ is a fine moduli space, there is a natural transformation of functors
$$
L(\_):\cR_B({X},x_0,r)(\_)\ra \cL oc\cS ys_{free}(X,\_).
$$
In particular, this is a natural transformation of $K$-structures of unispaces  defined over $K$  (see Definition \ref{exNTKS}) if the moduli on the left side is defined over $K$ (and this true in order for $S$ to be $K$-constructible as it is assumed). Hence $K$-constructible (resp. $K$-closed) sets with respect to the unispace from the right side, pullback to classical $K$-constructible (resp. $K$-closed) sets. Thus $S^\sigma$ is $K$-constructible (resp. $K$-closed) for every $\sigma\in Gal(\bC/\bQ)$. Now, by the commutativity $RH\circ C=L(\bC)\circ RH$ from above, together with the commutativity of 
$$
\xymatrix{
\cR_{DR}(\bar{X}/D,x_0,r)(\bC) \ar[d]^C \ar[r]^{p_\sigma}&  \cR_{DR}(\bar{X^\sigma}/D^\sigma,x_0^\sigma,r)(\bC)\ar[d]^C\\
Conn(X) \ar[r]^{p_\sigma}& Conn(X^\sigma),
}
$$ 
show immediately that $S^\sigma=RH\circ p_\sigma\circ RH^{-1}(S)$ also in the diagram
\begin{equation}\label{eqDi1}
\xymatrix{
\cR_{DR}(\bar{X}/D,x_0,r)(\bC) \ar[d]^{RH} \ar[r]^{p_\sigma}& \cR_{DR}(\bar{X^\sigma}/D^\sigma,x_0^\sigma,r)(\bC)\ar[d]^{RH}\\
\cR_B({X},x_0,r)(\bC) &  \cR_{B}({X^\sigma},x_0^\sigma,r)(\bC).
}
\end{equation} 
Hence $S$ is moduli-absolute $K$-constructible (resp. absolute $K$-closed).

Conversely let $S\subset \cR_B({X},x_0,r)(\bC)$ be a moduli-absolute $K$-constructible (resp. absolute $K$-closed) subset with
$S=L(\bC)^{-1}(S')$ for some $S'\subset\cL oc\cS ys (X,\bC)$. Then by definition, for every $\sigma\in Gal(\bC/\bQ)$ there exists $S^\sigma\in \cR_{B}(X^\sigma,x_0^\sigma,r)(\bC)$ which is $K$-constructible (resp. $K$-closed) and such that $S^\sigma=RH\circ p_\sigma\circ RH^{-1}(S)$ holds for the diagram (\ref{eqDi1}). Since $S$ contains all possible framings, the same is true for $S^\sigma$. Thus, defining $(S')^\sigma=RH\circ p_\sigma \circ RH^{-1}(S')$ with $RH$ and $p_\sigma$ as in diagram (\ref{eqDi2}), one has $L(\bC)^{-1}((S')^\sigma)=S^\sigma$. To finish the proof that $S$ is absolute $K$-constructible (resp. absolute $K$-closed), we only need to show that $(S')^\sigma$ are $K$-constructible (resp. $K$-closed) with respect to the unispace $\cL oc\cS ys_{free}(X^\sigma,\_)$. Let $\sigma=id$ for now. Let $\cL_R\in \cL oc\cS ys_{free}(X,R)$ be of rank $r$. We need to show that $\cS=\{m\mid \cL_R\otimes R/m\in S'\}$ is a $K$-constructible (resp. $K$-closed) subset of $\spec(R)(\bC)$ for every $R\in\cA lg_{ft,reg}(\bC)$. Since $L(R)$ only forgets the framing, we can find $\rho_R\in \cR_B({X},x_0,r)(R)$ such that $L(R)(\rho_R)=\cL_R$. Then since $L(\_)$ is a natural transformation, $\cS=\{m\mid L(\bC)(\rho_R\otimes_R R/m)\in S'\}$. Now, since $S$ is the full inverse image of $S'$, we get that $\cS=\{m\mid \rho_R\otimes R/m\in S\}$, but this is $K$-constructible (resp. $K$-closed) since $S$ is assumed to be so. It is now clear that the proof works the same for $\sigma\neq id$.  This finishes the proof for part (1).

Part (2) follows immediately from part (1) and Lemma \ref{lemMaks}.
\end{proof}

\begin{prop}\label{propRHC}
Let $X$ be smooth  $\bC$ quasi-projective algebraic variety defined over a countable subfield $K$  of $\bC$. Let $\bar{X}$ be a smooth completion of $X$ with $D=\bar{X}\setminus X$ a simple normal crossings divisor such that $\bar{X}$ and $D$ are defined over $K$. Let $x_0\in X$ be a closed point defined over $K$. 
\begin{enumerate}
\item Let $S$ be a moduli-absolute $K$-constructible subset of  $\cR_{B}(X,x_0,r)(\bC)$ (respectively of $\cM_B(X,r)(\bC)$). Then  the Euclidean closure of any analytically irreducible component of $RH^{-1}(S)$ is a Zariski closed subset of $\cR_{DR}(\bar{X}/D,x_0,r)(\bC)$ (resp. of $\mdr(\bar{X}/D,r)(\bC)$).
\item In particular, if $X=\bar{X}$, then every moduli-absolute $K$-constructible subset of  $\cR_{B}(X,x_0,r)(\bC)$ (respectively of $\cM_B(X,r)(\bC)$) is a Zariski constructible subset of $\cR_{DR}(X,x_0,r)(\bC)$ (resp. of $\mdr(X,r)(\bC)$).
\end{enumerate}
\end{prop}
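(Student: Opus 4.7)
The plan is to reduce to fixed Hilbert-polynomial strata of $\cR_{DR}(\bar{X}/D, x_0, r)$, analyze each analytic irreducible component of $RH^{-1}(S)$ via its Zariski closure inside the $K$-scheme, and then use moduli-absoluteness together with the countability of $K$ to force equality of the Euclidean closure of the component with that Zariski closure. Since $\cR_{DR}(\bar{X}/D, x_0, r)$ decomposes into a countable disjoint union of finite-type quasi-projective $K$-schemes $\cR_{DR}^{(p)}$ indexed by Hilbert polynomials, I would fix one stratum $\cR_{DR}^{(p)}$. Because $RH\colon \cR_{DR}^{(p)}(\bC)\to \cR_B(X,x_0,r)(\bC)$ is complex analytic and $S$ is Zariski constructible on the Betti side, the preimage $W:=RH^{-1}(S)\cap \cR_{DR}^{(p)}(\bC)$ is an analytically constructible subset, and its analytic irreducible components form an at most countable family. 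Let $Z$ be one of them and $\bar Z$ its Euclidean closure, a pure-dimensional irreducible analytic subvariety; let $V:=Z^{\mathrm{Zar}}$ be its Zariski closure inside $\cR_{DR}^{(p)}$. Since $\cR_{DR}^{(p)}$ is of finite type over $\bC$ and $Z$ is analytically irreducible, $V$ is Zariski irreducible, and the claim reduces to the equality $\bar Z = V$.

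Next I would invoke moduli-absoluteness. For every $\sigma\in Gal(\bC/K)$, the commutation $q_{DR}\circ p_\sigma = p_\sigma\circ q_{DR}$ together with $RH\circ p_\sigma\circ RH^{-1}(S)=S^\sigma$ implies that $p_\sigma(W)=RH^{-1}(S^\sigma)$ is again analytically constructible in $\cR_{DR}^{(p),\sigma}(\bC)$, that $p_\sigma(Z)$ is an analytic irreducible component, and that $p_\sigma(V)=p_\sigma(Z)^{\mathrm{Zar}}$ is Zariski irreducible, obtained by the same construction from $S^\sigma$. The set of analytic irreducible components of $W$ is at most countable, and since $K$ is countable there are only countably many $K$-constructible subsets of any given $\cR_B(X^\sigma,x_0^\sigma,r)(\bC)$ that can occur as $S^\sigma$. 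A Baire-category style argument on these countable families then shows that the $Gal(\bC/K)$-stabilizer of $V$ has countable index, hence contains $Gal(\bC/K')$ for some finite extension $K'/K$, so that $V$ descends to a subscheme defined over $K'\subset\bar K$.

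Once $V$ is defined over $\bar K$, the set $V(\bar K)$ is Zariski- and hence Euclidean-dense in $V(\bC)$. The final step is to show $V(\bar K)\subseteq \bar Z$, whence Euclidean density and closedness of $\bar Z$ give $V(\bC)\subseteq \bar Z$ and thus $V=\bar Z$. For a point $v\in V(\bar K)$, the corresponding logarithmic connection is defined over a number field, so $RH(v)$ is a local system defined over a number field; combining this with moduli-absoluteness applied to a suitable $\sigma$ carrying $v$ into the $\bar K$-locus of a Galois conjugate of $V$ forces $v\in W$, and then the local analytic structure of $W$ near $v$, together with the $\bar K$-rationality of $V$, identifies $v$ with a point of the analytic irreducible component whose Zariski closure is $V$, namely $Z$. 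For part (2), when $X=\bar X$ the map $RH$ is a biholomorphism on each Hilbert stratum, so analytic irreducible components of $RH^{-1}(S)$ correspond to the finitely many Zariski irreducible components of $S$; applying part (1) exhibits $RH^{-1}(S)$ as a finite union of Zariski closed sets, hence Zariski constructible. The corresponding statement for $\mdr(\bar X/D,r)$ follows by pushing forward through the good quotient $q_{DR}$.

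The main obstacle is twofold: first, extracting $\bar K$-rationality of $V$ from the countability inputs and Galois invariance via a Baire-category descent that rules out uncountable Galois orbits; second, the density argument showing $V(\bar K)\subseteq \bar Z$, which requires the Riemann--Hilbert correspondence to respect $\bar K$-structures well enough to convert algebraic rationality of points of $V$ into membership in $W$. Without these two inputs, one has only the trivial containment $\bar Z\subseteq V$ with $V$ irreducible, and no a priori reason to conclude equality, since the Zariski closure of an analytically irreducible set inside a $\bC$-variety can strictly exceed its dimension (as the graph of $y=e^x$ in $\bA^2$ shows); moduli-absoluteness plus countability of $K$ is precisely what rules out such transcendental behaviour in our setting.
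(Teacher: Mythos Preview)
Your strategy---show that the Zariski closure $V$ of an analytic irreducible component $Z$ equals its Euclidean closure $\bar Z$---is the right target, but both steps you flag as ``main obstacles'' are genuine gaps that do not close along the lines you indicate. First, the claim that $p_\sigma(V)$ is ``obtained by the same construction from $S^\sigma$'' is unjustified: $p_\sigma$ is an algebraic isomorphism but is \emph{not} Euclidean-continuous (for $\sigma$ other than the identity or complex conjugation), so $p_\sigma(Z)$ is not a priori analytic and cannot be identified with an analytic irreducible component of $RH^{-1}(S^\sigma)$. Without this you only know $p_\sigma(V)\subseteq (RH^{-1}(S^\sigma))^{\mathrm{Zar}}$, which does not pin down $p_\sigma(V)$ among countably many candidates; and the inference ``countable index $\Rightarrow$ contains $Gal(\bC/K')$ for some finite $K'/K$'' is false as stated. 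Second, even granting that $V$ is defined over $\bar K$, the argument for $V(\bar K)\subseteq \bar Z$ is a non sequitur: a point $v\in V(\bar K)$ gives a logarithmic connection over a number field, but nothing forces $RH(v)\in S$, i.e.\ $v\in W$; the containment $Z\subseteq W$ does not pass to Zariski closures, and the appeal to ``a suitable $\sigma$'' supplies no mechanism.

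The paper proceeds differently, via a separate algebraicity criterion (Proposition~\ref{Galois2}): if $T\subset M$ is analytically constructible in an irreducible variety $M$ defined over a countable field $K$, and $\sigma(T)$ is analytically constructible for every $\sigma\in Gal(\bC/K)$, then the Euclidean closure of each analytic irreducible component of $T$ is Zariski closed. The key input (Proposition~\ref{Galois1}) \emph{constructs} a specific $\sigma_0\in Gal(\bC/K)$ making $\sigma_0(Z)$ Euclidean-dense in $Y=Z^{\mathrm{Zar}}$, by picking very general points of $Z$ and extending field embeddings step by step so that their images approximate a prescribed Euclidean-dense sequence in $Y$; a dimension comparison using that each $\sigma(T)$ is analytically constructible then forces $\dim Y\le\dim Z$, whence $\bar Z=Y$. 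The countability of $K$ enters through transcendence-basis arguments inside this construction, not through orbit-counting or Baire category.
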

\begin{proof}  Part (2) follows from part (1) since in this case $RH$ is an analytic isomorphism.

Since both $q_B$ and $q_{DR}$ are good quotient morphisms, the Zariski and analytic topologies of their targets are the quotient topologies of the Zariski and, respectively, analytic topologies of their sources. Hence the claim for the subsets of the moduli $\cM_B(X,r)(\bC)$ follows from the claim for the subsets of the moduli $\cR_B(X,x_0,r)(\bC)$ together with  Lemma \ref{lemMaks}.  We will focus now on proving (1)  for subsets of $\cR_B$.

The algebraic closure $\bar{K}$ of $K$ is also countable. Moreover, if $S$ is moduli-absolute $K$-constructible then it is also moduli-absolute $\bar{K}$-constructible. Since the claim is the same for $\bar{K}$ as for $K$, we can and will assume from now that $K=\bar{K}$.

Since $\cR_{DR}(\bar{X}/D,x_0,r)$ is defined over $K$, for every $\sigma\in Gal(\bC/K)$ we have that $p_\sigma$ is the conjugation morphism $\sigma$ on $\cR_{DR}(\bar{X}/D,x_0,r)$ to itself as a $K$-scheme. Hence for a subset $S$ of $\cR_{B}(X,x_0,r)(\bC)$, defining $S^\sigma = RH\circ p_\sigma\circ RH^{-1}(S)$, one has that $\sigma(RH^{-1}(S))=RH^{-1}(S^\sigma)$. Since $\cR_{DR}(\bar{X}/D,x_0,r)$ is defined over $K$ and $K=\bar{K}$, every algebraic  irreducible component is invariant under the action of $Gal(\bC/K)$. 

We shall use  the following algebraicity criterion which we will prove in the next section:

\begin{prop}\label{Galois2}
Let $M$ be an irreducible $\bC$ algebraic variety defined over a countable subfield $K$ of $\bC$. Let $T\subset M$ be an analytically constructible subset of $M$. Suppose $\sigma(T)$ is an analytically constructible subset of $M$ for all $\sigma\in Gal(\bC/K)$. Let $T_0$ be an analytic irreducible component of $T$, then $\bar{T}_0$, the closure of $T_0$ with respect to the Euclidean topology, is an algebraic subset of $M$. 
\end{prop}

Here we recall that an {\bf analytically constructible set} of a $\bC$ analytic variety $M$ is an element of the smallest family of subsets of $M$ that contains all analytic subsets of $M$ and is closed with respect to the operations of taking the  finite union of sets and the complement of a set. Thus the difference and the finite intersection of analytically constructible sets are also analytically constructible sets.

Returning to the proof of Proposition \ref{propRHC}, let  $S$ be a moduli-absolute $K$-constructible subset of $\cR_{B}(X,x_0,r)(\bC)$. We apply Proposition \ref{Galois2} to the following: $T_0$ is an analytically irreducible component of $RH^{-1}(S)$,  $M$ is an algebraic irreducible component of $\cR_{DR}(\bar{X}/D,x_0,r)$ containing $T_0$, and $T=M\cap RH^{-1}(S)$. By assumption, $\sigma(RH^{-1}(S))$ is the inverse image under the analytic morphism $RH$ of a Zariski constructible subset, hence it is an analytically constructible subset. Thus $\sigma(T)$ is also analytically constructible. Hence, Proposition \ref{Galois2} applies, and $\bar{T}_0$ must be an algebraic subset. \end{proof}

\section{Algebraicity criterion}


In this section prove Proposition \ref{Galois2}. 
We need some preliminary remarks.

Let $Y\subset \bC^N$ be an irreducible algebraic subvariety defined over a subfield $K$ of $\bC$. Consider the action of $Gal(\bC/K)$ on $\bC^N$ by acting on each coordinate. Clearly, the action of $Gal(\bC/K)$ on $\bC^N$ preserves $Y$. 
\begin{prop}\label{Galois1}
Under the above notations, let $Z\subset Y$ be an irreducible analytically constructible subset of $Y$. Suppose $K$ is countable and suppose $Z$ is dense in $Y$ with respect to the algebraic Zariski topology. Then there exists $\sigma\in Gal(\bC/K)$ such that 
$$\sigma(Z):=\left\{(\sigma(x_1), \ldots, \sigma(x_N))\in \bC^N|(x_1, \ldots, x_N)\in Z\right\}$$
is dense in $Y$ with respect to the Euclidean topology on $Y$. 
\end{prop}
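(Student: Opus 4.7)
The plan is to construct $\sigma\in Gal(\bC/K)$ directly, by a recursive argument, so that $\sigma(Z)$ meets every member of a countable base of non-empty Euclidean open subsets $(U_n)_{n\ge 1}$ of $Y$. Such a base exists because $Y\subset\bC^N$ is a separable metric space. It is therefore enough to produce $\sigma\in Gal(\bC/K)$ with $\sigma(Z)\cap U_n\neq\emptyset$ for every $n$.

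The main preparatory fact is the following density assertion. Let $\bar Z$ denote the Euclidean closure of $Z$ in $Y$. Because $Z$ is irreducible analytically constructible, $\bar Z$ is an irreducible analytic subset of $Y$, and by the structure of analytically constructible sets $Z$ contains a non-empty Euclidean open subset $U_Z$ of $\bar Z$. The hypothesis that $Z$ is Zariski dense in $Y$ is equivalent to $\bar Z$ having Zariski closure $Y$, so for every proper $K'$-subvariety $V\subsetneq Y$ (with $K\subset K'\subset\bC$), the set $\bar Z\cap V$ is a proper closed analytic subset of $\bar Z$, hence nowhere dense in $\bar Z$. Whenever $K'$ is countable there are countably many proper $K'$-subvarieties of $Y$, so by the Baire category theorem applied to the Polish space $\bar Z$, the set of points of $Y$ that are \emph{$K'$-generic}, i.e.\ whose coordinates generate a field extension of $K'$ of transcendence degree $d=\dim Y$, is a dense $G_\delta$ in $\bar Z$ and in particular meets $U_Z$. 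By exactly the same Baire argument applied inside $Y$, it also meets each non-empty Euclidean open subset of $Y$, and in particular each $U_n$.

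Using this, I would recursively choose sequences $z_n\in U_Z\subset Z$ and $y_n^*\in U_n$ as follows. Having chosen $z_1,y_1^*,\ldots,z_{n-1},y_{n-1}^*$, let $F_{n-1}=K(z_1,y_1^*,\ldots,z_{n-1},y_{n-1}^*)\subset\bC$; this is a countable field. By the density assertion applied with $K'=F_{n-1}$, pick $z_n\in U_Z$ that is $F_{n-1}$-generic in $Y$; then with $K'=F_{n-1}(z_n)$, pick $y_n^*\in U_n$ that is $F_{n-1}(z_n)$-generic in $Y$. Inductively this forces both $(z_1,\ldots,z_n)$ and $(y_1^*,\ldots,y_n^*)$ to be $K$-generic points of the (geometrically irreducible) $K$-variety $Y^n$, so the associated $K$-embeddings $K(Y^n)\hookrightarrow\bC$ induce a well-defined $K$-isomorphism $\tau_n\colon K(z_1,\ldots,z_n)\xrightarrow{\sim}K(y_1^*,\ldots,y_n^*)$ sending $z_i\mapsto y_i^*$. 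These are compatible as $n$ grows, and passing to the limit yields a $K$-isomorphism $\tau\colon K(z_n:n\ge 1)\xrightarrow{\sim}K(y_n^*:n\ge 1)$ of countable subfields of $\bC$ with $\tau(z_n)=y_n^*$ for every $n$.

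By Steinitz's theorem on the extension of isomorphisms between subfields of an algebraically closed field of large enough transcendence degree, $\tau$ extends to an automorphism $\sigma\in Gal(\bC/K)$. Then $\sigma(z_n)=y_n^*\in U_n$, so $\sigma(Z)\cap U_n\neq\emptyset$ for every $n$, and $\sigma(Z)$ is Euclidean dense in $Y$. The main obstacle is the combination of the Baire-category density step with the requirement that the $z_n$'s lie inside $Z$ itself rather than merely in $\bar Z$; this is where one uses crucially that $Z$ being irreducible analytically constructible forces $Z$ to contain a Euclidean open subset of $\bar Z$, so that the dense $G_\delta$ of $K'$-generic points of $Y$ really meets $Z$.
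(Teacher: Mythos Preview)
Your proof is correct and follows essentially the same approach as the paper: recursively choose generic points $z_n\in Z$ and targets $y_n^*$ approximating Euclidean density in $Y$, build a compatible tower of $K$-isomorphisms of their coordinate fields, and extend to an automorphism of $\bC$ by Steinitz. The only cosmetic differences are that the paper phrases genericity via injective $K$-algebra maps from the coordinate ring of $Y$ and approximates a countable dense sequence of points $P_i$ to within $1/i$, whereas you use the language of generic points of $Y^n$ and a countable base of Euclidean opens.
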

Before proving the proposition, we need a simple lemma. 
\begin{lm}
Let $K'$ be a field with $K\subset K'\subset \bC$. Suppose $K'$ is countable. Then for a very general point $z\in Z$, the only subvariety of $Y$, which is defined over $K'$ and contains $z$, is $Y$ itself. 
\end{lm}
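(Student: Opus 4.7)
The plan is to exploit countability of $K'$ to reduce to countably many ``bad'' conditions on $z$, and then apply the Baire category theorem on a complex-analytic smooth piece of $Z$ to find a very general point avoiding all of them. Since $K'$ is countable, so is the polynomial ring $K'[x_1,\ldots,x_N]$, and consequently there are only countably many proper algebraic subvarieties $V\subsetneq Y$ defined over $K'$; enumerate them as $\{V_n\}_{n\in\bN}$. For each $n$, Zariski density of $Z$ in $Y$ forces $Z\not\subseteq V_n$ (else $Y=\overline{Z}^{\,\mathrm{Zar}}\subseteq V_n$, contradicting $V_n\subsetneq Y$), so $Z\cap V_n$ is a proper analytically constructible subset of $Z$.

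Next I would produce a complex manifold $U$ inside $Z$ on which to run Baire category. Let $d=\dim Y$ and let $Y^{\mathrm{reg}}\subseteq Y$ be the smooth locus, a connected complex manifold of dimension $d$ that is Zariski open and dense in $Y$. Decompose $Z$ as a finite disjoint union of locally closed complex analytic subsets of $Y$. Since $Z$ is Zariski dense in the irreducible variety $Y$, at least one stratum must have dimension $d$; its intersection with $Y^{\mathrm{reg}}$ contains a non-empty Euclidean open subset $U\subseteq Z\cap Y^{\mathrm{reg}}$ that is itself a $d$-dimensional connected complex manifold.

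For each $n$, the set $V_n\cap U$ is a proper complex analytic subvariety of $U$, and is therefore Euclidean nowhere-dense. Because $U$ is locally compact and Hausdorff, the Baire category theorem implies that
\[
U\setminus\bigcup_{n\in\bN} V_n
\]
is a dense $G_\delta$ subset of $U$, and in particular non-empty; any point $z$ in this set lies in no proper $K'$-subvariety of $Y$, so that the only $K'$-subvariety of $Y$ containing $z$ is $Y$ itself. The complement of such points forms a meager subset, which is the sense in which a ``very general'' $z$ has this property. The step I expect to require the most care is extracting the Euclidean open subset $U$ from $Z$: this is where the hypothesis that $Z$ is complex-analytically constructible (rather than, say, only semi-analytic) is essential, since it allows one to stratify $Z$ into locally closed complex analytic pieces and then use Zariski density in the irreducible $Y$ to single out a top-dimensional stratum with non-empty Euclidean interior in $Y^{\mathrm{reg}}$.
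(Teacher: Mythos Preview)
Your overall strategy matches the paper's: since $K'$ is countable there are only countably many proper $K'$-subvarieties $V_n\subsetneq Y$, each meets $Z$ in a proper closed subset, and a Baire-type argument shows a very general point of $Z$ avoids them all. The paper's proof is essentially a one-line version of this.

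However, your implementation has a genuine gap. You claim that because $Z$ is Zariski dense in the $d$-dimensional irreducible variety $Y$, some stratum of $Z$ must have dimension $d$, and hence $Z$ contains a Euclidean open subset of $Y^{\mathrm{reg}}$. This is false: Zariski density of an analytically constructible set does not force it to have full analytic dimension. For instance, with $Y=\bC^2$ the graph $Z=\{(t,e^t):t\in\bC\}$ is an irreducible analytic curve that is Zariski dense in $\bC^2$, yet has dimension $1$ and empty Euclidean interior in $Y$. In the setup of the lemma nothing rules this out, so your extraction of a $d$-dimensional open $U\subset Y^{\mathrm{reg}}$ inside $Z$ may be impossible.

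The fix is to run the Baire argument inside $Z$ itself rather than inside $Y$. Let $\bar Z$ be the Euclidean closure of $Z$, an irreducible analytic subset of $Y$. For each $n$, if $\bar Z\subset V_n$ then, since $V_n$ is Zariski closed, $Y=\overline{Z}^{\,\mathrm{Zar}}\subset V_n$, a contradiction; so $V_n\cap\bar Z$ is a proper analytic subset of $\bar Z$, hence nowhere dense in the connected complex manifold $\bar Z^{\mathrm{reg}}$. Now $Z$, being constructible with closure $\bar Z$, contains a non-empty Euclidean open subset $U$ of $\bar Z^{\mathrm{reg}}$, and Baire applied to $U$ gives the desired very general point. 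This is exactly what the paper's terse sentence ``each of them intersects $Z$ along an analytic proper closed subset of $Z$'' is encoding.
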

\begin{proof}[Proof of the lemma]
There are countably many proper subvarieties of $Y$ that are defined over $K'$. Each of them intersects $Z$ along an analytic proper closed subset of $Z$. Thus a very general point of $Z$ is not contained in any of such intersections. 
\end{proof}
\begin{proof}[Proof of Proposition \ref{Galois1}]
Denote the restriction of the Euclidean metric of $\bC^N$ to $Y$ by $d_Y$. Let $P_1, P_2, \ldots$ be a sequence of points in $Y$ such that they are dense in $Y$ with respect to the Euclidean topology. 

By the preceding lemma, we can choose a very general point $\underline{z}^1$ in $Z$ such that no proper subvariety of $Y$ defined over $K$ contains $\underline{z}^1$. Denote by $Y_K$ be the underlying $K$-variety of $Y$, and denote the coordinate ring of $Y_K$ by $R_K$. Then by \cite[Lemma 4.4]{bw2} and its proof, $R_K$ is canonically isomorphic to $K[z_1^1, \ldots, z_N^1]$, where $(z_1^1, \ldots, z_N^1)$ are the coordinates of $\underline{z}^1$ in $\bC^N$. 

Every $\bC$ point $Q$ of $Y_K$, i.e. every closed point of $Y$, corresponds to a $K$-algebra morphism $f_Q: R_K\to \bC$. For any $t\in R_K$, $\{Q\in Y|f_Q(t)=0\}$ defines a proper algebraic subset of $Y$. Thus, 
$$\{Q\in Y|f_Q \textrm{ is injective}\}$$
is the complement of countably many proper algebraic subsets in $Y$, and hence is dense in $Y$ with respect to the Euclidean topology (see also the proof of \cite[Theorem 6.1]{simpson}). Choose $Q_1\in Y$ such that $f_{Q_1}$ is injective and $d_Y(P_1, Q_1)<1$. Let $K^1=K(z_1^1, \ldots, z_N^1)$. Notice that the coordinates of $\underline{z}^1$ are contained in $K^1$. Hence we have constructed a finitely generated extension $K^1$ of $K$ and a field embedding $f_1\stackrel{\textrm{def}}{=}f_{Q_1}: K^1\to \bC$ such that
$$d_Y\left(f_1^N((K^1)^N\cap Z), P_1\right)<1$$
where we consider $(K^1)^N$ and $Z$ as subsets of $\bC^N$, and $f_1^N: (K^1)^N\to \bC^N$ is the $N$-fold product of $f_1$. 

Similarly, we can construct a finitely generated extension $K^2$ of $K^1$ and a field embedding $f_2: K^2\to \bC$ such that
\begin{itemize}
\item $f_2$ is an extension of $f_1$;
\item $d_Y\left(f_2^N((K^2)^N\cap Z), P_2\right)<1/2$.
\end{itemize}
Repeating this process, we obtain $K^i$ ($i=1, 2, \ldots$) with $K\subset K^i\subset \bC$ and $f_i: K^i\to \bC$ such that  for every $i$,
$$d_Y\left(f_i^N((K^i)^N\cap Z), P_i\right)<1/i.$$
Let $K^\infty$ be the union of all $K^i$ and let $f_\infty: K^\infty\to \bC$ be the field embedding induced by $f_i$. Clearly for every $i$,
$$d_Y\left(f_\infty^N((K^\infty)^N\cap Z), P_i\right)<1/i.$$
Since $\{P_i|i=1, 2, \ldots\}$ is dense in $Y$ with respect to the Euclidean topology, $f_\infty^N((K^\infty)^N\cap Y)$ is also dense in $Y$ with respect to the Euclidean topology. 

{We claim that, by the existence of transcendental basis, $f_\infty: K^\infty\to \bC$ can be extended to an automorphism $\sigma_\infty$ of $\bC$.  In fact, by the existence of transcendental basis, we can choose a transcendental basis $\Gamma_1$ for the extension $\bC/K^\infty$ and a transcendental basis $\Gamma_2$ for the extension $\bC/f_\infty(K^\infty)$. Since $K^\infty$ is countable, the cardinalities of both $\Gamma_1$ and $\Gamma_2$ are equal to the cardinality of $2^{\bN}$. Thus, there exists a bijection $\Gamma_1\to\Gamma_2$. Now, we can extend $f_\infty$ by the above chosen bijection, and obtain a ring isomorphism $K^\infty(\Gamma_1)\to f_{\infty}(K^\infty)(\Gamma_2)$ between the field extensions. By the definition of transcendental basis, the algebraic closures of both $K^\infty(\Gamma_1)$ and $f_{\infty}(K^\infty)(\Gamma_2)$ are equal to $\bC$. Therefore, the isomorphism $K^\infty(\Gamma_1)\to f_{\infty}(K^\infty)(\Gamma_2)$ extends to an automorphism $\sigma_\infty$ of $\bC$.    
}

By construction, $\sigma_\infty$ restricts to the identity map on $K$, that is, $\sigma_\infty\in Gal(\bC/K)$.  It is straightforward to check that
$$\sigma_\infty(Z)\supset \sigma_\infty\left((K^\infty)^N\cap Z\right)=f_\infty^N\left((K^\infty)^N\cap Z\right).$$
Therefore, $\sigma_\infty\in Gal(\bC/K)$ and $\sigma_\infty(Z)$ is dense in $Y$ with respect to the Euclidean topology. 
\end{proof}

Let $Y$ be a complex algebraic variety defined over a subfield $K$ of $\bC$. Then $Gal(\bC/K)$ acts on the $\bC$-points of $Y$ as follows. Any automorphism $\sigma: \bC\to \bC$ induces a scheme morphism $\spec(\bC)\to \spec(\bC)$, which by abusing notation we also denote by $\sigma$. Denote the underlying $K$-scheme of $Y$ by $Y_K$. Then we let $\sigma\in Gal(\bC/K)$ maps a $\bC$-point 
$$P: \spec(\bC)\to M_K$$
to the composition
$$\sigma(P): \spec(\bC)\stackrel{\sigma}{\rightarrow}\spec(\bC)\to Y_K.$$
Equivalently, the action $\sigma$ on $Y$ can also be interpreted as the following map
$$Y=Y_K\times_{\spec(K)}\spec(\bC)\xrightarrow[]{\id\times \sigma}Y_K\times_{\spec(K)}\spec(\bC)=Y.$$
Evidently, the action of $Gal(\bC/K)$ on $Y$ fixes the $K$-points of $Y$. 

By restricting to one affine chart, one can easily obtain a coordinate-free generalization of the preceding the proposition.
\begin{cor}
Let $Y$ be a complex algebraic variety defined over a subfield $K$ of $\bC$, and let $Z$ be an analytically constructible subset of $Y$ which is dense with respect to the algebraic Zariski topology. Suppose $K$ is countable. Then there exists $\sigma\in Gal(\bC/K)$ such that $\sigma(Z)$ is dense in $Y$ with respect to the Euclidean topology. 
\end{cor}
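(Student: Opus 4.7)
The plan is to reduce the coordinate-free statement to the affine embedded setting of Proposition \ref{Galois1} by passing to a single affine chart defined over $K$. Since $Y$ is a $\bC$-variety defined over $K$, its underlying $K$-scheme admits an open cover by affine $K$-subschemes, and base-changing to $\bC$ produces an open affine cover of $Y$ by subvarieties defined over $K$. I would first suppose $Y$ is irreducible and pick any non-empty affine open $U\subset Y$ defined over $K$.

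Choosing a finite generating set for the coordinate ring of $U_K$ over $K$ yields a closed $K$-embedding $U\hookrightarrow \bC^N$, realizing $U$ as an irreducible closed subvariety of $\bC^N$ defined over $K$. The trace $Z\cap U$ is analytically constructible in $U$ (analytic constructibility is stable under restriction to open subvarieties) and is Zariski dense in $U$, because $Z$ is Zariski dense in the irreducible $Y$ and $U\subset Y$ is a non-empty Zariski open. Proposition \ref{Galois1} then yields $\sigma\in Gal(\bC/K)$ such that $\sigma(Z\cap U)$ is Euclidean dense in $U$. Since $U$ is a non-empty Zariski open of an irreducible complex algebraic variety, it is Euclidean dense in $Y$, so $\sigma(Z)\supset \sigma(Z\cap U)$ is Euclidean dense in $Y$, as required.

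If $Y$ is not irreducible, its irreducible components $Y_1,\dots,Y_n$ are defined over the countable algebraic closure $K'=\bar{K}$, and for each $i$ the trace $Z\cap Y_i$ is Zariski dense in $Y_i$. The subtle point is that a \emph{single} $\sigma$ must simultaneously spread every piece $Z\cap Y_i$ to Euclidean density in $Y_i$; this is the main obstacle. To handle it I would re-run the inductive construction behind Proposition \ref{Galois1} with a Euclidean dense sequence $\{P_j\}$ in $Y$ that visits every component infinitely often, choosing at step $j$ a very general point of $Z\cap Y_{c(j)}$ (where $P_j\in Y_{c(j)}$) and extending the field tower so that its image approximates $P_j$. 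Starting the tower at $K^0=K'$ ensures the resulting automorphism $\sigma$ lies in $Gal(\bC/K')\subset Gal(\bC/K)$, hence fixes each component setwise, and the one-component argument above applies to each $Y_i$ separately to conclude that $\sigma(Z)$ is Euclidean dense in $Y$.
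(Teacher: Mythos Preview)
Your irreducible case is exactly the paper's argument: the paper's entire proof is the single sentence ``By restricting to one affine chart, one can easily obtain a coordinate-free generalization of the preceding proposition.'' One small gap in your write-up: Proposition \ref{Galois1} requires its analytically constructible set to be \emph{irreducible}, and $Z\cap U$ need not be. The fix is routine---since $U$ is irreducible and $Z\cap U$ is Zariski dense, its (at most countably many) irreducible analytic components cannot all have proper Zariski closure in $U$, so some component $Z_0\subset Z\cap U$ is already Zariski dense in $U$; apply Proposition \ref{Galois1} to $Z_0$. (The paper's one-liner does not spell this out either.)

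Your treatment of reducible $Y$ goes beyond what the paper provides---the paper does not address this case at all, and in practice the corollary is only invoked later with the ambient variety irreducible. Your idea of re-running the tower of field extensions from the proof of Proposition \ref{Galois1}, now with a Euclidean dense sequence $\{P_j\}$ that visits every component infinitely often and choosing the very general point at step $j$ inside $Z\cap Y_{c(j)}$, is the right way to produce a \emph{single} $\sigma$ working for all components simultaneously. The last sentence is slightly garbled, though: once $\sigma$ is built this way, Euclidean density of $\sigma(Z)$ in each $Y_i$ follows directly from the construction (for every $P_j\in Y_i$ there is a point of $\sigma(Z\cap Y_i)$ within $1/j$ of it). You do not ``apply the one-component argument'' again afterward; the single $\sigma$ already does the job.
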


Let $Y$ be a complex algebraic variety and let $\sigma\in Gal(\bC/\bQ)$. Recall that $Y^\sigma$ is defined as $Y\times_{\spec(\bC)}\spec(\bC^\sigma)$, where $\bC^\sigma=\bC$ but its $\bC$-algebra structure is given by $\bC\stackrel{\sigma}{\rightarrow}\bC=\bC^\sigma$. By definition, there is a natural projection $Y^\sigma\to Y$ which we denote by $p_1$. Clearly, $p_1$ induces a bijection on the $\bC$ points. 
\begin{lm}
Given any $\sigma\in Gal(\bC/\bQ)$, suppose $V$ is an analytic open subset of $Y^\sigma$ which is dense with respect to the Euclidean topology. Then  $p_1(V)$ is dense in $Y$ with respect to the Euclidean topology. 
\end{lm}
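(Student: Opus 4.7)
The plan is to reduce to the case where $Y$ is irreducible and then argue by contradiction. First, $p_1$ is an isomorphism of abstract schemes: the ring-theoretic map $R \to R \otimes_\bC \bC^\sigma$ becomes an isomorphism upon the identification $r \otimes 1 \leftrightarrow r$ (coming from the relation $1 \otimes c = \sigma^{-1}(c) \otimes 1$). Consequently $p_1$ acts as the identity on the underlying Zariski topological space and identifies the $\bC$-irreducible components of $Y$ and $Y^\sigma$ bijectively. Since Euclidean density can be checked component by component, this reduces the lemma to the case where $Y$, and hence $Y^\sigma$, is irreducible.

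In the irreducible case, because the Zariski topology is coarser than the Euclidean topology (every Zariski-closed subset is Euclidean-closed, so Euclidean closure is contained in Zariski closure), the hypothesis that $V$ is Euclidean-open and Euclidean-dense in $Y^\sigma$ implies that $V$ is $\bC$-Zariski-dense in $Y^\sigma$. Via the Zariski-homeomorphism underlying $p_1$, the image $p_1(V)$ is then $\bC$-Zariski-dense in $Y$.

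The main difficulty is to upgrade this Zariski density to Euclidean density, since Zariski density is strictly weaker in general (for example, $\bZ$ is Zariski-dense but not Euclidean-dense in $\aaa^1_\bC$). My proposed approach is a contradiction argument. Suppose $p_1(V)$ is not Euclidean-dense in $Y$. Then some non-empty Euclidean-open $U \subset Y$ is disjoint from $p_1(V)$, and shrinking we may take $U = Y \cap B$ for an Euclidean-open ball $B \subset \bC^N$ in compatible affine embeddings of $Y$ and $Y^\sigma$ (where the defining equations of $Y^\sigma$ are obtained from those of $Y$ by applying $\sigma$ to coefficients). Since $p_1$ on $\bC$-points is coordinate-wise $\sigma^{-1}$, the bijective preimage is $p_1^{-1}(U) = Y^\sigma \cap \sigma(B)$, and it is disjoint from $V$. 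A contradiction with the Euclidean density of $V$ would follow if one can show that $Y^\sigma \cap \sigma(B)$ contains a non-empty Euclidean-open subset of $Y^\sigma$.

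The main obstacle I foresee is precisely this last step. For wild $\sigma$ the image $\sigma(B)$ is typically not Euclidean-open in $\bC^N$, so one must exploit the interaction between the wild Galois action on the ambient coordinates and the rigid algebraic structure of $Y^\sigma$ to guarantee that the intersection $Y^\sigma \cap \sigma(B)$ nonetheless has non-empty Euclidean interior in $Y^\sigma$; morally, this rests on the classical fact that proper subfields of $\bC$, and more generally the $\sigma$-twisted analogues relevant here, have empty Euclidean interior, so that $\sigma(B)$ cannot be swallowed by any proper Euclidean-closed subset of the ambient $\bC^N$ that avoids $Y^\sigma$. Establishing this coupling rigorously is the delicate technical heart of the argument.
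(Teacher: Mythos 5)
Your proposal correctly identifies where the difficulty lies, but the strategy you outline for resolving it does not work, and the missing step is not a technical detail --- it is the entire content of the lemma. The claim you would need, that $Y^\sigma \cap \sigma(B)$ contains a non-empty Euclidean-open subset of $Y^\sigma$, is generically false: for a wild automorphism $\sigma$ the set $\sigma(B)$ has empty Euclidean interior in $\bC^N$, and intersecting with $Y^\sigma$ does not repair this. Even in the simplest case $Y = Y^\sigma = \aaa^1_\bC$, the set $\sigma(B)$ is typically a nowhere dense set, so your proposed contradiction never materializes. The heuristic about "proper subfields of $\bC$ having empty interior" points in the wrong direction --- it cuts against you, not for you.

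The paper's proof bypasses the geometry of $\sigma(B)$ entirely via a dimension reduction and a counting argument. Suppose $p_1(V)$ misses a ball $B$ around a smooth point $y \in Y$. Choose a closed algebraic curve $C \subset Y$ passing through $y$ and through some point of $p_1(V)$. Since $p_1$ is an isomorphism of abstract schemes, $p_1^{-1}(C)$ is a closed algebraic curve in $Y^\sigma$ meeting $V$. Now $V$ is the complement of a proper closed analytic subset of $Y^\sigma$ (this is how $V$ arises in the application: as an open dense subset of the closure of an analytically constructible set), so $p_1^{-1}(C) \setminus V$ is a proper closed analytic subset of a curve, hence a discrete --- and therefore countable --- set. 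On the other hand $C \cap B$ is a non-empty Euclidean-open subset of a one-dimensional variety, hence uncountable, and $C \cap B \subset C \setminus p_1(V)$. Since $p_1$ is a bijection on $\bC$-points, this makes the countable set $p_1^{-1}(C)\setminus V$ contain the uncountable set $p_1^{-1}(C\cap B)$, a contradiction. The key idea you are missing is precisely this restriction to a curve, which converts the codimension-one "bad set" $Y^\sigma \setminus V$ into a countable set so that it can be compared against an uncountable ball, while the Zariski-algebraic nature of $p_1$ ensures the curve transports cleanly between $Y$ and $Y^\sigma$.
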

\begin{proof}
Suppose that $p_1(V)$ is not dense in $Y$ with respect to the Euclidean topology. Then there exists an open ball $B$ centered at a smooth point $y$ in $Y$ such that $p_1(V)\cap B=\emptyset$. Let $C$ be a closed algebraic curve in $Y$ passing through $y$ and some point in $p_1(V)$. Then $p_1^{-1}(C)$ is a closed algebraic curve in $Y^{\sigma}$ which intersects $V$. Since $V$ is an analytic open and dense subset of $Y^\sigma$, and since $C$ intersects $p_1(V)$, $V\cap p_1^{-1}(C)$ is an nonempty analytic open subset and dense subset of $C$. Therefore, $p_1^{-1}(C)\setminus V$ consists of isolated points in $p_1^{-1}(C)$, and hence is countable. 

On the other hand, $C\setminus p_1(V)\supset C\cap B$. Since $B$ is an open ball in $Y$ intersecting $C$, $C\cap B$ contains uncountably many elements. Thus, $C\setminus p_1(V)$ is uncountable. Since $p_1$ is a bijection, we have a contradiction to the fact that $p_1^{-1}(C)\setminus V$ is countable. 

Therefore, $P_1(V)$ is dense in $Y$ with respect to the Euclidean topology. 
\end{proof}

\begin{proof}[Proof of Proposition \ref{Galois2}]


Let $N\subset M$ be an affine open subvariety of $M$ such that $T_0\cap N$ is dense in $T_0$. It suffixes to show that $\bar{T}_0\cap N$ is an algebraic subset of $N$. Since the definition of $N$ as a scheme over $\bQ$ involves finitely many elements in $\bC$, by enlarging $K$ to a finitely generated extension that is also countable, we can assume that $N$ is defined over $K$. 

First, we assume that $\dim \bar{T}_0=\dim T$. 

Let $Y$ be the algebraic Zariski closure of $T_0$ in $N$. Since as an analytically constructible set $T_0$ is irreducible, $Y$ is also irreducible as an algebraic set.  Again, by enlarging $K$ to a finitely generated extension which is also countable, we can assume that $Y$ is also defined over $K$. Moreover, by taking algebraic closure in $\bC$, we can assume that $K$ is algebraically closed.  By the Proposition \ref{Galois1}, there exists $\sigma_0\in Gal(\bC/K)$ such that $\sigma_0(T_0)$ is dense in $Y$ with respect to the Euclidean topology. 

Notice that $\sigma_0(T)$ and $T$ may not have the same dimension a priori. For the moment, let us assume that $\dim T\geq \dim \sigma(T)$ for any $\sigma\in Gal(\bC/K)$. In particular, $\dim T\geq \dim \sigma_0(T)$. Since $\sigma_0(T)$ is an analytically constructible subset of $N$, we have
$$\dim Y=\dim \overline{\sigma_0(T_0)}\leq \dim \overline{\sigma_0(T)}=\dim \sigma_0(T) \leq \dim T= \dim T_0$$
where $\overline{\sigma_0(T_0)}$ (resp. $\overline{\sigma_0(T)}$) is the closures of $\sigma_0(T_0)$ (resp. $\sigma_0(T)$) in $N$ with respect to the Euclidean topology. 
Since $Y$ is the algebraic Zariski closure of $T_0$, we can conclude that
$$Y=\overline{\sigma_0(T_0)}.$$
Thus, $\overline{\sigma_0(T_0)}$ is an algebraic subset of $N$ defined over $K$. Since $\sigma_0(T_0)$ is an analytically constructible subset of $Y$, which is dense in $Y$ with respect to the Euclidean topology, and since $K$ is an algebraically closed, the $K$ points of $\sigma_0(T_0)$ is dense in $Y$ with respect to the Euclidean topology. Since $\sigma_0$ fixes all $K$ points, $T_0$ is also dense in $Y$ with respect to the Euclidean topology. Therefore, $\bar{T}_0=Y$ is an algebraic subset of $Y$. 

In general, we choose $\sigma_1\in Gal(\bC/K)$ such that $\sigma_1(T)$ has the maximal dimension among all $\sigma(T)$ with $\sigma\in Gal(\bC/K)$. Let $T_1$ be an irreducible component of $\sigma_1(T)$ such that $\dim T_1=\dim \sigma_1(T)$. Then by the earlier argument, $\bar{T}_1$, the closure of $T_1$ with respect to the Euclidean topology, is an algebraic subset of $M$. 
Since $T_1$ is an analytically constructible subset of $\bar{T}_1$, it contains an analytic open and dense subset $V_1$ of $\bar{T}_1$. By the preceding lemma, $\sigma_1^{-1}(V_1)$ is dense in $\sigma_1^{-1}(\bar{T}_1)$ with respect to the Euclidean topology. Since $V_1\subset \sigma_1(T)$, $\sigma_1^{-1}(V_1)\subset T$. Since $\sigma_1^{-1}(V_1)$ is dense in $\sigma_1^{-1}(\bar{T}_1)$ with respect to the Euclidean topology, 
$$\dim T\geq \sigma_1^{-1}(\bar{T}_1)=\dim \bar{T}_1=\dim T_1=\dim \sigma_1(T).$$
Therefore, $\dim T=\dim \sigma_1(T)$. Thus, we can conclude that $\dim T=\dim \sigma(T)$, and $\bar{T}_0$ is an algebraic subset of $M$. 

So far we have proved that the Euclidean closure of the largest dimensional components of $T$ are algebraic sets. Let 
$$T^{max}=\bigcup\bar{T}_0$$
where the union is over all maximal-dimensional irreducible components $T_0$ of $T$ and $\bar{T}_0$ is the closure of $T_0$ with respect to the Euclidean topology. Since $T$ is an analytic constructible set, the above union is locally a finite union, and hence $T^{max}$ is an analytic subset of $M$. 

By the above arguments, we have 
\begin{equation}\label{max1}
\sigma(T^{max})\subset (\sigma(T))^{max}
\end{equation}
for any $\sigma\in Gal(\bC/K)$. Plugging $\sigma^{-1}(T)$ to $T$ in (\ref{max1}) and acting on both sides by $\sigma^{-1}$, we have
\begin{equation}\label{max2}
(\sigma^{-1}(T))^{max}\subset \sigma^{-1}(T^{max})
\end{equation}
for any $\sigma\in Gal(\bC/K)$. Now, combining (\ref{max1}) and (\ref{max2}), we have $\sigma(T^{max})=(\sigma(T))^{max}$ for all $\sigma\in Gal(\bC/K)$. Therefore, $T\setminus T^{max}$ satisfies the assumption of the proposition, that is $\sigma(T\setminus T^{max})$ is an analytic constructible subset of $M$ for all $\sigma\in Gal(\bC/K)$. 
\end{proof}


\section{Moduli of rank one local systems} \label{secR1}

In this section we give the main structure theorem for absolute sets of rank one local systems on smooth complex algebraic varieties.

\subsection{Statement of results} 
Let $X$ be a smooth  complex algebraic variety. We specialize now to the case of rank $r=1$. In this case,
$$
\cR_B(X,x_0,1)=\cM_B(X,1)
$$
is an algebraic group, consisting of finitely many disjoint copies of $(\bG_m)^{\times{b_1(X)}}$, where $b_1(X)=\dim H^1(X,\bC)$. By an {\bf affine subtorus} of $\cM_B(X,1)$ we will always mean a closed embedding  of algebraic groups $(\bG_m)^{\times a}\hookrightarrow\cM_B(X,1)$ for some $a\in\bN$. Affine subtori, like $\cM_B(X,1)$, are automatically defined over ${\bQ}$.

Note that $\cM_B(X,1)(\bC)$ is the set of isomorphism classes of   the category of rank one $\bC$ local systems on $X$. Hence the rank-one case  of Theorem \ref{thrmRM} gives:

\begin{thrm}\label{thrmCac0}
Let $X$ be a smooth complex algebraic variety. Let $\phi:\cM_B(X,1)(\bC)\ra\bZ$ be a function.
\begin{enumerate}
\item Suppose $\phi$ is as in part (1) of Theorem \ref{thrmABS}. Then $\phi$ is an absolute $\bQ$-constructible function.
\item Suppose $\phi$ is as in part (2) of Theorem \ref{thrmABS}. Then $\phi$ is an absolute $K$-constructible function.
\item Suppose $\phi$ is as in part (3) of Theorem \ref{thrmABS}. Then $\phi$ is an absolute $\bQ$-semi-continuous function.
\end{enumerate}
\end{thrm}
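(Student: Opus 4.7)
The plan is to derive this as the rank-one specialization of Theorem \ref{thrmRM}, as suggested by the sentence immediately preceding the statement. The core observation, which does all the work, is that for rank $r=1$ the conjugation action appearing in the definition of $\cM_B(X,r)$ is trivial. Indeed, $GL_1 = \bG_m$ is abelian, so the good quotient morphism
$$q_B:\cR_B(X,x_0,1)\lra\cM_B(X,1)$$
is the identity morphism of affine $\bQ$-schemes, and $\cM_B(X,1)(\bC)$ is canonically identified with $\cR_B(X,x_0,1)(\bC) = \homo_{gp}(\pi_1(X,x_0),\bC^*)$, i.e.\ with the set of isomorphism classes of rank one $\bC$-local systems on $X$.

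Under this identification, the natural transformation of unispaces
$$L(\_):\cR_B(X,x_0,1)(\_)\lra\cL oc\cS ys_{free}(X,\_)$$
from Section 6 sends a character to its associated rank one local system. Any function $\phi:\cM_B(X,1)(\bC)\to\bZ$ therefore factors (in the only sensible way) as
$$\phi:\cM_B(X,1)(\bC)=\cR_B(X,x_0,1)(\bC)\xra{L(\bC)}\cL oc\cS ys(X,\bC)\xra{F}\bZ$$
for a function $F$ on isomorphism classes of rank one local systems, after identifying $\bZ$ with $\iso(\cS h_c(pt,\bC))$ as in Theorem \ref{thrmRM}.

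Now I would simply invoke each part of Theorem \ref{thrmRM} in turn. For (1), if $F$ is of the form in Theorem \ref{thrmABS}(1), then Theorem \ref{thrmRM}(1) asserts that $\phi = F\circ L(\bC)$ is an absolute $\bQ$-constructible function on $\cR_B(X,x_0,1)(\bC)$, and that $q_B(\phi^{-1}(k))$ is absolute $\bQ$-constructible in $\cM_B(X,1)(\bC)$ for every $k\in\bZ$; since $q_B=\id$, this is the same as saying $\phi^{-1}(k)$ is absolute $\bQ$-constructible, which in turn is the definition of $\phi$ being an absolute $\bQ$-constructible function on $\cM_B(X,1)(\bC)$. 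Parts (2) and (3) follow verbatim from Theorem \ref{thrmRM}(2) and (3) by the same substitution.

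There is essentially no obstacle, since all the substantive content was established in Sections 2–6; the only thing to check is the compatibility $\cR_B(X,x_0,1)=\cM_B(X,1)$, which is built into the setup of Section \ref{secMLS}. The ``main step" is really just to confirm that the definition of an absolute $K$-constructible/closed set on $\cR_B$ (Definition \ref{defOurA}) agrees, via $q_B=\id$, with the one on $\cM_B$; this is immediate from the definition since a set in $\cM_B(X,1)(\bC)$ is declared absolute $K$-constructible precisely as the image under $q_B$ of such a set in $\cR_B(X,x_0,1)(\bC)$.
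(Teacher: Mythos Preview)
Your proposal is correct and matches the paper's own approach exactly: the paper simply remarks that $\cM_B(X,1)(\bC)$ is the set of isomorphism classes of rank one local systems and states that the theorem is the rank-one case of Theorem \ref{thrmRM}. Your argument just spells out the identification $q_B=\id$ and the factorization through $L(\bC)$ in more detail than the paper does.
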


Next result is more difficult and shows how very special the outcome of the previous theorem is. 

\begin{thrm}\label{thrmCac} Let $X$ be a smooth complex algebraic variety.
\begin{enumerate}
\item Let $S$ be an absolute $\bC$-constructible subset of $\cM_B(X,1)(\bC)$. Then $S$ is obtained from finitely many translated affine subtori via a sequence of taking union, intersection, and complement. 
\item Let $S$ be an absolute $\bar{\bQ}$-constructible subset of $\cM_B(X,1)(\bC)$. Then $S$ is obtained from finitely many torsion-translated affine subtori via a sequence of taking union, intersection, and complement. 
\end{enumerate}
\end{thrm}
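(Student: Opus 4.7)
The plan is to reduce the statement to absolute closed sets and then apply an Ax-Lindemann type argument on the de Rham side. Since the class of sets obtainable from finitely many (torsion-)translated affine subtori via union, intersection and complement is itself closed under these Boolean operations, it suffices to show that every irreducible component of an absolute $\bC$-closed (resp. absolute $\bar{\bQ}$-closed) subset of $\cM_B(X,1)(\bC)$ is a translated (resp. torsion-translated) affine subtorus.

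Fix a smooth projective compactification $\bar{X}\supset X$ with $D=\bar{X}\setminus X$ a simple normal crossing divisor, together with a base point $x_0\in X$, everything defined over a countable subfield $K\subset\bC$ (such a compactification exists since $X$ is defined over a countable subfield of $\bC$). For rank one, $\cR_{DR}(\bar{X}/D,x_0,1)=\mdr(\bar{X}/D,1)$ is smooth and the Riemann-Hilbert correspondence is a transcendental analytic covering map $RH:\mdr(\bar{X}/D,1)(\bC)\ra\cM_B(X,1)(\bC)$. By Proposition \ref{propSac}, absolute constructibility in the sense of Definition \ref{defOurA} coincides with moduli-absolute constructibility in the sense of Definition \ref{defSac}. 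Hence Proposition \ref{propRHC} applies and, for any absolute closed $S$, the Euclidean closure of every analytic irreducible component of $RH^{-1}(S)$ is a Zariski closed subset of $\mdr(\bar{X}/D,1)(\bC)$.

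The heart of the proof will be an Ax-Lindemann type criterion, recorded below as Theorem \ref{BettiDR} and similar in spirit to those of \cite{bw1, bw2}: an irreducible algebraic subvariety $Z\subset\cM_B(X,1)(\bC)$ whose analytic preimage under $RH$ has algebraic Euclidean closure in $\mdr(\bar{X}/D,1)(\bC)$ must be a translated affine subtorus. Morally, since the Riemann-Hilbert map in rank one is essentially the exponential on the universal cover of $\cM_B(X,1)(\bC)$, with period lattice governed by $H^1(X,\zz)$, this is an incarnation of the classical Ax-Lindemann phenomenon for algebraic tori. Proving this bi-algebraicity criterion is the main obstacle of the proof; everything else in the argument is formal.

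Finally, the $\bar{\bQ}$-constructible case is refined to torsion translates by a Galois argument. If $Z=\chi\cdot T$ for a subtorus $T\subset\cM_B(X,1)$ and a character $\chi$, then $Z/T$ is a single point of the quotient affine torus $\cM_B(X,1)/T$, which is again defined over $\bQ$. Since $T$ is defined over $\bQ$, the absolute $\bar{\bQ}$-constructibility of $S$ passes to the quotient and forces $Z/T$ to have finite orbit under $Gal(\bC/\bar{\bQ})$. A point of a $\bQ$-defined affine torus with finite $Gal(\bC/\bar{\bQ})$-orbit has coordinates that are roots of unity, and is therefore torsion; this shows $\chi$ is torsion modulo $T$ and completes the reduction.
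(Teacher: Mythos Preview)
Your overall strategy---pass to the de Rham side via Propositions \ref{propSac} and \ref{propRHC} and then invoke an Ax--Lindemann criterion---matches the paper's, and you correctly identify Theorem \ref{BettiDR} as the crux. But two of the steps you call ``formal'' are not.

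\textbf{The reduction to closed sets is unjustified.} You assert it suffices to treat absolute \emph{closed} sets, but an absolute constructible $S$ is not a priori a Boolean combination of absolute closed sets; in particular it is not clear that $\bar S$ is absolute closed, since $p_\sigma$ is not Euclidean-continuous and so closure does not commute with $RH\circ p_\sigma\circ RH^{-1}$. The paper avoids this by applying the algebraicity criterion (Propositions \ref{Galois2}/\ref{propRHC}) directly to the \emph{constructible} $S$: the irreducible components of $RH^{-1}(S)$ already have algebraic Euclidean closures, so Theorem \ref{BettiDR-weak} forces $\bar S$ to be a finite union of translated subtori. Only \emph{then} does one observe that such a union satisfies the Galois-invariance condition \eqref{eqstar-all}, so that $\bar S\setminus S$ inherits it and one may induct on dimension. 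Your ordering skips the step that makes the induction close up.

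\textbf{The torsion argument in part (2) is wrong.} You claim that a point of a $\bQ$-defined affine torus with finite $Gal(\bC/\bar\bQ)$-orbit has root-of-unity coordinates. This is false: $2\in\bG_m(\bC)$ is fixed by $Gal(\bC/\bar\bQ)$ but is not torsion. A finite $Gal(\bC/\bar\bQ)$-orbit on the Betti side yields only $\bar\bQ$-definedness. Torsion genuinely requires the transcendence of $RH$: the paper shows, via a transcendence-degree argument (Lemma \ref{lemOld}), that the relevant irreducible component of $RH^{-1}(S)$ in $\mdr(\bar X/D,1)$ is itself defined over $\bar\bQ$, and then invokes Theorem \ref{BettiDR-weak}(2) (from \cite{bw1}) to extract torsion. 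Your quotient-by-$T$ argument on the Betti side cannot replace this. You also omit the case where $X$ is not defined over $\bar\bQ$, which the paper handles separately by a deformation argument borrowed from \cite{simpson}.
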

For $X$ projective  this was proved in \cite{simpson}. To prove the theorem, we prove first a criterion of Ax-Lindemann type, along the lines of the criteria proved and used in \cite{bw1, bw2}.

\subsection{Ax-Lindemann type criterion.} 



We begin by stating a weaker Ax-Lindemann type criterion for the Riemann-Hilbert map between de Rham moduli and Betti moduli in rank one from \ref{subSac}:

\begin{theorem}\label{BettiDR-weak} Let $X$ be a smooth $\bC$ algebraic variety, the complement in a complete smooth $\bC$ variety $\bar{X}$ of a divisor $D$ with normal crossings. Let $S\subset \mb(X,1)(\bC)$ be a subset of the space of rank one local systems on $X$. Let $T\subset \mdr(\bar{X}/D,1)(\bC)$ be a subset of the space of rank one flat bundles on $\bar{X}$ with logarithmic poles along $D$. 
\begin{enumerate}
\item If $S$ and $T$ are irreducible {algebraic} subvarieties of the same dimension, and $RH(T)\subset S$, then $S$ is a translated affine subtorus.
\item In in addition $\bar{X}$, $D$, $S$, and $T$ are defined over $\bar{\bQ}$, then $S$ is a torsion-translated affine subtorus.
\end{enumerate}
\end{theorem}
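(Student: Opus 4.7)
The plan is to exploit the fact that in rank one the Riemann-Hilbert map $RH$ admits a concrete description as a transcendental analytic covering of exponential type, reducing the statement to a classical Ax-Lindemann argument along the lines of \cite{bw1, bw2}.

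First I would make the local structure of $RH$ explicit. On each connected component, $\mdr(\bar{X}/D,1)$ is an extension of $\mathrm{Pic}^0(\bar{X})$ (possibly via a finite quotient) by the vector space $H^0(\bar{X},\Omega^1_{\bar{X}}(\log D))$ of logarithmic one-forms with the appropriate residue constraints, while each connected component of $\mb(X,1)(\bC)$ is isomorphic to an affine torus $(\bC^*)^{b_1(X)}$. The Riemann-Hilbert map is covered by an analytic map that restricts on the Picard factor to the standard exponential uniformization of $\mathrm{Pic}^0(\bar{X})$ and on the log factor to the exponential $\exp:\bC^m \to (\bC^*)^m$ reading off periods of the connection along cycles transverse to the boundary divisor. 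In particular, $RH$ is a local analytic isomorphism on each component.

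Granted this description, part (1) becomes an Ax-Lindemann statement. Because $RH$ is a local analytic isomorphism, $\dim_\bC RH(T) = \dim T = \dim S$, so $RH(T)$ is analytically open in the irreducible algebraic variety $S$ and hence Zariski dense in $S$. Lifting $T$ through the exponential covering to an analytic subset $\widetilde{T}$ of the ambient vector space, I would invoke Ax's theorem on the complex exponential, which asserts that the Zariski closure of $\exp(W)$ for any algebraic $W\subset \bC^n$ is a translate of a subtorus, to conclude that $S = \overline{RH(T)}^{\mathrm{Zar}}$ is a translated affine subtorus of the ambient component of $\mb(X,1)$.

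For part (2), the arithmetic upgrade to torsion translates, I would proceed as follows. By part (1), write $S = g\cdot \mathbf{T}$ for $\mathbf{T}$ a $\bQ$-defined connected affine subtorus and $g\in \mb(X,1)(\bC)$. Since $S$ is $\bar{\bQ}$-defined and $\mathbf{T}$ is $\bQ$-defined, the class $[g]$ in the quotient torus $\mb(X,1)/\mathbf{T}$ is $\bar{\bQ}$-rational. Picking a $\bar{\bQ}$-point $p$ of $T$, the image $RH(p)\in S$ projects to $[g]$ in this quotient, and it is also the image under $RH$ of a $\bar{\bQ}$-point of $\mdr$. The key transcendence input is the rank-one fact that a $\bar{\bQ}$-point of $\mb(X,1)$ which lifts under $RH$ to a $\bar{\bQ}$-point of $\mdr(\bar{X}/D,1)$ must be a torsion character, the quasi-projective analogue of Simpson's torsion criterion from \cite{simpson}. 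Applied to $[g]$, this yields that $[g]$ is torsion, and replacing $g$ by a torsion representative proves that $S$ is a torsion-translated subtorus.

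The main obstacle will be executing the transcendence input of part (2) cleanly in the logarithmic setting. In the projective case one uses that a $\bar{\bQ}$-defined local system underlain by a $\bar{\bQ}$-defined flat connection is torsion, but here the residues at $D$ contribute to the monodromy through an additional exponential of a $\bar{\bQ}$-linear functional of residues; one must handle this by transcendence (e.g.\ via Baker's theorem or an Ax--Schanuel input), concluding that $\bar{\bQ}$-rationality of the monodromy forces those residues to lie in $2\pi i\,\bQ$, hence the whole character to be torsion.
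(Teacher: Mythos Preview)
The paper does not prove this theorem; it is stated and immediately attributed to \cite{bw3} for part (1) and \cite{bw1} for part (2), and is then used as a black box in the proof of Theorem \ref{BettiDR}. Your sketch is essentially a reconstruction of the approach of those references, and the overall strategy is correct: in rank one $RH$ is a transcendental covering of exponential type, part (1) is Ax--Lindemann, and part (2) is a bi-algebraicity/transcendence upgrade.

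Two points where your execution would need tightening. For part (1), you invoke ``Ax's theorem on the complex exponential'' after lifting $T$ to ``the ambient vector space.'' But $\mdr(\bar{X}/D,1)$ is not a vector space: a connected component is an extension of (a quotient of) $\mathrm{Pic}^0(\bar{X})$ by a vector space of log $1$-forms, so the map $RH$ mixes the abelian-variety exponential with the ordinary complex exponential on residues. The classical Ax theorem applies to algebraic subvarieties of $\bC^n$ under $\exp$, while here the lift of $T$ to the universal cover is only analytic. The actual arguments in \cite{bw1,bw3} handle this by separating the two pieces (using the $1$-Hodge structure of $X$ and reducing the abelian-variety part to Simpson's projective result), then applying Ax to the residue coordinates only.

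For part (2), the step ``applied to $[g]$'' is where the gap sits. The bi-algebraicity criterion you quote concerns points of $\mb(X,1)$ that lift to $\bar{\bQ}$-points of $\mdr(\bar{X}/D,1)$; but $[g]$ lives in the quotient torus $\mb(X,1)/\mathbf{T}$, and there is no a priori de Rham moduli for that quotient to which the criterion applies. You would need either to realize the quotient as $\mb(X',1)$ for some auxiliary $X'$ with a compatible $RH$, or (as in \cite{bw1}) to work directly with the explicit formula for $RH$ in coordinates and apply Gelfond--Schneider/Baker to the residue exponentials, together with Simpson's projective torsion criterion on the $\mathrm{Pic}^0$ factor. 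Your final paragraph correctly flags this as the crux, but the quotient-torus shortcut as written does not close the argument.
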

It will be clear from the proof of Theorem \ref{BettiDR} that $\mdr(\bar{X}/D,1)$ exists and has the same properties as in the case when $X$ is smooth quasi-projective. Part (1) of the above theorem is proven in \cite{bw3}, even in the more general case when $X$ is a quasi-compact K\"ahler manifold by introducing an analytic counterpart for $\mdr(\bar{X}/D,1)$. Part (2) was proven in \cite{bw1} for the quasi-projective case. Again, it will be clear from the proof of Theorem \ref{BettiDR}  that one can drop the quasi-projective assumption.

In this article we prove and use the following  Ax-Lindemann type criterion, in which the  algebraicity condition on $\mdr$ is replaced with a certain Galois-invariance condition:

\begin{theorem}\label{BettiDR} Let $X$ be a smooth $\bC$ algebraic variety, the complement in a complete smooth $\bC$ variety $\bar{X}$ of a divisor $D$ with normal crossings. Let $S\subset \mb(X,1)(\bC)$ be a constructible subset of the space of rank one local systems on $X$. 
\begin{enumerate}
\item Suppose the following conditions are satisfied: 
\begin{equation}\label{eqstar-all}
\begin{array}{cc}
\bar{X} \text{ and } D \text{ are defined over a countable subfield }K\subset\bC,\\
\text{and for any }\sigma \in Gal(\bC/K), \sigma(RH^{-1}(S)) \text{ is an }\\ 
\text{analytically constructible set of }\mdr(\bar{X}/D,1). 
\end{array}
\end{equation}
Then $S$ is obtained from finitely many translated affine subtori via a sequence of taking union, intersection and complement. 
\item Suppose the following conditions are satisfied: 
\begin{equation}\label{eqstar}
\begin{array}{cc}
\bar{X}, D, \text{ and } S\text{ are defined over }\bar{\bQ}. \text{ For any } \sigma\in Gal(\bC/\bar{\bQ}), 
\text{ there exists a}\\ \text{constructible subset } S_\sigma \text{ defined over $\bar{\bQ}$, such that }\sigma(RH^{-1}(S))=RH^{-1}(S_\sigma).
\end{array}
\end{equation}
Then $S$ is obtained from finitely many torsion-translated affine subtori via a sequence of taking union, intersection and complement. 
\end{enumerate}
\end{theorem}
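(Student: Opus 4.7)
The plan is to upgrade the weaker Ax--Lindemann criterion of Theorem \ref{BettiDR-weak} using the algebraicity criterion of Proposition \ref{Galois2}: the Galois-invariance in (\ref{eqstar-all}) or (\ref{eqstar}) promotes the analytic constructibility of $RH^{-1}(S)$ to genuine algebraicity of the Euclidean closures of its analytic irreducible components, at which point Theorem \ref{BettiDR-weak} identifies the top-dimensional components of $\bar{S}$ as (torsion-)translated affine subtori, and the full Boolean structure of $S$ is recovered by induction on dimension.

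First I would fix an irreducible algebraic component $M$ of $\mdr(\bar{X}/D,1)$, which after enlarging $K$ to a finitely generated countable extension is defined over $K$; applied to $T=M\cap RH^{-1}(S)$, Proposition \ref{Galois2} shows that the Euclidean closure $\bar{T}_0$ of every analytic irreducible component $T_0$ of $T$ is an irreducible algebraic subvariety of $M$. Writing $\bar{S}=\bigcup_i S_i$ for the irreducible decomposition of the Zariski closure of $S$ in $\mb(X,1)(\bC)$, the fact that $S$ contains a Zariski-open dense subset of each $S_i$ and that $RH$ is an analytic covering map in rank one (so $RH$ is a local biholomorphism) allows me to select for each $i$ an analytic irreducible component $T_i$ of $RH^{-1}(S)$ with $\dim T_i=\dim S_i$ and $RH(T_i)$ Euclidean dense in $S_i$. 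Then $\bar{T}_i$ is an irreducible algebraic subvariety of $\mdr(\bar{X}/D,1)$ of dimension $\dim S_i$ with $RH(\bar{T}_i)\subset S_i$, so Theorem \ref{BettiDR-weak}(1) forces each $S_i$ to be a translated affine subtorus. For part (2), the Galois-invariance in (\ref{eqstar}) together with a Lindemann--Weierstrass-type argument (an orbit $\{\sigma(\log c_i/2\pi i)\}_\sigma$ of transcendentals cannot have algebraic exponential image in the countable set $\bar{\bQ}$) forces each $c_i$ to be a root of unity, i.e.\ each $S_i$ to be torsion-translated, in line with Theorem \ref{BettiDR-weak}(2).

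Second, I induct on $\dim S$. The base case $\dim S=0$ is immediate since $S$ is then a finite union of points, each a zero-dimensional (torsion-)translated subtorus. For the inductive step, $S':=\bar{S}\setminus S$ is constructible with strictly smaller dimension than $S$ since $S_i\setminus S$ has dimension less than $\dim S_i$ for each $i$. I verify that $S'$ inherits the hypothesis via
\begin{equation*}
\sigma(RH^{-1}(S'))=\sigma(RH^{-1}(\bar{S}))\setminus \sigma(RH^{-1}(S)),
\end{equation*}
together with the direct computation that for a (torsion-)translated subtorus $S_i$ cut out by $\prod x_j^{a_j}=c$, the Galois translate $\sigma(RH^{-1}(S_i))$ equals $RH^{-1}(\{\prod x_j^{a_j}=c'\})$ with $c'=\exp(2\pi i\,\sigma(\log c/2\pi i))$; in part (2), $c'\in\bar{\bQ}$ because $c$ is a root of unity and $\sigma$ fixes $\bar{\bQ}$. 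Hence $\sigma(RH^{-1}(\bar{S}))$ is a finite union of preimages of (torsion-)translated subtori, analytically constructible (resp.\ a preimage of a $\bar{\bQ}$-constructible set), and so is $\sigma(RH^{-1}(S'))$. The induction hypothesis then makes $S'$ a Boolean combination of (torsion-)translated subtori, whence so is $S=\bar{S}\setminus S'$.

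The delicate point I expect to be the main obstacle is the transition from ``translated" to ``torsion-translated" in part (2): once the components $S_i=\{\prod x_j^{a_j}=c_i\}$ are known to be translated affine subtori defined over $\bar{\bQ}$, concluding that $c_i$ is a root of unity requires controlling the Galois orbit of the potentially transcendental constants $\log c_i/2\pi i$ under $\mathrm{Gal}(\bC/\bar{\bQ})$. The key is that (\ref{eqstar}) forces the exponential image of this orbit to lie in the countable set $\bar{\bQ}$, which by a Lindemann--Weierstrass-type argument is possible only when $\log c_i/2\pi i\in\bQ$; this is the sole transcendence-theoretic input needed beyond the framework already developed.
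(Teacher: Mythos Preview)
Your treatment of part (1) is correct and is essentially the paper's argument: Proposition~\ref{Galois2} upgrades the Euclidean closure $\bar T_0$ of each analytic irreducible component of $RH^{-1}(S)$ to an algebraic subvariety of $\mdr(\bar X/D,1)$; since $RH$ is a covering map in rank one, $\dim\bar T_0=\dim S_i$, and Theorem~\ref{BettiDR-weak}(1) then identifies each $S_i$ as a translated affine subtorus; one finishes by inducting on $\dim S$ via $\bar S\setminus S$.

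The gap is in part (2). Your key formula
\[
\sigma\bigl(RH^{-1}(S_i)\bigr)=RH^{-1}\bigl(\{\textstyle\prod x_j^{a_j}=c'\}\bigr),\qquad c'=\exp\bigl(2\pi i\,\sigma(\log c/2\pi i)\bigr),
\]
presupposes that $\mdr(\bar X/D,1)$ is an affine space on which $RH$ is the coordinate-wise exponential and $\sigma$ acts coordinate-wise. That model is valid only when $X$ is an affine torus; in general the identity component of $\mdr(\bar X/D,1)$ is a nontrivial extension of the abelian variety $\mathrm{Pic}^0(\bar X)$ by a vector group, and the $\mathrm{Gal}(\bC/\bar\bQ)$-action is the one induced by the $\bar\bQ$-structure of $\bar X$, not an action on putative coordinates ``$\log c/2\pi i$''. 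There is therefore no well-defined number $\sigma(\log c/2\pi i)$ on which to run your transcendence argument, and your inductive step for part (2), which rests on the same formula, is likewise unsupported. Incidentally, even in the toy model the relevant transcendence input would be Gelfond--Schneider rather than Lindemann--Weierstrass: the latter does not rule out algebraic irrational $\alpha$ with $e^{2\pi i\alpha}\in\bar\bQ$.

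The paper proceeds differently for part (2). It stays on the de Rham side and shows, via an orbit-counting argument using Lemma~\ref{lemOld}, that if $\bar T_0$ were not defined over $\bar\bQ$ then a very general point of it would have $\mathrm{Gal}(\bC/\bar\bQ)$-orbit too large to be confined to $\bigcup_Z RH^{-1}(Z)$ with $Z$ ranging over $\bar\bQ$-subvarieties of dimension $\dim S$. Once $\bar T_0$ is defined over $\bar\bQ$, the black-box Theorem~\ref{BettiDR-weak}(2) from \cite{bw1} gives the torsion conclusion directly, and $RH^{-1}(\bar S)$ becomes $\mathrm{Gal}(\bC/\bar\bQ)$-invariant, so the induction needs no explicit computation of $\sigma(RH^{-1}(S_i))$ at all.
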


In fact, we will prove that Theorem \ref{BettiDR-weak} implies Theorem \ref{BettiDR} as a consequence of the algebraicity criterion \ref{Galois2}.

\subsection{Proof of the Ax-Lindemann criterion - Theorem \ref{BettiDR}}

First of all, we want to reduce to the case when $\bar{X}$ is projective. Let $\bar{X}'$ be a projective model of $\bar{X}$ obtained by a sequence of blowup maps along smooth centers $\pi: \bar{X}'\to \bar{X}$. Let $X'=\pi^{-1}(X)$ and $D'=\pi^{-1}(D)$. By possibly further blowups, we can assume that $D'$ is a divisor with normal crossing singularities. Then the map $X'\to X$ induces an isomorphism $\pi_1(X)\cong \pi_1(X')$, and hence an isomorphism $\mb(X,1)\cong \mb(X',1)$ defined over $\bQ$. Moreover, every rank one flat bundle on $\bar{X}'$ with logarithmic poles along $D'$ is obtained by pullback of some flat bundle on $\bar{X}$ with logarithmic poles along $D$. Thus the two moduli problems of rank one flat bundles with logarithmic poles are same. Therefore, the moduli space $\mdr(\bar{X}/D,1)$ exists and is isomorphic to $\mdr(\bar{X}'/D',1)$.

Let $\bar{X}$ and $D$ be defined over a subfield $K$ of $\bC$. Then we can assume that $\bar{X}'$ and the map $\pi: \bar{X}'\to \bar{X}$ are both defined over $K$. So we have the following commutative diagram whose top horizontal arrow is an isomorphism defined over $K$,
$$
\xymatrix{
&\mdr(\bar{X}/D,1)\ar[d]^{RH}\ar[r]^{\sim}&\mdr(\bar{X}'/D',1)\ar[d]^{RH}\\
&\mb(X,1)\ar[r]^{\sim}&\mb(X',1).
}
$$
Let $S'$ be the image of $S$ in $\mb(X',1)$. It follows then that the condition (\ref{eqstar-all}), respectively (\ref{eqstar}), is satisfied by $S$ if and only if it is satisfied by $S'$. Therefore, it suffices to prove the theorem under the assumption that $\bar{X}$ is projective.

\begin{proof}[Proof of part (1) of Theorem \ref{BettiDR}] Denote the connected component of $\mb(X,1)$ (resp. $\mdr(\bar{X}/D,1)$) containing the origin by $\mb^0(X,1)$ (resp. $\mdr^0(\bar{X}/D,1)$). 

First, let us assume that $S\subset \mb^0(X,1)$. Let $S_0$ be an irreducible component of $S$, and let $T_0$ be an irreducible component of $RH^{-1}(S)\cap \mdr^0(\bar{X}/D,1)$. Denote the closure of $S_0$ and $T_0$ with respect to the Euclidean topology by $\bar{S}_0$ and $\bar{T}_0$ respectively. Since $RH: \mdr^0(\bar{X}/D,1)\to \mb^0(X,1)$ is a covering map and since $S_0$ is irreducible, $RH(T_0)=S_0$. By Proposition \ref{Galois2}, $\bar{T}_0$ is an algebraic subvariety of $\mdr^0(\bar{X}/D,1)$. Since $\dim \bar{T}_0=\dim \bar{S}_0$, it follows from Theorem \ref{BettiDR-weak} that $\bar{S}_0$ is a translated affine torus. 

Therefore $\bar{S}$, the closure of $S$ is a finite union of translated affine torus. Moreover, $\bar{S}$ satisfies the condition (\ref{eqstar-all}). Since the condition (\ref{eqstar-all}) is preserved by taking union, intersection and complement, $\bar{S}-S$ also satisfies condition (\ref{eqstar-all}). Notice that the dimension of $\dim (\bar{S}-S)<\dim S$. Thus we can use induction on the dimension of $S$ to conclude that $S$ is obtained from finitely many translated affine tori via a sequence of taking union, intersection and complement.

We proved part (1) assuming that $S\subset \mb^0(X,1)$. In fact, our argument shows that in general, $S\cap \mb^0(X,1)$ is obtained from finitely many translated affine subtori via a sequence of taking union, intersection and complement. Suppose $S$ satisfies the assumption in Theorem \ref{BettiDR} (1). Then the translation of $S$ by any element in $\mb(X,1)$ also satisfies the same assumption. In fact, take any $q\in \mb(X,1)$ and $p\in \mdr(\bar{X}/D,1)$ with $RH(p)=q$. For any $\sigma\in Gal(\bC/K)$, 
$$\sigma\left(RH^{-1}(S+q)\right)=\sigma(RH^{-1}(S)+p)=\sigma(RH^{-1}(S))+\sigma(p),$$
because the Galois group action is compatible with the group structure on $\mdr(\bar{X}/D,1)$. Therefore, by translating every connected component of $S$ into $\mb^0(X,1)$, we can deduce that the conclusion of Theorem \ref{BettiDR} (1) applies to every connected component of $S$. Thus it also applies to $S$. 
\end{proof}

\begin{proof}[Proof of part (2) of Theorem \ref{BettiDR}] 
Let $T_0$ be an irreducible component of $RH^{-1}(S)$ of the maximal dimension. According to the conclusion in part (1), { $RH(\bar{T}_0)$ is a translate of an affine subtorus. Suppose we show that $\bar{T}_0$ is defined over $\bar{\bQ}$. Then Theorem \ref{BettiDR-weak} (2) implies that $RH(\bar{T_0})$ is a torsion translate of an affine subtorus. Moreover, with the same argument as above, this implies that $S$ is obtained from finitely many torsion translated affine subtori via a sequence of taking union, intersection, and complement.

We focus thus on proving that $\bar{T}_0$ is defined over $\bar{\bQ}$.} As above
$$\dim S=\dim RH^{-1}(S)=\dim T_0.$$
Since the Galois action on $\mdr(\bar{X}/D,1)(\bC)$ preserves the dimension of an algebraic constructible set, we also have that for every $\sigma\in Gal(\bC/\bar{\bQ})$,
$$
\dim S_\sigma=\dim RH^{-1}(S_\sigma) =\dim \sigma(RH^{-1}(S)) =\dim RH^{-1}(S)=\dim (S).
$$

Consider first the case when $S$ is a point. Hence $T_0$ is a point $P$. The assumption (\ref{eqstar}) in this case implies that for every $\sigma\in Gal(\bC/\bar{\bQ})$, the point $\sigma(P)$ belongs to $RH^{-1}(q)$ for some point $q_B$ of $\mb(X,1)(\bC)$ defined over $\bar{\bQ}$. So there is an inclusion of sets
$$
\{\sigma(P)\mid \sigma\in Gal(\bC/\bar{\bQ})\} \subset \bigcup_{q}RH^{-1}(q),
$$
where the union of over all points in $\mb(X,1)(\bC)$ defined over $\bar{\bQ}$. Since $RH$ is a covering map, the right-hand side is a countable set of points. Hence the left-hand side is also countable. Note that $Gal(\bC/\bar{\bQ})$ is not countable. If $P$ is not defined over $\bar{\bQ}$, we will derive a contradiction from:

\begin{lemma}[Lemma 4.5 of \cite{bw2}]\label{lemOld}
Let $p\in\bC^r$ be a point. Let $Y\subset \bC^r$ be the smallest closed subvariety defined over $\bar{\bQ}$ which contains $p$. Then for a very general point $p'\in\bC^r$, that is outside the union of countably many proper subvarieties of $\bC^r$, there exists $\sigma\in Gal(\bC/\bar{\bQ})$ such that $\sigma(p)=p'$.
\end{lemma}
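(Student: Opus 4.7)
The plan is to mirror the transcendence-basis construction used in the proof of Proposition~\ref{Galois1} above, adapted so that we can prescribe the image of the single point $p$. Write $p = (p_1,\dots,p_r)$ and let $I \subset \bar{\bQ}[x_1,\dots,x_r]$ be the kernel of the evaluation homomorphism $x_i\mapsto p_i$. Since $\bC$ is an integral domain, $I$ is prime, and by the minimality of $Y$ we have $Y = V(I)\subset \bC^r$. Setting $A = \bar{\bQ}[x_1,\dots,x_r]/I$ and $K = \mathrm{Frac}(A)$, the evaluation at $p$ factors through an injection $\iota_p: K \hookrightarrow \bC$ whose image is countable, because $\bar{\bQ}$ is countable.

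Next I would identify the ``very general'' locus. Since $\bar{\bQ}[x_1,\dots,x_r]$ is Noetherian and countable, only countably many prime ideals $J$ strictly contain $I$, and these cut out a countable collection of proper $\bar{\bQ}$-subvarieties of $Y$ (each of which I enlarge formally to a proper subvariety of $\bC^r$, if one insists on the literal reading of the statement; in any case $p'\in Y(\bC)$ is a necessary consequence of the sought conclusion $\sigma(p)=p'$, as $Y$ is Galois-stable). For any $p'$ outside this countable union, the induced homomorphism $A\to \bC$ sending $x_i\mapsto p_i'$ is injective and therefore extends to a second embedding $\iota_{p'}: K\hookrightarrow \bC$.

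To finish, I would extend the isomorphism $\tau: \iota_p(K) \to \iota_{p'}(K)$ defined by $\iota_p(a)\mapsto \iota_{p'}(a)$ to an automorphism of $\bC$ fixing $\bar{\bQ}$. Pick transcendence bases $B_1$ of $\bC$ over $\iota_p(K)$ and $B_2$ of $\bC$ over $\iota_{p'}(K)$; because both $\iota_p(K)$ and $\iota_{p'}(K)$ are countable while $|\bC| = 2^{\aleph_0}$, one has $|B_1| = |B_2| = 2^{\aleph_0}$, so any bijection $B_1\to B_2$ extends $\tau$ to an isomorphism $\iota_p(K)(B_1)\to \iota_{p'}(K)(B_2)$. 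Since $\bC$ is the algebraic closure of each side, a further extension yields $\sigma\in \mathrm{Aut}(\bC)$, and $\sigma$ fixes $\bar{\bQ}$ since $\bar{\bQ}\subset \iota_p(K)$ and $\tau|_{\bar{\bQ}} = \mathrm{id}$. By construction $\sigma(p_i) = \sigma(\iota_p(x_i)) = \iota_{p'}(x_i) = p_i'$, as required. There is no real obstacle here: the argument is entirely set-theoretic once the algebraic framework is in place, the only sensitive point being the cardinality matching of the two transcendence bases, which relies crucially on the countability of $\bar{\bQ}$.
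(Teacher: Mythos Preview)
Your proof is correct. Note that the paper does not supply its own proof of this lemma---it is quoted verbatim from \cite{bw2}---but your argument is exactly the transcendence-basis extension technique the paper uses in the proof of Proposition~\ref{Galois1}, which you rightly acknowledge, and it goes through without issue (the one delicate point you flag, that the statement should really read ``very general $p'\in Y$'' rather than ``very general $p'\in\bC^r$'', is confirmed by the paper's own applications of the lemma later in the same section).
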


To use this lemma, let now $Y$ be the smallest subvariety of $\mdr(\bar{X}/D,1)(\bC)$ defined over $\bar{\bQ}$, which contains $P$. Note that $Y$ is quasi-projective since it is a subvariety of the connected component of $\mdr(\bar{X}/D,1)$  containing $P$, and this connected component is quasi-projective and defined over $\bar{\bQ}$. By fixing an embedding $Y\to \bP^N$ compatible with the $\bar{\bQ}$-structure, and restricting to an affine chart, we can apply Lemma \ref{lemOld}. Hence, if $P$ is not defined over $\bar{\bQ}$, the set of general points as in the lemma is not countable. By the lemma, it follows that the set $\{\sigma(P)\mid \sigma \in Gal(\bC/\bar{\bQ})\}$ is not countable, which is a contradiction. Hence $\bar{T_0}=P$ is defined over $\bar{\bQ}$ when $S$ is a point.

We address now the case when $\dim S>0$. Suppose ${\bar{T_0}}$ is not defined over $\bar{\bQ}$. Let $P$ be a point of $\bar{T_0}$ not defined over $\bar{\bQ}$. The assumption (\ref{eqstar}) implies that
\be\label{eqSRH}
\{\sigma (P)\mid \sigma\in Gal (\bC/\bar{\bQ})\} \subset \bigcup_{Z}RH^{-1}(Z),
\ee
where the union is taken over all irreducible $\bar\bQ$-subvarieties of $\mb(X,1)$ of same dimension as $S$. We shall again derive a contradiction of this inclusion from Lemma \ref{lemOld}.

Let $Y$ be the smallest subvariety of $\mdr(\bar{X}/D,1)(\bC)$ defined over $\bar{\bQ}$ which contains $\bar{T_0}$. Denote the dimension of $Y$ by $d$. By assumption, $\dim S<d$. Fix an embedding $Y\ra\bP^N$ defined over $\bar{\bQ}$ as above.  By the minimality of $Y$, the intersection of $\bar{T}_0$ and any irreducible $\bar{\bQ}$-subvariety of $\bP^N$ of dimension at most $d-1$ is a proper (possibly empty or reducible) subvariety of $\bar{T}_0$. Since the set
$$\left\{[p_0, \ldots, p_N]\in \bP^N\mid p_0\neq 0,\; tr.deg_{\,\bar{\bQ}} \bar{\bQ}(p_1/p_0, \ldots, p_N/p_0) <d\right\}$$
is the union of countably many subvarieties of dimension $d-1$, and since ${\bar{T_0}}$ can not be covered by countably many proper subvarieties of $\bar{T}_0$, $\bar{T_0}$ is not contained in the above set. Thus, for a very general point $P=[p_0, p_1, \ldots, p_N]$ of $\bar{T_0}$, the transcendental degree of $\bar{\bQ}(p_1/p_0, \ldots, p_N/p_0)$ over $\bar{\bQ}$ is at least $d$. In fact the transcendental degree is equal to $d$, because $P$ is contained in the $d$-dimensional $\bar\bQ$-subvariety $Y$ of $\bP^N$.  Hence, $Y$ is the smallest $\bar\bQ$-subvariety of $\bP^N$ containing $P$. Hence, as before, by Lemma \ref{lemOld} every very general point $P'$ of $Y$ can be achieved as $\sigma(P)$ for some $\sigma\in Gal (\bC/\bar{\bQ})$. This contradicts the inclusion obtained from (\ref{eqSRH})
$$\{\sigma (P)\mid \sigma\in Gal (\bC/\bar{\bQ})\} \subset Y\cap \bigcup_{Z}RH^{-1}(Z)$$
into a union of countably many proper subvarieties of $Y$. Hence $\bar{T_0}$ is defined over $\bar{\bQ}$ and the proof of part (2) of Theorem \ref{BettiDR} is complete.
\end{proof}

\subsection{Proof of Theorem \ref{thrmCac}} We reduce the statement to that of Theorem \ref{BettiDR}.  As before, we take $\bar{X}$ to be a good compactification of $X$, and $D=\bar{X}\setminus X$.

As in the proof of Theorem \ref{BettiDR},  replacing $X$ by a quasi-projective model does not change the absolute sets. Thus, we can assume from now on that $\bar{X}$ is projective.


\begin{proof}[Proof of part (1) of Theorem \ref{thrmCac}] Since $S$ is absolute $\bC$-constructible, it is also moduli-absolute $\bC$-constructible, by Proposition \ref{propSac}. Hence the statement follows from part (1) of Theorem \ref{BettiDR}.\end{proof}


\begin{proof}[Proof of part (2) of Theorem \ref{thrmCac}] As above, if $X$ is defined over $\bar{\bQ}$, then the claim reduces immediately to part (2) of Theorem \ref{BettiDR}.

When $X$ is not defined over $\bar{\bQ}$, we need to find a good deformation $\hat{X}$ of $X$ which is defined over $\bar{\bQ}$. As a deformation $\hat{X}$ is homeomorphic to $X$, one has an induced  isomorphism $\mb(X,1)\cong \mb(\hat{X},1)$. We need to find a deformation $\hat{X}$ such that the image of $S$ in $\mb(\hat{X},1)$ satisfies the condition (\ref{eqstar}). 

This deformation can be achieved using the proof of Theorem 6.1 and Corollary 6.2 of \cite{simpson}. Here we simply point out the adjustment that is needed to apply Simpson's argument. Recall that we already reduced to the case when $\bar{X}$ is projective. {According to {\it loc. cit.}, we  need to verify that as $(\bar{X}, D)$ varies smoothly in a family, the moduli space $\mdr(\bar{X}/D,1)$ varies continuously.} In \cite{a} (see also \cite{bw3}), it is shown that the moduli space $\mdr(\bar{X}/D,1)$ only depends on the 1-Hodge structure of $X$. When $(\bar{X}, D)$ varies smoothly, the 1-Hodge structure varies continuously, and hence $\mdr(\bar{X}/D,1)$ varies continuously. 
\end{proof}

\section{Absolute sets and the Decomposition Theorem}

In this section we propose some conjectures about absolute sets extending the structure result for rank one local systems obtained in Theorem \ref{thrmCac}. Part of these conjectures  amounts to an analog of the Manin-Mumford, Mordell-Lang, and the Andr\'e-Oort conjectures for absolute sets. 

We relate the conjectures with the Decomposition Theorem. For example, assuming a part of the conjectures and assuming the Decomposition Theorem for semi-simple perverse sheaves of geometric origin from \cite{BBD}, one proves easily the Decomposition Theorem holds for all semi-simple perverse sheaves. So the conjectures are a good substitute for Hodge modules, twistor structures, and for a conjecture of de Jong in arithmetic, all of which have been used crucially in proofs of the general Decomposition Theorem. We also give a version of the conjectures involving mixed Hodge modules of geometric origin, and this version implies the full Decomposition Theorem as well if one only assumes it known for this type of mixed Hodge modules. Conversely, one can view the fact that the general Decomposition Theorem holds as evidence that our conjectures are true.

Throughout this section we let
$$
\mb(X)=\coprod_r\mb(X,r),\quad\quad \mdr(\bar{X}/D)=\coprod_r\mdr(\bar{X}/D,r),
$$
where $\mb(X,r)$ and $\mdr(\bar{X}/D,r)$ are the Betti and, respectively, de Rham moduli of rank $r$ objects as in \ref{secMLS} and \ref{subSac}.

\subsection{Origin of points} 
Let $X$ be a  complex algebraic variety.
Let $MHM(X)$ be the category of mixed Hodge modules, $\bD^b(MHM(X))$ be the bounded derived category of mixed Hodge modules, and $rat:\bD^b(MHM(X))\ra \bD^b_c(X,\bQ)$ the forgetul functor, see \cite{Sa-MHM}. Then $rat(MHM(X))\subset Perv(X,\bQ)$. 

\begin{defn}\label{defOri} Assume that for an object $\cF$ in $\bD^b_c(X,\bC)$, there is an isomorphism $\cF\simeq \oplus_i{}^p\cH^i(\cF)[-i]$. We say $\cF$ is of {\bf geometric origin} if  each direct summand ${}^p\cH^i(\cF)$ is a semi-simple perverse sheaf of geometric origin, i.e. satisfying  \cite[6.2.4]{BBD}. We say $\cF$ is of {\bf  MHM origin} if each direct summand ${}^p\cH^i(\cF)$ is a  semi-simple perverse sheaf of type $rat(M)\otimes_\bQ\bC$ for some $M\in MHM(X)$ obtained by applying to $\bQ^H_{pt}$ several of the cohomological functors between mixed Hodge modules  $H^if_*$, $H^if_!$, $H^if^*$, $H^if^!$, $\psi_g$, $\phi_{g,1}$, $\mathcal{D}$, $\boxtimes$, $\oplus$, $\otimes$, $\cH om$, and taking subquotients in the category of mixed Hodge modules, i.e. satisfying \cite[Definition 2.6]{Sa91}. As proved by M. Saito, any object of geometric origin is a direct summand of an  object of MHM origin.
\end{defn}

\subsection{Absolute points}

From the existence of the moduli spaces of local systems in subsection \ref{secMLS}, one has the following immediate consequence:

\begin{prop}  Let $X$ be a complex algebraic variety. Let $\cF$ be a local system (resp. semi-simple local system) and $S=\{\cF\}\subset\iso(\cL oc\cS ys (X,\bC))$. Then, with respect to the unispace $\cL oc\cS ys_{free}(X,\_)$:
\begin{enumerate} 
\item $S$ is  $K$-constructible (resp. $K$-closed) if $\cF$ is defined over a subfield $K$ of $\bC$.
\end{enumerate}
If $X$ is also smooth, then:
\begin{enumerate}
\setcounter{enumi}{1}
\item $S$ is absolute  $\bC$-constructible (resp. absolute $\bC$-closed);
\item $S$ is absolute  $\bQ$-closed if  $\cF$ is of MHM origin;
\item $S$ is absolute $\bar{\bQ}$-closed if   $\cF$ is of geometric origin.
\end{enumerate}
Same  conclusions hold if $S$ is viewed a subset of $\cM_B(X)$, the moduli space of local systems all ranks.
\end{prop}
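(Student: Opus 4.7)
The plan is to establish (1) directly from the moduli description of local systems, and then to deduce (2)--(4) by applying (1) on the Galois-conjugate varieties $X^\sigma$, using that MHM and geometric origin are preserved by the base change $p_\sigma$.

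For (1), fix a rank $r$ so that $\cF\in\iso(\cL oc\cS ys(X,\bC))$ comes from some $\cF_K\in\cL oc\cS ys_{free}(X,K)$. The isomorphism class $\{\cF\}$ is the image in $\iso(\cL oc\cS ys(X,\bC))$ of the full $GL_r$-orbit, through any framed lift, inside the representation variety $\cR_B(X,x_0,r)$ of Section~\ref{secMLS}. Since $\cR_B$ and its $GL_r$-action are defined over $\bQ$ and the chosen lift is defined over $K$, this orbit is $K$-constructible in $\cR_B(\bC)$, and $K$-closed precisely when $\cF$ is semi-simple (semi-simple representations being exactly those with closed orbits). To verify the unispace condition, I would take any test family $\cG_L\in\cL oc\cS ys_{free}(X,R_L)$ for $K\subset L\subset\bC$, choose a Zariski-local trivialization on $\spec(R_L)$ so that $\cG_L$ is locally classified by an $L$-morphism to $\cR_B$, and observe that the pullback of the above $K$-orbit is $L$-constructible (resp. $L$-closed). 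Since the isomorphism-class condition is independent of the framing, these local loci glue to a well-defined $L$-constructible (resp. $L$-closed) subset of $\spec(R_L)(\bC)$.

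For (2)--(4) I would verify the Galois-equivariance condition in the definition of absolute constructibility. For every $\sigma\in Gal(\bC/\bQ)$, the object $p_\sigma(\cF)$ is still a local system on $X^\sigma$ because $p_\sigma$ sends flat connections to flat connections; hence the only candidate for the required auxiliary set is $S^\sigma=\{p_\sigma(\cF)\}\subset\iso(\cL oc\cS ys(X^\sigma,\bC))$. Taking $K=\bC$ in (1) applied to $X^\sigma$ gives (2) at once. For (3), the hypothesis that $\cF$ is of MHM origin (with $X$ smooth of dimension $n$) means $\cF[n]\cong rat(M)\otimes_\bQ\bC$ for some $M\in MHM(X)$; since the functors in Saito's construction of $M$ commute with $p_\sigma$ and are $\bQ$-defined on the perverse sheaf side, $p_\sigma(\cF)[n]$ has the form $rat(M^\sigma)\otimes_\bQ\bC$ for a corresponding $M^\sigma\in MHM(X^\sigma)$, so $p_\sigma(\cF)$ is defined over $\bQ$; then (1) with $K=\bQ$ on $X^\sigma$ gives $\bQ$-closedness. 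For (4), if $\cF$ is of geometric origin, the irreducible summand $\cF[n]$ is a $\bC$-summand of a $\bQ$-perverse sheaf built from BBD's operations; only finitely many simple summands appear, so the $Gal(\bC/\bQ)$-orbit of its isomorphism class is finite and its stabilizer has finite index in $Gal(\bC/\bQ)$, making $\cF$ (and similarly $p_\sigma(\cF)$) defined over a subfield of $\bar\bQ$. Applying (1) with $K=\bar\bQ$ on $X^\sigma$ then yields absolute $\bar\bQ$-closedness.

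For the version of the statement in $\cM_B(X)$, I would invoke Definition~\ref{defOurA}: a subset of $\cM_B(X)$ is absolute $K$-constructible (resp. closed) iff its $q_B$-preimage in $\cR_B(X,x_0)$ is. For $S=\{\cF\}\subset\cM_B(X)$ with $\cF$ semi-simple, $q_B^{-1}(S)$ is precisely the closed $GL_r$-orbit treated in the first step, and the Galois bookkeeping above carries over directly; for general $\cF$, one uses the locally closed orbit and obtains the constructible version. The main technical obstacle will be step (1) itself: verifying that, without a global framing of $\cG_L$, the ``isomorphic to $\cF$'' locus is framing-independent and classically $L$-constructible. This rests on the orbit stratification of $\cR_B$ being preserved by the $GL_r$-action and defined over $\bQ$, so that the local framing-dependent classifying maps differ only by $GL_r(R_L)$-conjugation; once this patching is in hand, the remainder is formal manipulation of $p_\sigma$, the unispace framework, and the functorial properties of MHM's and BBD's geometric origin.
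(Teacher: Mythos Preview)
Your approach is essentially the same as the paper's: identify $\{\cF\}$ with a $GL_r$-orbit in $\cR_B(X,x_0,r)$ (constructible, closed iff semi-simple), then for (2)--(4) apply (1) on each $X^\sigma$ after checking that $\cF^\sigma$ is defined over the appropriate field. The one simplification you are missing is that your ``main technical obstacle'' is illusory: since the test family $\cG_L$ lies in $\cL oc\cS ys_{free}(X,R_L)$, its stalk at $x_0$ is globally free of rank $r$ over $R_L$, so a single global frame exists and gives an honest $L$-morphism $\spec(R_L)\to \cR_B(X,x_0,r)$ with no need for Zariski-local trivializations or gluing.
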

\begin{proof}
Part (1): Let $R\in\cA lg_{ft, reg}(\bC)$ and $\cF_R\in \cL oc\cS ys_{free}(X,R)$. Consider the map $f_R:\spec(R)(\bC)\ra \iso(\cL oc\cS ys (X,\bC))$ given by $m\mapsto \cF_R\otimes_RR/m$. Fix a frame $(\cF_R)|_{x_0}\xra{\sim}R^r$, where $r$ is the rank of $\cF_R$. This defines a morphism of $\bC$-schemes of finite type $\tilde{f}_R:\spec(R)\ra\cR_B(X,x_0,r)$ since the latter is a fine moduli space. Moreover, by definition, $f_R=L(\bC)\circ\tilde{f}_R$ on the $\bC$ points, with $L(\bC)$ as in (\ref{eqLC}). Hence $f_R^{-1}(S)=\tilde{f}_R^{-1}(L(\bC)^{-1}(S))$. Moreover, $L(\bC)^{-1}(S)$ is the orbit of a(ny) representation lifting $\cF$ under the action of $GL_r(\bC)$ on $\cR_B(X,x_0,r)$. Since the action is algebraic, every orbit is constructible, hence $f_R^{-1}(S)$ is constructible. If $\cF$ is semi-simple, then the orbit is closed, by \cite[Theorem 1.27]{LM}.  Hence in this case, $f_R^{-1}(S)$ is closed. This proves the first part for $K=\bC$. It is straight-forward to adapt the proof for every $K$.

Part (2): Let $\sigma\in Gal(\bC/\bQ)$. Via the Riemann-Hilbert correspondence, there is uniquely  defined isomorphism class $\cF^\sigma$ of a local system on $X^\sigma$ corresponding to $\cF$. By the first part, $S^\sigma=\{\cF^{\sigma}\}$ is $\bC$-constructible. Hence $S$ is absolute $\bC$-constructible by definition. If $\cF$ is semi-simple, to show that $S$ is absolute $\bC$-closed, we only need that $\cF_\bC^\sigma$ is also semi-simple. Indeed, the equivalence $\cM od_{rh}(\cD_X)\xra{\sim}\cM od_{rh}(\cD_{X^\sigma})$ preserves semi-simplicity.

Part (3): We need to show first that $\cF^\sigma$ is defined over $\bQ$ for all $\sigma$. By assumption, $\cF$ is obtained from $\bQ^H$ via a succession of functors on bounded derived categories of mixed Hodge modules, all of which commute with $\sigma$. Hence $\cF^\sigma$ is defined over $\bQ$. Since $\cF$ is semi-simple, the same holds for $\cF^\sigma$.

Part (4): If $\cF$ is a direct summand of a local system defined over $\bQ$, then $\cF$ is defined over $\bar{\bQ}$. As above, the assumption on the geometric origin implies that $\cF^\sigma$ is also defined over $\bar{\bQ}$. 

The same conclusions hold for $S$ viewed as a subset of $\cM_B(X)$, by definitions.
\end{proof}


A converse of the proposition in the projective case was conjectured by Simpson \cite[p.9]{Si-conj}: ``every rigid local system on a smooth projective complex algebraic variety is motivic". In the quasi-projective case, rigidity can be defined with respect to a compactification with a simple normal crossings divisors at infinity. This conjecture is trivial in the rank one case. Results on the geometric origin of rigid local systems are available for $\dim X=1$ \cite{Ka}, rank 2 \cite{CS}, and rank 3 \cite{LS}.

We can state a generalization of Simpson's conjecture beyond  local systems:

\begin{conj} Let $X$ be a  complex algebraic variety.  Let $\cC_X$ be $Perv(X,\bC)$ or $\bD^b_c(X,\bC)$. Let $S=\{\cF\}\subset\iso(\cC_X)$ be a subset consisting of only one element. Then, with respect to natural unispace structure on $\cC_X$:
\begin{enumerate}
\item  $S$ is $K$-constructible if $\cF$ is defined over a subfield $K$ of $\bC$; 
\item  $S$ is $K$-closed if $\cF$ is a semi-simple perverse sheaf  defined over a subfield $K$ of $\bC$.
\end{enumerate}
If  $X$ is also smooth, then:
\begin{enumerate}
\setcounter{enumi}{2}
\item  $S$ is absolute  $\bC$-constructible;
\item If   $\cF$ is a semi-simple perverse sheaf, then $S$ is absolute  $\bC$-closed;
\item $S$ is absolute $\bQ$-closed  if and only if $\cF$ is of MHM origin;
\item   $S$ is absolute $\bar{\bQ}$-closed if and only if
$\cF$ is of geometric origin.
\end{enumerate}
\end{conj}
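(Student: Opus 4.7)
The plan is to treat the six parts in order of increasing difficulty, mirroring the preceding proposition for local systems but replacing moduli-based arguments by direct unispace arguments, since $Perv(X,\bC)$ and $\bD^b_c(X,\bC)$ lack scheme-theoretic moduli in general. For part (1), given $\cF$ defined over $K$ and any $\cF_R\in\cC_X(R)$ with $R\in\cA lg_{ft,reg}(\bC)$, I would show the locus $\{m\in\spec(R)(\bC):\cF_R\otimes^L_RR/m\cong\cF\}$ is $K$-constructible by using that $R\cH om(\cF_R,\cF\otimes_\bC R)$ and $R\cH om(\cF\otimes_\bC R,\cF_R)$ lie in the unispace $\bD^b_c(X,\_)$ (by the natural transformation results of Section~4), so the dimensions of their hypercohomologies are $K$-constructible functions by Theorem~\ref{thrmABS}; the locus where a morphism class in $H^0$ becomes an isomorphism on each cohomology is then obtained from a finite combination of constructible functions. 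For part (2), semi-simplicity promotes constructible to closed via a classical semi-continuity argument: two semi-simple objects are isomorphic iff the dimensions of $\Hom$ into each simple summand coincide, and these are upper semi-continuous, exactly as in the $GL_r$-orbit closedness argument of the preceding proposition.

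For parts (3) and (4), I would use Riemann--Hilbert directly: for $\sigma\in Gal(\bC/\bQ)$, $p_\sigma(RH^{-1}(\cF))\in\bD^b_{rh}(\cD_{X^\sigma})$ corresponds under $RH$ to a single isomorphism class $\cF^\sigma\in\cC_{X^\sigma}$, and semi-simplicity is preserved since the equivalence $\cM od_{rh}(\cD_X)\xrightarrow{\sim}\cM od_{rh}(\cD_{X^\sigma})$ preserves simple objects. Setting $S^\sigma=\{\cF^\sigma\}$, parts (1) and (2) applied over $K=\bC$ show that $S^\sigma$ is $\bC$-constructible (resp.\ $\bC$-closed), which verifies the Galois-compatibility condition in Definition~\ref{absolutefunctiondefinition}.

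The ``if'' directions of (5) and (6) should follow from the functoriality of $MHM$ under $Gal(\bC/\bQ)$. If $\cF$ is of MHM origin, it is built from $\bQ^H_{pt}$ by the operations $H^if_*,H^if_!,H^if^*,H^if^!,\psi_g,\phi_{g,1},\mathcal{D},\boxtimes,\oplus,\otimes,\cH om$ and subquotients, all of which commute with $p_\sigma$ on the appropriate base changes $X^\sigma$ since they are geometric in nature. Hence $\cF^\sigma$ is of MHM origin on $X^\sigma$, and in particular its underlying perverse sheaf is defined over $\bQ$; part (2) then yields the absolute $\bQ$-closedness. The same argument, run with $\bar\bQ$ in place of $\bQ$ (allowing summands cut out by idempotents rational over $\bar\bQ$), gives (6)``if''. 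Throughout I would use M.~Saito's structure theorem that objects of geometric origin are direct summands of objects of MHM origin to pass between the two settings.

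The main obstacle is the ``only if'' direction of (5) and (6): a single absolute $\bar\bQ$-closed semi-simple perverse sheaf must be forced to arise from geometry. This generalizes Simpson's conjecture that every rigid local system on a smooth projective variety is motivic, and currently only special cases are known (rank one via Theorem~\ref{thrmCac}, rigid local systems on curves by Katz, ranks $2,3$ by Corlette--Simpson and Langer--Simpson). I would not attempt a direct assault; rather, I would isolate what the absolute $\bar\bQ$-closed condition imposes via the algebraicity criterion Proposition~\ref{Galois2}, namely that every Galois translate of the underlying $\cD$-module lies in a countable union of $\bar\bQ$-defined classes, and then try to combine this with a companion-type statement (in the spirit of the Drinfeld--B\"ockle--Khare--Gaitsgory arithmetic input used in Kashiwara's conjecture) to descend to a geometric origin. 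This is essentially the full Simpson conjecture in disguise, and is the genuine conjectural heart of the statement.
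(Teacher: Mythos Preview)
The statement you are attempting is a \emph{Conjecture} in the paper, not a theorem; the paper offers no proof. Immediately after stating it, the authors remark that parts (5) and (6) are ``very difficult'', and that for $\cC_X=Perv(X,\bC)$ one could ``in principle'' attack (1)--(4) via quiver moduli of perverse sheaves, but they explicitly ``do not pursue this here.'' So there is nothing to compare your proposal against; you have correctly identified that the ``only if'' directions of (5) and (6) are the genuine conjectural content, essentially a vast generalization of Simpson's rigid-implies-motivic conjecture, and that no direct attack is feasible with current technology.

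Your sketches for the ``if'' directions of (5)--(6) and for (3)--(4) are in the same spirit as the paper's proof of the analogous Proposition for local systems just above the Conjecture, and are reasonable modulo (1)--(2). However, your argument for (1) has a real gap. Knowing the constructibility of $m\mapsto\dim H^0(R\cH om(\cF_R\otimes^L_RR/m,\cF))$ and of the analogous $\Hom$ the other way does not by itself cut out the isomorphism locus: you need an actual morphism varying with $m$ whose invertibility you can test, not merely a dimension count. For local systems the paper sidesteps this by using the fine moduli $\cR_B(X,x_0,r)$, where the isomorphism class is a $GL_r$-orbit and orbits are constructible (closed for semisimple points). For perverse sheaves the authors gesture at the same mechanism via framed quiver moduli; your $R\cH om$ approach would need, at minimum, a spreading-out of a chosen isomorphism over a generic open in $\spec R$ followed by noetherian induction, and even then detecting ``$\cF_R\otimes^L_RR/m\cong\cF$'' in $\bD^b_c$ (as opposed to an abelian category) is delicate. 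So (1)--(2), while plausible, are not established by what you wrote.
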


\begin{rmk} The very difficult parts of the conjecture are parts (5) and (6). When $\cC_X=Perv(X,\bC)$, there are by now well-known coarse moduli space constructions in terms of quiver representations. By properly framing the objects, there exist presumably fine moduli spaces. Hence the same strategy as for local systems can be in principle employed to show (1)-(4) in this case. We do not pursue this here. \end{rmk}

\subsection{Absolute sets and the Decomposition Theorem} The previous conjecture is part of a larger conjectural picture for all absolute sets. We focus on the moduli space $\cM_B(X)$ of semi-simple local systems of all ranks. Recall that absoluteness of a subset of complex points of $\cM_B(X)$ is the same as moduli-absoluteness if $X$ is a smooth complex quasi-projective variety, by Proposition \ref{propSac}. That is, the Galois invariance property holds with respect to $\cM_{DR}(\bar{X}/D)$ instead of $\bD^b_{rh}(\cD_X)$, where $(\bar{X},D)$ is a smooth compactification of $X$ with simple normal crossings complement.

\begin{conj}\label{conjADT} Let $X$ be a smooth complex algebraic variety. Let $\cM_B(X)$ be the moduli space of semi-simple local systems of all ranks.
\begin{enumerate}
\item
	 Every  absolute ${\bQ}$-constructible set in $\cM_B(X)(\bC)$ contains a $\bC$-local system of MHM origin.		
		In particular, $\cM_B(X)(\bC)$ is the smallest closed absolute ${\bQ}$-constructible set containing all the $\bC$-local systems of MHM origin.
	
\item 
	 Every  absolute $\bar{\bQ}$-constructible  set in $\cM_B(X)(\bC)$ contains a $\bC$-local system of geometric origin.
		In particular, $\cM_B(X)(\bC)$ is the smallest closed absolute $\bar{\bQ}$-constructible set containing all the $\bC$-local system of geometric origin.
			
\end{enumerate}
\end{conj}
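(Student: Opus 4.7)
The plan is to bootstrap from the rank one case, for which Theorem \ref{thrmCac} supplies a complete structural description, and then attempt a functorial/inductive attack on higher rank. I would organize the argument in three stages.

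First, the rank one case. By passing to an irreducible ``atom'' in the Boolean combination produced by Theorem \ref{thrmCac}(2), any non-empty absolute $\bar{\bQ}$-constructible subset of $\cM_B(X,1)(\bC)$ contains, as a dense open subset, the complement of lower-dimensional strata inside some torsion-translated affine subtorus $T = \chi_0 \cdot T_0$ with $\chi_0$ a torsion character. Torsion characters of $\pi_1(X)$ are monodromies of rank one direct summands of $Rf_*\bC$ for $f\colon Y\to X$ a finite \'etale cover, hence are of geometric origin; we must only check that after removing lower-dimensional absolute $\bar{\bQ}$-constructible pieces, a torsion point survives, which follows because the torsion points are Zariski dense in $T$ and each removed piece is again a finite union of torsion-translated subtori of strictly smaller dimension. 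For part (1), the analogous statement in the $\bQ$-constructible/Theorem \ref{thrmCac}(1) setting reduces to showing that every translated affine subtorus contains a character of MHM origin; this should follow by exhibiting suitable unitary characters or characters coming from admissible variations of mixed Hodge structure, using that the set of such characters is Zariski dense in each component of $\cM_B(X,1)(\bC)$ (this density is essentially Deligne's construction of MHS on cohomology with arbitrary unitary coefficients).

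Second, a reduction step for higher rank via the constructible-functions framework built in Sections 2--5. Given an absolute $\bar{\bQ}$-constructible set $S\subset \cM_B(X,r)(\bC)$, I would try to produce from $S$ by applying functors on the list (\ref{eqList}) a non-trivial absolute $\bar{\bQ}$-constructible set $S'$ of rank one local systems on an auxiliary variety $Y$---for instance, by choosing a surjection $\pi_1(X)\twoheadrightarrow G$ with $G$ virtually abelian and using a finite \'etale cover $Y\to X$ to decompose $\mathcal{L}\in S$ after pullback, or by taking determinants and exterior powers to obtain rank one quotients. Absoluteness is preserved by Theorem \ref{thrmABS}, so stage one applies to $S'$, yielding a point of geometric origin in $S'$. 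One must then lift this back to $S$: this is the step that would require an Ax--Lindemann-type rigidity statement saying that a local system whose determinant (and other invariants produced by the functors above) is of geometric origin is itself of geometric origin, up to modification inside $S$.

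The main obstacle, which I expect to be genuinely hard, is the absence of a higher-rank analogue of Theorem \ref{thrmCac}: without a geometric description of closed absolute $\bar{\bQ}$-constructible subsets of $\cM_B(X,r)(\bC)$ as ``special subvarieties'' (pullbacks and pushforwards of tori, in the spirit of the special varieties package Conjecture \ref{conjSVP}), the lifting step has no natural target and the ``atoms'' in the Boolean combination need not carry any group-theoretic structure that makes geometric-origin points dense. A fallback strategy would be a pure cardinality/Galois argument: using Proposition \ref{Galois2} together with the assumption that $\sigma(RH^{-1}(S))$ stays constructible for all $\sigma\in \mathrm{Gal}(\bC/\bar{\bQ})$, show that the set of Galois conjugates of any $\bC$-point of $S$ is countable, hence the point is defined over $\bar{\bQ}$; combine this with a Kashiwara/Mochizuki-style rigidity statement for semisimple regular holonomic $\cD$-modules defined over $\bar{\bQ}$ to conclude geometric origin. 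Making this last implication precise---essentially a form of de Jong's conjecture in the non-arithmetic setting---is where the real difficulty lies, and is presumably why the statement is posed as a conjecture rather than a theorem.
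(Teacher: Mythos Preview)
The statement you were asked to prove is a \emph{conjecture}, not a theorem: the paper does not supply a proof, and indeed uses it only as a hypothesis in Theorem~\ref{thrmCIDT}. So there is no ``paper's own proof'' to compare against. You seem to realize this in your closing sentence.

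Your stage-one argument for rank one is essentially correct and matches what the paper does establish: it is the content of Proposition~\ref{propCR1} (which proves the stronger Conjecture~\ref{conjSVP} in rank one), together with the standard observation that torsion characters arise from finite \'etale covers. One simplification: you do not need a separate MHM density argument for part~(1). Since $\bQ$-constructible implies $\bar{\bQ}$-constructible (more intermediate fields are tested), Theorem~\ref{thrmCac}(2) already applies, and the torsion point you find is of geometric origin, hence a fortiori of MHM origin. The paper notes $(2)\Rightarrow(1)$ in the remark immediately following the conjecture for exactly this reason.

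For higher rank, what you have written is a research outline, not a proof, and both proposed routes have concrete gaps. The determinant/exterior-power reduction cannot work: for any $r\ge 2$ there are plenty of semisimple rank-$r$ local systems whose determinant is trivial (hence geometric) but which are not themselves of geometric origin, so knowing that the rank-one invariants land in geometric loci does not let you lift back. The Galois/cardinality fallback also fails as stated: Proposition~\ref{Galois2} yields algebraicity of closures of components, not that an arbitrary $\bC$-point of $S$ has countable Galois orbit; and even granting that a point is defined over $\bar{\bQ}$, the implication ``defined over $\bar{\bQ}$ $\Rightarrow$ geometric origin'' is precisely Simpson's Standard Conjecture (part~(3) of Conjecture~\ref{conjSVP}), which remains open. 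These are the obstacles the paper itself flags, and why the statement is posed as a conjecture.
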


\begin{rmk}
Note that $(2)\Rightarrow (1)$. The conjecture will be extended to a ``special varieties package" conjecture in the next section, where we also prove the rank one case.
\end{rmk}

\begin{thrm}\label{thrmCIDT} $\quad$
\begin{enumerate}
\item Conjecture \ref{conjADT}-(2)  and Decomposition Theorem  for semi-simple perverse sheaves of geometric origin (\cite{BBD}) imply Decomposition Theorem for all semi-simple perverse sheaves.
\item Conjecture \ref{conjADT}-(1)  and Decomposition Theorem  for semi-simple perverse sheaves of MHM origin (\cite{Sa-MHM}) imply Decomposition Theorem for all semi-simple perverse sheaves.
\end{enumerate}
\end{thrm}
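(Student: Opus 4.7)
The plan is to reduce the Decomposition Theorem for an arbitrary semi-simple perverse sheaf $\cF$ on $X$ to the case where $\cF$ is of geometric (resp.\ MHM) origin, by showing that the locus of simple local systems for which the conclusion of the DT fails is an absolute constructible set in an appropriate Betti moduli, and then invoking Conjecture~\ref{conjADT} to force this locus to be empty. By the classification of simple perverse sheaves, $\cF$ splits as a finite direct sum $\bigoplus_\alpha IC(Z_\alpha, L_\alpha)$ with $Z_\alpha \subset X$ closed irreducible and $L_\alpha$ a simple local system on a smooth Zariski-dense open $U_\alpha \subset Z_\alpha$. Because $Rf_*$ commutes with direct sums and both parts of the DT are additive, it suffices to prove the theorem for each $\cF_\alpha = IC(Z_\alpha, L_\alpha)$ individually.

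Fix such data $(Z,U,L,f)$ with $j\colon U \hookrightarrow Z$ and $i\colon Z \hookrightarrow X$, and set
\[
S \;:=\; \bigl\{\,L \in \cM_B(U)(\bC)\;:\;\text{DT fails for } Rf_*(IC(Z,L))\,\bigr\}.
\]
The formation $L\mapsto Rf_*(IC(Z,L))$ is the composition
\[
\cL oc\cS ys(U,\_) \xrightarrow{\iota[\dim Z]} Perv(U,\_) \xrightarrow{j_{!*}} Perv(Z,\_) \xrightarrow{i_*} Perv(X,\_) \xrightarrow{Rf_*} \bD^b_c(Y,\_),
\]
which by Theorem~\ref{thrmABS} lifts to a morphism of unispaces defined over $\bQ$ (and over $\bar{\bQ}$ after a routine spreading-out of $X,Y,Z,U,f$ to a model over a number field). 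The ``DT fails'' condition has two constituents: (a) the canonical triangles ${}^p\tau_{<i}\cG \to {}^p\tau_{\le i}\cG \to {}^p\cH^i(\cG)[-i]\xrightarrow{[1]}$ do not split; (b) some ${}^p\cH^i(\cG)$ is not semi-simple. Both can be expressed as equalities/inequalities among absolute $\bQ$-constructible functions produced by the list (\ref{eqList}): (a) by comparing $\dim \enmo(\cG)$ with $\sum_{i,j}\dim\homo({}^p\cH^i(\cG),{}^p\cH^j(\cG)[j-i])$, detecting a vanishing of a class in some $\homo$-group by Theorem~\ref{thrmABS}; (b) by combining the length function of \cite{GL} with the dimension of the socle (built from Hom pairings against simple summands, which are themselves absolute constructible by the $j_{!*}$-clause of Theorem~\ref{thrmABS}). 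Hence $S$ is absolute $\bar{\bQ}$-constructible for part (1), and absolute $\bQ$-constructible for part (2).

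Now suppose $S\neq\emptyset$. In the setting of part (1), Conjecture~\ref{conjADT}-(2) produces an $L'\in S$ of geometric origin. Because $j_{!*}$ and $i_*$ figure in the list of operations generating perverse sheaves of geometric origin in \cite[6.2.4]{BBD}, the perverse sheaf $IC(Z,L') = i_* j_{!*}(L'[\dim Z])$ is of geometric origin on $X$, so by the hypothesis (BBD's DT for geometric origin) the DT holds for $Rf_*(IC(Z,L'))$, contradicting $L'\in S$. Thus $S=\emptyset$ and the DT holds for every $L$, in particular for $L_\alpha$. The argument for part (2) is identical, invoking Conjecture~\ref{conjADT}-(1) and Saito's DT for MHM origin in place of BBD. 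The main obstacle, and the step deserving the most care, is the precise verification in the second paragraph that the semi-simplicity of each perverse cohomology really is cut out by an absolute $\bar{\bQ}$-constructible condition; while the splitting part (a) is a direct consequence of the semi-continuity of $\dim\homo$ under the framework of Section~5, condition (b) requires leveraging the absolute $\bQ$-constructibility of length from \cite{GL} together with the socle description, and one must ensure the formation of socles is itself expressible via the list of generators in Theorem~\ref{thrmABS}.
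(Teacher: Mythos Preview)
Your overall strategy—reduce to IC sheaves supported on a single closed $Z$, define the failure locus $S$ in a Betti moduli, show it is absolute $\bar{\bQ}$-constructible, and invoke the Conjecture to force $S=\emptyset$—is exactly the paper's. Your splitting test (comparing $\dim\enmo(Rf_*K)$ with $\sum_{i,j}\dim\homo({}^p\cH^i,{}^p\cH^j[j-i])$) is legitimate and arguably cleaner than the paper's variant, which instead checks the sufficient Ext-vanishing conditions $Ext^i({}^p\cH^j,{}^p\cH^k)=0$ for $j<k+i$.

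The genuine gap is your handling of semi-simplicity of the perverse cohomologies. Appealing to the length function of \cite{GL} risks circularity (that paper builds on the present framework), and ``socle via Hom pairings against simple summands'' is not a condition visibly built from the list (\ref{eqList}): the simple test objects vary with $L$ and there is no a priori finite family to pair against. The paper's solution is the key trick you are missing. Since $f,i,j$ are fixed, one can fix a stratification $\Sigma$ of $Y$ such that every ${}^p\cH^j(Rf_*K)$ is $\Sigma$-constructible. A $\Sigma$-constructible perverse sheaf is semi-simple iff it lies in the image of the absolute $\bQ$-constructible functor
\[
\mathop{\times}_{T\in\Sigma}\cL oc\cS ys(T,\bC)\ \xrightarrow{\ \bigoplus_T (i_T)_*(j_T)_{!*}(\,\cdot\,[\dim T])\ }\ Perv(Y,\bC)
\]
applied to a tuple of \emph{semi-simple} local systems. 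This reduces the question to the semi-simple locus inside each $\cL oc\cS ys(T,\bC)$, which is absolute $\bar\bQ$-constructible because semi-simple representations form a $\bQ$-constructible subset of $\cR_B(T)$ (closed orbits) and semi-simplicity is preserved under Riemann--Hilbert.

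Two smaller points. First, the absolute machinery requires smooth varieties (the Riemann--Hilbert side of Definition~\ref{absolutefunctiondefinition}), so the paper first reduces the chain $Z\hookrightarrow\bar Z\hookrightarrow X\to Y$ to maps between smooth varieties via resolutions; you omit this. Second, the spreading-out of $X,Y,Z,U,f$ over a number field is unnecessary: absolute $\bar{\bQ}$-constructibility here comes from the $\bQ$-structure on the unispaces (Theorem~\ref{thrmABS}), not from a model of $X$ over $\bar{\bQ}$.
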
 
\begin{proof} Let $f:X\ra Y$ be a proper morphism of $\bC$ algebraic varieties. Let $Z$ be a smooth Zariski locally closed subset of $X$. We denote by $j:Z\ra \bar{Z}$ the open inclusion in its closure and by $i:\bar{Z}\ra X$ the closed embedding. Let $S\subset \cM_B(Z)(\bC)$ be the subset of those semi-simple local systems $L$ on $Z$ such that DT under $Rf_*$ holds for the perverse sheaf $i_*(j_{!*}(L[\dim Z]))$. It is enough to prove that $S=\cM_B(Z)(\bC)$. By the assumptions, it is  enough to show that $S$ is absolute $\bar{\bQ}$-constructible. 

In fact, by definition, we can let $S\subset\iso(\cL oc\cS ys_{free}(Z,\bC))$ be the subset of local systems $L$ such that DT holds for the associated perverse sheaf on $X$. Then, it is enough to show that this $S$ is absolute $\bar{\bQ}$-constructible with respect to the unispace $\cL oc\cS ys_{free}(Z,\_)$.

Next, we reduce to the case when the maps $Z\xra{j}\bar{Z}\xra{i} X\xra{f} Y$ are all between smooth varieties. Let $\pi_Z:\tilde{Z}\ra\bar{Z}$ be a resolution singularities of $\tilde{Z}$ which is an isomorphism above $Z$, and let $X'$ be a smooth variety containing $X$ as a closed subset. Then the claim for $Z\ra\tilde{Z}\ra X'\ra X'$, where the last map is the identity, implies DT for resolutions of singularities. Now let $\pi_X:\tilde{X}\ra X$ be a resolution of singularities which base changes to $\pi_Z$, and $Y'$ be a smooth variety containing $Y$ as a closed subset. Applying DT to $Z\ra\tilde{Z}\ra\tilde{X}\ra Y'$, and using DT for $\pi_X$, we obtain DT for the original setup $Z\ra\bar{Z}\ra X\ra Y$. Thus we assume from now that $Z\xra{j}\bar{Z}\xra{i} X\xra{f} Y$ are maps between smooth varieties.

To show that the set $S$ of local systems on $Z$ satisfying DT is absolute $\bar{\bQ}$-constructible, we will express $S$ as an intersection of other such sets. 

First, consider the set  of local systems $L$ on $Z$ such that there exists a non-canonical isomorphism $Rf_*K\simeq \bigoplus_i {}^p\cH^i(Rf_*(K))[-i]$, where $K=i_*(j_{!*}(L[\dim Z]))$. Recall that this condition is implied by the conditions 
$$
Ext^{\,i}({}^p\cH^j(Rf_*(K)), {}^p\cH^k(Rf_*(K)))=0,\quad\forall j<k+i.
$$ 
Hence it is enough to prove that each condition  gives an absolute $\bar{\bQ}$-constructible set of $L$'s. Fixing $i$, $j$, and $k$ with $j<k+i$, note that the set of $L$ for which the above vanishing occurs is the inverse image of $\{0\}$ under the composition of the functors
$$
\cL oc\cS ys(Z,\bC)\xra{i_* j_{!*} (\_)} Perv(X,\bC)\xra{F} Perv(Y,\bC)^{\times 2}\xra{\cR \cH om(\_,\_)} \bD^b_c(Y,\bC),$$
where $F=({}^p\cH^j(Rf_*(\_)),{}^p\cH^k(Rf_*(\_)))$, with
$$
\bD^b_c(Y,\bC) \xra{Ra_*} \bD^b_c(pt,\bC)\xra{\dim H^i}\bZ.
$$
By Theorem \ref{thrmRM} (1), this is an absolute $\bQ$-constructible function, hence the inverse image of $\{0\}$ is absolute $\bQ$-constructible. 

Next, we consider the local systems $L$ on $Z$ such that ${}^p\cH^j(Rf_*(K))$ is semi-simple, for a fixed $j$, and show that this an absolute $\bQ$-constructible set. Since the inverse image of such a set by an absolute $\bar{\bQ}$-constructible functor is again absolute $\bar{\bQ}$-constructible, it is enough to show that the set of semi-simple perverse sheaves in $\iso(Perv(Y,\bC))$ is absolute $\bar{\bQ}$-constructible. Since the maps $i,j,f$ are fixed, we actually only have to prove that the set of semi-simple perverse sheaves with respect to a fixed stratification $\Sigma$ of $Y$ is absolute $\bar{\bQ}$-constructible. We can assume that all strata $T$ in $\Sigma$ are smooth locally closed. Let $j_T:T\ra \bar{T}$ be the open embedding into the closure of $T$, and $i_T:\bar{T}\ra Y$ the embedding of the closure. Since fiber products of absolute $\bar{\bQ}$-constructible functors are allowed by Theorem \ref{thrmRM}, and since the functor
$$
\mathop{\times}_{T\in\Sigma} \cL oc \cS ys (T,\bC)\xra{\oplus_{T}(i_T)_*(j_T)_{!*}(\_)} Perv(Y,\bC) 
$$
is absolute $\bar{\bQ}$-constructible, it is enough to show that the set of semi-simple local systems is absolute in $\cL oc \sS ys(T,\bC)$ for any smooth $\bC$ algebraic variety. This follows from the fact that the semi-simple representations in $\cR_B(T)(\bC)$ form a $\bQ$-constructible set and semi-simplicity is preserved by the Riemann-Hilbert correspondence. This finishes the proof of the claim.
\end{proof}

\begin{rmk}
There are two other usual companions of the Decomposition Theorem, namely:
\begin{enumerate} \item (Relative Hard Lefschetz Theorem) If $f:X\ra Y$ is a proper morphism of $\bC$-algebraic varieties and $\cF$ is a semi-simple perverse sheaf on
$X$, then
$$
\eta^i:{}^p\cH^{-i}(Rf_*(\cF))\xra{\simeq} {}^p\cH^{i}(Rf_*(\cF)),\quad\forall i\ge 0, 
$$
for $\eta$ the first Chern class of an $f$-ample line bundle on $X$.
\item (Monodromy Filtration) If $f:X\ra \bA^1$ is a regular function on a $\bC$-algebraic variety and $\cF$ is a semi-simple perverse sheaf on
$X$, then $Gr^W_i\psi_f(\cF)$ is a semi-simple local system, where $W$ is the monodromy filtration.  
\end{enumerate}
The conclusion of Theorem \ref{thrmCIDT} holds  also for these theorems. That is, the geometric case of these theorems together with the conjecture on absolute sets of local systems on smooth $\bC$ algebraic varieties, implies the theorems hold beyond the geometric case. The proof, which we skip, follows the same idea: show, using the machinery in developed earlier in this article, that the set of semi-simple local systems giving perverse sheaves satisfying the theorems is absolute.
\end{rmk}

\subsection{Special varieties package conjecture.} We expect that Conjecture \ref{conjADT} is part of a more general conjectural picture, at least in the case of local systems, as follows. We stress that the last part is added in analogy with the rank one case, without further heuristic reason:

\begin{conj}\label{conjSVP}
Let $X$ be a smooth complex algebraic variety. Then in $\cM_B(X,r)(\bC)$, the moduli space of semi-simple $\bC$-local systems of rank $r$:
\begin{enumerate}
\item The collection of  absolute $\bar{\bQ}$-constructible subsets  is generated from the absolute $\bar{\bQ}$-closed subsets  via: taking irreducible components, intersections, finite unions, and complements. In particular,  the Euclidean  (or equivalently, the Zariski) closure of an absolute $\bar{\bQ}$-constructible set is absolute $\bar{\bQ}$-closed.
\item An absolute $\bar{\bQ}$-closed set is the Zariski closure of its subset of absolute $\bar{\bQ}$-points.
\item A point is an absolute $\bar{\bQ}$-point if and only if it is of geometric origin.
\item An irreducible component of the Zariski closure of an infinite set of absolute $\bar{\bQ}$-points is an absolute $\bar{\bQ}$-closed subset in $\cM_B(X',r')(\bC)$ for possibly a different smooth complex algebraic variety $X'$  and natural number $r'$ with $\cM_B(X',r')$ isomorphic to $\cM_B(X,r)$ over $\bQ$.
\end{enumerate}
\end{conj}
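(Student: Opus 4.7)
The plan is to prove the conjecture in two stages: first the rank one case, where all four parts are accessible via the tools developed in the excerpt, and then to sketch why the general rank case appears to require substantial new input.

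For rank one, my proposal is to reduce everything to Theorem \ref{thrmCac}. Part (1) follows because that theorem already shows every absolute $\bar{\bQ}$-constructible set $S\subset\cM_B(X,1)(\bC)$ is a Boolean combination of torsion-translated affine subtori, and each such subtorus is itself absolute $\bar{\bQ}$-closed (it is cut out by finitely many character equations defined over $\bar{\bQ}$, and the Riemann-Hilbert image in $\mdr$ of such a subtorus is Galois-stable since torsion translates are defined over $\bar{\bQ}$). The Euclidean and Zariski closures of such a set coincide and give a finite union of torsion-translated subtori, which is again absolute $\bar{\bQ}$-closed. For part (2), every torsion-translated subtorus is the Zariski closure of its torsion points (a classical statement for affine tori), and torsion points are absolute $\bar{\bQ}$-points because a torsion character corresponds to a finite étale cover $X'\to X$, so the associated rank one local system is a direct summand of $R^0\pi_*\bC$ and hence of geometric origin in the sense of Definition \ref{defOri}.

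For part (3) in rank one, one direction is given by the proposition following Definition \ref{defOri}: any local system of geometric origin defines an absolute $\bar{\bQ}$-closed point. The reverse direction in rank one is where I would use part (2) of Theorem \ref{thrmCac} applied to the singleton $\{L\}$: if $\{L\}$ is absolute $\bar{\bQ}$-constructible, then it is a finite Boolean combination of torsion-translated subtori, which forces $L$ itself to be a torsion character, hence of geometric origin by the argument above. For part (4), I would observe that in rank one, an infinite set of torsion points on $\cM_B(X,1)$ has Zariski closure equal to a finite union of torsion-translated subtori (the Manin-Mumford theorem for affine tori, due to Laurent), and each such component is pulled back under a homomorphism $\cM_B(X,1)\to\cM_B(X',1)$ of Betti moduli coming from a surjection of $H_1$; by the previous parts these irreducible components are absolute $\bar{\bQ}$-closed.

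For general rank, I would try to imitate the rank one proof, but the main obstacle is the absence of an Ax-Lindemann criterion of the form of Theorem \ref{BettiDR} in higher rank, and the absence of a structural description of absolute $\bar{\bQ}$-constructible sets analogous to Theorem \ref{thrmCac}. The reverse direction in part (3) — that any absolute $\bar{\bQ}$-point comes from geometry — is essentially Simpson's motivicity conjecture for rigid local systems, which is known only in very limited cases (e.g.\ for rank two or three, or for curves by work of Katz). Part (4) is a non-abelian Andr\'e-Oort type statement, and even formulating the analog of torsion-translated subtori requires identifying the right class of ``special subvarieties'' in $\cM_B(X,r)$; the natural candidates coming from proper maps $f\colon Y\to X$ and pullbacks of absolute closed sets would need to be shown to exhaust all absolute closed sets. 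The main hard step will therefore be a higher-rank Ax-Lindemann criterion relating Galois-stable analytically constructible subsets of the de Rham moduli to algebraic ones, which in rank one is ultimately powered by the covering-space structure of $RH\colon\mdr(\bar X/D,1)\to\mb(X,1)$ and breaks down in higher rank where $RH$ is only a local analytic isomorphism at simple local systems.
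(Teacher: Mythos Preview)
You have correctly recognized that this is a conjecture, not a theorem, and that the paper only establishes it in special cases (rank one via Proposition~\ref{propCR1}, plus affine tori and abelian varieties via Proposition~\ref{propAbV}). Your two-stage structure is the right framing.

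There is, however, a genuine gap in your rank-one argument for part~(1). You assert that every torsion-translated affine subtorus of $\cM_B(X,1)(\bC)$ is itself absolute $\bar{\bQ}$-closed, reasoning that its Riemann--Hilbert preimage in $\mdr$ is Galois-stable. This is false for general $X$. As Remark~\ref{rmkKI}(3) explains, when $X$ is smooth projective every absolute $\bar{\bQ}$-closed subset of $\cM_B(X,1)$ has \emph{even} complex dimension (via the hyperk\"ahler structure coming from the Dolbeault moduli), so an odd-dimensional torsion-translated subtorus---which certainly exists inside $(\bC^*)^{b_1(X)}$---is \emph{not} absolute $\bar{\bQ}$-closed. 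Your Galois-stability argument silently assumes $\mdr(\bar X/D,1)\cong\bC^{b_1}$ with the naive Galois action; this holds only for $X$ an affine torus, not for arbitrary $X$, where $\mdr$ involves $\Pic^0(\bar X)$ and the Galois action depends on the arithmetic of $X$. The paper's Proposition~\ref{propCR1} derives parts~(1)--(3) from the structure theorem \ref{thrmCac} without ever claiming that individual subtori appearing in the Boolean decomposition are absolute; the point is rather that the class of absolute $\bar{\bQ}$-constructible sets is already closed under the Boolean operations and closure, not that the building blocks of the decomposition are absolute.

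This same issue explains the difference in part~(4). You invoke Laurent's theorem and gesture at a surjection of $H_1$, but the paper's proof is more explicit about the change of variety: it replaces $X$ by the affine torus $(\bC^*)^b$ (exploiting that the conjecture permits passing to any $X'$ with $\cM_B(X',r')\cong\cM_B(X,r)$ over $\bQ$), and then for \emph{that} $X$ shows directly via the $\cD$-modules $\cD_X\cdot x^\alpha$ that any torsion-translated subtorus is absolute $\bar{\bQ}$-constructible, since $RH$ becomes the exponential map on $\bC^b$ and $\bQ$-linear subspaces are Galois-stable. Your argument needs this change of $X$ made explicit to close the gap.

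Your discussion of the obstacles in higher rank (absence of a higher-rank Ax--Lindemann criterion, Simpson's motivicity conjecture for part~(3), the Andr\'e--Oort analogy for part~(4)) is accurate and matches the paper's own commentary in Remark~\ref{rmkKI}.
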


\begin{rmk}\label{rmkKI}$\quad$
\begin{enumerate}
\item The conjecture holds for rank one local systems by Proposition \ref{propCR1}, and for complex affine tori and abelian varieties by Proposition \ref{propAbV} below.  
\item Part (1) of the conjecture also holds for $X$ projective, see Proposition \ref{propP1} below.
\item Part (4) of the  conjecture is not necessarily true if $(X',r')$ is forced to be $(X,r)$ even for $r=1$. For example, if $X$ is a smooth complex projective variety, every absolute $\bar{\bQ}$-closed subset $S$ of $\cM_B(X,1)$ has even dimension. This is because $S$ is also moduli-absolute $\bar{\bQ}$-closed and in the rank one case this implies the existence of a third complex structure on $S$ from the Dolbeault moduli space, see the remarks after Definition \ref{defSac}. In particular, the smooth locus of $S$ is a hyperk\"ahler manifold, and any such manifold has even complex dimension.  Hence an odd-dimensional algebraic subtorus  of $\cM_B(X,1)$ occurs as the Zariski closure of its infinite subset of absolute $\bar{\bQ}$-points, but it cannot be an absolute $\bar{\bQ}$-closed subset, unless one changes $(X,r)$.

\item In fact, if the analogy with the rank one case is to be kept,  part (3) of the conjecture should  be more generally stated as: every absolute $\bQ$-closed subset is of geometric origin, with the meaning that it should come from $\cM_B(Y)$ for some variety $Y$ by applying a sequence of natural functors relating local systems on $Y$ with those on $X$. We do not attempt here to provide a precise definition. 

\item Part (3) of the conjecture is Simpson's Standard Conjecture from \cite{SiCo}, where he addresses it also for rank 2 and rank 3 local systems on $\bP^1$ minus a few points. The whole conjecture is inspired by the properties proven by Simpson in \cite{simpson}, and listed in \cite[Section 1]{Ey}, satisfied for $X$ projective by Simpson's absolute sets (recall here Remark \ref{rmkSMA}). \end{enumerate}
\end{rmk}

\begin{prop}\label{propCR1}
The conjecture holds for rank $r=1$ local systems.
\end{prop}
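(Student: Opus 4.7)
The strategy is to derive all four parts of Conjecture \ref{conjSVP} from the structure theorem \ref{thrmCac}(2) together with classical facts about torsion points in affine tori. The central object is the observation that, in rank one, torsion-translated affine subtori are precisely the absolute $\bar{\bQ}$-closed sets, and torsion points are precisely the absolute $\bar{\bQ}$-points.

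First I would prove the auxiliary lemma that every torsion-translated affine subtorus $T \subset \cM_B(X,1)(\bC)$ is absolute $\bar{\bQ}$-closed. By Proposition \ref{propSac} this is equivalent to checking that $T$ is moduli-absolute $\bar{\bQ}$-closed. In rank one $\cM_{DR}(\bar{X}/D,1)$ and $\cM_B(X,1)$ are commutative algebraic groups defined over $\bQ$, and the Riemann-Hilbert map $RH$ is a surjective analytic group homomorphism whose kernel is a finitely generated abelian group with all elements defined over $\bar{\bQ}$ (it is built from the integral lattice in $H^1$). Writing $T = t\cdot T_0$ with $T_0$ a connected algebraic subgroup defined over $\bQ$ and $t$ a torsion point, $RH^{-1}(T)$ is a countable union of cosets of $\ker(RH)$ inside the affine subvariety $RH^{-1}(t\cdot T_0)$; the latter is an affine translate of a $\bQ$-linear subspace by an $\bar{\bQ}$-rational element (a logarithm of a root of unity). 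Therefore $RH^{-1}(T)$ is pointwise fixed by $Gal(\bC/\bar{\bQ})$, which gives moduli-absolute $\bar{\bQ}$-closedness. Granting this lemma, Part (1) follows at once: by Theorem \ref{thrmCac}(2) every absolute $\bar{\bQ}$-constructible set is a Boolean combination of torsion-translated subtori, and these are now known to be absolute $\bar{\bQ}$-closed.

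For Part (2), an absolute $\bar{\bQ}$-closed set is in particular absolute $\bar{\bQ}$-constructible and Zariski closed, so by Theorem \ref{thrmCac}(2) it is a finite union of torsion-translated affine subtori. Each such subtorus is the Zariski closure of its subset of torsion points (torsion points are Zariski dense in any translated subtorus of an affine torus), and every torsion point of $\cM_B(X,1)$ corresponds to a local system factoring through a finite quotient of $\pi_1(X)$, which is realized as a direct summand of $\pi_*\bC$ for a finite étale cover $\pi$, hence is of geometric origin and gives an absolute $\bar{\bQ}$-closed singleton by the proposition preceding Conjecture \ref{conjADT}. For Part (3), if $\{L\}$ is absolute $\bar{\bQ}$-closed then, being a zero-dimensional absolute $\bar{\bQ}$-constructible set, Theorem \ref{thrmCac}(2) forces $L$ to be a torsion point, hence of finite order and of geometric origin by the above; conversely, a rank one local system of geometric origin has monodromy eigenvalues that are algebraic integers all of whose conjugates have absolute value one, which are roots of unity by Kronecker's theorem, so it is a torsion point.

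For Part (4), an infinite set of absolute $\bar{\bQ}$-points is, by Part (3), an infinite set of torsion points in the affine torus $\cM_B(X,1)(\bC)$. Laurent's theorem (the Manin-Mumford conjecture for tori) says the Zariski closure of an infinite set of torsion points in an affine torus is a finite union of torsion-translated affine subtori, and each irreducible component is itself such a subtorus. By Part (1) it is absolute $\bar{\bQ}$-closed in $\cM_B(X,1)(\bC)$ itself, so one may take $(X',r')=(X,1)$. The main obstacle I anticipate is the lemma in Part (1): the explicit verification that $RH^{-1}(T)$ is Galois-stable requires a careful unwinding of the rank-one Riemann-Hilbert correspondence with its integral periods, especially in the quasi-projective case where one must work with the logarithmic de Rham moduli $\cM_{DR}(\bar{X}/D,1)$; once this $\bar{\bQ}$-rationality of periods modulo the lattice is pinned down, the remaining parts are essentially formal consequences of the structure theorem and Laurent's theorem.
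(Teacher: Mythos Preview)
There is a genuine error in your auxiliary lemma. You claim that every torsion-translated affine subtorus $T\subset\cM_B(X,1)(\bC)$ is absolute $\bar{\bQ}$-closed, and you argue this by asserting that $RH^{-1}(T)$ is ``an affine translate of a $\bQ$-linear subspace by an $\bar{\bQ}$-rational element'' and hence pointwise $Gal(\bC/\bar{\bQ})$-fixed. This is false for general $X$. The rank-one de Rham moduli $\cM_{DR}(\bar{X}/D,1)$ is not a vector space: for $X$ projective it is the universal vectorial extension of $\widehat{X}$, and the covering map $RH$ involves the transcendental periods of $X$. Consequently $RH^{-1}(T)$ is typically not $\bar{\bQ}$-algebraic, and $p_\sigma$ will not preserve it. In fact Remark~\ref{rmkKI}(3) records an explicit obstruction: for $X$ smooth projective, every absolute $\bar{\bQ}$-closed subset of $\cM_B(X,1)$ has \emph{even} complex dimension (via the induced Dolbeault/hyperk\"ahler structure), so an odd-dimensional algebraic subtorus---which is certainly torsion-translated---cannot be absolute $\bar{\bQ}$-closed. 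Your lemma therefore fails already for an elliptic curve and a one-dimensional subtorus.

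This error propagates to Part~(4), where you conclude ``one may take $(X',r')=(X,1)$''. That is precisely what Remark~\ref{rmkKI}(3) says is impossible. The paper's proof of Part~(4) instead replaces $X$ by the affine torus $X'=(\bC^*)^{b_1(X)}$, which has $\cM_B(X',1)\cong\cM_B(X,1)$ over $\bQ$. For \emph{this} particular $X'$ your heuristic becomes correct: the de Rham side is genuinely a $\bC$-vector space and $RH$ is the coordinate-wise exponential, so $RH^{-1}(T)$ is a $\bQ$-linear subspace and is visibly Galois-stable. The paper makes this explicit by writing down the $\cD_{X'}$-modules $\cD_{X'}\cdot x^{\alpha}$. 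Your argument for Parts~(2) and~(3) is essentially fine (though the Kronecker step for ``geometric $\Rightarrow$ torsion'' would need justification; the paper bypasses it by using the already-established implication geometric $\Rightarrow$ absolute $\bar{\bQ}$-closed together with Theorem~\ref{thrmCac}(2)). For Part~(1), once the false lemma is removed, one must argue more carefully that the Zariski closure of an absolute $\bar{\bQ}$-constructible set is again absolute; this does not reduce to showing all torsion-translated subtori are absolute, since most are not.
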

\begin{proof}
Parts (1)-(3) of the Conjecture follow directly from Theorem \ref{thrmCac}. Note that every torsion point in $\cM_B(X,1)$ is geometric, by considering the eigen-sheaf decomposition of the direct image of the constant sheaf of the associated finite covering of $X$. 

Part (4) of the Conjecture follows if we can show that any torsion-translated affine algebraic subtorus $T$ of $\cM_B(X,1)(\bC)$ is absolute $\bar{\bQ}$-constructible for a complex affine torus $X=(\bC^*)^b$. Let $$S=\{\al\in\bC^b\mid DR_X(\cD_X\cdot x_1^{\al_1}\cdots x_b^{\al_b})[-b]\in T\}.$$
The $\cD_X$-module $\cD_X\cdot x^{\al}$ is actually an $\cO_X$-coherent sheaf, hence a vector bundle with a flat connection, and the de Rham functor $DR_X$ returns up to a shift the associated local system. Then $S\subset\bC^b$ can be identified with a subset of $\iso(\bD^b_{rh}(\cD_X))$ such that its conjugate $S^\sigma\in\iso(\bD^b_{rh}(\cD_{X^\sigma}))$ is the natural conjugate of $S$ in $\bC^b$ for any $\sigma\in Gal(\bC/\bQ)$. Moreover, the Riemann-Hilbert correspondence on $S$ is the restriction of $\Exp:\bC^b\ra(\bC^*)^b$ given coordinate-wise by $\al_j\mapsto\exp(2\pi i\al_j)$, so that $RH(S)=\Exp(S)=T$. Since $T$ is a torsion-translated algebraic subtorus, it follows that $S$ is a linear subspace of $\bC^b$ defined over $\bQ$. Thus $T^\sigma:=RH(S^\sigma)$ is $\bar{\bQ}$-closed for every $\sigma$, and so $T$ is absolute $\bar{\bQ}$-constructible.
\end{proof}

\begin{prop}\label{propP1}
If $X$ is projective, part (1) of Conjecture \ref{conjSVP} holds.
\end{prop}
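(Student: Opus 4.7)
The plan is to reduce to the following claim: if $S \subseteq \cM_B(X,r)(\bC)$ is absolute $\bar{\bQ}$-constructible, then its Zariski closure $\bar{S}$ is absolute $\bar{\bQ}$-closed. Given the claim, noetherian induction on $\dim S$ completes the proof: for $\dim S = 0$ one has $\bar{S} = S$, so $S$ is already absolute $\bar{\bQ}$-closed; for $\dim S > 0$, the difference $\bar{S}\setminus S$ is absolute $\bar{\bQ}$-constructible (the class is closed under intersections and complementation, since absoluteness is defined via characteristic functions) and has strictly smaller dimension, so by induction it lies in the collection generated from absolute $\bar{\bQ}$-closed sets by the operations of irreducible components, intersections, unions, and complements. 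Hence so does $S = \bar{S}\setminus(\bar{S}\setminus S)$.

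To prove the claim, fix a countable subfield $K\subseteq \bC$ with $\bar{\bQ}\subseteq K$ over which $X$ is defined. Since $X$ is projective, Simpson's nonabelian Hodge correspondence makes $RH\colon \mdr(X,r)(\bC)\to \cM_B(X,r)(\bC)$ an analytic isomorphism. By Proposition \ref{propSac}, $S$ is moduli-absolute $\bar{\bQ}$-constructible, hence also moduli-absolute $K$-constructible, so Proposition \ref{propRHC}(2) gives that $RH^{-1}(S)$ is Zariski constructible in $\mdr(X,r)$. Since $RH$ is a Euclidean homeomorphism and since the Euclidean and Zariski closures of a constructible subset of a complex algebraic variety coincide, one has
\[
RH^{-1}(\bar{S}) \;=\; \overline{RH^{-1}(S)}^{\mathrm{Euc}} \;=\; \overline{RH^{-1}(S)}^{\mathrm{Zar}},
\]
which is Zariski closed in $\mdr(X,r)$.

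For each $\sigma\in \mathrm{Gal}(\bC/\bQ)$, since $p_\sigma$ is a $\bQ$-scheme isomorphism and therefore preserves the Zariski topology, $p_\sigma(RH^{-1}(S))$ is Zariski constructible in $\mdr(X^\sigma,r)$, and
\[
p_\sigma(RH^{-1}(\bar{S})) \;=\; \overline{p_\sigma(RH^{-1}(S))}^{\mathrm{Zar}} \;=\; \overline{p_\sigma(RH^{-1}(S))}^{\mathrm{Euc}}.
\]
Applying the Euclidean homeomorphism $RH$ on $X^\sigma$ transports this to $\overline{S_\sigma}^{\mathrm{Euc}} = \overline{S_\sigma}^{\mathrm{Zar}}$, where $S_\sigma := RH\circ p_\sigma\circ RH^{-1}(S)$. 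The latter set is $\bar{\bQ}$-constructible by moduli-absoluteness of $S$, so its Zariski closure is $\bar{\bQ}$-closed in $\cM_B(X^\sigma,r)$. This shows $\bar{S}$ is moduli-absolute $\bar{\bQ}$-closed, and Proposition \ref{propSac} upgrades the conclusion to absolute $\bar{\bQ}$-closed.

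The main obstacle is the mismatch between the Euclidean and Zariski topologies across the Riemann--Hilbert correspondence: $RH$ is only analytic, while $p_\sigma$ is only Zariski-continuous and not Euclidean-continuous. This is bridged by Proposition \ref{propRHC}, which rests on the Ax--Lindemann type algebraicity criterion \ref{Galois2}, together with the classical coincidence of Euclidean and Zariski closures for constructible subsets of complex algebraic varieties.
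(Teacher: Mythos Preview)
Your proof is correct and follows essentially the same route as the paper: reduce to showing that the Zariski closure of an absolute $\bar{\bQ}$-constructible set is absolute $\bar{\bQ}$-closed, using Proposition~\ref{propRHC} to get Zariski constructibility of $RH^{-1}(S)$ on the de Rham side, then transporting closures through the Euclidean homeomorphism $RH$ and the Zariski-topology-preserving bijection $p_\sigma$, and finishing by induction on dimension. Your write-up is in one respect cleaner than the paper's: you explicitly choose a countable field $K\supseteq\bar{\bQ}$ over which $X$ is defined, whereas the paper's proof simply assumes $X$ is defined over $\bar{\bQ}$ without comment.

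One small omission: part~(1) of the conjecture asserts that the collection of absolute $\bar{\bQ}$-constructible sets is \emph{exactly} what is generated from the absolute $\bar{\bQ}$-closed sets via the listed operations, one of which is taking irreducible components. Your induction shows that every absolute $\bar{\bQ}$-constructible set lies in the generated collection (in fact using only complements), but you do not check the reverse inclusion for the irreducible-components operation, i.e.\ that each irreducible component of an absolute $\bar{\bQ}$-closed set is again absolute $\bar{\bQ}$-constructible. The paper handles this in one line: since $RH$ is an analytic isomorphism and $\bar{\bQ}$ is algebraically closed, the irreducible decomposition on the Betti side matches the one on the de Rham side, and $p_\sigma$ permutes components, so each component is absolute $\bar{\bQ}$-closed. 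It is worth adding this sentence.
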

\begin{proof} Recall that the Euclidean closure of a $\bar{\bQ}$-constructible subset of a variety defined over $\bar{\bQ}$ is a $\bar{\bQ}$-closed subset, hence equal to the Zariski closure.

Since $X$ is projective, absoluteness can be taken to mean moduli-absoluteness, by Proposition \ref{propSac}. That is, the Galois invariance condition holds with respect to the de Rham moduli space $\cM_{DR}(X)$ (all ranks are allowed).

It is clear that the collection of absolute $\bar{\bQ}$-closed subsets are closed under taking finite unions and intersections.  Since $RH$ is an analytic isomorphism and $\bar{\bQ}$ is algebraically closed, the irreducible components of an absolute $\bar{\bQ}$-closed subset of $\mb(X)$ are themselves absolute $\bar{\bQ}$-closed subsets.

Let $T$ be an irreducible absolute $\bar{\bQ}$-constructible subset of $\cM_B(X)(\bC)$. Let $S=RH^{-1}(T)$, where $RH:\cM_{DR}(X)\ra\cM_B(X)$ is the analytic isomorphism of Simpson, and denote by $S^\sigma$ the image of $S$ in $\cM_{DR}(X^\sigma)(\bC)$ by $\sigma\in Gal (\bC/\bQ)$. By assumption, $T^\sigma:=RH(S^\sigma)$ is $\bar{\bQ}$-constructible. Thus, the Euclidean closures $\ol{T}$ and $\ol{T^\sigma}$ are  Zariski closures and are defined over $\bar{\bQ}$. Since $RH$ is an analytic isomorphism, it preserves Euclidean closures. Hence the Euclidean closures $\ol{S}$ and $\ol{S^\sigma}$ are analytically isomorphic to $\ol{T}$ and $\ol{T^\sigma}$, respectively.

Assume that $X$ is defined over $\bar{\bQ}$. We apply now Proposition \ref{propRHC} to $S$ to conclude that $\ol{S}$ and, hence $(\ol{S})^\sigma$ are Zariski closed subsets of $\cM_{DR}(X)$ and $\cM_{DR}(X^\sigma)$, respectively. Let $W$ be a Zariski closed subset containing $S^\sigma$. Then $\sigma^{-1}(W)$ contains $S$ and is also Zariski closed. Therefore $\sigma^{-1}(W)$ contains $\ol{S}$, and so $W$ contains $(\ol{S})^\sigma$. This proves that $(\ol{S})^\sigma =\ol{S^\sigma}$.  Therefore $(\ol{T})^\sigma =\ol{T^\sigma}$ and so $(\ol{T})^\sigma$ is $\bar{\bQ}$-closed for all $\sigma$. Thus $\ol{T}$ is an absolute $\bar{\bQ}$-closed subset. 

We can assume  $T$  is $\bar{\bQ}$-locally closed. So $T=\bar{T}\cap C^c$, where $C$ is a $\bar{\bQ}$-closed subset and $C^c$ is its complement. Using that the closure of an absolute $\bar{\bQ}$-constructible subset is absolute $\bar{\bQ}$-closed and induction on the dimension of $T$, we can show that $C$ is also absolute $\bar{\bQ}$-closed. So, an absolute $\bar{\bQ}$-locally closed subset is the intersection of an absolute $\bar{\bQ}$-closed subset with the complement of another absolute $\bar{\bQ}$-closed subset.This finishes the proof.
\end{proof}

\begin{prop}\label{propAbV} 
If $X$ is an a complex affine torus or an abelian variety, Conjecture \ref{conjSVP} holds.
\end{prop}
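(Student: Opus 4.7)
The plan is to use the abelianness of $\pi_1(X)$ to reduce everything to the rank one case already proven in Proposition \ref{propCR1}. For $X$ an affine torus or an abelian variety, $\pi_1(X)$ is finitely generated abelian ($\bZ^n$ or $\bZ^{2g}$ respectively), so every semisimple complex representation of rank $r$ decomposes as a direct sum of $r$ characters. Both the Betti and de Rham moduli of semisimple objects therefore realize as symmetric products, yielding surjective $\bQ$-morphisms
$$
\mu_r:\cM_B(X,1)^r\lra \cM_B(X,r),\qquad \mu_r^{DR}:\cM_{DR}(\bar{X}/D,1)^r\lra \cM_{DR}(\bar{X}/D,r),
$$
quotients by the symmetric group $\mathfrak{S}_r$ via direct sum, compatible with $RH$ and with Galois conjugation $p_\sigma$ on both sides. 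Identifying $\cM_B(X,1)^r=\cM_B(X^r,1)$ (since $\pi_1(X^r)=\pi_1(X)^r$ is abelian) and using that the direct sum functor lifts to a natural transformation of unispaces defined over $\bQ$ compatible with $RH$, we obtain that $\mu_r$ pulls back absolute $\bar{\bQ}$-constructible (resp.\ closed) sets to absolute $\bar{\bQ}$-constructible (resp.\ closed) sets, and that $\mu_r$ sends $\mathfrak{S}_r$-invariant absolute $\bar{\bQ}$-constructible (resp.\ closed) sets to absolute $\bar{\bQ}$-constructible (resp.\ closed) sets, since $\mu_r$ is a finite $\bQ$-quotient.

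Parts (1)--(3) of the conjecture follow from the rank one case. An absolute $\bar{\bQ}$-constructible set $S\subset\cM_B(X,r)$ pulls back to an absolute $\bar{\bQ}$-constructible set $\mu_r^{-1}(S)\subset\cM_B(X^r,1)$, which by Proposition \ref{propCR1} applied to $X^r$ is a Boolean combination of torsion-translated affine subtori; pushing forward by $\mu_r$ gives part (1), and part (2) follows because torsion points in any such subtorus are Zariski dense and their images in $\cM_B(X,r)$ correspond to direct sums of $r$ torsion characters of $\pi_1(X)$. For part (3), these direct sums are precisely the rank-$r$ local systems of geometric origin on $X$: torsion characters come from finite étale covers and are therefore geometric, and their direct sums remain so; conversely, since geometric origin is stable under direct summand, a semisimple rank-$r$ local system of geometric origin on $X$ must be a direct sum of $r$ characters each of geometric origin, which by the rank one case are torsion.

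The main obstacle is part (4). Given an infinite set $\Sigma$ of absolute $\bar{\bQ}$-points in $\cM_B(X,r)$, its preimage $\mu_r^{-1}(\Sigma)$ is an infinite set of torsion points in $\cM_B(X^r,1)\cong(\bC^*)^N$ (with $N=nr$ or $N=2gr$). Let $Z$ be an irreducible component of $\overline{\Sigma}\subset\cM_B(X,r)$. Since $\mu_r$ is finite and surjective, $\mu_r^{-1}(\overline{\Sigma})=\overline{\mu_r^{-1}(\Sigma)}$, so $\mu_r^{-1}(Z)$ is a $\mathfrak{S}_r$-invariant union of irreducible components of the Zariski closure of a torsion set in an affine torus. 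By the Manin-Mumford theorem for algebraic tori (Laurent), each irreducible component is a torsion-translated affine subtorus; hence $\mu_r^{-1}(Z)$ is a finite union of such subtori, and by Proposition \ref{propCR1} applied to $X^r$ it is absolute $\bar{\bQ}$-closed in $\cM_B(X^r,1)$. Descending through $\mu_r$, whose commutativity with $RH$ and Galois conjugation is set up above, yields that $Z$ is absolute $\bar{\bQ}$-closed in $\cM_B(X,r)$ itself, so the conjecture holds with $(X',r')=(X,r)$.

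The main technical difficulty is to verify rigorously that absolute $\bar{\bQ}$-closure descends through the $\mathfrak{S}_r$-quotient $\mu_r$: for each $\sigma\in Gal(\bC/\bQ)$ one must check that $\mu_r$ intertwines the operation $RH\circ p_\sigma\circ RH^{-1}$ on both $\cM_B(X^r,1)$ and $\cM_B(X,r)$, which reduces to the fact that the direct sum functor is defined over $\bQ$ and compatible with Riemann-Hilbert, together with Proposition \ref{propSac} relating absolute and moduli-absolute constructibility.
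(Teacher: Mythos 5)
Your argument for parts (1)--(3) is essentially the paper's: reduce to rank one via the identification $\cM_B(X,r)=\cM_B(X,1)^{(r)}$, the map $\mu_r:\cM_B(X,1)^r=\cM_B(X^r,1)\to\cM_B(X,r)$, and the finiteness of that map, then invoke Proposition \ref{propCR1} for $X^r$.

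Your treatment of part (4) contains a genuine error, however, and it is one the paper explicitly warns against. You conclude that ``$Z$ is absolute $\bar{\bQ}$-closed in $\cM_B(X,r)$ itself, so the conjecture holds with $(X',r')=(X,r)$.'' This is false when $X$ is an abelian variety. In your argument the crucial step is the assertion that, because $\mu_r^{-1}(Z)$ is a finite union of torsion-translated affine subtori of $\cM_B(X^r,1)$, it is absolute $\bar{\bQ}$-closed in $\cM_B(X^r,1)$ ``by Proposition \ref{propCR1} applied to $X^r$.'' Proposition \ref{propCR1} asserts no such thing. Its parts (1)--(3) give the forward direction (absolute sets are Boolean combinations of torsion-translated subtori), and its part (4) explicitly hedges with ``for possibly a different smooth complex algebraic variety $X'$.'' Remark \ref{rmkKI}(3) spells out the obstruction: when $X$ (hence $X^r$) is projective, every absolute $\bar{\bQ}$-closed subset of $\cM_B(X^r,1)$ is necessarily of even dimension, because of the hyperk\"ahler structure on the smooth locus coming from the Dolbeault picture. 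An odd-dimensional torsion-translated subtorus of $\cM_B(X^r,1)$ is therefore the Zariski closure of infinitely many geometric points but is \emph{not} absolute $\bar{\bQ}$-closed there. So the claim $(X',r')=(X,r)$ cannot hold for abelian varieties.

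The fix, which is what the paper does in the proof of Proposition \ref{propCR1}(4) and what the conjecture's phrasing is designed to allow, is to replace the abelian variety by the complex affine torus $(\bC^*)^{b}$ with the same $\cM_B$: there the Riemann--Hilbert map is the coordinatewise exponential $\Exp:\bC^{br}\to(\bC^*)^{br}$, Galois conjugation acts coordinatewise, and every torsion-translated affine subtorus pulls back to a $\bQ$-linear affine subspace, hence is absolute $\bar{\bQ}$-closed of \emph{any} dimension. One then pushes forward by $\mu_r$ for that affine torus, landing in $\cM_B((\bC^*)^{b},r)\cong\cM_B(X,r)$ over $\bQ$. Your argument for affine tori is actually fine as written (since the even-dimension constraint only arises when $X$ is projective), but your final claim should be weakened: the conjecture holds with $(X',r')$ equal to an affine torus of the appropriate rank, not necessarily equal to $(X,r)$.
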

\begin{proof} Let $X$ be a complex affine torus $(\bC^*)^{b}$ or an abelian variety of complex dimension $b/2$. Then $X$ has the homotopy type of a product of circles and
$
\cM_B(X,r)
$
is the $r$-th symmetric power
$$
\cM_B(X,r)=\cM_B(X,1)^{(r)}=[(\bC^*)^b]^{(r)},
$$
see for example \cite{FT}. 

In particular,  every semi-simple $\bC$-local system on $X$ is a direct sum of rank one $\bC$-local systems. Hence the geometric points of $\cM_B(X,r)$ are the unordered $r$-tuples of torsion points in $\cM_B(X,1)$, by Proposition \ref{propCR1}. That is, the geometric points of $\cM_B(X,r)$ are the images of the geometric points in $\cM_B(X,1)^r$. However, in our case $\cM_B(X,1)^r=\cM_B(X^r,1)$.



Let $T$ be an absolute $\bar{\bQ}$-constructible subset of $\cM_B(X,r)$. Then, by definition, $T$ is the image of an absolute $\bar{\bQ}$-constructible subset of $\cR_B(X,x_0,r)$. Composing with the natural map $\cR_B(X,x_0,1)^{r}\ra\cR_B(X,x_0,r)$, $T$ is also the image of an absolute $\bar{\bQ}$-constructible subset $T'$ of $\cR_B(X,x_0,1)^{ r}$. Here absoluteness is defined via the Galois invariance on $\iso(\bD^b_{rh}(\cD_X))^{\times r}$. But $\cR_B(X,x_0,1)^{ r}=\cM_B(X,1)^{ r}$ and in our case, this is further equal to $\cM_B(X^r,1)$ and this equality is compatible with the Galois action defining absoluteness.  Conversely, the image of any absolute $\bar{\bQ}$-constructible subset of $\cR_B(X,x_0,1)^r=\cM_B(X^r,1)$ is absolute $\bar{\bQ}$-constructible in  $\cM_B(X,r)$ by definition.

Finally, note that $\phi$ is a finite map, hence the image of a closed set is closed.

Therefore we can identify the absolute $\bar{\bQ}$-constructible sets,  the absolute $\bar{\bQ}$-closed sets, and of the geometric points of $\cM_B(X,r)$ with the images of the absolute $\bar{\bQ}$-constructible sets,  the absolute $\bar{\bQ}$-closed sets, and, respectively, of the geometric points of $\cM_B(X^r,1)$ under the map $\phi.$ Since the conjecture is true for $\cM_B(X^r,1)$,  it is then true for $\cM_B(X,r)$.
\end{proof}

\begin{rmk}
Let $b$ be the real dimension of an abelian variety $X$. Consider   the analytic isomorphism of Simpson $$ 
RH:\cM_{DR}(X,r)\mathop{\lra}^{\sim} \cM_B(X,r).
$$
 By \cite[Corollary 5.1]{FT}
the two spaces are given explicitly as the symmetric powers
$$
\cM_{DR}(X,r)=\cM_{DR}(X,1)^{(r)}\quad\quad
\cM_B(X,r)=\cM_B(X,1)^{(r)}
$$
with $RH$ induced from the rank one analytic isomorphism
$$
RH:\cM_{DR}(X,1)=X^\sharp\mathop{\lra}^{\sim} \cM_B(X,1)=(\bC^*)^{b},
$$
where $X^\sharp$ is the universal group extension of the dual abelian variety $\hat{X}$ by the dual  $\mathfrak{g}^*$ of the Lie algebra $\mathfrak{g}$ of $X$, corresponding to $\mathbf{1}\in\mathfrak{g}^*\otimes\mathfrak{g}= H^1(\hat{X},\mathfrak{g}^*)$. 

The previous proposition implies the following. For $\sigma\in Gal(\bC/\bQ)$, let $p_\sigma:X^\sharp\ra (X^\sigma)^\sharp$ be the associated morphism. If  $T$ is an irreducible Zariski closed  subset of $\cM_B(X,r)=[(\bC^*)^b]^{(r)}$ defined over $\bar{\bQ}$ such that $RH\circ p_\sigma\circ RH^{-1}(T)$ is  $\bar{\bQ}$-constructible for all $\sigma$, then $T$ is the image of a torsion-translated algebraic subtorus under the quotient map $(\bC^*)^{br}\ra [(\bC^*)^b]^{(r)}$. Moreover, this is a subtorus of even complex dimension, since $(\bC^*)^{br}$ is identified with $\cM_B(X^r,1)$ and in the rank one the extra Dolbeault condition kicks in automatically to guarantee that the subtorus is hyperk\"ahler, cf. Remark \ref{rmkKI}. At the moment we do not know how to characterize all the affine algebraic subtori of $\cM_B(X^r,1)$ among all even-dimensional ones in $(\bC^*)^{br}$.
\end{rmk}

There are other instances in geometry and arithmetic of collections of sets satisfying Conjecture \ref{conjSVP}, which can be viewed as an analog for absolute sets of local systems of the Manin-Mumford, Mordell-Lang, and Andr\'{e}-Oort conjectures.

\section{Other results}  

We spell out some direct consequences of Theorem \ref{thrmCac0} and Theorem \ref{thrmCac}.  

\subsection{Hypercohomology jump loci}

\begin{thrm}
Let $X$ be a  complex algebraic variety. Let $\mb(X,1)$ be the space of rank one local systems on $X$.  Let $\cF\in \bD^b_c(X,\bC)$.  Let $i, k\in\bZ$. Consider the hypercohomology jump loci
$$
V^i_k(X,\cF)=\{L\in \mb(X,1)(\bC)\mid \dim H^i(X,\cF\otimes L)\ge k\}.
$$
Then:
\begin{enumerate}
\item $V^i_k(X,\cF)$ is  $K$-closed if $\cF$ is defined over a subfield $K\subset\bC$.
\end{enumerate}
If  $X$ is also smooth, then:
\begin{enumerate}
\setcounter{enumi}{1}
\item $V^i_k(X,\cF)$ is absolute $\bC$-closed. In particular, it is a finite union of translated affine subtori.
\item  $V^i_k(X,\cF)$ is absolute $\bQ$-closed if $\cF$ is of  MHM origin. In particular, it is a $Gal(\bar{\bQ}/\bQ)$-invariant  finite union of torsion-translated affine subtori.
\item  $V^i_k(X,\cF)$ is absolute $\bar{\bQ}$-closed if $\cF$ is geometric origin. In particular, it is a finite union of torsion-translated affine subtori.
\end{enumerate}
\end{thrm}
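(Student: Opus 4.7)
The approach is to express the function $\phi_\cF:\mb(X,1)(\bC)\to\bZ$ defined by $L\mapsto \dim H^i(X,\cF\otimes L)$ as a composition of maps that lift to natural transformations of unispaces via the machinery of Sections~4 and~6, and then read off the closedness and structural conclusions from Theorem~\ref{thrmABS}, Theorem~\ref{thrmRM}, and Theorem~\ref{thrmCac}. Writing $a_X:X\to pt$ for the structure map, I would decompose
\begin{align*}
\phi_\cF:\mb(X,1)(\bC)&\xra{L(\bC)}\cL oc\cS ys(X,\bC)\xra{\iota(\cdot)\otimes\cF}\bD^b_c(X,\bC)\\
&\xra{Ra_{X,*}}\bD^b_c(pt,\bC)\xra{\dim\circ H^i}\bZ.
\end{align*}
Each of the first four arrows lifts to a natural transformation of unispaces, using Proposition~\ref{propSit} for $\iota$, the tensor-with-fixed-object construction of~\ref{subFS}, Proposition~\ref{propDI} for $Ra_{X,*}$, and the natural transformation $L(\cdot)$ from the Betti representation space to local systems; and the composite $\dim\circ H^i$ is $\bQ$-semi-continuous by Theorem~\ref{thrmClo}(3). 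Consequently $\phi_\cF$ is semi-continuous in the classical sense, and $V^i_k(X,\cF)=\phi_\cF^{-1}(\bZ_{\geq k})$ is Zariski closed in $\mb(X,1)(\bC)$.

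For part~(1), if $\cF$ is defined over $K$ then the embedding of the isomorphisms-closed sub-category $\{\cG\mid\cG\cong\cF\}\hookrightarrow\bD^b_c(X,\bC)$ lifts to a natural transformation of unispaces with a $K$-structure (cf.\ Remark~\ref{rmkExcept}), while the remaining arrows are natural transformations defined over $\bQ\subset K$. Thus $\phi_\cF$ is $K$-semi-continuous, hence $V^i_k(X,\cF)$ is classically $K$-closed in $\mb(X,1)(\bC)$. For parts~(2)-(4) I assume $X$ smooth and apply the absoluteness machinery. Part~(2) uses the embedding over $\bC$ and gives absolute $\bC$-semi-continuity directly (the Galois condition being vacuous for $K=\bC$), so $V^i_k(X,\cF)$ is absolute $\bC$-closed and Theorem~\ref{thrmCac}(1) yields the finite union of translated affine subtori.

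For part~(3), when $\cF$ is of MHM origin, the perverse cohomology sheaves ${}^p\cH^j(\cF)$ are each of the form $rat(M_j)\otimes_\bQ\bC$ for a mixed Hodge module $M_j$ obtained by applying standard MHM operations to $\bQ^H_{pt}$ as in Definition~\ref{defOri}. These operations commute with conjugation by any $\sigma\in Gal(\bC/\bQ)$, so $\cF$ is isomorphic to the base change of an object defined over $\bQ$, and the Galois conjugate $\cF^\sigma$ is built from $\bQ^H_{pt}$ on $X^\sigma$ and is again of MHM origin. Hence the embedding of $\{\cG\mid\cG\cong\cF\}$ promotes to a natural transformation with a $\bQ$-structure, $\phi_\cF$ is absolute $\bQ$-semi-continuous, and $V^i_k(X,\cF)$ is absolute $\bQ$-closed. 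Applying Theorem~\ref{thrmCac}(2), since absolute $\bQ$-closed implies absolute $\bar\bQ$-closed, gives the finite union of torsion-translated affine subtori, and the $\bQ$-structure forces $Gal(\bar\bQ/\bQ)$-invariance. Part~(4) is identical with $\bQ$ replaced by $\bar\bQ$, using that every object of geometric origin is a direct summand of an object of MHM origin and is thus defined over $\bar\bQ$ up to isomorphism.

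The main obstacle is the promotion argument in parts~(3) and~(4): verifying that $\cF$ of MHM (respectively geometric) origin forces the tensor-with-$\cF$ step of the composition to lift to a natural transformation with a $\bQ$- (respectively $\bar\bQ$-)structure, so that $\phi_\cF$ qualifies as absolute $\bQ$- (respectively $\bar\bQ$-)semi-continuous in the sense of Definition~\ref{absolutefunctiondefinition}. This requires checking that all operations entering the definition of MHM origin are Galois-equivariant and compatible with the unispace formalism, and then appealing to Remark~\ref{rmkExcept}. A secondary, bookkeeping point is that Theorem~\ref{thrmCac} is stated for constructible sets obtained via union, intersection, and complement; the complement step is eliminated in our closed setting by decomposing $V^i_k(X,\cF)$ into its finitely many irreducible components, each of which inherits absolute closedness, leaving the cleaner finite union of (torsion-translated) affine subtori as claimed.
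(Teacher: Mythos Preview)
Your approach is essentially identical to the paper's: both factor $\phi_\cF$ through $\iota$, $\cF\otimes^L(\_)$, $Ra_*$, and $\dim\circ H^i$, invoke the natural-transformation machinery of Theorems~\ref{thrmClo} and~\ref{thrmABS}, and then read off the torus structure from Theorem~\ref{thrmCac}. One small correction: in part~(2) the Galois condition is not vacuous for $K=\bC$, since Definition~\ref{absolutefunctiondefinition} quantifies over all of $Gal(\bC/\bQ)$ regardless of $K$; what actually happens is that for each $\sigma$ the conjugate $\cF^\sigma\in\bD^b_c(X^\sigma,\bC)$ exists, and the analogous composition with $\cF^\sigma$ supplies the required $\bC$-semi-continuous $\phi^\sigma$---this is precisely what Theorem~\ref{thrmABS}(3)--(4) packages, and what the paper cites via Theorem~\ref{thrmCac0}.
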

\begin{proof}
Let $\phi:\mb(X,1)(\bC)\ra\bZ$ be the function $L\mapsto\dim H^i(X,\cF\otimes L)$. Then $\phi$ is the composition of the absolute $\bQ$-semi-continuous function
$rk\circ H^i:\bD^b_c(pt,\bC)\ra\bZ$ with 
$$
\cL oc\cS ys(X,\bC)\xra{\iota}\bD^b_c(X,\bC)\xra{\cF\otimes^L(\_)\;}\bD^b_c(X,\bC)\xra{Ra_*}\bD^b_c(pt,\bC),
$$
where $a:X\ra pt$ is the constant map to a point. Hence we can apply Theorem \ref{thrmCac0} to conclude that $\phi$ is absolute $\bC$-semi-continuous function, defined over the field of definition of $E$. Hence (1) and (2) follow, since $V^i_k(X,\cF)=\phi^{-1}(\bZ_{\ge k})$.

For (3) (respectively (4)), the conditions imply that $\cF^\sigma$ is defined over $\bQ$ (resp. $\bar{\bQ}$) for every $\sigma\in Gal(\bC/\bQ)$. Hence $\phi$ is an absolute $\bQ$ (resp. $\bar{\bQ}$)-semi-continuous function. 
\end{proof}

The above result was proved for: $X$ projective and $\cF$ trivial by Simpson \cite{simpson}, $X$ quasi-projective and $\cF$ trivial by Budur-Wang \cite{bw1}, $X=A$ an abelian variety by Schnell \cite[Theorem 2.2]{Schne}. There is however a rich history of partial results besides the ones we have just mentioned, see the survey \cite{BW-survey}.

\begin{cor}
Let $X$ be a smooth complex algebraic variety and let $\cF_\bQ\in\bD^b_c(X,\bQ)$ be of MHM origin. Then for every $\bC$-local system $L$ of rank one  on $X$,
$$
H^i(X,\cF_\bQ\otimes_\bC L) \cong H^i(X,\cF_\bQ\otimes_\bC L^{-1}).
$$ 
\end{cor}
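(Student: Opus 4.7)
The plan is to deduce the isomorphism from equality of dimensions, using the preceding theorem to control the hypercohomology jump loci. Set $\cF = \cF_\bQ \otimes_\bQ \bC \in \bD^b_c(X,\bC)$; this is again of MHM origin. For each $i,k \in \bZ$, part (3) of the preceding theorem tells us that
$$V^i_k(X,\cF) = \{L \in \cM_B(X,1)(\bC) \mid \dim H^i(X, \cF \otimes L) \ge k\}$$
is absolute $\bQ$-closed, and in particular is a $Gal(\bar\bQ/\bQ)$-invariant finite union of torsion-translated affine subtori. To conclude it will be enough to show that each such locus is stable under the inversion map $L \mapsto L^{-1}$, because then the two semi-continuous functions $L \mapsto \dim H^i(X,\cF\otimes L)$ and $L \mapsto \dim H^i(X,\cF\otimes L^{-1})$ have all the same upper-level sets and so coincide pointwise; since both cohomology groups are finite-dimensional $\bC$-vector spaces, equal dimensions forces isomorphism.

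The central step is therefore to verify that a $Gal(\bar\bQ/\bQ)$-invariant finite union of torsion-translated affine subtori of $\cM_B(X,1)$ is automatically closed under inversion. Write $V^i_k(X,\cF) = \bigcup_j \zeta_j T_j$, with each $\zeta_j$ a torsion point and each $T_j$ an affine subtorus (which is defined over $\bQ$, hence $Gal$-stable and equal to $T_j^{-1}$ since it is a subgroup). Complex conjugation $c$ lies in $Gal(\bar\bQ/\bQ)$, and sends every root of unity to its inverse; since the coordinates of the torsion point $\zeta_j$ in any $\bQ$-identification of the ambient component with $(\bC^*)^n$ are roots of unity, we have $c(\zeta_j) = \zeta_j^{-1}$. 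Hence $c(\zeta_j T_j) = \zeta_j^{-1} T_j = (\zeta_j T_j)^{-1}$, and $Gal$-invariance of $V^i_k(X,\cF)$ gives $\zeta_j^{-1}T_j \subset V^i_k(X,\cF)$ for each $j$. Taking the union yields $V^i_k(X,\cF)^{-1} \subset V^i_k(X,\cF)$, and symmetry gives equality.

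Combining the two steps, $\dim H^i(X,\cF\otimes L) = \dim H^i(X,\cF\otimes L^{-1})$ for every rank one $\bC$-local system $L$, which gives the required isomorphism. The only delicate point is the identification of the $Gal$-action with the geometric inversion on torsion translates; this is entirely formal once one knows that the affine subtori are $\bQ$-defined and that torsion points of $\cM_B(X,1)(\bC)$ have coordinates which are roots of unity, so no real obstacle is expected. All of the heavy lifting is already contained in the preceding theorem, whose proof in turn rests on the structure result of Theorem \ref{thrmCac}.
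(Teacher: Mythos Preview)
Your proof is correct and is precisely the standard argument the paper alludes to when it writes ``The proof is well-known once the structure is as in part (3) above, see for example \cite[Theorem 1.2]{bw2} for $\cF$ trivial.'' The key step---that a $Gal(\bar\bQ/\bQ)$-invariant finite union of torsion-translated affine subtori is stable under inversion because complex conjugation inverts roots of unity while fixing the $\bQ$-defined subtori---is exactly the mechanism used there.
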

The proof is well-known once the structure is as in part (3) above, see for example \cite[Theorem 1.2]{bw2} for $\cF$ trivial.

\subsection{Intersection-cohomology jump loci}

\begin{thrm}
Let $X$ be a smooth complex algebraic variety. Let $X\subset \bar{X}$ be a locally closed embedding into a smooth complex algebraic variety.  Let $\cF\in \bD^b_c(X,\bC)$. Let $i, k\in\bZ$. Consider the intersection cohomology jump loci in the space of rank one local systems $\mb(X,1)$:
$$
IV^i_k(X,\bar{X},\cF)=\{L\in \mb(X,1)(\bC)\mid \dim IH^i(\bar{X},\cF\otimes L)\ge k\}.
$$
Then:
\begin{enumerate}
\item  $IV^i_k(X,\bar{X},\cF)$ is  $K$-constructible if $\cF$ is defined over a subfield $K\subset\bC$.
\end{enumerate}
If $X$ is also smooth, then:
\begin{enumerate}
\setcounter{enumi}{1}
\item $IV^i_k(X,\bar{X},\cF)$ is absolute $\bC$-constructible. In particular, it is obtained from  finitely many translated affine subtori via a sequence of taking union, intersection, and complement.
\item $IV^i_k(X,\bar{X},\cF)$ is absolute $\bQ$-constructible if $\cF$ is of MHM origin. In particular, it is obtained from finitely many $Gal(\bar{\bQ}/\bQ)$-invariant finite unions of  torsion-translated affine subtori via a sequence of taking union, intersection, and complement.
\item If $\cF$ is  of geometric origin, then $IV^i_k(X,\bar{X},\cF)$ is absolute $\bar{\bQ}$-constructible. In particular, it is   obtained from finitely many  torsion-translated affine subtori via a sequence of taking union, intersection, and complement.

\end{enumerate}
\end{thrm}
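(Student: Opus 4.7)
The plan is to adapt the argument just given for the hypercohomology jump loci by inserting the intermediate extension functor into the composition. Let $n = \dim X$, let $j_0 : X \hookrightarrow \bar{X}'$ be the open embedding of $X$ into its closure $\bar{X}'$ in $\bar{X}$, let $i : \bar{X}' \hookrightarrow \bar{X}$ be the closed embedding, and let $a : \bar{X} \to pt$ be the constant map. However one chooses to define $IH^i(\bar{X}, \cF\otimes L)$ for a general $\cF \in \bD^b_c(X,\bC)$, it factors through $j_{0,!*}$ applied to the perverse cohomology pieces of $\cF \otimes^L L$ (with appropriate shifts); concretely I would decompose
$$
\phi : L \,\longmapsto\, \dim IH^i(\bar{X},\cF\otimes L)
$$
into a finite sum of functions of the form $\dim \circ H^{i-k} \circ G_k$, where $G_k$ is the composition of unispace morphisms
$$
\cL oc\cS ys_{free}(X,\_) \xra{\iota} \bD^b_c(X,\_) \xra{(\_)\otimes^L\cF} \bD^b_c(X,\_) \xra{{}^p\cH^k} Perv(X,\_)
$$
followed by
$$
Perv(X,\_) \xra{j_{0,!*}} Perv(\bar{X}',\_) \xra{Ri_*} \bD^b_c(\bar{X},\_) \xra{Ra_*} \bD^b_c(pt,\_).
$$
Every arrow belongs to the list (\ref{eqList}) or is a fixed-object specialization (tensor with $\cF$) as in \S\ref{subFS}.

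Once this decomposition is in place, parts (1) and (2) follow formally. Theorem \ref{thrmABS}(1) makes each $G_k$ an absolute $\bQ$-constructible functor, and composing with $\dim \circ H^{i-k}$, which is absolute $\bQ$-constructible on $\bD^b_c(pt,\bC)$ by Theorem \ref{thrmABS}(5), yields (2) via Theorem \ref{thrmRM}. For (1), the same composition applied with the $K$-structure coming from $\cF$ being defined over $K$ (cf.\ Remark \ref{rmkExcept} for the fixed-object tensor) gives the $K$-constructibility through Theorem \ref{thrmPCs}(2). For (3) and (4) I would track the field of definition further: when $\cF$ is of MHM (respectively geometric) origin, the Riemann--Hilbert conjugates $\cF^\sigma$ on $X^\sigma$ are of the same origin and hence defined over $\bQ$ (resp.\ $\bar{\bQ}$); by Remark \ref{rmkExcept} the tensor $(\_)\otimes^L\cF$ then refines to a morphism of unispaces with the corresponding $K$-structure, and Theorem \ref{thrmABS}(2) upgrades the conclusion to absolute $\bQ$- (resp.\ absolute $\bar{\bQ}$-)constructibility of $\phi$, and hence of $IV^i_k$. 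The ``in particular'' clauses then follow by invoking Theorem \ref{thrmCac} on the resulting absolute $\bC$- (resp.\ $\bar{\bQ}$-)constructible subsets of $\cM_B(X,1)(\bC)$.

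The main obstacle, and the reason this theorem concludes only constructibility rather than closedness as in the hypercohomology analogue, is that the intermediate extension $j_{0,!*}$ together with the ordinary truncation functors entering its construction lift only to morphisms, not to natural transformations, of unispaces; this was pointed out explicitly right after the proposition establishing $j_{!*}$ as a unispace morphism. Consequently this framework produces an absolute constructible function but not an absolute semi-continuous one, and this is known to be sharp: intersection cohomology jump loci are not in general Zariski closed, as noted in the introduction.
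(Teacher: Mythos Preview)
Your approach is essentially the paper's: both express $\phi(L)=\dim IH^i(\bar X,\cF\otimes L)$ as a composition of functors from list (\ref{eqList}) and observe that the appearance of $j_{!*}$ (only a unispace morphism, not a natural transformation) downgrades the conclusion from closedness to constructibility. The paper's proof is literally one line---the formula $IH^i(\bar X,\cF\otimes L)=H^{i-n}\bigl(Ra_*\,j_{!*}(\cF\otimes L)\bigr)$ followed by ``the only addition here is $j_{!*}$; this functor is only $\bQ$-constructible''---and your treatment of parts (3) and (4) via the field of definition of $\cF^\sigma$ is exactly the mechanism implicit in ``similar to the previous theorem.''

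One caveat worth flagging: your explicit decomposition of $\phi$ as a \emph{finite sum} $\sum_k \dim H^{i-k}\circ G_k$ presupposes that $\cF\otimes L$ splits as the direct sum of its shifted perverse cohomology sheaves. That splitting is part of the definition when $\cF$ is of geometric or MHM origin (Definition \ref{defOri}), so for parts (3) and (4) your formula is fine, but it need not hold for an arbitrary $\cF\in\bD^b_c(X,\bC)$ as in parts (1) and (2). The paper sidesteps this by writing $j_{!*}(\cF\otimes L)$ as a single object; you are right that some passage through $Perv$ is needed to invoke the unispace morphism $j_{!*}:Perv(X,\_)\to Perv(\bar X,\_)$, but this does not require the additive splitting---a single application of ${}^p\cH^0$ after a shift (or whatever convention defines $IH$ of a complex) suffices, and any such convention is still a composition from (\ref{eqList}), so the conclusion stands.
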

\begin{proof} Recall that $IH^i(\bar{X},\cF\otimes L)=H^{i-n}(Ra_*( j_{!*}(\cF\otimes L)))$, where $j:X\ra\bar{X}$ is the inclusion and $n=\dim X$. The proof is similar to the previous theorem. The only addition here is the intermediate-extension functor $j_{!*}$. This functor is only $\bQ$-constructible.
\end{proof}


This gives a precise answer to \cite[Question 1.6]{B-uls}, where it was remarked that the intersection cohomology jump loci were untouchable by the results of  \cite{simpson}.

From part (3), with a similar proof as the above corollary, one has:

\begin{cor}
Let $X\ra\bar{X}$ be a locally closed embedding of smooth complex algebraic varieties. Let $\cF\in\bD^b_c(X,\bC)$ be of MHM origin. Then for every $\bC$-local system $L$ of rank one  on $X$,
$$
IH^i(\bar{X},\cF\otimes L) \cong IH^i(\bar{X},\cF\otimes L^{-1}).
$$ 
\end{cor}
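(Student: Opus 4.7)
My plan is to reduce the claim to the assertion that the function
$$\phi\colon \mb(X,1)(\bC) \to \bZ, \qquad L \mapsto \dim IH^i(\bar X,\cF\otimes L),$$
satisfies $\phi(L)=\phi(L^{-1})$ for every rank one local system $L$. Since each of the two intersection cohomology groups in the statement is a finite-dimensional $\bC$-vector space, equality of their dimensions immediately produces a (non-canonical) isomorphism, and that is all ``$\cong$'' means here (as in the preceding corollary).

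To obtain $\phi(L)=\phi(L^{-1})$ I would invoke part (3) of the preceding theorem on intersection cohomology jump loci. It tells us that each level set
$$\phi^{-1}(\{k\}) \;=\; IV^i_k(X,\bar X,\cF)\setminus IV^i_{k+1}(X,\bar X,\cF)$$
is absolute $\bQ$-constructible, and, more concretely, is a finite Boolean combination of $\mathrm{Gal}(\bar\bQ/\bQ)$-orbits of torsion-translated affine subtori of $\mb(X,1)(\bC)$.

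The key observation is then the following compatibility between complex conjugation and inversion on $\mb(X,1)(\bC)$. Every affine subtorus $T\subset\mb(X,1)(\bC)$ is automatically defined over $\bQ$, hence fixed by every element of $\mathrm{Gal}(\bar\bQ/\bQ)$ and also satisfies $T^{-1}=T$. For a torsion point $\zeta$ and for the element $\sigma_0 \in \mathrm{Gal}(\bar\bQ/\bQ)$ induced by complex conjugation, one has $\sigma_0(\zeta)=\bar\zeta=\zeta^{-1}$, so
$$\sigma_0(\zeta\cdot T) \;=\; \zeta^{-1}\cdot T \;=\; (\zeta\cdot T)^{-1}.$$
A $\mathrm{Gal}(\bar\bQ/\bQ)$-orbit of a torsion-translated subtorus is thus closed under inversion, and consequently so is any Boolean combination of such orbits. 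Applying this to the level sets of $\phi$ yields $\phi^{-1}(\{k\})=\phi^{-1}(\{k\})^{-1}$ for every $k$, which is precisely $\phi(L)=\phi(L^{-1})$.

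The only point that demands care is making sure that the description of absolute $\bQ$-constructible subsets of $\mb(X,1)(\bC)$ really produces the ``$\mathrm{Gal}$-orbit of a torsion-translated subtorus'' as the relevant atomic building block, so that the inversion-symmetry argument above applies to each atom; I expect this to follow directly by reorganizing the Boolean expression provided by part (3) of the preceding theorem, but it is the one place where the proof needs to be spelled out explicitly rather than invoked.
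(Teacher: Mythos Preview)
Your proof is correct and follows exactly the approach the paper intends: it invokes part (3) of the preceding theorem and then runs the standard argument (referenced for the previous corollary via \cite[Theorem 1.2]{bw2}) that a $\mathrm{Gal}(\bar\bQ/\bQ)$-invariant finite union of torsion-translated subtori is stable under inversion because complex conjugation sends $\zeta T$ to $\zeta^{-1}T=(\zeta T)^{-1}$. Your one caveat about the atomic building blocks is harmless, since a Galois-invariant finite union decomposes into Galois orbits, and inversion commutes with Boolean operations.
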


\subsection{Leray spectral sequence degeneration} 

We complement Corollary \ref{corLer} with the following result, seemingly new in rank one according to P. Deligne:

\begin{thrm}\label{thrmLer}
 Let $f:X\ra {Y}$ be the a morphism between smooth complex algebraic varieties. Assume that $f$ is an open embedding, $Y$ is projective, and the complement is a normal crossings divisor.  Let $L$ be a $\bC$-local system on $X$. Assume either $\pi_1(X)$ is finite, or $L$ has rank one. Then
the Leray spectral sequence 
$$
E_2^{p,q}(L)=H^p(Y,R^qj_*L) \Rightarrow H^{p+q}(X,L)
$$
degenerates at $E_3$.
\end{thrm}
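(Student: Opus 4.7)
The plan is to handle the two hypotheses separately, with the rank-one case being the genuine application of the absoluteness machinery developed in the paper.

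For the case $\pi_1(X)$ finite, I would not use absolute sets at all and instead argue by a covering trick. Let $\pi:\tilde X\to X$ be a finite étale cover trivialising $L$ (e.g. the universal cover, finite since $\pi_1(X)$ is). Then $L$ is a direct summand of $\pi_*\bC_{\tilde X}$ (with some multiplicity), so by functoriality of the Leray spectral sequence it suffices to prove degeneration at $E_3$ for the sheaf $\pi_*\bC_{\tilde X}$. Compactify $\tilde X$ to a smooth projective $\tilde Y$ with normal-crossings complement $\tilde D$, arranged so that $j\circ\pi$ extends to a morphism $\bar\pi:\tilde Y\to Y$. Since $\bar\pi$ is finite, $\bar\pi_*$ is exact and $R(j\pi)_*\bC=\bar\pi_*R\tilde j_*\bC$, so $H^p(Y,R^q j_*\pi_*\bC)=H^p(\tilde Y,R^q\tilde j_*\bC)$; the two Leray spectral sequences coincide on $E_2$ and hence on all later pages. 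Deligne's classical result for $\tilde j:\tilde X\hookrightarrow\tilde Y$ with constant coefficients gives $E_2$-degeneration, which implies $E_3$-degeneration for $L$.

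For the rank-one case, the strategy is to extend the known degeneration from torsion rank-one local systems to all rank-one local systems via the absolute-constructibility structure. Let $S\subset\cM_B(X,1)(\bC)$ denote the set of rank-one local systems on $X$ for which Leray with respect to $j$ degenerates at $E_3$. First, by Corollary \ref{corLer}(2), the corresponding set in $\cS h_c(X,\bC)$ is absolute $\bQ$-constructible; composing with the natural unispace morphism from $\cM_B(X,1)(\bC)$ described in Theorem \ref{thrmRM} (which is defined over $\bQ$) shows that $S$ itself is absolute $\bQ$-constructible, hence absolute $\bar\bQ$-constructible. Second, $S$ contains every torsion point of $\cM_B(X,1)(\bC)$: a torsion rank-one local system is trivialised by a finite cyclic étale cover, and applying the argument of the first case (with $\pi_1(X)$ replaced by the relevant finite quotient through which the monodromy factors) yields $E_2$-degeneration, hence $E_3$-degeneration.

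The concluding step is a Boolean-algebra density argument. Since $S$ is absolute $\bar\bQ$-constructible, so is its complement $S^c$, and by Theorem \ref{thrmCac}(2) $S^c$ is obtained from finitely many torsion-translated affine subtori via union, intersection and complement. Any non-empty set in this Boolean algebra contains a ``cell'' of the form $T'\setminus\bigcup_i T'_i$ where $T'$ is a torsion-translated affine subtorus and each $T'_i$ is a torsion-translated subtorus of strictly smaller dimension in $T'$; torsion points are Zariski dense in $T'$ and cannot all lie in the lower-dimensional $T'_i$, so the cell contains at least one torsion point. But $S^c$ contains no torsion points (all of them lie in $S$), forcing $S^c=\emptyset$ and hence $S=\cM_B(X,1)(\bC)$.

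The main obstacle I anticipate is not the absoluteness argument itself, which is by now standard given Theorem \ref{thrmCac}, but rather the bookkeeping needed to certify that ``Leray degenerates at $E_3$'' really falls under the umbrella of Corollary \ref{corLer}(2) when applied to local systems (as opposed to arbitrary constructible sheaves), and to ensure the appropriate $\bQ$-structure is present so that the Galois invariance under $\mathrm{Gal}(\bC/\bQ)$ makes sense. A secondary technical point is arranging, in the finite-$\pi_1$ case, a smooth compactification $\tilde Y$ of $\tilde X$ together with a morphism $\bar\pi:\tilde Y\to Y$ (not merely an abstract completion), which is standard via equivariant resolution.
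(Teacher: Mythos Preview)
Your high-level strategy for the rank-one case---show that the set $S$ of local systems with $E_3$-degeneration is absolute $\bar\bQ$-constructible, show it contains a dense set of special points, and conclude via the structure theorem that $S^c=\emptyset$---is correct and is exactly what the paper does. The Boolean density argument you give at the end is fine.

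The gap is in your input for the special points, namely the covering argument you use both for the finite-$\pi_1$ case and to show that torsion rank-one local systems lie in $S$. You claim one can take a smooth compactification $\tilde Y$ of $\tilde X$ with normal-crossings boundary \emph{and} a finite morphism $\bar\pi:\tilde Y\to Y$. This is not generally possible: the normalization $Y'$ of $Y$ in the function field of $\tilde X$ gives a finite map $Y'\to Y$, but $Y'$ is typically singular along the branch locus; any resolution $\tilde Y\to Y'$ then destroys finiteness of $\tilde Y\to Y$. Without $\bar\pi$ finite you cannot identify $H^p(Y,R^qj_*\pi_*\bC)$ with $H^p(\tilde Y,R^q\tilde j_*\bC)$, so the two Leray spectral sequences need not agree. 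Equivariant resolution does not help here, since it produces a smooth $\tilde Y$ but not a finite $\bar\pi$. (A minor side point: Deligne's result for the open embedding $\tilde j$ with constant coefficients gives $E_3$-degeneration, not $E_2$; the $d_2$ is a Gysin-type map and is generally nonzero.)

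The paper avoids this entirely by invoking Timmerscheidt \cite{Ti}: for any \emph{unitary} local system the Leray spectral sequence degenerates at $E_3$. This single citation handles both hypotheses at once. If $\pi_1(X)$ is finite every representation is unitary, so the theorem holds outright. If $r=1$, unitary (in particular torsion) local systems are Zariski dense in every torsion-translated affine subtorus, hence in every nonempty absolute $\bar\bQ$-constructible subset of $\cM_B(X,1)(\bC)$; since $S^c$ is absolute $\bar\bQ$-constructible and contains no unitary point, it must be empty. So the fix for your argument is simply to replace the covering trick by the reference to \cite{Ti}; what you flagged as a ``secondary technical point'' is in fact the one genuine obstruction, and the paper's route sidesteps it rather than resolving it.
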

\begin{proof} The assumptions imply that the set of unitary representations of rank $r=rk(L)$
of $\pi_1(X)$ is Zariski dense in any absolute $\bar{\bQ}$-closed subset of $\cR_B(X,r)$. Indeed, if $\pi_1(X)$ is finite, then any finite-dimensional representation is unitary. If on the other hand $r=1$, then an absolute $\bar{\bQ}$-closed subset of $\cR_B(X,1)$ is a torsion-translated affine subtorus and hence contains a Zariski dense subset of unitary representations.

Let $S$ be the subset of representations of rank $r=rk(L)$ of $\pi_1(X)$ for which the associated Leray spectral sequence degenerates at $E_3$. Then $S$ is an absolute $\bar{\bQ}$-constructible subset of $\cR_B(X,r)$ by Corollary \ref{corLer}. By \cite{Ti}, the statement of the theorem is true for any unitary representation. Hence the Zariski closure of $S$ is the whole $\cR_B(X,r)$. But the complement of $S$ must be absolute $\bar{\bQ}$-closed if non-empty, which would contradict the fact there are no unitary representations in the complement. Hence $S=\cR_B(U,r)$ and the theorem follows.
\end{proof}

\newpage

\thispagestyle{empty}

\bigskip

\begin{center}
{\bf Corrections}\\
\end{center}

\bigskip

On page 41 we stated: {\it ``It is known that the two analytic morphisms $RH$ from (\ref{eqRHM}) are surjective."}
This is still an open question in general, although surjectivity is known in many cases, see [BLW, Theorem 5.2]. 

For the lack of a proof, the definition on moduli-absoluteness and its relation with absoluteness from Section \ref{sec7} need to be changed in order to be correct. These changes are only necessary if the two RH maps from (\ref{eqRHM}) are not surjective. Thus, this does not affect any of the main results nor any of the conjectures, all being contained in the Introduction and Sections 9 -11,  since they are either about absolute subsets of $M_B(X,r)$ if there are no additional conditions on $X$, or about moduli-absolute subsets in cases when both maps $RH$ from (\ref{eqRHM}) are surjective.

The corrections to Section \ref{sec7} were first pointed out in [BLW, 5.6] and are as follows:

\begin{itemize}

\item Definition \ref{defSac}: in the definition of moduli-absoluteness for a subset $S$ of the moduli $R_B(X,x_0,r)(\bC)$ (respectively, of $M_B(X,r)(\bC)$), one has to assume that $S$ is contained in the image of $RH$, so that $S=S^\sigma$ if $\sigma$ is the identity. 

\item Lemma \ref{lemMaks} should begin with: ``Let $T$ be a subset of the image of $RH$ in $M_B(X,r)(\bC)$ such that $q_B^{-1}(T)$ is a subset of the image of $RH$ in $R_B(X,x_0,r)(\bC)$." Then the equivalence as stated there holds.

\item Proposition \ref{propSac}: $S$, $T$, and $q_B^{-1}(T)$ are assumed to be subsets of the images of the two maps $RH$.

\item Proposition \ref{propRHC} remains true, but the proof of (1) for $M_B$ is exactly like the proof for $R_B$, and does not follow from the result for $R_B$.

\end{itemize}

\bigskip

\begin{center}
{\footnotesize REFERENCES}
\end{center}

\medskip

{\footnotesize [BLW] N. Budur, L.A. Lerer, and H. Wang, Absolute sets of rigid local systems. arXiv:2104.00168. }

\bigskip

\end{document}